\documentclass[a4paper, reqno,10pt]{amsart}

\usepackage{amsthm,bm,amsfonts,amssymb,amsmath,amsxtra}
\usepackage[all]{xy}
\usepackage[utf8]{inputenc}

\SelectTips{cm}{}
\usepackage{tikz}
\usepackage{verbatim}

\usepackage[margin=1.25in]{geometry}
\usepackage{mathrsfs}

\RequirePackage{xspace}
\RequirePackage{etoolbox}
\RequirePackage{varwidth}
\RequirePackage{enumitem}
\RequirePackage{tensor}
\RequirePackage{mathtools}
\RequirePackage{longtable}
\RequirePackage{multirow}
\RequirePackage{makecell}
\RequirePackage{bigints}
\RequirePackage{xcolor}

\usepackage[backref=page]{hyperref} %

\setcounter{tocdepth}{1}

\newcommand{\fka}{\ensuremath{\mathfrak{a}}\xspace}

\newcommand{\fkd}{\ensuremath{\mathfrak{d}}\xspace}

\newcommand{\fkg}{\ensuremath{\mathfrak{g}}\xspace}

\newcommand{\fkq}{\ensuremath{\mathfrak{q}}\xspace}

\newcommand{\BA}{\ensuremath{\mathbb{A}}\xspace}

\newcommand{\BC}{\ensuremath{\mathbb{C}}\xspace}

\newcommand{\BE}{\ensuremath{\mathbb{E}}\xspace}

\newcommand{\BG}{\ensuremath{\mathbb{G}}\xspace}

\newcommand{\BL}{\ensuremath{\mathbb{L}}\xspace}

\newcommand{\BP}{\ensuremath{\mathbb{P}}\xspace}
\newcommand{\BQ}{\ensuremath{\mathbb{Q}}\xspace}
\newcommand{\BR}{\ensuremath{\mathbb{R}}\xspace}

\newcommand{\BV}{\ensuremath{\mathbb{V}}\xspace}

\newcommand{\BX}{\ensuremath{\mathbb{X}}\xspace}

\newcommand{\BZ}{\ensuremath{\mathbb{Z}}\xspace}

\newcommand{\bB}{\ensuremath{\mathbf{B}}\xspace}

\newcommand{\bE}{\ensuremath{\mathbf{E}}\xspace}
\newcommand{\bF}{\ensuremath{\mathbf{F}}\xspace}

\newcommand{\bH}{\ensuremath{\mathbf{H}}\xspace}

\newcommand{\bK}{\ensuremath{\mathbf{K}}\xspace}

\newcommand{\bR}{\ensuremath{\mathbf{R}}\xspace}

\newcommand{\CA}{\ensuremath{\mathcal{A}}\xspace}

\newcommand{\CD}{\ensuremath{\mathcal{D}}\xspace}
\newcommand{\CE}{\ensuremath{\mathcal{E}}\xspace}
\newcommand{\CF}{\ensuremath{\mathcal{F}}\xspace}
\newcommand{\CG}{\ensuremath{\mathcal{G}}\xspace}

\newcommand{\CM}{\ensuremath{\mathcal{M}}\xspace}
\newcommand{\CN}{\ensuremath{\mathcal{N}}\xspace}
\newcommand{\CO}{\ensuremath{\mathcal{O}}\xspace}

\newcommand{\CS}{\ensuremath{\mathcal{S}}\xspace}

\newcommand{\CX}{\ensuremath{\mathcal{X}}\xspace}

\newcommand{\CZ}{\ensuremath{\mathcal{Z}}\xspace}

\newcommand{\RT}{\ensuremath{\mathrm{T}}\xspace}
\newcommand{\U}{\ensuremath{\mathrm{U}}\xspace}
\newcommand{\RV}{\ensuremath{\mathrm{V}}\xspace}

\newcommand{\adm}{\mathrm{adm}}

\DeclareMathOperator{\Aut}{Aut}

\newcommand{\Ch}{{\mathrm{Ch}}}
\DeclareMathOperator{\charac}{char}

\newcommand{\del}{\operatorname{\partial Orb}}
\newcommand{\delJ}{\partial J}
\DeclareMathOperator{\diag}{diag}
\DeclareMathOperator{\Diff}{Diff}

\renewcommand{\div}{{\mathrm{div}}}

\newcommand{\Ei}{\mathrm{Ei}}

\DeclareMathOperator{\End}{End}

\DeclareMathOperator{\Gal}{Gal}

\newcommand{\GL}{\mathrm{GL}}

\newcommand{\GU}{\mathrm{GU}}

\DeclareMathOperator{\Hom}{Hom}

\newcommand{\id}{\ensuremath{\mathrm{id}}\xspace}
\let\Im\relax

\DeclareMathOperator{\Im}{Im}

\newcommand{\Ind}{{\mathrm{Ind}}}
\DeclareMathOperator{\Int}{\ensuremath{\mathrm{Int}}\xspace}

\DeclareMathOperator{\Lie}{Lie}

\DeclareMathOperator{\Nm}{Nm}

\DeclareMathOperator{\Orb}{Orb}

\DeclareMathOperator{\Ros}{Ros}

\renewcommand{\Re}{{\mathrm{Re}}}

\DeclareMathOperator{\Res}{Res}
\newcommand{\rs}{\ensuremath{\mathrm{rs}}\xspace}

\newcommand{\Sh}{\mathrm{Sh}}

\newcommand{\SL}{{\mathrm{SL}}}

\DeclareMathOperator{\Spec}{Spec}
\DeclareMathOperator{\Spf}{Spf}

\newcommand{\SO}{{\mathrm{SO}}}

\newcommand{\ssm}{\smallsetminus}
\newcommand{\SU}{{\mathrm{SU}}}
\DeclareMathOperator{\supp}{supp}

\DeclareMathOperator{\tr}{tr}

\DeclareMathOperator{\vol}{vol}

\newcommand{\wt}{\widetilde}
\newcommand{\wh}{\widehat}

\newcommand{\pair}[1]{\langle {#1} \rangle}

\newcommand{\ov}{\overline}
\newcommand{\incl}{\hookrightarrow}
\newcommand{\lra}{\longrightarrow}

\newcommand{\bs}{\backslash}
\newcommand{\la}{\langle}
\newcommand{\ra}{\rangle}
\newcommand{\lv}{\lvert}
\newcommand{\rv}{\rvert}



\newcommand{\LN}{\,^\BL\!\CN}
\newcommand{\CCM}{\ensuremath{\mathcal{C\!M}}\xspace}
\newcommand{\LCM}{\,^\BL\CCM}
\newcommand{\Ltimes}{\stackrel{\BL}{\otimes}}
\newcommand{\jiao}{\stackrel{\BL}{\cap}}

%


%
\newtheorem{theorem}{Theorem}
\newtheorem{proposition}[theorem]{Proposition}
\newtheorem{lemma}[theorem]{Lemma}

\newtheorem{conjecture}[theorem]{Conjecture}
\newtheorem{`conjecture'}[theorem]{``Conjecture''}
\newtheorem{corollary}[theorem]{Corollary}

\theoremstyle{definition}
\newtheorem{definition}[theorem]{Definition}

\newtheorem{remark}[theorem]{Remark}

\newenvironment{altenumerate}
   {\begin{list}
      {(\theenumi) }
      {\usecounter{enumi}
       \setlength{\labelwidth}{0pt}
       \setlength{\labelsep}{0pt}
       \setlength{\leftmargin}{0pt}
       \setlength{\itemsep}{\the\smallskipamount}
       \renewcommand{\theenumi}{\roman{enumi}}
      }}
   {\end{list}}

\newenvironment{altitemize}
   {\begin{list}
      {$\bullet$}
      {\setlength{\labelwidth}{0pt}
	   \setlength{\itemindent}{5pt}
       \setlength{\labelsep}{5pt}
       \setlength{\leftmargin}{0pt}
       \setlength{\itemsep}{\the\smallskipamount}
      }}
   {\end{list}}

\numberwithin{equation}{section}
\numberwithin{theorem}{section}


\setitemize[0]{leftmargin=*,itemsep=\the\smallskipamount}
\setenumerate[0]{leftmargin=*,itemsep=\the\smallskipamount}

\renewcommand{\to}{%
   \ifbool{@display}{\longrightarrow}{\rightarrow}%
   }
\let\shortmapsto\mapsto
\renewcommand{\mapsto}{%
   \ifbool{@display}{\longmapsto}{\shortmapsto}%
   }
\newcommand{\hooklongrightarrow}{\mathrel{\mkern 0.5mu\lhook\mkern -3.5mu\relbar\mkern -3mu \rightarrow }}
\newcommand{\inj}{%
   \ifbool{@display}{\hooklongrightarrow}{\hookrightarrow}
   }
\newcommand{\isoarrow}{%
   \ifbool{@display}{\overset{\sim}{\longrightarrow}}{\xrightarrow\sim}%
   }
\newlength{\olen}
\newlength{\ulen}
\newlength{\xlen}
\newcommand{\xra}[2][]{%
   \ifbool{@display}%
      {\settowidth{\olen}{$\overset{#2}{\longrightarrow}$}%
       \settowidth{\ulen}{$\underset{#1}{\longrightarrow}$}%
       \settowidth{\xlen}{$\xrightarrow[#1]{#2}$}%
       \ifdimgreater{\olen}{\xlen}%
          {\underset{#1}{\overset{#2}{\longrightarrow}}}%
          {\ifdimgreater{\ulen}{\xlen}%
             {\underset{#1}{\overset{#2}{\longrightarrow}}}
             {\xrightarrow[#1]{#2}}}}%
      {\xrightarrow[#1]{#2}}
   }
\makeatother
\newcommand{\xyra}[2][]{%
   \settowidth{\xlen}{$\xrightarrow[#1]{#2}$}%
   \ifbool{@display}%
      {\settowidth{\olen}{$\overset{#2}{\longrightarrow}$}%
       \settowidth{\ulen}{$\underset{#1}{\longrightarrow}$}%
       \ifdimgreater{\olen}{\xlen}%
          {\mathrel{\xymatrix@M=.12ex@C=3.2ex{\ar[r]^-{#2}_-{#1} &}}}%
          {\ifdimgreater{\ulen}{\xlen}%
             {\mathrel{\xymatrix@M=.12ex@C=3.2ex{\ar[r]^-{#2}_-{#1} &}}}
             {\mathrel{\xymatrix@M=.12ex@C=\the\xlen{\ar[r]^-{#2}_-{#1} &}}}}}%
      {\mathrel{\xymatrix@M=.12ex@C=\the\xlen{\ar[r]^-{#2}_-{#1} &}}}%
   }
\makeatletter
\newcommand{\xla}[2][]{%
   \ifbool{@display}%
      {\settowidth{\olen}{$\overset{#2}{\longleftarrow}$}%
       \settowidth{\ulen}{$\underset{#1}{\longleftarrow}$}%
       \settowidth{\xlen}{$\xleftarrow[#1]{#2}$}%
       \ifdimgreater{\olen}{\xlen}%
          {\underset{#1}{\overset{#2}{\longleftarrow}}}%
          {\ifdimgreater{\ulen}{\xlen}%
             {\underset{#1}{\overset{#2}{\longleftarrow}}}
             {\xleftarrow[#1]{#2}}}}%
      {\xleftarrow[#1]{#2}}
   }
\renewcommand{\lra}{%
   \ifbool{@display}{\longleftrightarrow}{\leftrightarrow}%
   }

\usepackage{unicode-math}

\newcommand{\iso}{\cong}
\newcommand{\mr}{\mathrm}

\newcommand{\mbA}{\mathbb{A}}

\newcommand{\mbC}{\mathbb{C}}

\newcommand{\mbE}{\mathbb{E}}
\newcommand{\mbF}{\mathbb{F}}
\newcommand{\mbG}{\mathbb{G}}

\newcommand{\mbL}{\mathbb{L}}

\newcommand{\mbQ}{\mathbb{Q}}
\newcommand{\mbR}{\mathbb{R}}

\newcommand{\mbV}{\mathbb{V}}

\newcommand{\mbX}{\mathbb{X}}
\newcommand{\mbY}{\mathbb{Y}}
\newcommand{\mbZ}{\mathbb{Z}}

\newcommand{\mcC}{\mathcal{C}}
\newcommand{\mcD}{\mathcal{D}}
\newcommand{\mcE}{\mathcal{E}}
\newcommand{\mcF}{\mathcal{F}}
\newcommand{\mcG}{\mathcal{G}}

\newcommand{\mcL}{\mathcal{L}}
\newcommand{\mcM}{\mathcal{M}}
\newcommand{\mcN}{\mathcal{N}}
\newcommand{\mcO}{\mathcal{O}}

\newcommand{\mcS}{\mathcal{S}}

\newcommand{\mcX}{\mathcal{X}}

\newcommand{\mcZ}{\mathcal{Z}}

\newcommand{\mfd}{\mathfrak{d}}

\newcommand{\mfp}{\mathfrak{p}}


\newcommand{\LNSch}{\ensuremath{(\mathrm{LNSch})}\xspace}

\newcommand{\bfF}{\mathbf{F}}
\newcommand{\bfE}{\mathbf{E}}

\begin{document}

\title{On the Arithmetic Fundamental Lemma conjecture over a general $p$-adic field}

\author{A. Mihatsch}
\author{W. Zhang}

\address{Universität Bonn, Mathematisches Institut, Endenicher Allee 60, 53115 Bonn, Germany}
\email{mihatsch@math.uni-bonn.de}
\address{Massachusetts Institute of Technology, Department of Mathematics, 77 Massachusetts Avenue, Cambridge, MA 02139, USA}
\email{weizhang@mit.edu}

\date{\today}

\begin{abstract}

We prove the arithmetic fundamental lemma conjecture over a general $p$-adic field with odd residue cardinality $q\geq \dim V$. Our strategy is similar to the one used by the second author during his proof of the AFL over $\mbQ_p$ \cite{Z19}, but only requires the modularity of divisor generating series on the Shimura variety (as opposed to its integral model). The resulting increase in flexibility allows us to work over an arbitrary base field.
To carry out the strategy, we also generalize results of Howard \cite{H-CM} on CM cycle intersection and of Ehlen--Sankaran \cite{ES} on Green function comparison from $\mbQ$ to general totally real base fields.
\end{abstract}

\maketitle

\tableofcontents

\section{Introduction}
\subsection{Main results}
\label{ss:Intro Main}

Let $F/F_0$ be an unramified quadratic extension of $p$-adic local fields with $p\neq 2$. The  arithmetic fundamental lemma (AFL) conjecture, first formulated by the second author in \cite{Z12}, is an identity 
relating the following two types of quantities.
\begin{enumerate}
\item
Special values of the derivative of orbital integrals for the symmetric space
\begin{equation*}
     S_{n}:=(\Res_{F/F_0}\GL_{n})/\GL_{n,F_0}\simeq \{\,g\in \Res_{F/F_0}\GL_{n}\mid g\ov g=1_{n}\,\},
\end{equation*}
for a natural action of $\GL_{n-1,F_0}$;
\item Arithmetic intersection numbers on a Rapoport--Zink formal moduli space (abbreviated as ``RZ space" henceforth) of $p$-divisible groups with PEL type structure associated to a unitary group.
\end{enumerate}
We refer to \S\ref{ss:RZ}--\ref{ss:AFL} for the precise statements. The main result of this paper is a confirmation of the AFL conjecture under the assumption that the residue field of $F_0$ has at least $n$ elements, cf. Thm.~\ref{thm main}.\footnote{In the more recent preprint \cite{ZZ}, Z. Zhang adapts and extends the strategy of the present article to remove the assumption that the residue field of $F_0$ has at least $n$ elements, and to establish the analog of the AFL in many cases of bad reduction.} The strategy is similar to, but more refined than the one employed by the second author in \cite{Z19}, where he shows the AFL for $F_0=\BQ_p$ and $p\geq n$. We describe our strategy in more detail below.

The AFL conjecture arises from the relative trace formula approach to the arithmetic Gan--Gross--Prasad conjecture (for $\U_n\times\U_{n-1}$) and we refer to \cite{RSZ3,Z19} for a more detailed introduction to its origin and its relation to the Gross--Zagier theorem \cite{GZ}. In particular, the orbital integrals appear on the geometric side of Jacquet--Rallis' relative trace formula \cite{JR}, while certain Rankin–Selberg L-functions appear on the spectral side. The arithmetic intersection numbers on the RZ space appear as a local component of a global intersection problem via the uniformization theorem \cite{RZ}. Therefore, the AFL conjecture implies certain identities for intersection numbers on integral models of unitary Shimura varieties, which has been made precise by Rapoport, Smithling and the second author in \cite{RSZ3}. We prove their ``semi-global conjecture'' in the hyperspecial case in Thm.~\ref{thm semi-global} under the assumption of not too small residue field cardinality.

\subsection{Proof of the AFL}
The overall strategy is similar to \cite{Z19}: We extract the local intersection number from a global one that may be accessed by using the modularity of certain generating series of cycles on unitary Shimura varieties. In order to explain these ideas and to point out the differences with \cite{Z19}, we now recall the arithmetic intersection pairing from \cite{Z19} and then explain the variant employed here.

Let $\bfF$ be a number field and $\CX$ a regular, flat, projective scheme over $\Spec O_\bfF$ with smooth generic fiber $\CX_\bfF$. Denote by $\wt\CZ_1(\CX)$ the group of $1$-cycles on $\CX$ and let $\wh\Ch{}^1(\CX)$ denote the arithmetic Chow group of cycles of codimension $1$. The latter is isomorphic to the group of isomorphism classes of metrized line bundles $\wh{\mr{Pic}}(\mcX)$. In this paper, all Chow groups have $\BQ$-coefficients. There is an arithmetic intersection pairing (cf.  \cite[\S2.3]{BGS})
\begin{align}
 \begin{gathered}
	\xymatrix@R=0ex{
	 (\cdot\,,\cdot)\colon \wh\Ch{}^1(\CX)\times\wt \CZ_{1}(\CX)  \ar[r] & \mbR.
	}
	\end{gathered}
\end{align}
It is given by taking the arithmetic degree of the metrized line bundle restricted to the curve, $(\wh{\mcL}, C) = \wh{\deg}(\wh{\mcL}\vert_C)$. 
 Let $ \CZ_{1}(\CX) $  be the quotient of $\wt \CZ_{1}(\CX)$ by the subgroup generated by $1$-cycles on $\CX$ that are contained in a closed fiber and rationally trivial within that fiber. Then by \cite[Prop.\ 2.3.1 (ii)]{BGS}, the above pairing factors through
 \begin{align}
 \begin{gathered}
	\xymatrix@R=0ex{
(\cdot\,,\cdot)\colon\wh\Ch{}^1(\CX)\times \CZ_{1}(\CX)   \ar[r] & \mbR.
	}
	\end{gathered}
\end{align}
Our case of interest is when $\mcX/O_\bfF$ is an integral model for a unitary Shimura variety. The Shimura datum will be defined for a hermitian $F$-vector space where $F/F_0$ is a quadratic CM extension of a totally real field that globalizes the local fields from \S\ref{ss:Intro Main}. Then $\bfF$ is our notation for the reflex field.

Just as in \cite{Z19}, the two kinds of cycles of interest are the Kudla--Rapoport divisors (elements of $\wh\Ch{}^1(\CX)$) and certain (derived) CM cycles (elements of $\CZ_{1}(\CX)$). One of the key inputs in \cite{Z19} was the modularity of generating series of arithmetic KR divisors as elements in $\wh\Ch{}^1(\CX)$. More precisely, the modularity of the generating series of intersection numbers with a fixed CM cycle allows to run an inductive argument. This modularity was proved by Bruinier, Howard, Kudla, Rapoport and Yang in \cite{BHKRY} when $F_0=\BQ$ by a detailed study of the divisors of Borcherds products on the integral model in question. However, it appears that such modularity results are difficult to obtain when $F_0\neq \BQ$ and this was why the restriction $F_0 = \BQ$ appeared in \cite{Z19}.

The new idea in this paper is to only use modularity of the KR generating series for the generic fiber, i.e. with coefficients in $\Ch^1(\mcX_\bfF)$. This result for general $F_0$ is deduced by Liu \cite{Liu1} from its analogue for orthogonal Shimura varieties which was proved  by Yuan, S. Zhang and the second author in \cite{YZZ1}; the modularity for the generating series with coefficients in cohomology groups was shown by Kudla--Millson in much greater generality \cite{Kud-Mil} and was a key ingredient in \cite{YZZ1}. Moreover, we assume $F_0\neq \mbQ$ which implies properness for $\mcX$ and allows to simplify the above intersection theory. For simplicity, we now additionally assume that $\CX$ is smooth over $\Spec O_\bfF$. Let $\CZ_{1}(\CX) _{\deg=0}$ be the subgroup of $\CZ_{1}(\CX) $ consisting of $1$-cycles with vanishing degree on every  connected component of $\mcX_\bfF$. Let $\wh\Ch{}^{1,\adm}(\CX)\subset \wh\Ch{}^1(\CX)$ be the subgroup generated by cycles with \emph{admissible} Green function, meaning they have harmonic curvature with respect to the naturally metrized Hodge bundle on $\mcX_\bfF$. Then it is easy to see (cf. \S\ref{ss:AIP}) that the restriction of the pairing \eqref{eq:AI} to $\wh\Ch{}^{1,\adm}(\CX)\times \CZ_{1}(\CX) _{\deg=0}$ factors through
 \begin{align}
 \begin{gathered}
	\xymatrix@R=0ex{
(\cdot\,,\cdot)\colon \Ch^1(\CX_\bfF)\times \CZ_{1}(\CX)_{\deg=0}   \ar[r] & \BR,
	}
	\end{gathered}
\end{align}
where $\wh\Ch{}^{1,\adm}(\CX)\to  \Ch^1(\CX_\bfF)$ is the natural map that takes the generic fiber of cycles. In effect, the pairing is computed by extending a class in $\Ch^1(\CX_\bfF)$ to one in  $\wh\Ch{}^{1,\adm}(\CX)$ and then applying the pairing \eqref{eq:AI}. Now one is in a position to apply the modularity results from \cite{YZZ1} for KR divisors in $\Ch^1(\mcX_\bfF)$. In practice, to extract a specific local term from the global intersection number, we need to allow general level structure at a finite set of non-archimedean places $S$ and hence we actually use an $S$-punctured variant of the above, see \S\ref{s:alm mod}.

Just as in \cite{Z19}, it is also crucial to understand the archimedean local intersection, i.e. the values of Green functions at CM cycles. It turns out that the automorphic Green function constructed by Bruinier in \cite{Br12} is an admissible choice in the above sense. We prove that the generating series obtained by taking the difference with the Kudla Green function is modular. This was shown before by Ehlen--Sankaran \cite{ES} for $F_0 = \mbQ$. The extension to general $F_0$ we prove here is obtained by a rather straightforward proof that relies on the properness of $\mcX_\bfF$. This is carried out in Part 1.

The other obstacle is that we need to modify the CM cycle to land in the smaller $\CZ_{1}(\CX)_{\deg=0}$. This modification should be as simple as possible so that we keep control of the resulting modification of intersection numbers. Our strategy is to subtract a suitable linear combination of \emph{elementary} CM-cycles. Their intersection pairings with KR divisors have been studied by various authors (e.g. by Howard \cite{H-CM} when $F_0=\BQ$ as well as earlier by Kudla--Rapoport--Yang \cite{KRY}) and are essentially given by (the derivative of) Fourier coefficients of Siegel--Eisenstein series. This is carried out in Part 2.

We also mention the recent work of the first author \cite{M-LC} which proved the local constancy of the intersection number occurring in the AFL conjecture for all regular semi-simple elements. This allows, both for the present paper and for \cite{Z19}, to spread out the AFL identity from \emph{strongly} regular semi-simple elements to all regular semi-simple ones.

The strategy in this paper is quite flexible and  in a future paper the authors plan to explore the possibility of a new proof of the averaged Colmez conjecture. Moreover, it seems likely that some enhancement of the arguments in Part 1 will also establish the modularity of generating series of arithmetic KR divisors as elements in $\wh\Ch{}^1(\CX)$ under certain conditions (e.g. when $\CX$ has good reduction everywhere). 

Naturally, the present paper makes use of several results and constructions from previous works, notably \cite{Br12,RSZ3,Z19}. Our policy is to recall the necessary background from these sources whenever needed to make the paper as self-contained as possible.

\subsection*{Acknowledgment}
We thank Congling Qiu, Michael Rapoport, and Zhiyu Zhang for comments on an earlier version of this article. We also thank the referees for their helpful comments.

This work was initiated and partially completed during a research stay of the first author (A. Mi.) at MIT. He thanks MIT for offering such a hospitable working environment and also thanks the DFG (Deutsche Forschungsgemeinschaft) for making this stay possible through grant MI 2591/1-1. W. Z. is partially supported by the NSF grant DMS \#1901642.

\subsection{Notation}\label{notation}

\subsubsection*{Notation on algebra}

\begin{itemize}
\item 
For a global field $\bF$, we denote by $\Sigma_{\bfF}$ the set of all places.
\item  Unless otherwise stated, $F$ denotes a CM number field and $F_0$ denotes its (maximal) totally real subfield of index $2$.  We denote by $a\mapsto \ov a$ the nontrivial automorphism of $F/F_0$.   Let $F_{0,+}$ (resp., $F_{0,\geq0}$) be the set of totally positive (resp., semi-positive) elements in $F_0$.
\item We denote $\bH=\SL_2$ as an algebraic group over $F_0$. Denote by $B$ the Borel subgroup of upper triangular matrices, $N$ its unipotent radical. By abuse of notation we let $\wt\bH$ denote the metaplectic double covering of $\bH=\SL_2$ even though it is not an algebraic group.
\item 
Unless otherwise stated, we write $\BA$, $\BA_{0}$, and $\BA_F$ for the adele rings of $\BQ$, $F_0$, and $F$, respectively.  We systematically use a subscript $f$ for the ring of finite adeles, and a superscript $p$ for the adeles away from the prime number $p$.

\item 
For an abelian scheme $A$ over a locally noetherian scheme $S$ on which the prime number $p$ is invertible, we write $\RT_p(A)$ for the $p$-adic Tate module of $A$ (regarded as a smooth $\BZ_p$-sheaf on $S$) and $\RV_p(A) := \RT_p(A) \otimes \BQ$ for the rational $p$-adic Tate module (regarded as a smooth $\BQ_p$-sheaf on $S$).
\item 
We use a superscript $\circ$ to denote the operation $-\otimes_\BZ \BQ$ on groups of homomorphisms of abelian schemes, so that for example $\Hom^\circ(A,A') := \Hom(A,A')\otimes_\BZ \BQ$.

\item All Chow groups and $K$-groups have $\BQ$-coefficients. 

\item \label{Herm2Quad}
For a vector space $V$  over a field $F_0$ (of characteristic not equal to $2$), a quadratic form $\fkq\colon  V\to F$ has an associated symmetric bilinear pairing defined by
\begin{align}\label{eq:q2bi}
\pair{x,y}=\fkq(x+y)-\fkq(x)-\fkq(y),\quad x,y\in V.
\end{align}
In particular, 
\begin{align}\label{eq:q2bi2}
\pair{x,x}=2\fkq(x).
\end{align}
For a quadratic field extension $F$ of $F_0$, an $F/F_0$-hermitian space is an $F_0$-vector space $V$ endowed with an $F_0$-linear action of $F$ and an ``$F/F_0$-hermitian form", i.e., a map $\pair{\cdot,\cdot}\colon  V\times V\to F$ that is $F$-linear on the first factor and conjugate symmetric. Its dimension will be the dimension as an $F$-vector space. It  induces  a symmetric bi-$F_0$-linear pairing by $(x,y)\mapsto \tr_{F/F_0}\pair{x,y}\in F_0$. In particular, the corresponding quadratic form on $V$ is 
\begin{align}\label{eq:her2q}
\fkq(x)=\pair{x,x}\in F_0.
\end{align} We will treat $V$ as an affine variety over $F_0$, and for $\xi\in F_0$ we denote by $V_\xi$ the subscheme defined by $\fkq(x)=\xi$.

\item For a $F/F_0$-hermitian space $V$ over a non-archimedean local field, and an $O_F$-lattice $\Lambda\subset V$ (of full rank), we denote by $\Lambda^\vee=\{x\in V\mid \pair{x,\Lambda}\subset O_F\}$ its dual lattice under the hermitian form. 

\item Let $R$ be a commutative ring. We denote by $\LNSch_{/R}$ the category of locally noetherian schemes over $\Spec R$. 

\item We denote by $F[t]^\circ_{\deg n}$  the set of monic polynomials $α$ of degree $n$ such that $α(0)t^n\ov{α}(t^{-1}) = α(t)$.
\end{itemize}

\subsubsection*{Notation on automorphic forms}

\begin{itemize}
\item 
Fix the non-trivial additive character $\psi=\psi_{\BQ}\circ\tr_{F_0/\BQ}\colon  F_0\bs\BA_{0}\to \BC^\times$ where $\psi_{\BQ}$ is the standard one and $\tr_{F_0/\BQ}\colon F_0\bs \BA_{0}\to\BQ\bs \BA$ is the trace map. For $\xi\in F_0$ we denote by $\psi_\xi$ the twist $\psi_\xi(x)=\psi(\xi x)$. Let $\delta_{E/F}$ denote the different ideal for any extension $E/F$ of number fields.
\item For a smooth algebraic variety $X$ over a local field $F$, we denote by $\CS(X(F))$ the space of Schwartz functions on $X(F)$.
Similarly, for a smooth algebraic variety $X$ over a global field $F$, we denote $\CS(X(\BA))$ by the space of Schwartz functions on $X(\BA)$.

\item For a quadratic space $V$ over a number field $F_0$, we  denote by $\omega$ the Weil representation of ${\rm O}(V)(\BA_0)\times\wt\bH(\BA_0)$ on  $\CS(V(\BA_0))$. If $\dim V$ is even, the representation factors through ${\rm O}(V)(\BA_0)\times\bH(\BA_0)$. If $V$ is an $F/F_0$-hermitian space, we view it as a quadratic space over $F_0$ and denote again by $\omega$ the  Weil representation. See \cite[\S11]{Z19} for an explicit description of $\omega$.

\item For $\xi\in\BR$ and $k\in\BZ$, the weight $k$ Whittaker function on $\SL_2(\BR)$ is defined by
\begin{align}\label{Whit}
W^{(k)}_{\xi}(h)= |a|^{k/2} e^{2\pi i \xi (b+ai)} \chi_k(\kappa_\theta),
\end{align}
where we write $h\in \SL_2(\BR)$ according to the Iwasawa decomposition 
\begin{align}\label{h infty}
h=\left(\begin{matrix} 1 & b \\
& 1
\end{matrix}\right)
\left(\begin{matrix} a^{1/2} & \\
& a^{-1/2}
\end{matrix}\right)\,\kappa_\theta,\quad a\in\BR_{+},\quad b\in \BR,
\end{align}
and
\begin{align}\label{kappa in SO2}
\kappa_\theta=\left(\begin{matrix}\cos\theta&\sin\theta\\- \sin\theta&
\cos\theta\end{matrix}\right)\in \SO(2,\BR).
\end{align}
Here the weight $k$ character of $\SO(2,\BR)$, for $k\in\BZ$, is defined by
\begin{align}\label{chi}
\chi_k(\kappa_\theta)= e^{i k  \theta}.
\end{align} 
The preimage $\SO(2,\BR)$ in $\wt\SL_2(\BR)$ is denoted by $\wt\SO(2,\BR)$. For $k\in\frac{1}{2}\BZ$, we similarly define the weight $k$ character of $\wt\SO(2,\BR)$ and the Whittaker function.

\item
$\CA_\infty(\bH(\BA_{0}),K_\bH, k)$: the space of smooth functions on $\bH(\BA_0)$, invariant under left $\bH(F_0)$ and right $K_\bH\subset\bH(\BA_{0,f})$ translation, and of parallel weight $k$ under the action of $\prod_{v\in \Hom(F_0,\BR)}\SO(2,\BR)$. (We do not impose any finiteness condition under the center of universal enveloping algebra.) Similarly we define $\CA_\infty(\wt\bH(\BA_{0}),K_{\wt \bH}, k)$.

\item For a left $N(F_0)$-invariant  continuous function $\phi\colon \bH(\BA_0)\to\BC$, its $\xi$-th Fourier coefficient  for $\xi\in F_0$ is defined as the function
\begin{align}\label{eq:def F coeff}
h\in\bH(\BA_0)\mapsto \phi(\xi,h) :=\int_{F_0\bs \BA_0}  \phi\left[\left(\begin{matrix} 1&b\\
&1
\end{matrix}\right) h\right]\psi_{-\xi}( b) db.
\end{align}
Then there is a Fourier expansion (by an absolutely convergent sum): For $h\in  \bH(\BA_0)$,
\begin{align}\label{eq:def F exp}
\phi(h)=\sum_{\xi\in F_0} \phi(\xi,h).
\end{align}

\item
$\CA_{\rm hol}(\bH(\BA_{0}),K_\bH, k)$: the space of automorphic forms (with moderate growth) on $\bH(\BA_0)$,  invariant under $K_\bH\subset\bH(\BA_{0,f})$,  of parallel weight $k$ under the action of $\prod_{v\in \Hom(F_0,\BR)}\SO(2,\BR)$, and holomorphic (i.e., annihilated by the element $\frac{1}{2}\left(\begin{matrix}i &1\\1&
-i\end{matrix}\right)$ in the complexifed Lie algebra of $\bH(F_{0,v})\simeq\SL_2(\BR)$ for every $v\in \Hom(F_0,\BR)$).
This is a finite dimensional vector space over $\BC$, and it has a $\ov\BQ$-structure via the Fourier expansion. For any subfield $L\subset\BC$ (for our applications, it suffices to assume $L\supset\ov\BQ$), we define  $\CA_{\rm hol}(\bH(\BA_{0}),K_\bH, k)_L$ as the subspace of $\CA_{\rm hol}(\bH(\BA_{0}),K_\bH, k)$ consisting of functions with Fourier coefficients all in $L$ (i.e., if we write the $\xi$-th term in the expansion \eqref{eq:def F exp} as $ \phi(\xi,h) =W^{(k)}_{\xi}(h_\infty) \phi_f(\xi,h_f)$, then $\phi_f(\xi,h_f)\in L$ for all $h_f\in \prod_{v\nmid\infty} \bH( O_{F_{0,v}})$). Then for any $L$-vector space $W$, we have an $L$-vector space:
\begin{align}\label{def A hol W tot}
\CA_{\rm hol}(\bH(\BA_{0}),K_\bH, k)_L\otimes_L W.
\end{align}

\end{itemize}

\part{KR divisors generating series}
\section{Shimura varieties and Special divisors}\label{s:SV KR}

We work with the so-called RSZ variant of unitary Shimura varieties throughout the paper. It goes back to \cite{KR-U2} but has been introduced systematically in \cite{RSZ3}. Its advantage is that it admits both a PEL type moduli description and the flexible definition of cycles. This section's aim is to recall all relevant definitions and properties. In that, it overlaps considerably with \cite[§6 and §7]{Z19}. We refer to \cite{RSZ4} for a much more detailed survey.

\subsection{Shimura varieties}\label{ss:S data}
Let $F/\mbQ$ be a CM field with maximal totally real subfield $F_0$ and non-trivial $F_0$-automorphism $a\mapsto \ov{a}$. Fix a CM type $Φ$ for $F$, a distinguished element $φ_0\in \Phi$ and a hermitian $F$-vector space $V$ of dimension $n$ such that, for $φ\in \Phi$,
$$\mr{sign}(V_φ) = \begin{cases} (n-1,1) & \text{if $φ=φ_0$}\\
(n,0) & \text{if $φ\neq φ_0$.}
\end{cases}
$$
Then we consider the following algebraic groups over $\Spec \mbQ$ resp. $\Spec F_0$,
\begin{align*}
Z^\mbQ &:= \{z\in \Res_{F/\mbQ} \mbG_m\mid \Nm_{F/F_0}(z)\in \mbG_m\},\\
G &:= \U(V),\ \ \text{an algebraic group over $F_0$,}\\
G^\BQ &:= \bigl\{g \in \Res_{F_0/\BQ} \GU(V) \bigm| c(g)\in \BG_m\bigr\},\ \ \mr{and}\\
\wt G &:= Z^\BQ \times_{\BG_m} G^\BQ \isoarrow Z^{\mbQ}\times \Res_{F_0/\mbQ}G.
\end{align*}
Here, $c\colon \GU(V)\to \mbG_{m,F_0}$ denotes the similitude character, while the isomorphism in the last line is given by $(z,g)\mapsto (z,z^{-1}g)$.
We now recall how to define Shimura data for the above groups. For $Z^\mbQ$ we take the Shimura datum defined by $Φ$,
\begin{equation}\label{eq Sh dat Z}\begin{aligned}
h_{Z^\mbQ}\colon \Res_{\mbC/\mbR}\mbG_m & \to  Z^\mbQ_{\mbR} \subseteq \prod_{φ\in Φ} \Res_{\mbC/\mbR}\mbG_m\\
z & \mapsto (z,\ldots,z).
\end{aligned}
\end{equation}
Next, choose $\mbC$-bases of the $V_φ,\ φ\in Φ$, such that the hermitian form is given by $\diag(1,\ldots,1)$ if $φ\neq φ_0$ resp. $\diag(1,\ldots,1,-1)$ if $φ = φ_0$. Then set
$$\begin{aligned}
h\colon \Res_{\mbC/\mbR}\mbG_m & \to  (\Res_{F_0/\mbQ}G)_\mbR = \prod_{φ\in Φ} \U(V_φ),\ \ z \mapsto \big(h_φ(z)\big)_{φ\in Φ},\\
						h_φ(z) & = \begin{cases} \diag\big(1,\ldots,1,\ov z/z\big) & φ = φ_0\\
						\mr{id}_{V_φ} & φ \neq φ_0\end{cases}
														\end{aligned}$$
where the matrix notation is with respect to the fixed choice of basis. Denote by $\{h\}$ the $(\Res_{F_0/\mbQ}G)(\mbR)$-conjugacy class of $h$. Then $(\Res_{F_0/\mbQ} G,\{h\})$ is a Shimura datum. Finally put
$$\big(\wt G, \big\{\wt h\big\}\big) := \big(Z^\mbQ \times \Res_{F_0/\mbQ} G,\ h_{Z^\mbQ} \times \{h\}\big).$$
Let $\bfF\subseteq \mbC$ denote the reflex field for $\big(\wt G, \big\{\wt h\big\}\big)$. It is the composite of the reflex fields for $Φ$ and for the conjugacy class $\{h\}$. It always contains $φ_0(F)$ which allows to view $F$ as a subfield of $\bfF$ (cf. \cite[(6.5)]{Z19}). For open compact subgroups $K_{Z^\mbQ}\subseteq Z^\mbQ(\mbA_f), K\subseteq G(\mbA_{0,f})$ and $\wt K := K_{Z^\mbQ}\times K$, we obtain Shimura varieties (over $\bfF$) denoted by
\begin{equation}\label{prod shim}
\Sh_{\wt K}\big(\wt G, \big\{\wt h\big\}\big)\simeq
   \Sh_{K_{Z^\BQ}}\big(Z^\BQ,h_{Z^\BQ}\big) \times \Sh_K\big(\Res_{F_0/\mbQ} G,\{h\}\big).
\end{equation}
Here and in the following we make the simplifying assumption that $K_{Z^\mbQ}$ and $K$ are neat. Then $\Sh_{\wt K}(\wt G, \{\wt h\})$ is an actual variety instead of a stack. (This assumption will not be used in an essential way and all our results should hold in the more general setting.) From now on we write $M_{\wt K}$ or even $M$ instead of $\Sh_{\wt K}\big(\wt G, \big\{\wt h\big\}\big)$.

\subsection{Integral Models}
\label{ss:integral models}
We next recall the integral model of $M_{\wt K}$ at places where the given data are unramified. This is the special case \cite[\S5.1]{RSZ3} and \cite[\S6.1]{RSZ4}. Let $Λ \subseteq V$ be a fixed $O_F$-lattice and let $\mfd\in \mbZ_{>0}$  be chosen such that:
\begin{altitemize}
\item The $O_F[\mfd^{-1}]$-lattice $Λ[\mfd^{-1}]$ is self-dual with respect to the $F$-valued pairing on $V$.
\item If $v\in \Sigma_{F_0}$ is non-archimedean with $v\nmid \mfd$, then $v$ has odd residue characteristic and is unramified in $F$.
\item The level $K$ factors as
$$K = K_\mfd\times K^\mfd$$
where $K_\mfd\subseteq G(F_{0,\mfd})$ is any open compact subgroup and where $K^\mfd = \mr{Stab}(\wh {Λ}^\mfd)$ is the stabilizer of the completion $\wh {Λ}$ in $G(\mbA_{0,f}^\mfd)$.
\item The level $K_{Z^\mbQ}$ factors similarly as
$$K_{Z^\mbQ} = K_{Z^\mbQ,\mfd}\times K_{Z^\mbQ}^{\mfd}$$
where $K_{Z^\mbQ, \mfd}\subseteq Z^\mbQ(\mbQ_\mfd)$ is any open compact subgroup and where $K_{Z^\mbQ}^\mfd$ is the maximal compact subgroup in $Z^\mbQ(\mbA_f^\mfd)$.
\end{altitemize}
We next consider a moduli problem $\mcM_0$ of abelian varieties with CM by $F$. It is defined as the functor on $\mr{LNSch}_{/O_{\bfF}[\mfd^{-1}]}$ that associates to $S$ the set of isomorphism classes of $(A_0, \iota_0, \lambda_0, \ov{η}_0)$, where 
\begin{altitemize}
\item $A_0$ is an abelian scheme over $S$ of dimension $[F_0:\mbQ]$;
\item $\iota_0\colon O_F[\mfd^{-1}]\to \End(A_0)[\mfd^{-1}]$ is an $O_F[\mfd^{-1}]$-action that satisfies the Kottwitz condition for $Φ$,
\begin{equation}\label{kottcondA_0}
   \charac\bigl(\iota_0(a)\mid\Lie A_0\bigr) = \prod_{\varphi\in\Phi}\bigl(T-\varphi(a)\bigr)
   \quad\text{for all}\quad
   a\in O_F[\mfd^{-1}];
\end{equation}
\item $\lambda_0$ is an away-from-$\mfd$ principal polarization on $A_0$ such that $λ_0^{-1}\circ ι_0(a)^\vee \circ λ_0 = ι_0(\ov a)$ for all $a\in O_F[\mfd^{-1}]$ and
\item $\ov{η}_0$ a $K_{Z^{\mbQ},\mfd}$-level structure. This last notion requires the choice of an auxiliary $1$-dimensional hermitian $F_{\mfd}$-module. It will never figure explicitly in this work, so we do not spell out the details and refer to \cite[\S C.3]{Liu18} instead.
\end{altitemize}
An isomorphism of two quadruples $(A_0,\iota_0,\lambda_0,\ov{η}_0)$  and $(A'_0,\iota'_0,\lambda'_0,\ov{η}'_0)$ is a quasi-isogeny $φ_0\in\Hom_{O_F}(A_0,A'_0)[\mfd^{-1}]$ such that $φ_0^*\lambda_0'= λ_0$ and $ \ov{η}'_0\circ φ_0= \ov{η}_0$. Then $\CM_0$ is representable, finite and \'etale over $\Spec O_\bfF[\mfd^{-1}]$, cf.\ \cite[Prop.\ 3.1.2]{H-CM}. Here we used our assumption that $K_{Z^\mbQ}$ is neat, without it $\mcM_0$ would have to be defined as a stack.

Depending on $F$ and $\mfd$, it might happen that $\mcM_0$ is empty. This does not happen, however, if $F/F_0$ is ramified at some place (cf. \cite[Rem.\ 3.5 (ii)]{RSZ3}) and we make that assumption throughout the paper.

Then the generic fiber $M_0$ of $\CM_0$ is a disjoint union of copies of $\Sh_{K_{Z^\BQ}}\big(Z^\BQ,\{h_{Z^\BQ}\}\big)$ (cf. \cite[Lem.\ 3.4]{RSZ3} specialized to the ideal $\fka=O_{F_0}$). This reflects the failure of the Hasse principle for the group $Z^\mbQ$. It would be possible to work with $\mcM_0$ throughout the paper, but it is more convenient to work directly with an integral model of $\Sh_{K_{Z^\mbQ}}$. For this reason, we fix one of the copies $\Sh_{K_{Z^\BQ}}\big(Z^\BQ,\{h_{Z^\BQ}\}\big)\subseteq M_0$ and \emph{redefine} $\mcM_0$ as its closure. Note that $\CM_0$ is still finite étale over $\Spec O_{\bfF}$. We also \emph{redefine} $M_0$ as the generic fiber of this new $\mcM_0$.

We next define the integral model of $M = M_{\wt K}$ over $\Spec O_{\bfF}[\mfd^{-1}]$.
\begin{definition}\label{def RSZ glob}
The functor $\CM = \CM_{\wt K}$ associates to each scheme $S$ in $\mr{LNSch}_{/O_{\bfF}[\mfd^{-1}]}$ the set of isomorphism classes of $(A_0,\iota_0,\lambda_0,\ov{η}_0, A,\iota,\lambda,\ov\eta)$, where
\begin{altitemize}
\item $(A_0,\iota_0,\lambda_0,\ov\eta_0)$ is an object of $\CM_{0}(S)$; 
\item $A$ is an abelian scheme over $S$ of dimension $n[F_0:\mbQ]$;
\item $\iota\colon O_F[\mfd^{-1}] \to \End(A)[\mfd^{-1}]$ is an action satisfying the Kottwitz condition of signature
$$\big(( n-1,1)_{\varphi_0}, (n,0)_{\varphi\in\Phi\ssm\{\varphi_0\}}\big)$$
on $O_F[\mfd^{-1}]$; and
\item $\lambda\colon A \to A^\vee$ is an away-from-$\fkd$ principal polarization such that $λ^{-1}\circ ι(a)^\vee \circ λ = ι(\ov a)$ for all $a\in O_F[\mfd^{-1}]$;
\item $\ov\eta$ is a $K_{\mfd}$-orbit  of isometries of hermitian modules  
(as smooth $F_{\fkd}=\prod_{v\mid \fkd}F_{ v} $-sheaves on $S$ endowed with its natural hermitian form induced by the polarization)
\begin{align}\label{level}
    \xymatrix{ \eta\colon  \RV_\fkd(A_0, A) \ar[r]^-\sim& V(F_{0,\fkd})},
\end{align}
where
$$
 \RV_\fkd(A_0, A):=\prod_{p\mid\fkd}  \RV_p(A_0, A), \quad \text{and} \quad   \RV_p(A_0, A)=\Hom_{F\otimes_\BQ\BQ_p}(\RV_p(A_0), \RV_p(A)),
 $$
 and
\begin{align}\label{eq:def V d}
 V(F_{0,\fkd}):= \prod_{p\mid \fkd}\mbQ_p\otimes_\mbQ V = \prod_{v\mid \fkd}F_{0,v}\otimes_{F_0}V.
\end{align}

\item Finally, we impose the Eisenstein condition (cf. \cite[\S5.2]{RSZ4}) for every place $v\nmid\fkd$ of $F_0$. Note that this condition already follows from the Kottwitz condition for $v$ that are unramified over $\mbQ$.
\end{altitemize}

An isomorphism between two objects $(A_0,\iota_0,\lambda_0,\ov\eta_0,A,\iota,\lambda,\ov\eta)$  and $(A'_0,\iota'_0,\lambda'_0,\ov\eta_0',A',\iota',\lambda',\ov\eta')$ is an isomorphism $φ_0\colon (A_0,\iota_0,\lambda_0,\ov\eta_0) \isoarrow (A_0',\iota_0',\lambda_0',\ov\eta_0')$ together with a quasi-isogeny $φ\in \Hom_{O_F} (A,A')[\mfd^{-1}]$ such that $φ^*λ' = λ$ and $\ov\eta' \circ (φ_0,φ)= \ov \eta$.
\end{definition}

\begin{theorem}[\protect{\cite[Thm. 6.2]{Z19}}]\label{thm:representabilityZ19}
The functor $\CM_{\wt K}$ is representable. The morphism $\CM_{\wt K}\to\Spec O_{\bfF}[\mfd^{-1}]$ is separated, of finite type and smooth of relative dimension $n-1$.
\end{theorem}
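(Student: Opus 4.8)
The plan is to deduce all four assertions from the corresponding statements for the forgetful morphism $\pi\colon\CM_{\wt K}\to\CM_0$ sending $(A_0,\iota_0,\lambda_0,\ov\eta_0,A,\iota,\lambda,\ov\eta)$ to $(A_0,\iota_0,\lambda_0,\ov\eta_0)$. Since $\CM_0\to\Spec O_\bfF[\mfd^{-1}]$ is finite \'etale (the base change to $O_\bfF[\mfd^{-1}]$ of the finite \'etale morphism $\CM_0\to\Spec O_\bfF$ recalled above), it suffices to show that $\pi$ is representable, separated, of finite type, and smooth of relative dimension $n-1$; the claims over $\Spec O_\bfF[\mfd^{-1}]$ then follow by composition. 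So fix $T\in\LNSch_{/O_\bfF[\mfd^{-1}]}$ together with an object $(A_0,\iota_0,\lambda_0,\ov\eta_0)\in\CM_0(T)$, and let $\CM_T$ denote the functor on $\LNSch_{/T}$ that classifies the remaining data $(A,\iota,\lambda,\ov\eta)$ relative to it.

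\emph{Representability.} The functor $\CM_T$ is a PEL-type moduli problem: abelian schemes $A$ of relative dimension $n[F_0:\BQ]$ equipped with an $O_F[\mfd^{-1}]$-action of the prescribed CM type and signature $\big((n-1,1)_{\varphi_0},(n,0)_{\varphi\neq\varphi_0}\big)$, a prime-to-$\mfd$ polarization $\lambda$ compatible with $\iota$ and the conjugation on $O_F$, and a $K_\mfd$-orbit of isometries $\RV_\mfd(A_0,A)\cong V(F_{0,\mfd})$ at the places above $\mfd$ (a locally constant datum, as $\mfd$ is invertible on $T$); away from $\mfd$ there is no level, the hyperspecial normalization there being built into the requirement that $\lambda$ be prime to $\mfd$ and that $\Lambda[\mfd^{-1}]$ be self-dual. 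Representability of such a problem over the ring of integers away from the bad primes is standard --- one combines Mumford's GIT construction of moduli of polarized abelian schemes with the PEL formalism of Kottwitz and Rapoport--Zink, in the integral form set up in \cite[\S5]{RSZ3} (see also \cite[\S6.1]{RSZ4}): after adding an auxiliary prime-to-$\mfd$ full level structure one obtains a quasi-projective scheme, and $\CM_T$ is recovered as the quotient by the resulting finite group, which is again a scheme because the neatness of $K$ (hence of $K_\mfd$) makes that action free. Composing with the finite \'etale $\CM_0\to\Spec O_\bfF[\mfd^{-1}]$ shows $\CM_{\wt K}$ is representable by a scheme which is separated and of finite type over $O_\bfF[\mfd^{-1}]$.

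\emph{Smoothness and dimension.} Here I would apply the infinitesimal criterion together with Grothendieck--Messing (and Serre--Tate) deformation theory. For a square-zero thickening $T_0\hookrightarrow T$ of $O_\bfF[\mfd^{-1}]$-schemes and an object over $T_0$, the $\mfd$-adic level structure lifts uniquely (rigidity of lisse sheaves along nilpotent immersions), and $\iota,\lambda$ lift along with $A$; hence lifting the object amounts to lifting the Hodge filtration $\Fil\subset H^1_{\mathrm{dR}}(A/T_0)$ to a filtration over $T$ that is $O_F$-stable, isotropic for the $\lambda$-pairing, and compatible with the Kottwitz and Eisenstein conditions. Decomposing $H^1_{\mathrm{dR}}$ along the $O_F$-idempotents over the base, the blocks indexed by $\varphi\neq\varphi_0$ are rigid (the filtration there is forced by the signature $(n,0)$, and the conjugate block is then pinned down by isotropy), while the $\varphi_0$-block is a torsor under a locally free sheaf of rank $n-1$ (a corank-one subspace of an $n$-dimensional space). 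So lifts exist locally on $T$ and the deformation functor is formally smooth with $(n-1)$-dimensional tangent space; thus $\pi$ is smooth of relative dimension $n-1$, and composing with the \'etale $\CM_0\to\Spec O_\bfF[\mfd^{-1}]$ concludes the proof.

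The main obstacle is the representability step: correctly integralizing the mixed-level PEL datum (hyperspecial away from $\mfd$, of type $K_\mfd$ at $\mfd$) and checking that the resulting functor is an honest scheme rather than merely an algebraic space or stack --- this is exactly where the neatness hypothesis on $\wt K$ is used. Granting representability, smoothness and the dimension count are a routine Grothendieck--Messing computation; the only delicate point there is that the Kottwitz and Eisenstein conditions at places of $F_0$ ramified over $\BQ$ cut out precisely the expected $(n-1)$-dimensional deformation space, for which one invokes the local analysis in \cite{RSZ4,Z19}.
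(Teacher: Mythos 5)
The paper does not give its own proof here; Theorem~\ref{thm:representabilityZ19} is quoted from \cite[Thm.~6.2]{Z19} (and the paper merely remarks that the stack version there becomes a scheme under the neatness hypothesis on $\wt K$). Your sketch follows the standard PEL route that underlies the cited result: factor through the finite \'etale $\CM_0\to\Spec O_\bfF[\mfd^{-1}]$, obtain representability of the relative moduli problem by rigidifying with an auxiliary prime-to-$\mfd$ level structure and quotienting by the resulting finite group (free action because $\wt K$ is neat), and establish formal smoothness of relative dimension $n-1$ by Serre--Tate/Grothendieck--Messing, decomposing the Hodge filtration along $O_F$ and observing that the signature forces rigidity at $\varphi\neq\varphi_0$ while the $\varphi_0$-block contributes a rank-$(n-1)$ tangent space. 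You also correctly isolate the genuine subtlety: over the primes $v\nmid\mfd$ that are ramified in $F_0/\BQ$, the $O_F$-eigenspace decomposition used in the tangent-space computation does not come for free, and it is precisely the Eisenstein condition of \cite[\S5.2]{RSZ4} (automatic at unramified $v$, as the paper notes) that ensures the deformation space still has the expected dimension; without imposing it, smoothness can fail at those fibers. This is exactly the role that condition plays in \cite{Z19} and \cite{RSZ4}, so your sketch is a faithful account of the proof the citation points to.
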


In \cite{Z19}, the functor $\mcM$ is defined as stack and \cite[Thm. 6.2]{Z19} states that it is a Deligne--Mumford stack. The version we gave here used the assumption that $\wt K$ is neat. Moreover, the scheme $\mcM$ is projective if $F_0\neq \mbQ$, cf. \cite[Thm. 4.4 (i)]{RSZ4}, which we will assume henceforth.

By \cite[Prop.\ 3.5]{RSZ3}, the generic fiber of $\CM_{\wt K}$ is isomorphic to $M_{\wt K}$, the canonical model of $\Sh_{\wt K}\big(\wt G, \big\{\wt h\big\}\big)$. (Note that the failure of the Hasse principle for $\wt G$ is already taken care of by our (re)definition of $\mcM_0$ above.)

Concerning terminology, we usually write $(A_0,A,\ov{η})$ for points of $\mcM$, the remaining data being implicit.

\subsection{Kudla--Rapoport divisors}\label{ss:KR}

We next recall the definition of the Kudla--Rapoport special divisors \cite{KR-U2}. These parametrize certain homomorphisms from CM abelian varieties occurring in $\mcM_0$ to abelian varieties parametrized by $\mcM$. They may equivalently be thought of as the loci in $\mcM$ where a CM abelian variety factor splits off in a prescribed fashion. The definition is based on the following observation. Assume that $(A_0,A,\ov{η})\in \mcM(S)$ with $S$ connected. Then one has $\End^\circ_{F}(A_0) = F$ and the space $\Hom_{O_F}(A_0,A)$ becomes a hermitian $O_F$-module with pairing
$$
\pair{x,y}= \lambda_0^{-1}\circ y^\vee\circ \lambda \circ x\in \End^\circ _{F}(A_0)\simeq F.
$$
This space is positive definite at all archimedean places of $F_0$ by the positivity of the Rosati involution. Recall (cf. §\ref{Herm2Quad} on Notation) that $V_ξ$ denotes the hyperboloid of $x\in V$ with $\pair{x,x} = ξ$ and that we assume all Schwartz functions in the non-archimedean setting to be $\mbQ$-valued.
\begin{definition}
Given $0 \neq \xi\in F_{0}$ and $\mu\in V(F_{0,\fkd})/K_{\fkd}$, the \emph{KR-cycle} $\CZ(\xi, \mu)$ is defined as follows. For $S\in \mr{LNSch}_{/ O_\bfF[\mfd^{-1}]}$, the $S$-points $\mcZ(ξ,µ)(S)$ are the set of isomorphism classes of $(A_0,A,\ov{η},u)$ where
\begin{altitemize}
\item $(A_0,A,\ov{η}) \in \mcM(S)$ and
\item $u\in \Hom_{O_F}(A_0,A)[\mfd^{-1}]$ such that $\pair{u,u}=\xi$, and $\ov\eta(u)\in µ.$
\end{altitemize}

Two objects $(A_0,A,\ov{η}, u)$ and $(A'_0,A',\ov{η}', u')$ are isomorphic if there is an isomorphism of underlying triples, $(φ_0,φ)\colon (A_0,A,\ov{η})\isoarrow(A'_0,A',\ov{η}')$, such that $φuφ_0^{-1} = u'$.
\end{definition}

Forgetting $u$ defines a natural morphism $i\colon  \CZ(\xi, \mu)\to \CM$ which is finite and unramified. Moreover, $\mcZ(ξ, μ)$ is \'etale locally a Cartier divisor and the morphism $\CZ(\xi, \mu)\to \Spec O_\bfF[\mfd^{-1}]$ is flat (cf. \cite[Prop. 7.3]{Z19}). Taking its pushforward, the image of $\mcZ(ξ,μ)$ along $i$ defines an effective relative Cartier divisor on $\mcM$ which we still denote by $\mcZ(ξ,µ)$. This definition extends by linearity to $ϕ_\mfd \in \mcS(V_\mfd)^{K_\mfd},$
\begin{align}\label{eq:KR gen}
\mcZ(ξ,ϕ_\mfd) := \sum_{µ\in V(F_{0,\mfd})/K_{\mfd}} ϕ_\mfd(µ) \mcZ(ξ,µ) \in \CZ^1(\mcM),
\end{align}
where the right hand side denotes the group of divisors with $\mbQ$-coefficients. It is non-zero only for $ξ\in F_{0,+}$ because the hermitian pairing on $\Hom_{O_F}(A_0,A)$ is positive definite at all archimedean places.

We finally introduce the following notation. Whenever $ϕ \in \mcS(V(\mbA_{0,f}))$ is a Schwartz function that factors as $ϕ = ϕ_\mfd \otimes 1_{\wh{Λ}^\mfd}$, we put
\begin{equation}\label{eq:defn_int_model_KR_div}
\mcZ(ξ, ϕ) := \mcZ(ξ, ϕ_\mfd).
\end{equation}

\subsection{Generating series of KR divisors}
\label{s:gen div}

We turn to the generic fiber $M = M_{\wt K}$ again and write $Z(ξ,µ)$, $Z(ξ,ϕ_\mfd)$ and $Z(ξ, ϕ)$ for the generic fibers of the above cycles. Since $\mfd$ may be varied, we actually have defined a divisor $Z(ξ,ϕ)\subseteq M$ for every $ϕ\in \mcS(V(\mbA_{0,f}))^K$ and $ξ\in F_{0,+}$. Also put
\begin{equation}\label{KR C 0}
Z(0,\phi):=-\phi(0) \,c_1(\bm{\omega})\in \Ch^1(M_{\wt K}),
\end{equation}
where $\bm{\omega}$ is the automorphic line bundle \cite{K-duke}, and $c_1$ denotes the first Chern class. Note that this is just a line bundle while we defined actual divisors when $ξ \neq 0$.
Recall from \cite[(11.1)]{Z19} the Weil representation $\omega$ of $\bH(\BA_{0,f})$ on $\CS(V(\BA_{0,f}))^{K}$.
We define a generating series on $\bH(\BA_0)$ by 
\begin{equation}\label{gen  E}
Z(h,\phi)=Z(0,\omega(h_f)\phi)W^{(n)}_{0}(h_\infty)+\sum_{\xi\in F_{0,+}} Z(\xi,\omega(h_f)\phi)\, W^{(n)}_{\xi}(h_\infty),
\end{equation}
where $h=(h_\infty,h_f)\in\bH(\BA_{0})$, $h_\infty=(h_v)_{v\mid \infty} \in  \bH(F_{0,\BR})=\prod_{v\mid\infty}\SL_{2}(F_{0,v})$, and 
$$
W^{(n)}_{\xi}(h_\infty)=\prod_{v\mid \infty} W^{(n)}_{\xi}(h_v),
$$
is the weight $n$ Whittaker function $W^{(n)}_{\xi}$  on $  \bH(F_{0,\BR})$, cf. \eqref{Whit}. Here we recall \cite[Thm.~8.1]{Z19} for future reference.

\begin{theorem}\label{thm:mod E}
The generating series $Z(h,\phi)$ lies in $ \CA_{\rm hol}(\bH(\BA_0), K_\bH, n)_{\ov\BQ}\bigotimes_{\ov\BQ} \Ch^1(M_{\wt K})_{\ov\BQ}$, where $K_\bH\subset \bH(\BA_{0,f})$ is a compact open subgroup that fixes $\phi$ under the Weil representation. 
\end{theorem}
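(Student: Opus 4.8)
The plan is to split the statement into the transformation properties that are formal and the single nontrivial ingredient, modularity in the Chow group, which will be imported from the literature. First I would record that, as a function of the archimedean variable $h_\infty=(h_v)_{v\mid\infty}\in\prod_{v\mid\infty}\SL_2(F_{0,v})$, the series $Z(h,\phi)$ is holomorphic of parallel weight $n$ with moderate growth: each summand is $Z(\xi,\omega(h_f)\phi)$ times the weight-$n$ holomorphic Whittaker function $W^{(n)}_\xi$ of \eqref{Whit}, and only $\xi\in F_{0,+}\cup\{0\}$ contribute because $\Hom_{O_F}(A_0,A)$ is positive definite at every archimedean place, as recalled just before \eqref{eq:KR gen}. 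Right $K_\bH$-invariance is immediate from $\omega(h_fk)\phi=\omega(h_f)\phi$ for $k\in K_\bH$. For the $\ov\BQ$-rationality of the Fourier coefficients I would combine the fact that the cycles $Z(\xi,\mu)$ are already defined over the number field $\bfF$ with the explicit formulas for the finite Weil representation on $\mbQ$-valued Schwartz functions (these only introduce roots of unity), which together show that $h_f\mapsto Z(\xi,\omega(h_f)\phi)$ is $\ov\BQ$-valued on $\prod_{v\nmid\infty}\bH(O_{F_{0,v}})$ once the normalizing Whittaker factor is divided off.

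The content is the left $\bH(F_0)$-invariance. Here I would first reduce to a scalar assertion: because $\CA_{\rm hol}(\bH(\BA_0),K_\bH,n)$ is finite dimensional (hence an element of it is determined by finitely many Fourier coefficients), a formal generating series with coefficients in a $\BC$-vector space $W$ lies in $\CA_{\rm hol}(\bH(\BA_0),K_\bH,n)\otimes_\BC W$ as soon as all of its scalar projections do. Taking $W=\Ch^1(M_{\wt K})_\BC$, it therefore suffices to prove that $\ell(Z(h,\phi))$ is a holomorphic modular form of weight $n$ and level $K_\bH$ for every $\BC$-linear functional $\ell$ on $\Ch^1(M_{\wt K})_\BC$. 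This is exactly the modularity of the generating series of Kudla--Rapoport divisors with values in $\Ch^1$ of the unitary Shimura variety $\Sh_K(\Res_{F_0/\mbQ}G,\{h\})$ — the cycles on the RSZ variety $M_{\wt K}$ being, fibrewise over the finite étale factor coming from $\mcM_0$, divisors of this kind. For $F_0=\mbQ$ this is \cite[Thm.~8.1]{Z19}, built on \cite{BHKRY}; over a general totally real $F_0$ I would invoke Liu \cite{Liu1}, who deduces it from the analogous theorem for special divisors on orthogonal Shimura varieties of Yuan--S.~Zhang--W.~Zhang \cite{YZZ1} by embedding the unitary Shimura datum into an orthogonal one under which special divisors pull back to special divisors.

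The step I expect to be the real obstacle — although it lies entirely in the cited works — is the passage from cohomology to the Chow group inside \cite{YZZ1}: the modularity of the cycle-class-valued series in $H^2$ is the Kudla--Millson theorem \cite{Kud-Mil}, and upgrading it to $\Ch^1$ requires producing, via the theory of Borcherds products, enough meromorphic modular forms whose Borcherds lifts realize every linear relation among the Fourier coefficients already at the level of rational equivalence. For the present paper nothing of this needs to be reproved; the only genuine verification left is that the normalizations in \cite{YZZ1,Liu1,Kud-Mil} (weight and level, the additive character $\psi$, and the $\ov\BQ$-structure on spaces of forms) match the conventions fixed in \S\ref{notation}, after which the theorem follows.
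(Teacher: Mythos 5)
The paper does not prove this theorem: it is stated verbatim as a recollection of \cite[Thm.~8.1]{Z19}, which in turn rests on Liu \cite{Liu1} (deducing Chow-group modularity for unitary Shimura varieties over general $F_0$ from the orthogonal case of \cite{YZZ1}, with the cohomological modularity of \cite{Kud-Mil} as a further input). Your sketch is a faithful reconstruction of exactly that chain of results: the formal archimedean and $K_\bH$-invariance steps, the reduction to scalar modularity via linear functionals (which is precisely the definition of modularity used in \cite{YZZ1,Liu1}), and the citation of Liu/YZZ for the scalar statement are all as in the cited proof, so the proposal is correct and matches the route the paper implicitly relies on.
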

We refer to \eqref{def A hol W tot} for the definition of the vector space $ \CA_{\rm hol}(\bH(\BA_0),K_{\bH}, n)_{\ov\BQ}$.

\section{Green functions}\label{s:Green}
We recall the Green function of Kudla \cite{K}, and the automorphic Green function \cite{Br12}. The former is more convenient when comparing with the analytic side, while the latter is more suitable for proving (holomorphic) modularity of generating series. The difference between them is studied by Ehlen--Sankaran in \cite{ES} when $F_0=\BQ$.
Here we extend their result to a general totally real field $F_0$, cf. Thm. \ref{thm ES}. We first work in the more general setting of quadratic spaces and our notation in this section is slightly different from the rest of the paper.  We then specialize the situation to the induced quadratic space structure of a hermitian space. 

\subsection{The set up}
We recall some basic constructions, cf. \cite[\S2]{Br12}.
Let $V$ be a quadratic space over the totally real field $F_0$ and denote
\begin{align}\label{eq:dim V}
\dim _{F_0}V=m+2, \quad s_0:=\frac{\dim V-2}{2}=\frac{m}{2}.
\end{align}
 Let $G=\SO(V)$, a reductive group over $F_0$. Suppose that the signature of $V$ is $(m,2)$ at one distinguished archimedean place $v_0\in \Hom_\BQ(F_0,\BR)$, and $(m+2,0)$ at the remaining archimedean places. Let $\CD=\CD_{v_0}$ be the Grassmannian of oriented negative definite 2-dimensional subspaces of the real vector space $V_{v_0}=V\otimes_{F_0,v_0}\BR$.  Then there is an isomorphism 
\begin{align}\label{eq:D=G/K}
\xymatrix{\CD\ar[r]^-\sim&  \SO(V_{v_0})/K_{v_0}},
\end{align}
where $K_{v_0}\simeq\SO(m)\times \SO(2)$ is the stabilizer of a fixed base point of $\CD$. To endow $\CD$ with a complex structure, we extend the quadratic form  $\BC$-bilinearly to  $V_{v_0,\BC}=V_{v_0}\otimes_\BR \BC$  and identify $\CD$ with the open subdomain of a quadratic hypersurface in the projective space $\BP^1( V_{v_0,\BC})$ of $\BC$-lines
$$
\xymatrix{\left\{\BC\cdot z\in \BP(V_{v_0,\BC})\mid \pair{z,z}=0, \pair{z,\ov z}<0 \right\}\ar[r]^-\sim& \CD},
$$
where the map sends $z$ to the plane $\BR\, \Re(z)\oplus\BR \Im(z)$ with the induced orientation. The tautological line bundle on $\CD$ has a  natural hermitian metric:
\begin{align}\label{eq:Pet met}
|\!|z|\!|^2:=-\frac{1}{2}\pair{ z,\ov z}.
\end{align}
Its first Chern form
$$
\Omega=-dd^c\log |\!|z|\!|^2
$$
is $G(\BR)$-invariant and positive. Here we recall the differential operators 
$$
d=\partial+\ov\partial,\quad d^c=\frac{1}{4\pi i}\left(\partial-\ov\partial\right),
$$
and 
$$dd^c=-\frac{1}{2\pi i} \partial\ov\partial.
$$
The top degree wedge power $\Omega^{m}$ defines an invariant volume form on $\CD$.
 
For a compact open subgroup $K\subset G(\BA_{0,f})$, consider the complex analytic space
$$
M_K(\BC)=G(F_0)\bs\left [\CD \times G(\BA_{0,f})/K\right ].
$$ 
We will assume that $M_K(\BC)$ is compact. When $F_0\neq \BQ$, this holds automatically. The tautological line bundle on $\CD$ descends to a line bundle, called the automorphic bundle, $\bm{\omega}_K$ on $M_K$ and so does its hermitian metric \eqref{eq:Pet met}, which will be called  the Petersson metric $|\!|\cdot|\!|_{\rm Pet}$ on $\bm{\omega}_K$.  The first Chern form of  the Petersson metric remains $\Omega$ (viewed as a $(1,1)$-form on $M_K$).

Let $\Delta=\Delta_\CD$ be the Laplacian operator on $\CD$ induced by the Casimir element of the Lie algebra of $\SO(V_{v_0})$  and by the isomorphism \eqref{eq:D=G/K}. It is a constant multiple of the Laplacian operator induced by the Petersson metric. An explicit formula will be recalled below Lem.~\ref{lem Lap}, at least for a certain class of functions. 

Similar to the hermitian case in \S\ref{s:SV KR} but only over $\BC$, there are special divisors, also denoted by $Z(\xi,\phi)$, on $M_K(\BC)$ for every $ϕ\in \mcS(V(\mbA_{0,f}))^K$ and $\xi\in F_{0,+}$ \cite{YZZ1}. 

\subsection{Laplacian operator}
We recall an explicit formula for the action of the Laplacian $\Delta$ on functions on $\CD=\SO(V_{v_0})/K_{v_0}$ that are left invariant under a subgroup $H$ of the form $H_u$, the stabilizer of a non-zero vector $u\in V_{v_0}$ (not necessarily  with negative norm). Define the majorant associated to the point $z\in\CD$ (cf. \cite[(11.14)]{K}) as
\begin{align}\label{Ruz}
R(u,z)=-\pair{u_z,u_z},
\end{align}
where $u_z$ denotes the orthogonal projection of $u$ to $z$.

\begin{lemma}\label{lem Lap}
Let $\phi\colon  (0,\infty)\to\BC $ be a real analytic function and let $\wt\phi(u,z)= \phi(R(u,z))$ be the associated function on the domain $(V_{v_0}\times\CD)\setminus\{(u,z): u_z=0\}$. 
Then we have 
$$
\Delta \wt\phi= \wt{D\phi},
$$
where $D$ is the second order differential operator on $(0,\infty)$ given by (the coordinate being denoted by $R$)
$$
D\phi=\left(4R(\pair{u,u} +R)\left(\frac{d}{dR}\right)^2+4( \pair{u,u}+(s_0+1)R)\frac{d}{dR}\right)\phi.
$$

\end{lemma}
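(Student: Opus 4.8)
The plan is to reduce the identity, via the $G$-equivariance $R(gu,gz)=R(u,z)$ and the $G$-invariance of $\Delta$, to a direct computation at one point of each $G$-orbit of pairs $(u,z)$, and then to carry out that computation in an explicit holomorphic chart on $\CD$ centered at a base point.

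\emph{Reduction to a scalar computation at base points.} Since $\Delta$ is $G$-invariant it annihilates constants, so for real-analytic $\phi$ the chain rule gives $\Delta\wt\phi=\phi''(R)\,\sigma(dR,dR)+\phi'(R)\,\Delta R$, where $\sigma$ denotes the principal symbol of $\Delta$. Both $\sigma(dR,dR)$ and $\Delta R$ are $H_u$-invariant, hence functions of $R$, so the lemma is equivalent to the two identities $\sigma(dR,dR)=4R(\pair{u,u}+R)$ and $\Delta R=4\pair{u,u}+4(s_0+1)R$ on the locus $u_z\neq 0$. Fix an orthogonal basis $e_1,\dots,e_m,f_1,f_2$ of $V_{v_0}$ with $\pair{e_k,e_k}=1$, $\pair{f_j,f_j}=-1$; take the base point $z_0=\BR f_1\oplus\BR f_2\in\CD$, and for $c\in\BR$ and $t>0$ with $t+c\geq 0$ set $u_{c,t}:=\sqrt{t+c}\,e_1+\sqrt t\,f_2$, so that $\pair{u_{c,t},u_{c,t}}=c$ and, projecting $u_{c,t}$ onto $z_0$, $R(u_{c,t},z_0)=t$. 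These pairs $(u_{c,t},z_0)$ meet every $G$-orbit of pairs with $u_z\neq 0$: $G$ acts transitively on $\CD$, and the $K_{v_0}\simeq\SO(m)\times\SO(2)$-orbit of a vector $u$ is determined by $\big(\pair{u_{z_0^\perp},u_{z_0^\perp}},\pair{u_{z_0},u_{z_0}}\big)\in\BR_{\geq 0}\times\BR_{\leq 0}$. By equivariance it therefore suffices to prove $\Delta\wt\phi\,(u_{c,t},z_0)=4t(t+c)\,\phi''(t)+4\big(c+(s_0+1)t\big)\,\phi'(t)$.

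\emph{The chart and the formula for $\Delta$.} Realize $\CD$ as the space of lines $\BC Z$ with $\pair{Z,Z}=0$, $\pair{Z,\ov Z}<0$, put $Z_0=f_1+\sqrt{-1}\,f_2$ (so $\BC Z_0$ corresponds to $z_0$), and introduce the chart $\zeta=(\zeta_1,\dots,\zeta_m)\in\BC^m$ near $0$ by $Z(\zeta)=Z_0+\sum_k\zeta_k e_k+\tfrac14\big(\sum_k\zeta_k^2\big)\ov{Z_0}$, which is isotropic by construction and has $Z(0)=Z_0$. A short computation gives $\|Z(\zeta)\|^2=1-\tfrac12\sum_k|\zeta_k|^2+\tfrac1{16}\big|\sum_k\zeta_k^2\big|^2$, hence $-\log\|Z(\zeta)\|^2=\tfrac12\sum_k|\zeta_k|^2+O(|\zeta|^4)$ is a local Kähler potential for $\Omega$ and the Petersson metric equals $\tfrac1{2\pi}\delta_{jk}$ at $z_0$. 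With $\Delta$ normalized as the constant multiple of the Petersson Laplace–Beltrami operator coming from the Casimir (this is the explicit formula for $\Delta$ recalled after the lemma; cf.\ \cite{K}), this gives $\Delta=4\sum_k\partial_{\zeta_k}\partial_{\ov\zeta_k}$ at the point $z_0$. Also $R(u_{c,t},z(\zeta))=|\pair{u_{c,t},Z(\zeta)}|^2\big/\|Z(\zeta)\|^2$, an explicit real-analytic function of $\zeta$.

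\emph{The computation and the main difficulty.} Writing $\alpha=\sqrt{t+c}$, $\beta=\sqrt t$, one finds $\pair{u_{c,t},Z(\zeta)}=-\sqrt{-1}\,\beta+\alpha\zeta_1+\tfrac{\sqrt{-1}\,\beta}{4}\sum_k\zeta_k^2$; expanding $R(u_{c,t},z(\zeta))$ to second order at $\zeta=0$ yields $R=\beta^2+\sqrt{-1}\,\alpha\beta(\zeta_1-\ov\zeta_1)+\alpha^2|\zeta_1|^2+\tfrac{\beta^2}{2}\sum_k|\zeta_k|^2-\tfrac{\beta^2}{4}\big(\sum_k\zeta_k^2+\sum_k\ov\zeta_k^2\big)+O(|\zeta|^3)$, so at $z_0$ one has $\sum_k\partial_{\zeta_k}R\cdot\partial_{\ov\zeta_k}R=\alpha^2\beta^2$ and $\sum_k\partial_{\zeta_k}\partial_{\ov\zeta_k}R=\alpha^2+\tfrac m2\beta^2$. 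Applying $\Delta=4\sum_k\partial_{\zeta_k}\partial_{\ov\zeta_k}$ to $\wt\phi=\phi\circ R$ by the chain rule and substituting $\alpha^2=t+c$, $\beta^2=t$, $m/2=s_0$ gives exactly the desired right-hand side, which is $(D\phi)(R)$ with $\pair{u,u}=c$ and $R=t$; combined with the equivariance reduction this proves the lemma. The computation is short; the delicate point is the normalization, i.e.\ checking that the Casimir Laplacian $\Delta$ is precisely the multiple of the Petersson Laplace–Beltrami operator producing the overall factor $4$ (equivalently $\Delta=4\sum_k\partial_{\zeta_k}\partial_{\ov\zeta_k}$ in the chart at $z_0$), and tracking the factors of $2$ and $\pi$ relating the Kähler potential, the metric, and the Laplacian. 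The case $\pair{u,u}=0$ is not special: it is $c=0$ in the family $u_{c,t}$ (or follows from $\pair{u,u}\neq0$ by continuity of both sides in $u$).
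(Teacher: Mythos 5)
Your approach is genuinely different from the paper's. The paper reduces via the Cartan decomposition to the radial coordinate $r=R/\pair{u,u}=\sinh^2 t$ and then directly invokes the Oda--Tsuzuki formula \cite[Prop.~2.1.1]{OT} for the radial part of the Casimir, followed by the change of variable back to $R$. You instead exploit $G$-equivariance to reduce to a single base point $(u_{c,t},z_0)$ in each orbit, write down an explicit isotropic holomorphic chart $Z(\zeta)=Z_0+\sum\zeta_ke_k+\tfrac14(\sum\zeta_k^2)\ov{Z_0}$ around $z_0$, and compute the complex Hessian of $R$ there directly. Your chart computation is correct: the expansion of $R(u_{c,t},z(\zeta))$ to second order, the values $\sum\partial_{\zeta_k}R\,\partial_{\ov\zeta_k}R=\alpha^2\beta^2$ and $\sum\partial_{\zeta_k}\partial_{\ov\zeta_k}R=\alpha^2+\tfrac m2\beta^2$ at $\zeta=0$, and the chain-rule assembly all check out, and they produce the stated $D\phi$ once one substitutes $\alpha^2=t+c$, $\beta^2=t$, $m/2=s_0$. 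Your route has the virtue of being self-contained and elementary, avoiding the symmetric-space machinery of \cite{OT}, at the cost of requiring the explicit coordinate bookkeeping.

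However, there is one genuine gap, and it is precisely the point you flag: the identity $\Delta=4\sum_k\partial_{\zeta_k}\partial_{\ov\zeta_k}$ at $z_0$ is not established. The operator $\Delta$ is defined as the Casimir (not the Petersson Laplace--Beltrami operator), so the constant relating it to the complex Hessian in your chart depends on the choice of invariant form on $\fkso(V_{v_0})$ and must be pinned down; the overall factor $4$ on the right-hand side is exactly this constant. Your stated justification --- citing ``the explicit formula for $\Delta$ recalled after the lemma'' --- is circular, since that formula \emph{is} the lemma being proved, and the general reference to \cite{K} does not determine the normalization either. The paper avoids this issue because its cited radial-part formula from \cite{OT} is already expressed in terms of the same Casimir normalization (cf.\ the paper's footnote comparing the $\Delta$'s of \cite{OT}, \cite{BK03}, \cite{Br12}). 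To close your argument you would need an independent determination of the constant: either compute the Casimir in the chart via a root-space decomposition, or fix it by checking the known eigenvalue identity \eqref{eq:Del phi} for one explicit $(u,s)$, or match the normalization to \cite[\S2]{OT} directly. Without one of these, the proof establishes the lemma only up to an undetermined overall scalar multiplying $\Delta$.
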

\begin{proof}
It suffices to consider the case $\pair{u,u}>0$. Then the general case follows by the analyticity and the fact that the locus of $(u,z)$ with $\pair{u,u}>0$ is open (and non-empty) in each of the two connected components of the real analytic manifold $(V_{v_0}\times\CD)\setminus\{(u,z): u_z=0\}$. 

Now fix $u$ such that $\alpha:=\pair{u,u}>0$. We introduce a normalized version of $R(u,z)$:
\begin{align*}
r(u,z)=\frac{R(u,z)}{\pair{u,u}}.
\end{align*}
It satisfies the scaling invariance $r(\lambda u,z)=r(u,z)$  for $\lambda\in \BR^\times$ and is clearly left invariant under the stabilizer $H$ of $u$.  

Let $z_0\in\CD$ be a base point such that $u\perp z_0$. Let $K$ (resp. $H$) be the stabilizer of $z_0$ (resp. $u$). Let $Y_0\in \fkg$ (the Lie algebra of $\SO(V_{v_0})$) be the element $Y_0$ in \cite[\S1.3]{OT} (also see the proof of Thm.~4.7 of \cite{BK03}). We will not need its precise form, but we point out that $Y_0$ depends on $z_0$ and that the signature in {\it loc. cit.} is of the form $(2,m)$ rather than $(m,2)$.  
 
For $g=\exp(t Y_0)$, where $\exp\colon \fkg\to \SO(V_{v_0})$ is the exponential map, and $z=gz_0$, one has 
$$
r(u,z)=\sinh ^2t
$$
by \cite[(4.46)]{BK03}.
Define $$\Phi(g)=\phi(R(u,gz_0)), \quad g\in \SO(V_{v_0})\setminus H K.$$ It is a smooth function on $\SO(V_{v_0})\setminus H K$, left $H$-invariant and right $K$-invariant. By Cartan decomposition for the symmetric space $H\bs \SO(V_{v_0})$, we have $\SO(V_{v_0})=H\cdot \{\exp(t Y_0): t\in\BR \}\cdot K$. Therefore, to compute $\Delta \Phi$, it suffices to evaluate $(\Delta \Phi)(\exp(t Y_0))$. Then 
$$\Phi(\exp(t Y_0))=\phi(\alpha\sinh ^2t).
$$

Denote $\varphi(t)=\phi(\alpha\sinh ^2t)$. By \cite[Prop.~2.1.1]{OT} computing the radial part of the Casimir operator (we also denote it by $\Delta$ since our Laplacian is defined by the restriction of the Casimir operator), we have
 $$
 (\Delta \Phi)(\exp(t Y_0))=\left(\frac{d^2}{dt^2}+\bR(t) \frac{d}{dt}\right)\, \varphi(t),
$$
where $\bR(t)$ denotes the function in {\it loc. cit.}
$$
\bR(t)= m_\lambda^+ \coth(t) + m_\lambda^- \tanh(t) + 2m_{2\lambda}^+ \coth(2t) + 2m_{2\lambda}^- \tanh(2t).
$$

Substitute back $R=\alpha\sinh ^2t$. Then by  \cite[\S2.3 and (2.5.1)]{OT} (where the substitution was $z=-\sinh^2t$) and a simple substitution, the differential operator becomes
$$
\Delta \phi(R(u,z) )=(D \phi)(R(u,z) ).
$$
where $D\phi=\left(4R(\alpha+R)\left(\frac{d}{dR}\right)^2+4( \alpha+(s_0+1)R)\frac{d}{dR}\right)\phi$. This completes the proof.
\end{proof}

\begin{remark}
It is natural to expect the formula to hold for all  smooth $\phi$ by repeating the proof of \cite{OT}. In fact, the proof shows that the formula holds for smooth $\phi$ when we restrict the domain to a smaller subset defined by $\pair{u,u}>0$.
\end{remark}
\subsection{A family of Laplacian eigen-functions}
We follow the construction of Oda--Tsuzuki  and Bruinier \cite{OT,Br12}. The automorphic Green function is constructed by a regularization out of the following family of functions on $\CD$. 

For $u\in V_{v_0}$, we write $u_{z^\perp} = u - u_z$ for the orthogonal projection to the (positive definite) $m$-dimensional space $z^\perp$.
For $u\in V_{v_0}$ with $\pair{u,u}\neq 0$, $z\in\CD$ and $s\in\BC$, we define  \cite[\S5, (5.4)]{Br12}
\begin{align}\label{eq:phi u z}
\phi(u,z,s)=\frac{\Gamma(\frac{s}{2}+\frac{m}{4})}{\Gamma(s+1)}\left(\frac{\fkq(u)}{\fkq(u_{z^\perp})}\right)^{\frac{s}{2}+\frac{m}{4}} F\left(\frac{s}{2}+\frac{m}{4},\frac{s}{2}-\frac{m}{4}+1,s+1; \frac{\fkq(u)}{\fkq(u_{z^\perp})}\right).
\end{align}
Here $F(a,b,c;x)$ denotes the Gauss hyper-geometric function and $\fkq(x)=\frac{1}{2}\pair{x,x}$. It follows from the definition that 
$$
\phi(u,z,s)=\phi(g u,gz,s),\quad g\in \SO(V_{v_0}),
$$
and 
$$
\phi(\lambda u,z,s)=\phi( u, z,s),\quad \lambda \in \BR^\times.
$$
Moreover, it is an eigenfunction for the Laplacian \cite[(5.6)]{Br12}\footnote{Note that our normalization of $\Delta$ corresponds to the Casimir element in \cite{OT}, which corresponds to the $-\frac{1}{4}\Delta$ in \cite{OT} by \cite[Prop.~7.2.1.(3)]{OT} with $c_\fkg=2$ (cf. the last line in \cite[Proof of (7.6.2), p.~530]{OT}) and to $8$ times the $\Delta$ in \cite{Br12}. We also refer to the proof of \cite[Thm.~4.7]{BK03} for the comparison of the normalization on Laplacian and measures.}
\begin{align}\label{eq:Del phi}
\Delta\phi(u,z,s)=(s^2-s_0^2)\phi(u,z,s).
\end{align}
\begin{remark}
This function is related to the secondary spherical function $\phi_s^{(2)}$ of Oda--Tsuzuki \cite{OT} as follows. Fix a base point  $ z_0\in \CD$ and let $u\in V_{v_0}$ be a non-zero vector such that $u\perp z_0$ (and hence $\pair{u,u}>0$). Then, by \cite[(5.5)]{Br12}
$$
\phi(u,gz_0,s)=\frac{-2}{\Gamma(\frac{s}{2}-\frac{s_0}{2}+1)} \phi^{(2)}_s(g),\quad g\in \SO(V_{v_0}).
$$
This shows that $\phi(u,z,s)$ and $\phi^{(2)}_s(g)$ determine each other. Here we recall the definition of the secondary spherical function $\phi_s^{(2)}$ in  \cite{OT}. Denote by $H=H_u$ the stabilizer subgroup of $\SO(V_{v_0})$ of $u\in V_{v_0}$, and $K$ the compact subgroup of $\SO(V_{v_0})$ fixing $z_0\in \CD$. Then, by \cite[Prop.~2.4.2]{OT}, there exists a unique family of functions $\phi_s^{(2)}$ for $\Re(s)>s_0$ satisfying the following properties:
\begin{altenumerate}
\renewcommand{\theenumi}{\alph{enumi}}
\item $\phi_s^{(2)}\colon  \SO(V_{v_0})\setminus H K\to\BC$ is smooth, left $ H$-invariant and right $K$-invariant;
\item $\Delta \phi_s^{(2)}= (s^2-s_0^2)\phi_s^{(2)}$; 
\item The function $\phi_s^{(2)}(\exp(t Y_0))-\log\,t$ is bounded as $t\to 0^+$;
\item It has exponential decay near infinity: $\phi_s^{(2)}(\exp(t Y_0))=O(e^{-(\Re(s)+s_0)t})$ as $t\to+\infty$.
\end{altenumerate}
Moreover, the function $\phi_s^{(2)}$ has a meromorphic continuation to $s\in\BC$.  Here $Y_0\in \fkg$ is as in the proof of Lem.~ \ref{lem Lap}.
\end{remark}

\subsection{The automorphic Green function}
We now recall the automorphic Green function  $\CG^{\bf B}(\xi,\phi)$ for each $\xi\in F_{0,+}$, and $\phi\in\CS(V(\BA_{0,f}))^{K}$, cf. \cite{Br12,OT}. Up to adding a constant this is the same as the Green function in \cite[\S7.2]{BHKRY}.

There is a family of smooth functions on $M_K(\BC)\setminus \supp(Z(\xi,\phi))$ with a parameter $s\in\BC$ when $\Re(s)>s_0$,
\begin{align}
\CG_s(\xi,\phi)=\sum\phi(g^{-1}u)\cdot \left( \phi(u,\cdot,s)\times  {\bf 1}_{g \,K} \right), 
\end{align}
where the sum is over $(u,g)\in  V_\xi(F_0)\times G(\BA_{0,f})/K$. Here we recall that $\phi(u,\cdot,s)$ is defined by \eqref{eq:phi u z}.
The sum is absolutely convergent when $\Re(s)>s_0$. Up to a constant, the function $\CG_s(\xi,\phi)$ is characterized by the differential equation \cite[Thm.~5.7]{Br12}
\begin{align}\label{Del GBs}
\Delta \CG_s(\xi, \phi)+\frac{4s_0}{\Gamma(\frac{s}{2}-\frac{s_0}{2}+1)}\delta_{Z(\xi,\phi)}=(s^2-s_0^2) \CG_s(\xi, \phi)
\end{align}
as an equality of currents of degree $0$.  Note that here $\delta_{Z(\xi,\phi)}$ as a current of degree $0$ depends on the choice of a measure in \cite[p.~201]{Br12} (hence it should not be confused with the canonical current of degree $(1,1)$, also denoted by $\delta_{Z(\xi,\phi)}$ below \eqref{Del GB}).

By \cite[Cor.~5.9]{Br12}, the above family $\CG_s(\xi,\phi)$, viewed as distributions, admits a meromorphic continuation to $s\in\BC$ with a simple pole at $s=s_0$ with residue equal to $A(\xi,\phi)=\frac{2}{\vol(M_K)}\deg Z(\xi,\phi)$ (as a locally constant function on $M_K(\BC)$, viewed as a current of degree $0$, cf. \cite[p.~201]{Br12}). Then the automorphic Green function  is defined as the constant term of the Laurent expansion at $s=s_0$, i.e.
\begin{align}\label{Gr Bs}
\CG^\bB(\xi, \phi)=\lim_{s\to s_0}\left(\CG_s(\xi, \phi)-\frac{A(\xi,\phi)}{s-s_0}\right).
\end{align}
Then by \cite[Cor.~5.16]{Br12}, 
\begin{align}\label{ddc GB}
dd^c\,\CG^\bB(\xi, \phi)+ \delta_{Z(\xi,\phi)}\text{ is a harmonic smooth (1,1)-form}.
\end{align}
It follows from \eqref{Del GBs} and \eqref{Gr Bs} that we have an equality of currents
\begin{align}\label{Del GB}
\Delta\,\CG^\bB(\xi, \phi)+4s_0 \delta_{Z(\xi,\phi)}=2s_0A(\xi,\phi).
\end{align}
\begin{remark}
The generating series $c_0+\sum_{\xi\in F_{0,+}}A(\xi,\phi)q^\xi $ for a suitable constant term $c_0$ is essentially a Siegel--Eisenstein series $E(h_\infty, s_0, \phi)$ (evaluated at $s=s_0$) of weight $s_0+1=\dim V/2$, cf. \cite[Thm.~I]{K03} (for $F_0=\BQ$)  and \cite[\S6.2]{Br12}. We will not need this fact.
\end{remark}

\subsection{Gaussian and the function $\Ei$}
Let $z_0\in\CD$ be a fixed base point and let $u=u_++u_-$ be the orthogonal decomposition with respect to $V_{v_0}=z_0^\perp\oplus z_0$. 
Let $\Phi_0$ be the Gaussian function on $V_{v_0}$
\begin{align}\label{eq:Gau infty} 
\Phi_0(u)=e^{-2\pi \fkq(u_+)+2\pi \fkq(u_-) }.
\end{align}
Let
\begin{align}\label{eq:def Phi}
\Phi_0^+(u) :=&(-4\pi \fkq(u_+) +(\dim V/2-1)) e^{-2\pi \fkq(u_+)+2\pi \fkq(u_-) }\\
=&(-4\pi \fkq(u_+) +(\dim V/2-1))\Phi_0(u).\notag
\end{align}
Note that the function $\Phi_0$ (resp. $\Phi_0^+$) has weight $\frac{\dim V}{2}-2$ (resp. $\frac{\dim V}{2}$) under the action of $\wt\SO(2,\BR)\subset \wt\bH(F_{0,v_0})$ by the Weil representation $\omega$.

We recall the exponential integral, defined by
\begin{align}\label{Ei}
\Ei(-r)=-\int^{\infty}_{r} \frac{e^{-t}}{t}dt,\quad r>0.
\end{align}
It has  a logarithmic singularity around $0$, more precisely, when $r\to 0^+$,
$$
\Ei(-r)=\gamma+\log r+ \sum_{n=1}^\infty\frac{(-r)^n}{n\cdot n!},
$$
where $\gamma$ is the Euler constant. We would like to compute the action of the Laplacian on the function $\Ei(-2\pi a R(u,z))$.
\begin{lemma}\label{lem:Del Phi0}
Let $u\in V_{v_0}$ be a non-zero vector. In terms of the Iwasawa decomposition $h=\left(\begin{matrix} 1& b\\
& 1\end{matrix}\right)\left(\begin{matrix} \sqrt{a}& \\
& 1/\sqrt{a}\end{matrix}\right)\in \wt\bH(F_{0,v_0})$, and $\tau=b+ai$, we have
$$
\Delta\left(a^{\dim V/2}\Ei(-2\pi a R(u,z)) e^{2\pi i \tau \fkq( u)}\right)=\omega(h,g)\Phi_0^+(u).
$$
where $g\in G(F_{0,v_0})=\SO(V_{v_0})$ is such that $gz_0=z$ and $\Phi_0^+$ is defined by \eqref{eq:def Phi}.

\end{lemma}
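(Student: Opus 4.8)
The plan is to deduce the identity from Lemma~\ref{lem Lap} together with the explicit formulas for the Weil representation $\omega$ recalled in \cite[\S11]{Z19}. First observe that $\Delta$ acts on the $\CD$-variable $z$ alone, while the prefactor $a^{\dim V/2}e^{2\pi i\tau\fkq(u)}$ does not involve $z$; hence on the open locus where $u_z\neq 0$ (the complement of $\supp Z(\xi,\phi)$, where every function in sight is smooth) one has
\[
\Delta\!\left(a^{\dim V/2}\Ei(-2\pi a R(u,z))\,e^{2\pi i\tau\fkq(u)}\right)=a^{\dim V/2}e^{2\pi i\tau\fkq(u)}\,\Delta_z\Ei(-2\pi a R(u,z)).
\]
Now apply Lemma~\ref{lem Lap} with $a>0$ fixed and $\phi(R)=\Ei(-2\pi a R)$, which is real analytic on $(0,\infty)$. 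Since $\tfrac{d}{dR}\Ei(-2\pi aR)=R^{-1}e^{-2\pi aR}$ and $\tfrac{d^2}{dR^2}\Ei(-2\pi aR)=-R^{-2}(2\pi aR+1)e^{-2\pi aR}$, the two summands of $D\phi$ share a factor $R^{-1}e^{-2\pi aR}$, and after cancellation one obtains the clean expression $(D\phi)(R)=4e^{-2\pi aR}\bigl(s_0-2\pi a(\pair{u,u}+R)\bigr)$. From the orthogonal decomposition $u=u_z+u_{z^\perp}$ and $R(u,z)=-\pair{u_z,u_z}$ we get $\pair{u,u}+R(u,z)=\pair{u_{z^\perp},u_{z^\perp}}=2\fkq(u_{z^\perp})$, so that, using $s_0=\dim V/2-1$,
\[
\Delta_z\Ei(-2\pi aR(u,z))=4e^{-2\pi aR(u,z)}\bigl(\dim V/2-1-4\pi a\,\fkq(u_{z^\perp})\bigr)
\]
— already a polynomial-times-Gaussian of exactly the shape of $\Phi_0^+$, cf.~\eqref{eq:def Phi}.

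It then remains to compute $\omega(h,g)\Phi_0^+(u)$ and recognize it as $a^{\dim V/2}e^{2\pi i\tau\fkq(u)}$ times the last display. The orthogonal factor $\omega(g)$ sends $\Phi_0^+$ to $u\mapsto\Phi_0^+(g^{-1}u)$; since $gz_0=z$, the isometry $g^{-1}$ carries the decomposition $V_{v_0}=z^\perp\oplus z$ onto $z_0^\perp\oplus z_0$, so $\fkq((g^{-1}u)_+)=\fkq(u_{z^\perp})$ and $\fkq((g^{-1}u)_-)=\fkq(u_z)=-\tfrac12 R(u,z)$ (note that, $\Phi_0^+$ being $K_{v_0}$-invariant, this depends on $g$ only through $z=gz_0$, so the right-hand side is well defined). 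Substituting into \eqref{eq:Gau infty}–\eqref{eq:def Phi} and using $\fkq(u_{z^\perp})=\fkq(u)+\tfrac12 R(u,z)$ gives
\[
\omega(g)\Phi_0^+(u)=\bigl(\dim V/2-1-4\pi\fkq(u_{z^\perp})\bigr)e^{-2\pi\fkq(u)}e^{-2\pi R(u,z)}.
\]
Finally, for $h$ with Iwasawa decomposition as in the statement, the unipotent part acts by multiplication by $e^{2\pi i b\fkq(u)}$ and the torus part $\diag(\sqrt a,1/\sqrt a)$ acts by the rescaling $u\mapsto\sqrt a\,u$ together with the normalizing power of $a$ dictated by the conventions of \cite[\S11]{Z19}; since $\fkq$ and $R$ are quadratic, this rescaling produces the factors $e^{-2\pi a\fkq(u)}$ and $e^{-2\pi aR(u,z)}$, matching $e^{2\pi i\tau\fkq(u)}=e^{2\pi i b\fkq(u)}e^{-2\pi a\fkq(u)}$. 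Comparing with the last display of the first paragraph yields the identity.

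The only genuine difficulty is the bookkeeping of normalization constants: the factor $4$ produced in the first step by $D\phi$, the power of $a$ coming from the torus action of the Weil representation, and the normalization of $\Delta$ (which, by the footnote attached to \eqref{eq:Del phi}, is the Casimir, i.e.\ $8$ times Bruinier's Laplacian) must be reconciled so that the two sides agree on the nose. This is routine once the conventions of \cite[\S11]{Z19} and \cite{Br12} are written out explicitly, but it is exactly the place where a stray factor is easy to introduce; apart from that, one should only keep in mind that the asserted identity is one of smooth functions on the complement of $Z(\xi,\phi)$, which is precisely the locus $u_z\neq0$ on which the hypothesis of Lemma~\ref{lem Lap} is satisfied.
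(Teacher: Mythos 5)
Your proposal follows the same route as the paper: apply Lemma~\ref{lem Lap} to $\phi(R)=\Ei(-2\pi a R)$, rewrite $\pair{u,u}+R=2\fkq(u_{z^\perp})$, and then match against the Weil representation formulas. Your computation of the $D$-operator is carried out correctly and carefully: following the stated formula for $D$ literally does give $(D\phi)(R)=4e^{-2\pi aR}\bigl(s_0-2\pi a(\pair{u,u}+R)\bigr)$, and hence $\Delta_z\Ei(-2\pi aR(u,z))=4\bigl(s_0-4\pi a\fkq(u_{z^\perp})\bigr)e^{-2\pi aR(u,z)}$. This is $4$ times what the paper's own proof asserts — there, the overall $4$ present in the first displayed line disappears without comment in the second.

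The problem is the final paragraph, which is where the proof actually happens and where you stop. The lemma is an exact identity of smooth functions, and the leftover factor of $4$ is independent of $a$, $b$, $z$ and $u$; it therefore cannot be absorbed by any rescaling $u\mapsto\sqrt a\,u$ nor by the torus normalization of the Weil representation, which contributes a power $a^{\dim V/4}$ (as dictated by the weight-$(s_0+1)$ Whittaker function convention used throughout \S\ref{s:Green}) — note that, independently of the factor of $4$, this already fails to match the $a^{\dim V/2}$ appearing in the lemma's statement as written. So ``this is routine once the conventions are written out explicitly'' is not an argument; it is precisely the step that must be supplied. Closing the gap requires either showing that the factors of $4$ in Lemma~\ref{lem Lap}'s operator $D$ are an artifact of a different normalization of $\Delta$ (e.g.\ tracing through the Casimir-versus-$\Delta_{\mathrm{OT}}$-versus-$\Delta_{\mathrm{Br}}$ identifications in the footnote to \eqref{eq:Del phi}), or exhibiting a non-standard normalization of $\omega(m(\sqrt a))$ that produces exactly the missing constant — and reconciling $a^{\dim V/2}$ versus $a^{\dim V/4}$ in the same breath. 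Until one of these is done, the identity is not verified, and the discrepancy you noticed is a real obstruction rather than mere bookkeeping.
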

\begin{proof}
We first compute the first and the second derivative of the function  $\phi(r):=\Ei(-r)=-\int^{\infty}_1 \frac{e^{-rt}}{t}dt$ as
$$
\phi'(r)=r^{-1}e^{-r},\quad \phi''(r)=-r^{-2}(1+r)e^{-r}.
$$
Now we apply Lem.~ \ref{lem Lap}. To simplify notation we let $r=2\pi a R(u,z)$:
\begin{align*}
\Delta\,\Ei(-2\pi a R(u,z)) &= 4\left( -r(2\pi a\pair{u,u}+r)r^{-2}(1+r) +(2\pi a\pair{u,u}+(s_0+1)r)r^{-1}\right)e^{-r}
\\&=\left(-r+(s_0-2\pi a\pair{u,u} )\right) e^{-r}
\\&=\left(-2\pi a R(u,z)+(s_0-2\pi a\pair{u,u} )\right) e^{-2\pi a R(u,z)}.
\end{align*}
By $\pair{u,u}=2\fkq(u)=2\fkq(u_z)+2\fkq(u_{z^\perp})=-R(u,z)+2\fkq(u_{z^\perp})$, we obtain
\begin{align*}
\Delta\,\Ei(-2\pi a R(u,z))=\left(-4\pi a \fkq(u_{z^\perp}) +s_0\right) e^{-2\pi a R(u,z)}.
\end{align*}
Combining \eqref{eq:dim V}, \eqref{eq:def Phi} and the formulas for the Weil representation, we complete the proof.
\end{proof}

\subsection{Kudla's Green function}

We now recall Kudla's Green function, defined for the orthogonal case in \cite[\S11]{K} (cf. the unitary case in \cite[\S4B]{Liu1}). 

Let $h_\infty=(h_v)_{v\mid\infty}\in \prod_{v\mid\infty}\wt\bH(F_{0,v})$ and $h_v=\left(\begin{matrix} 1& b_v\\
& 1\end{matrix}\right)\left(\begin{matrix} \sqrt{a_v}& \\
& 1/\sqrt{a_v}\end{matrix}\right)\kappa_{v}$ in the Iwasawa decomposition. For each {\em non-zero} vector $u\in V_{v_0}$, denote by $\mcD_{u}\subseteq \mcD$ the divisor of $z$ with $u\perp z$. Kudla \cite{K} defined a Green function for $\CD_{u}$, parameterized by $h_\infty$ \footnote{Here the constants differ slightly from \cite[\S12]{Z19} which should be corrected as the ones here.}
\begin{align}\label{Gr Ku1}
\CG^{\bf K}(u, h_\infty)(z)=-{\rm Ei}(-2\pi a_{v_0}\, R(u,z)),\quad z\in \CD\setminus \CD_{u}.
\end{align}
which has logarithmic singularity along $\CD_{u}$. Note that this is defined for {\em every} non-zero vector $u\in V_{v_0}$ (in particular,  $u$ may have null-norm). If $\CD_{u}$ is empty, the function is then smooth on $\CD$.
For all non-zero vectors $u$,  we have as currents on $\CD$ by \cite[Prop.~11.1]{K}:
\begin{align}\label{ddc GK D}
dd^c\CG^{\bf K}(u, h_\infty) +\delta_{\CD_{u}}\text{ is  a smooth (1,1)-form}. 
\end{align}
 When $u=0$, we set
\begin{align}\label{Gr Ku m=0}
\CG^{\bf K}(0, h_\infty)=-\log|a_{v_0}|.
\end{align}

Continue to let $\phi\in\CS(V(\BA_{0,f}))^{K}$.
Now we descend the Green function from $\CD_{v_0}$ to the quotient $M_K(\BC)$: For all $\xi \in F_0$, define
\begin{align}\label{Gr Ku2}
\CG^{\bK}(\xi,h_\infty, \phi)=\sum \phi(g^{-1}u)\cdot \left(\CG^{\bf K}(u, h_\infty)\times  {\bf 1}_{g \,K} \right) 
\end{align}
where the sum is over $(u,g)\in V_\xi(F_0)\times G(\BA_{0,f})/K$. This defines a Green function for the divisor $Z(\xi,\phi)$. For all $\xi\in F_0$ we have as currents on $M_{K}(\BC)$
\begin{align}\label{ddc GK}
dd^c\CG^{\bf K}(\xi, h_\infty,\phi) +\delta_{Z(\xi,\phi)}\text{ is  a smooth (1,1)-form},
\end{align}
where $Z(\xi,\phi)$ is understood as zero unless $\xi\in F_{0,+}$.
Define its special value at $h_\infty=1$ 
\begin{align}\label{Gr Ku h=1}
\CG^{\bf K}(\xi,\phi):=\CG^{\bf K}(\xi,1, \phi).
\end{align}

We define the generating series of Kudla's Green functions:
\begin{align}\label{eq:sum GK}
\CG^{\bK}(h, \phi) =\sum_{\xi\in F_0}\CG^{\bK}(\xi,h_\infty, \omega(h_f)\phi) W^{(s_0+1)}_\xi(h_\infty),\quad h\in \wt \bH(\BA_0),
\end{align}
where  the Whittaker function is defined  by \eqref{Whit}. Note that this generating series is understood as a formal sum over $\xi\in F_0$ and we call the terms its Fourier coefficients.

We would like to compute the action of the Laplacian operator on Kudla's Green functions.
Recall that the theta function associated to $\Phi\in \CS(V(\BA_0))$ is defined as
$$
\theta_\Phi(g,h)=\sum_{u\in V(F)}\omega(g,h)\Phi(u),\quad g\in G(\BA_0),\ h\in\wt \bH(\BA_0).
$$
It is well known that the sum is absolutely convergent and defines a smooth function on  $G(\BA_0)\times \wt \bH(\BA_0)$, invariant under $G(F_0)\times \bH(F_0)$. We often omit the argument when evaluating it at $g=1$ or $h=1$.

\begin{lemma}\label{lem:Del i}
Let $i\geq 1$. Then there exist $\phi_{i,\infty}\in \CS(V(F_{0,\infty}))$ of parallel weight $s_0+1$ (with respect to the action of $\prod_{v\mid\infty}\SO(2,\BR)\subset\prod_{v\mid\infty}\wt\bH(F_v)$) such that
$$
\Delta^i \CG^{\bK}(h, \phi)=\begin{cases}\theta_{\phi_{i,\infty}\otimes\phi}(h)- \omega(h)(\phi_{i,\infty}\otimes\phi)(0),& i=1,
\\
\theta_{\phi_{i,\infty}\otimes\phi}(h), & i>1.\end{cases}
$$
Here and below, we will view the Fourier coefficients of $\mcG^{\bK}(h,ϕ)$ as functions on $G(\mbA_0)$ by the map $(g_\infty, g_f)\mapsto [g_{v_0}z_0,g_f]\in M_K(\mbC)$ where the point $z_0$ is also the one used in \eqref{eq:def Phi}. Moreover, the equality is understood as one between formal series, i.e., the $\xi$-th Fourier coefficients of the two sides are equal for every $\xi\in F_0$ and the operator $\Delta^i $ is applied on the left Fourier coefficient wise.
\end{lemma}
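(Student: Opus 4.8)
The plan is to verify the claimed identity one Fourier coefficient (in the variable $h$) at a time, reducing everything to Lemma~\ref{lem:Del Phi0} and Lemma~\ref{lem Lap}. Throughout write, for $v\mid\infty$ with $v\neq v_0$, $V_v=V\otimes_{F_0,v}\mbR$ (a positive definite quadratic space) and $\Phi_{0,v}(u)=e^{-2\pi\fkq(u)}\in\CS(V_v)$ for the standard Gaussian; recall $s_0+1=\dim V/2$ by \eqref{eq:dim V}. Since $V$ is positive definite at every $v\neq v_0$, a vector $0\neq u\in V(F_0)$ has $\fkq(u)$ totally positive away from $v_0$; in particular $V_0(F_0)=\{0\}$, a fact that will be used only for the constant term.

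\emph{The case $i=1$.} Fix $\xi\in F_0$, $\xi\neq 0$. Unwinding \eqref{eq:sum GK}, \eqref{Gr Ku1} and \eqref{Gr Ku2}, the $\xi$-th Fourier coefficient of $\CG^{\bK}(h,\phi)$ is the sum over $(u,g)\in V_\xi(F_0)\times G(\mbA_{0,f})/K$ of
$$
(\omega(h_f)\phi)(g^{-1}u)\cdot\Bigl(-\Ei\bigl(-2\pi a_{v_0}R(u,\cdot)\bigr)\Bigr)\cdot W^{(s_0+1)}_\xi(h_\infty)\times\mathbf 1_{gK}.
$$
At each place $v\mid\infty$, $v\neq v_0$, since $s_0+1=\dim V/2$ and $\fkq(u)=\xi$, the factor $W^{(s_0+1)}_\xi(h_v)$ equals $\omega(h_v)\Phi_{0,v}(u)$ by \eqref{Whit} and the explicit Weil representation formulas; and at $v_0$ the remaining factor $-\Ei(-2\pi a_{v_0}R(u,\cdot))\cdot W^{(s_0+1)}_\xi(h_{v_0})$ is, after the same unwinding, precisely the argument of $\Delta$ in Lemma~\ref{lem:Del Phi0}. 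Applying $\Delta=\Delta_{\mcD_{v_0}}$ term by term — legitimate because the differentiated series still converges locally uniformly on the complement of $\supp Z(\xi,\phi)$, each term decaying faster after differentiation — Lemma~\ref{lem:Del Phi0} turns the $v_0$-factor of each summand into $\omega(h_{v_0},g_{v_0})\Phi_0^+(u)$, so that the $\xi$-th coefficient of $\Delta\CG^{\bK}(h,\phi)$ is the $\xi$-th Fourier coefficient of $\theta_{\phi_{1,\infty}\otimes\phi}$ for
$$
\phi_{1,\infty}:=\Phi_0^+\otimes\bigotimes_{v\mid\infty,\,v\neq v_0}\Phi_{0,v}\in\CS(V(F_{0,\infty})),
$$
which has parallel weight $s_0+1$ by the weight of $\Phi_0^+$ recorded after \eqref{eq:def Phi}. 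For $\xi=0$: by \eqref{Gr Ku m=0} and $V_0(F_0)=\{0\}$ the $0$-th coefficient of $\CG^{\bK}(h,\phi)$ is $-(\omega(h_f)\phi)(0)\,\log|a_{v_0}|\cdot W^{(s_0+1)}_0(h_\infty)$, which is constant along $\mcD_{v_0}$ and hence killed by $\Delta$; on the other hand $V_0(F_0)=\{0\}$ and $g$ fixes $0$, so the $0$-th coefficient of $\theta_{\phi_{1,\infty}\otimes\phi}$ equals $\omega(h)(\phi_{1,\infty}\otimes\phi)(0)$, which is exactly the correction term in the statement. Hence the two sides of the $i=1$ identity agree coefficient-wise.

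\emph{The case $i>1$.} We induct. Applying $\Delta^{i-1}$ to the identity $\Delta\CG^{\bK}(h,\phi)=\theta_{\phi_{1,\infty}\otimes\phi}(h)-\omega(h)(\phi_{1,\infty}\otimes\phi)(0)$ and using that $\Delta$ kills the second term (constant along $\mcD_{v_0}$) gives $\Delta^i\CG^{\bK}(h,\phi)=\Delta^{i-1}\theta_{\phi_{1,\infty}\otimes\phi}(h)$. It remains to show: if $\psi\in\CS(V(F_{0,\infty}))$ has parallel weight $s_0+1$ and its $v_0$-component is of the form $P(\fkq(u_+),\fkq(u_-))\,\Phi_0(u)$ with $P$ a polynomial, then $\Delta\theta_{\psi\otimes\phi}=\theta_{\psi'\otimes\phi}$ with $\psi'$ of the same type. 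Bringing $\Delta$ inside the theta sum, one computes $\Delta$ of $\omega(g,h)(\psi\otimes\phi)(u)$; since the $v_0$-component of $\psi$ is $K_{v_0}$-invariant, this is for fixed $u$ a function of $R(u,z)$ of the form $Q(R)e^{-cR}$ ($c>0$, $Q$ polynomial), using $\fkq(u_{z^\perp})=\fkq(u)+\tfrac12 R(u,z)$ and \eqref{eq:Gau infty}. By Lemma~\ref{lem Lap}, $\Delta$ acts on such functions by the explicit second order operator $D$, which preserves the class $Q(R)e^{-cR}$ and hence corresponds to a Schwartz function $\psi'_{v_0}$ of the stated form; and $\Delta$ commutes with $\omega(\prod_{v\mid\infty}\wt\SO(2,\mbR))$, the Casimir of $\SO(V_{v_0})$ commuting with the metaplectic $\SL_2$-action, so parallel weight $s_0+1$ is preserved. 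Taking $\phi_{i,\infty}$ to be $\phi_{1,\infty}$ with its $v_0$-component modified by $i-1$ applications of this operator gives the claim.

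The genuine content is all contained in Lemmas~\ref{lem Lap} and~\ref{lem:Del Phi0}; I expect the main obstacle above to be bookkeeping rather than ideas — matching the Whittaker-function presentation \eqref{eq:sum GK} of $\CG^{\bK}(h,\phi)$ with the Weil-representation presentation of the theta series (reconciling the powers of $a_v$, the characters $\chi_{s_0+1}$, and signs) — together with the routine convergence estimate needed to differentiate these Kudla-type series term by term.
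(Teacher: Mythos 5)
Your proof is correct and follows essentially the same route as the paper: fix the $\phi_{i,v}$ to be standard Gaussians for $v\neq v_0$ and $\Delta^{i-1}\Phi_0^+$ at $v_0$, apply Lemma~\ref{lem:Del Phi0} termwise to get the case $i=1$, then iterate. The only cosmetic differences are that you invoke the compactness of $M_K(\mbC)$ to conclude $V_0(F_0)=\{0\}$ and thereby identify both sides of the $\xi=0$ coefficient identity as $0$, whereas the paper argues without that simplification by keeping the sum over nonzero isotropic vectors and isolating the $u=0$ term — both are fine here, but the paper's variant is slightly more robust; and your $i>1$ step carries the minor extra content of checking that $\Delta$ preserves the class of polynomial-times-Gaussian functions (Lemma~\ref{lem Lap}), which the paper silently absorbs into the definition $\phi_{i,v_0}=\Delta^{i-1}\Phi_0^+$.
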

\begin{proof}
We will always take $\phi_{i,v}$ the standard Gaussian \eqref{eq:Gau infty} for all archimedean places $v\neq v_0$. (Strictly speaking we are defining the Gaussian on a positive definite quadratic space $V_{v}$ by  \eqref{eq:Gau infty}.) At $v=v_0$ we take 
$$
\phi_{i,v_0}=\Delta^{i-1} \Phi_0^+,\quad i\geq 1,
$$
where $\Phi_0^+$ is the Schwartz function defined by \eqref{eq:def Phi}.  Then they all have weight $s_0+1$.

We start with the case $i=1$.
By Lem.~\ref{lem:Del Phi0} and the definition \eqref{Gr Ku2}, we evaluate the $\xi$-th term of  \eqref{eq:sum GK} at $[z,g_f]\in M_K(\BC)$ where $z=g_\infty z_0\in \CD$ and $g_f\in G(\BA_{0,f})$,
$$
\Delta \CG^{\bK}(\xi,h, \phi)(z,g_f)W^{(s_0+1)}_\xi(h_\infty)=\sum _{u\in V_\xi(F_0)} \omega((g_\infty,g_f), h)(\phi_{1,\infty}\otimes\phi)(u)
$$
when $\xi\neq 0$. When $\xi=0$, we need to remove the term corresponding to the zero vector:
$$
\Delta \CG^{\bK}(0,h, \phi)(z,g_f)W^{(s_0+1)}_\xi(h_\infty)=\sum _{u\in V_{\xi=0}(F_0),u\neq 0} \omega((g_\infty,g_f), h)(\phi_{1,\infty}\otimes\phi)(u).
$$
Note that the term for $u=0$ is independent of $(g_\infty,g_f)$
$$
\omega((g_\infty,g_f), h)(\phi_{1,\infty}\otimes\phi)(0)=\omega(h)(\phi_{1,\infty}\otimes\phi)(0).
$$
This proves the case $i=1$. 

When $i>1$, since the above term for $u=0$ is constant in the variable $(g_\infty,g_f)$, it is annihilated by the operator $\Delta^{i-1}$. The proof is complete.

\end{proof}

\subsection{Comparison of the two Green functions}

We extend the theorem of Ehlen--Sankaran \cite{ES} from $\BQ$ to any totally real field $F_0$. Our argument will be close to \cite[\S4.4]{KRY} where the special case $\dim V=3$ was proved. 

For notational consistence with $\CG^{\bf K}(\xi, h_\infty,\phi)$ defined by \eqref{Gr Ku2}, we define
\begin{align}\label{Gr B h}
\CG^{\bB}(\xi, h_\infty,\phi):=\CG^{\bB}(\xi,\phi),
\end{align} 
cf. \eqref{Gr Bs}.
Next we consider the difference
\begin{align}\label{eq:G K-B}
 \CG^{\bK-\bB}(\xi,h_\infty,\phi):= \begin{cases}\CG^{\bK}(\xi,h_\infty,\phi)-\CG^{\bB}(\xi,h_\infty,\phi),&\xi\in F_{0,+},\\
 \CG^{\bK}(\xi,h_\infty,\phi),& \text{otherwise}
 \end{cases}
\end{align}
as a current on $M_K(\BC)$. Note that $  \CG^{\bK-\bB}(\xi,h_\infty,\phi)$ is a smooth function on the complement of the support of the divisor $Z(\xi,\phi)$.

\begin{lemma}
For every $h_\infty\in \prod_{v\mid\infty}\wt\bH(F_{0,v})$, the difference $  \CG^{\bK-\bB}(\xi,h_\infty,\phi)$ extends to a smooth function on $M_K(\BC)$ (and will be denoted by the same notation).
\end{lemma}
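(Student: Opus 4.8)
The claim is that the difference $\mcG^{\bK-\bB}(\xi,h_\infty,\phi)$ has no singularities along $Z(\xi,\phi)$ and is in fact globally smooth on the (compact) space $M_K(\BC)$. The plan is to prove this by comparing the defining differential equations of the two Green functions and then invoking elliptic regularity. First I would observe that both $\mcG^{\bK}(\xi,h_\infty,\phi)$ and $\mcG^{\bB}(\xi,\phi)$ are, by \eqref{ddc GK} and \eqref{ddc GB} respectively, Green currents for the \emph{same} divisor $Z(\xi,\phi)$ (when $\xi\in F_{0,+}$; for $\xi$ not totally positive $Z(\xi,\phi)=0$ and $\mcG^\bK$ is already smooth by \eqref{ddc GK D}, so there is nothing to prove). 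Hence the current $\mcG^{\bK-\bB}(\xi,h_\infty,\phi)$ satisfies
$$
dd^c\,\mcG^{\bK-\bB}(\xi,h_\infty,\phi) = \bigl(dd^c\mcG^{\bK} + \delta_{Z(\xi,\phi)}\bigr) - \bigl(dd^c\mcG^{\bB} + \delta_{Z(\xi,\phi)}\bigr),
$$
and both terms on the right are smooth $(1,1)$-forms on all of $M_K(\BC)$. Therefore $dd^c$ of the difference is a smooth form; equivalently $\Delta$ applied to the difference (up to the usual constant relating $\Delta$ and $dd^c$ on functions) is a smooth function.

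Next I would upgrade ``$\Delta(\text{difference})$ is smooth'' to ``the difference is smooth''. The difference is a priori a current (an $L^1_{loc}$ function, since each Green function is locally integrable) satisfying $\Delta u = f$ with $f\in C^\infty(M_K(\BC))$. Since $M_K(\BC)$ is a compact complex manifold (properness is where $F_0\neq\mbQ$, or the running compactness assumption, is used) and $\Delta$ is an elliptic operator with smooth coefficients, elliptic regularity gives $u\in C^\infty(M_K(\BC))$. This is the substance of the lemma.

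The one point that needs a little care, and which I expect to be the main (minor) obstacle, is checking that $\mcG^{\bK-\bB}(\xi,h_\infty,\phi)$ really is represented by an $L^1_{loc}$ function rather than carrying a genuine distributional mass along $Z(\xi,\phi)$ — i.e.\ that the $\delta_{Z(\xi,\phi)}$ contributions genuinely cancel and not merely their $dd^c$-images. For $\mcG^{\bK}$ this is immediate from its explicit formula \eqref{Gr Ku1}--\eqref{Gr Ku2} in terms of $\Ei(-2\pi a_{v_0}R(u,z))$, which has only a logarithmic singularity along each $\mcD_u$ and hence is locally integrable. For $\mcG^{\bB}$ one uses that, by construction \eqref{Gr Bs}--\eqref{Gr Ku2} and the local behavior of the secondary spherical function ($\phi^{(2)}_s(\exp(tY_0)) - \log t$ bounded as $t\to0^+$, property (c)), the automorphic Green function also has at worst a logarithmic singularity along $Z(\xi,\phi)$ and the coefficient of the log is the \emph{same} as for Kudla's function — this is exactly the normalization built into \eqref{Del GBs}. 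Thus the difference is bounded near $Z(\xi,\phi)$, in particular $L^1_{loc}$ and well-defined as a current, and the elliptic regularity argument applies. (Alternatively, one can avoid discussing the coefficient of the logarithm by noting that $\mcG^{\bK-\bB}$ is by definition a difference of two currents each of which is a locally integrable function, so it is automatically a locally integrable function; the cancellation of the $\delta$-terms at the level of $dd^c$ is then all that is needed to run elliptic regularity, after restricting to a small ball where the log singularities of the two functions are comparable.) I would present the clean version: the difference is $L^1_{loc}$, $dd^c$ of it is smooth by \eqref{ddc GK} and \eqref{ddc GB}, hence it is smooth by elliptic regularity on the compact manifold $M_K(\BC)$.
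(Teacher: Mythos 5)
Your proposal is correct and takes essentially the same approach as the paper: both argue that since $\CG^\bK$ and $\CG^\bB$ are Green functions for the same divisor $Z(\xi,\phi)$, the $dd^c$ of the difference is a smooth $(1,1)$-form, and then invoke elliptic regularity. The worry you raise about $L^1_{\mathrm{loc}}$-ness is a non-issue exactly for the reason you yourself give (each summand is already an $L^1_{\mathrm{loc}}$ function by construction), and the paper's "alternative" local-expansion argument, comparing $-\Ei(-\sinh^2 t)$ with $-2\phi^{(2)}_{s_0}(\exp(tY_0))$ near $t=0$, is precisely the direct verification of singularity cancellation that you sketch in your penultimate paragraph.
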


\begin{proof} It suffices to consider the case $\xi\in F_{0,+}$. By \eqref{ddc GB} and \eqref{ddc GK}, both $\CG^{\bB}(\xi,\phi)$ and $\CG^{\bK}(\xi,h_\infty,\phi)$ define Green functions for the same divisor $Z(\xi,\phi)$. Hence
 the $dd^c$ operator sends their difference to a smooth $(1,1)$-form (as currents).  Then the elliptic regularity theorem implies that the current on the compact manifold $M_K(\BC)$ defined by  $  \CG^{\bK-\bB}(\xi,h_\infty,\phi)$ is represented by a smooth function. Since $  \CG^{\bK-\bB}(\xi,h_\infty,\phi)$ is smooth on the complement of the support of the divisor $Z(\xi,\phi)$, it must therefore extend to a smooth function on $M_K(\BC)$.  Alternatively, we may use the local expansion near the singular set $Z(\xi,\phi)$. By \cite[Thm.~ 5.5]{Br12} and the obvious analog for Kudla's Green function, the assertion follows from the fact that the difference $-\Ei(-2\pi a R(u,z))- \phi(u,z,s_0)$ is smooth when $z$ is near $\CD_u$. This reduces to the fact that   
 $ (-\Ei(-\sinh^2t))-(-2\phi^{(s=s_0)}_2(\exp(tY_0)))$ is smooth in a neighborhood of $t=0$. The proof is complete.

\end{proof}

Consider the space $L^2(M_K(\BC))$ (for the fixed volume form) of $L^2$-functions on $M_K(\BC)$ with inner product denoted by $\pair{\cdot,\cdot}$. Then the subspace ${\rm LC}(M_K(\BC))$ of the locally constant functions has dimension equal to the number of connected components of $M_K(\BC)$. For $ f\in L^2(M_K(\BC))$ we denote by $f^\circ$ the orthogonal projection to the orthogonal complement of ${\rm LC}(M_K(\BC))$ in $L^2(M_K(\BC))$.  Note that $\CG^{\bK-\bB}(\xi,h_\infty,\phi)$ is smooth and hence we can define $\CG^{\bK-\bB,\circ}(\xi,h_\infty,\phi)$.

Similar to \eqref{eq:sum GK}, we define the generating series,
\begin{equation}\label{geo error 0}
\CG^{?}(h,ϕ):= \sum_{\xi\in F_0} \CG^{?}(\xi,h_\infty,\omega(h_f)\phi) W^{(s_0+1)}_\xi(h_\infty),
\end{equation}
where $?$ represents $\bB,\bK$ and $\bK-\bB$ respectively. We note that the definition depends on the fixed archimedean place $v_0$ of $F_0$.
Similarly we set
\begin{equation}\label{geo error}
\CG^{\bK-\bB,\circ}(h,ϕ) := \sum_{\xi\in F_0}\CG^{\bK-\bB,\circ}(\xi,h_\infty,\omega(h_f)\phi) W^{(s_0+1)}_\xi(h_\infty).
\end{equation}
Again all of them  are  understood as formal sums over $\xi\in F_0$.

\begin{lemma}\label{lem:Del i K-B}
Let $i\geq 1$ and let $\phi_{i,\infty}\in \CS(V(F_{0,\infty}))$ be as in Lem.~ \ref{lem:Del i}. Then 
$$
\Delta^i \CG^{\bK-\bB}(h, \phi)=\begin{cases}\theta^\circ_{\phi_{i,\infty}\otimes\phi}(h),& i=1,
\\
\theta_{\phi_{i,\infty}\otimes\phi}(h), & i>1,\end{cases}
$$ 
as functions on $G(\mbA_0)$. (The equality is understood as one between formal series.)

\end{lemma}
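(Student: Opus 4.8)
The plan is to compute $\Delta^{i}\CG^{\bK-\bB}(h,\phi)$ one Fourier coefficient at a time, on the open locus where no cancellation of singularities is needed, and then to remove the resulting locally constant ambiguity using the projection $(\cdot)^{\circ}$. Throughout, fix $\xi\in F_{0}$ and recall that by the preceding lemma $\CG^{\bK-\bB}(\xi,h_{\infty},\phi)$ is a smooth function on the compact complex manifold $M_{K}(\BC)$, so $\Delta^{i}\CG^{\bK-\bB}(\xi,h_{\infty},\phi)$ is again a smooth function there. Since $\Delta^{i}=\Delta\circ\Delta^{i-1}$, this smooth function lies in the image of $\Delta$; because $\Delta$ is self-adjoint on $L^{2}(M_{K}(\BC))$ for the fixed volume form and its kernel on $C^{\infty}(M_{K}(\BC))$ is exactly ${\rm LC}(M_{K}(\BC))$, we have $\pair{\Delta g,c}=\pair{g,\Delta c}=0$ for every smooth $g$ and every $c\in{\rm LC}(M_{K}(\BC))$. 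Hence $\big(\Delta^{i}\CG^{\bK-\bB}(\xi,h_{\infty},\phi)\big)^{\circ}=\Delta^{i}\CG^{\bK-\bB}(\xi,h_{\infty},\phi)$.

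Next I would evaluate $\Delta^{i}\CG^{\bK-\bB}(\xi,h_{\infty},\phi)$ on $M_{K}(\BC)\setminus\supp Z(\xi,\phi)$, where $\CG^{\bK}(\xi,h_{\infty},\phi)$ and (for $\xi\in F_{0,+}$) $\CG^{\bB}(\xi,\phi)$ are smooth. By \eqref{eq:G K-B}, there $\CG^{\bK-\bB}(\xi,h_{\infty},\phi)=\CG^{\bK}(\xi,h_{\infty},\phi)-\CG^{\bB}(\xi,\phi)$ if $\xi\in F_{0,+}$, and $\CG^{\bK-\bB}(\xi,h_{\infty},\phi)=\CG^{\bK}(\xi,h_{\infty},\phi)$ otherwise. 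For the first term, Lem.~\ref{lem:Del i} identifies $\Delta^{i}\CG^{\bK}(\xi,h_{\infty},\phi)$ with the $\xi$-th Fourier coefficient of $\theta_{\phi_{i,\infty}\otimes\phi}(h)$, except that for $(i,\xi)=(1,0)$ one must subtract the term $\omega(h)(\phi_{1,\infty}\otimes\phi)(0)$, which is independent of the point of $M_{K}(\BC)$ and hence is a locally constant function there. For the second term, \eqref{Del GB} gives $\Delta\CG^{\bB}(\xi,\phi)=2s_{0}A(\xi,\phi)$ away from $\supp Z(\xi,\phi)$, a locally constant function on $M_{K}(\BC)$; applying $\Delta^{i-1}$, we get $\Delta^{i}\CG^{\bB}(\xi,\phi)=0$ there for $i\ge 2$. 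Combining, on $M_{K}(\BC)\setminus\supp Z(\xi,\phi)$ the function $\Delta^{i}\CG^{\bK-\bB}(\xi,h_{\infty},\phi)$ equals the $\xi$-th Fourier coefficient of $\theta_{\phi_{i,\infty}\otimes\phi}(h)$ plus a locally constant correction $\ell_{i,\xi}$, with $\ell_{i,\xi}=0$ whenever $i\ge 2$. Both sides being smooth on all of $M_{K}(\BC)$ and agreeing on a dense open subset, this identity holds on all of $M_{K}(\BC)$.

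For $i\ge 2$ this yields $\Delta^{i}\CG^{\bK-\bB}(h,\phi)=\theta_{\phi_{i,\infty}\otimes\phi}(h)$ after summing over $\xi$ and recalling the definition \eqref{geo error 0}. For $i=1$ I would apply $(\cdot)^{\circ}$ to the identity of the previous paragraph: by the first paragraph the left-hand side is unchanged, while on the right-hand side the locally constant correction $\ell_{1,\xi}$ is annihilated, leaving the image under $(\cdot)^{\circ}$ of the $\xi$-th Fourier coefficient of $\theta_{\phi_{1,\infty}\otimes\phi}(h)$. Since $(\cdot)^{\circ}$ acts in the $M_{K}(\BC)$-variable, it commutes with taking Fourier coefficients in $h$ (using the absolute convergence of the theta series), so this is exactly the $\xi$-th Fourier coefficient of $\theta^{\circ}_{\phi_{1,\infty}\otimes\phi}(h)$. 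Summing over $\xi$ and comparing with \eqref{geo error 0} and \eqref{geo error} gives $\Delta\CG^{\bK-\bB}(h,\phi)=\theta^{\circ}_{\phi_{1,\infty}\otimes\phi}(h)$.

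I expect the only delicate point to be the locally constant correction term produced by subtracting the two Laplacians. Identifying it explicitly would amount to a Siegel--Weil type computation relating the integral of $\theta_{\phi_{1,\infty}\otimes\phi}$ over a connected component of $M_{K}(\BC)$ to $A(\xi,\phi)$, but this is not needed here: the correction is automatically killed by $(\cdot)^{\circ}$, and the essential input is simply that the Laplacian of a smooth function on the compact manifold $M_{K}(\BC)$ is orthogonal to the locally constant functions.
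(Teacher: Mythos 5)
Your proof is correct and follows essentially the same route as the paper: one uses self-adjointness of $\Delta$ to see that $\Delta\CG^{\bK-\bB}$ is orthogonal to $\mathrm{LC}(M_K(\BC))$, computes $\Delta\CG^{\bK}$ via Lem.~\ref{lem:Del i} and $\Delta\CG^{\bB}$ via \eqref{Del GB} on the complement of the divisor, and then applies $(\cdot)^\circ$ to kill the locally constant pieces. The only difference is that you spell out the dense-open/smoothness step explicitly, which the paper leaves implicit.
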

\begin{proof}
Since $\Delta$ is a self-adjoint operator, the image $\Delta \CG^{\bK-\bB}(\xi,h, \phi)$ must be orthogonal to  ${\rm LC}(M_K(\BC))$. In particular, 
$$
\Delta \CG^{\bK-\bB}(h, \phi)=\Delta \CG^{\bK-\bB,\circ}(h, \phi).
$$
By \eqref{Del GB} and Lem.~  \ref{lem:Del i}
\begin{align*}
\Delta \CG^{\bK-\bB}(h, \phi)=\theta_{\phi_{1,\infty}\otimes\phi}(h)- \omega(h)(\phi_{1,\infty}\otimes\phi)(0)
-2s_0\sum_{\xi\in F_{0,+}} A(\xi,\omega(h_f)\phi)W^{(s_0+1)}_\xi(h_\infty).
\end{align*}
Note that the subtracted terms in the RHS are all locally constant functions on $M_K(\BC)$ and hence 
\begin{align*}
\Delta \CG^{\bK-\bB,\circ}(h, \phi)=\theta^\circ_{\phi_{1,\infty}\otimes\phi}(h).
\end{align*}
The case for $i>1$ follows now easily.
\end{proof}

\begin{theorem}\label{thm ES}Suppose that $\phi\in \CS(V(\BA_{0,f}))^K$  is invariant under $K_{\wt \bH}\subset\wt \bH(\BA_{0,f})$  by  the Weil representation. 
The generating series $ \CG^{\bK-\bB,\circ}(h,\phi)$
lies in the space $\CA_\infty(\wt\bH(\BA_0), K_{\wt\bH},  \dim V/2)$, in the sense that, for every point $[z,g]\in   M_{K}(\BC)$, the value of the generating series at $[z,g]$ is the Fourier expansion of some element in $\CA_\infty(\wt\bH(\BA_0), K_{\wt\bH}, \dim V/2)$. 
\end{theorem}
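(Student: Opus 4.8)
The plan is to deduce the modularity of $\CG^{\bK-\bB,\circ}(h,\phi)$ from that of a theta kernel by inverting the Laplacian on the compact manifold $M_K(\BC)$, the essential input being Lemma~\ref{lem:Del i K-B} with $i=1$. Since $M_K(\BC)$ is compact, $\Delta$ has discrete spectrum with kernel ${\rm LC}(M_K(\BC))$, and on $L^2_0:={\rm LC}(M_K(\BC))^\perp$ it is invertible; let $T$ be the associated Green operator, i.e.\ the continuous linear endomorphism of the Fr\'echet space $C^\infty(M_K(\BC))$ that inverts $\Delta$ on $L^2_0$ and kills ${\rm LC}(M_K(\BC))$. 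By elliptic regularity it is represented by an integral kernel $G(x,y)$, smooth away from the diagonal and locally integrable along it. The structural point is that $T$ acts purely in the geometric variable, hence commutes with every operation in the automorphic variable $h$ — left translation by $\bH(F_0)$, right translation by $K_{\wt\bH}$, right translation by $\prod_{v\mid\infty}\SO(2,\BR)$ (so it preserves the weight), and the Fourier coefficient maps \eqref{eq:def F coeff}.

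Next I would record the automorphy of the relevant theta kernel. Put $\Phi:=\phi_{1,\infty}\otimes\phi$, with $\phi_{1,v_0}=\Phi_0^+$ and $\phi_{1,v}$ the standard Gaussian on the positive definite space $V_v$ for $v\neq v_0$, as in Lemma~\ref{lem:Del i}; all archimedean components of $\Phi$ have weight $\dim V/2$ (cf.\ the discussion after \eqref{eq:def Phi}), and $\phi$ is $K_{\wt\bH}$-invariant by hypothesis. The theta series $(g,h)\mapsto\theta_\Phi(g,h)$ converges absolutely, together with all its derivatives, locally uniformly; consequently $h\mapsto\theta_\Phi(\,\cdot\,,h)$, and likewise its projection $h\mapsto\theta^\circ_\Phi(\,\cdot\,,h)$ off ${\rm LC}(M_K(\BC))$ (taken in the geometric variable), define smooth maps $\wt\bH(\BA_0)\to C^\infty(M_K(\BC))$ that are left $\bH(F_0)$-invariant, right $K_{\wt\bH}$-invariant, and of parallel weight $\dim V/2$. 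In particular, for every $x_0\in M_K(\BC)$ the function $h\mapsto\theta^\circ_\Phi(x_0,h)$ is the Fourier expansion of an element of $\CA_\infty(\wt\bH(\BA_0),K_{\wt\bH},\dim V/2)$, whose $\xi$-th Fourier coefficient has the shape $c_\xi(x_0,h_f)\,W^{(s_0+1)}_\xi(h_\infty)$ by the weight property of $\Phi$.

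Now the bootstrap. Lemma~\ref{lem:Del i K-B} with $i=1$, read Fourier coefficient by coefficient in $\xi$, says that $\Delta$ applied coefficient-wise to $\CG^{\bK-\bB,\circ}(h,\phi)$ agrees with the $h$-Fourier expansion of $g\mapsto\theta^\circ_\Phi(g,h)$. Each coefficient $\CG^{\bK-\bB,\circ}(\xi,h_\infty,\phi)$ is a smooth function of $g$ lying in $L^2_0$ — smoothness of $\CG^{\bK-\bB}(\xi,h_\infty,\phi)$ being the content of the lemma preceding \eqref{geo error 0}, and the projection off ${\rm LC}(M_K(\BC))$ preserving smoothness — so we may invert $\Delta$ and conclude $\CG^{\bK-\bB,\circ}(h,\phi)=T\,\theta^\circ_\Phi(h)$ as formal series, $T$ acting coefficient-wise. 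Fix $x=[z,g]\in M_K(\BC)$. Because $\Phi$ is Schwartz, the $h$-Fourier expansion of $h\mapsto\theta^\circ_\Phi(\,\cdot\,,h)$ converges in $C^\infty(M_K(\BC))$ uniformly for $h$ in compact sets; applying the continuous operator $T$ and evaluating at $x$ converts the formal identity into the honest identity
\begin{equation*}
\sum_{\xi\in F_0}\CG^{\bK-\bB,\circ}(\xi,h_\infty,\omega(h_f)\phi)(x)\,W^{(s_0+1)}_\xi(h_\infty)=\int_{M_K(\BC)}G(x,y)\,\theta^\circ_\Phi(y,h)\,dy=:f_x(h),
\end{equation*}
valid for all $h\in\wt\bH(\BA_0)$; thus the generating series $\CG^{\bK-\bB,\circ}(h,\phi)$ evaluated at $x$ is the Fourier expansion of $f_x$. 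Finally, $f_x$ is the image of the automorphic family $h\mapsto\theta^\circ_\Phi(\,\cdot\,,h)$ of the previous paragraph under the composition of $T$ with evaluation at $x$, both of which are continuous, linear, and act only in the geometric variable; hence $f_x$ is smooth on $\wt\bH(\BA_0)$, left $\bH(F_0)$-invariant, right $K_{\wt\bH}$-invariant, and of parallel weight $\dim V/2$, i.e.\ $f_x\in\CA_\infty(\wt\bH(\BA_0),K_{\wt\bH},\dim V/2)$, which is the assertion of the theorem.

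The difficulty is bookkeeping rather than conceptual. One must justify the various interchanges of limits: absolute convergence of $\theta_\Phi$ and of its $h$-Fourier expansion in $C^\infty(M_K(\BC))$, the mapping properties of $T$ and of its integral kernel $G(x,y)$, and Fubini for exchanging $\int_{M_K(\BC)}$ with the archimedean Fourier-coefficient integral over $N(F_0)\backslash N(\BA_0)$. All of these become uniform in the geometric variable because $M_K(\BC)$ is compact — which holds here precisely because $F_0\neq\BQ$ — and this is the point where the present setting simplifies the $F_0=\BQ$ argument of Ehlen--Sankaran \cite{ES}.
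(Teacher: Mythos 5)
Your proposal is correct and follows the same conceptual route as the paper: use Lemma~\ref{lem:Del i K-B} to express the generating series as a geometric operator (inverse Laplacian on the compact $M_K(\BC)$) applied to the theta kernel, then read off the automorphy in $h$ from the corresponding properties of $\theta^\circ_\Phi$, using that the operator acts only in the geometric variable and hence commutes with left $\bH(F_0)$-translation, right $K_{\wt\bH}$-translation, the weight action, and Fourier coefficient extraction. The only difference is technical: you invert $\Delta$ once, via the Green operator $T$ with its singular-but-locally-integrable kernel $G(x,y)$, and rely on standard elliptic theory for the continuity of $T$ on $C^\infty(M_K(\BC))$; the paper instead iterates to $\Delta^{-N}$ for $N\geq 3N_0 > \tfrac{3}{2}\dim_\BR M$ using Bruinier's eigenvalue and sup-norm estimates to obtain a \emph{continuous} kernel $K_N$, making the integral pairing elementary at the cost of also needing the higher $\Phi_N = \Delta^{N-1}\Phi_1$ from Lemma~\ref{lem:Del i}. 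Your variant is a bit cleaner but leans on deeper black boxes (pseudodifferential calculus/Schauder), whereas the paper's is more self-contained given the cited spectral estimates; either suffices.
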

 
\begin{proof}
The proof is similar to \cite[Thm.~4.4]{KRY} except we need to justify the asserted properties of the function in the $\wt\bH(\BA_0)$-variable. Fix $\phi$ and denote
\begin{align}
\varphi(g,h)= \CG^{\bK-\bB,\circ}(h,\phi) (g_\infty z_0, g_f), \quad g\in G(\BA_0).
\end{align}
The strategy is to show that the image of $\varphi(g,h)$ under the Laplacian operator has the asserted properties and then we use the spectral theory to recover the corresponding properties of $\varphi(g,h)$.

We use the spectral theory of $\Delta$ on the compact manifold $M_K(\BC)=\coprod_{j}\Gamma_j\bs \CD $. It suffices to consider each connected component of $M_K(\BC)$, say of the form $M=\Gamma\bs \CD$. Consider the space $L^2(M)$. Denote the eigenvalues of $-\Delta$  with multiplicities by
$$\lambda_0=0<\lambda_1\leq \lambda_2 \leq\cdots$$ with the eigenfunctions  $$
-\Delta\varphi_i=\lambda_i \varphi_i,
$$which are smooth and normalized by $\pair{\varphi_i,\varphi_i}=1$. Here $\varphi_0$ is the constant function $\frac{1}{\vol(M)}$.

We {\em claim} that for an integer $N\geq 3(\dim_\BR M /2+1)$, the kernel function
  \begin{align}\label{eq:KN}
K_{N}(x,y):=\sum_{i>0}\frac{1}{\lambda_i^N}\varphi_i(x)\ov\varphi_i(y),\quad x,y\in M
 \end{align} 
is a continuous function on $M\times M$.
 In fact, by \cite[Lem.~5.11\,(2)]{Br12} we have  for any integer $N_0> \dim_\BR M /2$, there  exists a constant $C>0$ such that
  \begin{align*}
|\!| \varphi_i |\!|_{L^\infty}&\leq C \lambda_i^{N_0}
 \end{align*} 
 holds for all $i\geq1$. It follows that when $N\geq 3N_0$,
\begin{eqnarray*}
\sum_{i>0}\frac{1}{\lambda_i^N}|\varphi_i(x)\ov\varphi_i(y)|&\leq& \sum_{i>0} \frac{1}{\lambda_i^N} |\!| \varphi_i |\!|_{L^\infty}^2\\
&\leq& C^2   \sum_{i>0} \frac{1}{\lambda_i^N} \lambda_i^{2N_0} \\
&\leq &C^2  \sum_{i>0} \frac{1}{\lambda_i^{N_0}} \\
&<&\infty \quad\quad\quad\quad \mbox{(\cite[Lem.~5.11\,(1)]{Br12})}.
\end{eqnarray*}
 Therefore the sum in \eqref{eq:KN} converges absolutely and uniformly in $(x,y)\in M\times M$.
 
Let $\Phi_N =\phi_{N,\infty}\otimes \phi\in\CS(V(\BA_0)),\, N\geq 1$ where $\phi_{N,\infty}\in \CS(V(F_{0,\infty}))$ is  as in Lem.~ \ref{lem:Del i}. Note that $\phi_{1,\infty} = Φ_0^+$ from \eqref{eq:def Phi}. Then
we have a spectral expansion
\begin{align}
\label{eq:theta lift}
\theta^\circ_{\Phi_1}(g,h)=\sum_{i>0 } \varphi_i(gz_0) \theta_{\Phi_1}^{\varphi_i}(h)
\end{align}
where $\theta_{\Phi_1}^{\varphi_i}$ is the ``theta lifting" of $\varphi_i$, defined by
 $$
  \theta_{\Phi_1}^{\varphi_i}(h):=\pair{\theta_{\Phi_1}(\cdot,h),\varphi_i}.
 $$
The theta function
$\theta_{\Phi_j}(\cdot,\cdot)$  is smooth on $ G(\BA_0)\times \wt\bH(\BA_0)$, left $G(F_0)\times \bH(F_0)$-invariant with parallel weight $\dim V/2$ under $\prod_{v|\infty}\wt\SO(2,\BR)\subset\prod_{v|\infty} \wt\bH(F_{0,v})$.

By Lem.~ \ref{lem:Del i K-B}, the function $\varphi(\cdot,h)$ is the unique solution to $\Delta \varphi=\theta_{\Phi}(g,h)$ and $\pair{\varphi,\varphi_0}=0$. We have
$$
\varphi(g,h)=\sum_{i>0}\pair{\varphi(\cdot,h), \varphi_i}\varphi_i(g),
$$where by \cite[Lem.~5.11(4)]{Br12} the sum is absolutely convergent and uniformly in $g$ by the smoothness of $\varphi(\cdot,h)$ (for every given $h$).  The coefficients are 
\begin{eqnarray*}
\pair{\varphi(\cdot,h), \varphi_i}&=&-\frac{1}{\lambda_i}\pair{\varphi(\cdot,h), \Delta\varphi_i}\\
&=&-\frac{1}{\lambda_i} \pair{ \Delta\varphi(\cdot,h),\varphi_i} \quad \mbox{(Self-adjointness of $\Delta$)} \\
&=&-\frac{1}{\lambda_i}\pair{\theta_{\Phi_1}(\cdot,h),\varphi_i} \quad \mbox{(Lem.~ \ref{lem:Del i K-B})}\\
&=&-\frac{1}{\lambda_i} \theta_{\Phi_1}^{\varphi_i}(h).
\end{eqnarray*}
It follows that 
$$
\varphi(g,h)=-\sum_{i>0}\frac{1}{\lambda_i} \theta_{\Phi_1}^{\varphi_i}(h)\varphi_i(g).
$$

Using $\Delta^N \varphi(g, h)=\theta_{\Phi_N}(g,h)$, the same argument shows for all $N\geq 1$,
 \begin{align*}
 \varphi(g,h)=-\sum_{i>0}\frac{1}{\lambda_i^N} \theta_{\Phi_N}^{\varphi_i}(h)\varphi_i(g).
 \end{align*}
When $N$ is large enough, by the continuity of the kernel function $K_N$, we have
 $$
 \varphi(g,h)= \pair{\Delta^N \varphi(\cdot, h), K_N(\cdot, g)}.
 $$ It follows from this identity and  the properties of the theta function $\theta_{\Phi_N}$ stated above that the function  $ \varphi(g,\cdot)$ is smooth on $\wt \bH(\BA_0)$,  left $\bH(F_0)$-invariant with weight $\dim V/2$.
 \end{proof}

 \begin{remark}The projection of the generating series $\CG^{\bK}(h,\phi)$ and $\CG^{\bB}(h,\phi)$ in \eqref{geo error 0} to the finite dimensional space  ${\rm LC}(M_K(\BC))$ of locally constant functions is essentially understood by Garcia--Sankaran \cite[Thm.~5.10]{GaS} and Bruinier \cite{Br12} respectively. We will not need these results in this paper. \end{remark}
   \begin{remark}In the case $F_0=\BQ$, the theorem of Ehlen and Sankaran shows that the function  $\CG^{\bK-\bB}(h,\phi)$  has at worst ``exponential growth at $\infty$" \cite[Def.\ 2.8, pp.2104]{ES}.
 \end{remark}
 \begin{corollary}\label{coro ES O}Let $\phi$ be as above.
 The generating function $\CG^{\bK-\bB}(h,\phi)$ when evaluating at  degree-zero (on every connected component) zero-cycle, lies in $\CA_\infty(\wt\bH(\BA_0), K_{\wt\bH}, \dim V/2)$.  \end{corollary}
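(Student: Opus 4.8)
The plan is to reduce the statement directly to Theorem~\ref{thm ES}, the point being that the ``locally constant part'' of the generating series $\CG^{\bK-\bB}(h,\phi)$ pairs to zero against a zero-cycle of degree zero, so that evaluating at such a cycle leaves only the part $\CG^{\bK-\bB,\circ}$ already treated in that theorem.

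First I would record the relevant decomposition. Recall from \S\ref{s:Green} that for each $h_\infty$ and each $\xi\in F_0$ the Fourier coefficient $\CG^{\bK-\bB}(\xi,h_\infty,\phi)$ is a smooth function on $M_K(\BC)$, so that we may write
$$
\CG^{\bK-\bB}(\xi,h_\infty,\phi)=\CG^{\bK-\bB,\circ}(\xi,h_\infty,\phi)+\CG^{\bK-\bB,{\rm LC}}(\xi,h_\infty,\phi),
$$
where $\CG^{\bK-\bB,\circ}$ is the orthogonal projection onto the orthogonal complement of ${\rm LC}(M_K(\BC))$ and $\CG^{\bK-\bB,{\rm LC}}$ is the complementary projection, which is locally constant on $M_K(\BC)$, i.e.\ constant on each connected component. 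Assembling over $\xi$ against the Whittaker functions as in \eqref{geo error 0}--\eqref{geo error}, one obtains a decomposition of formal series
$$
\CG^{\bK-\bB}(h,\phi)=\CG^{\bK-\bB,\circ}(h,\phi)+\CG^{\bK-\bB,{\rm LC}}(h,\phi),
$$
in which every Fourier coefficient of the last summand is locally constant on $M_K(\BC)$.

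Next I would evaluate at a zero-cycle $Z=\sum_i n_i[x_i]$ on $M_K(\BC)$ of degree zero, that is with $\sum_{x_i\in M^{(c)}}n_i=0$ for every connected component $M^{(c)}$ of $M_K(\BC)$. Since each Fourier coefficient of $\CG^{\bK-\bB,{\rm LC}}(h,\phi)$ is constant on each $M^{(c)}$, its pairing with $Z$ is $\sum_c(\text{const})\cdot\bigl(\sum_{x_i\in M^{(c)}}n_i\bigr)=0$; hence the last summand contributes nothing, and the evaluation of $\CG^{\bK-\bB}(h,\phi)$ at $Z$ equals the finite linear combination $\sum_i n_i\,\CG^{\bK-\bB,\circ}(h,\phi)(x_i)$. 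By Theorem~\ref{thm ES} (applicable since $\phi$ is invariant under $K_{\wt\bH}$ by hypothesis), for each point $x_i=[z_i,g_i]\in M_K(\BC)$ the series $\CG^{\bK-\bB,\circ}(h,\phi)(x_i)$ is the Fourier expansion of an element of $\CA_\infty(\wt\bH(\BA_0),K_{\wt\bH},\dim V/2)$. As this space is a $\BC$-vector space, the finite linear combination $\sum_i n_i\,\CG^{\bK-\bB,\circ}(h,\phi)(x_i)$ again lies in it, which is exactly the assertion.

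The argument carries no real difficulty beyond bookkeeping: the one point to check with care is that the coefficient-wise orthogonal projection $f\mapsto f^\circ$ is compatible with forming the generating series (via the $\omega(h_f)$-action and the Whittaker factors, which do not interact with the projection on $M_K(\BC)$) and with evaluation at points of $M_K(\BC)$, so that the two displayed decompositions are legitimate and the degree-zero hypothesis genuinely annihilates $\CG^{\bK-\bB,{\rm LC}}(h,\phi)$. Everything else is formal once Theorem~\ref{thm ES} is in hand.
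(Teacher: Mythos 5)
Your argument is correct and is exactly the intended (if implicit) deduction: the paper states this as an immediate corollary of Theorem~\ref{thm ES}, and your decomposition $\CG^{\bK-\bB}=\CG^{\bK-\bB,\circ}+\CG^{\bK-\bB,\mathrm{LC}}$ together with the vanishing of the locally constant part against a degree-zero $0$-cycle is precisely what makes it so. Nothing more is needed.
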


\subsection{Back to hermitian spaces}
 We now return to the set up in \S\ref{s:SV KR}. In particular $V$ is an $F/F_0$-hermitian vector space of $F$-dimension $n$. 
 Then the two Green functions are defined similarly to the orthogonal case, cf. \cite[\S8.3]{Z19} for details. Let $\nu$ be a place of $\bF$ and 
let $v_0$ be the unique place of $F_0$ below $\nu$. Define $ \CG^{\bK-\bB}(h,\phi)$ similarly to \eqref{geo error 0}, as a function on $M_{\wt K}\otimes_{\bF,\nu}\BC$. Note that now the Weil representation of $ \wt\bH(\BA_{0})$ factors through  $ \bH(\BA_{0})$.
  \begin{theorem}\label{thm:dif inf}Suppose that $\phi\in \CS(V(\BA_{0,f}))^K$  is invariant under $K_{\bH}\subset \bH(\BA_{0,f})$  by  the Weil representation.  Then the generating function $ \CG^{\bK-\bB}(h,\phi)$  
when evaluating at  degree-zero (on every connected component) zero-cycle, lies in $\CA_\infty(\bH(\BA_0), K_\bH, n)$.
 \end{theorem}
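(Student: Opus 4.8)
The plan is to mirror the proof of Theorem~\ref{thm ES} and Corollary~\ref{coro ES O}, now over the compact complex manifold $M := M_{\wt K}\otimes_{\bfF,\nu}\BC$. The starting observation is that a hermitian $F$-space $V$ of $F$-dimension $n$, equipped with the quadratic form $\fkq(x)=\pair{x,x}$, is a quadratic $F_0$-space of dimension $2n$ with signature $(2n-2,2)$ at $v_0$ and $(2n,0)$ at the remaining archimedean places; so in the notation of \S\ref{s:Green} one has $m=2n-2$, $s_0=n-1$ and $\dim V/2=n$, and since $2n$ is even the Weil representation of $\wt\bH(\BA_0)$ on $\CS(V(\BA_{0,f}))^K$ factors through $\bH(\BA_0)$, matching the group and weight in the statement. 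Under the natural embedding $\U(V)\hookrightarrow\SO(V,\fkq)$ the symmetric domain $\CD_\U$ of negative $F$-lines in $V_{v_0}$ (oriented by the complex structure) sits as a complex submanifold of the orthogonal Grassmannian $\CD_\SO$, and this is compatible with the majorant $R(u,z)$, with the divisors $\CD_u$, and with the archimedean Weil-representation vectors $\Phi_0,\Phi_0^+$ of \S\ref{s:Green}; by \cite[\S8.3]{Z19} the Kudla and Bruinier Green functions $\CG^{\bK}(\xi,h_\infty,\phi)$ and $\CG^{\bB}(\xi,\phi)$ on $M$ fit the formalism of \S\ref{s:Green} (with $\CG^{\bK}(u,h_\infty)(z)=-\Ei(-2\pi a_{v_0}R(u,z))$).

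First I would record the hermitian analogues of Lemmas~\ref{lem Lap}, \ref{lem:Del Phi0}, \ref{lem:Del i} and \ref{lem:Del i K-B}. This needs: the radial part of the invariant Laplacian on the rank-one space $\CD_\U\simeq\U(n-1,1)/(\U(n-1)\times\U(1))$ acting on functions invariant under the stabilizer $H_u$ of a vector $u\in V_{v_0}$; the curvature and eigenvalue identities for Bruinier's automorphic Green function on $M$, i.e.\ that $dd^c\CG^{\bB}(\xi,\phi)+\delta_{Z(\xi,\phi)}$ is a harmonic smooth $(1,1)$-form and $\Delta\CG^{\bB}(\xi,\phi)+4s_0\,\delta_{Z(\xi,\phi)}=2s_0A(\xi,\phi)$ with $s_0=n-1$; and the identity $\Delta\bigl(a^{\dim V/2}\Ei(-2\pi aR(u,z))e^{2\pi i\tau\fkq(u)}\bigr)=\omega(h,g)\Phi_0^+(u)$. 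Each is obtained by repeating the corresponding computation in \S\ref{s:Green} (alternatively, the last identity follows by restricting the orthogonal one along $\CD_\U\hookrightarrow\CD_\SO$, since both $R$ and $\Phi_0^+$ restrict correctly). This yields, exactly as in Lemma~\ref{lem:Del i K-B}, that $\Delta^i\CG^{\bK-\bB}(h,\phi)$ equals $\theta^\circ_{\phi_{i,\infty}\otimes\phi}(h)$ for $i=1$ and $\theta_{\phi_{i,\infty}\otimes\phi}(h)$ for $i>1$, with $\phi_{i,\infty}\in\CS(V(F_{0,\infty}))$ of parallel weight $s_0+1=n$ and $\phi_{1,\infty}=\Phi_0^+$.

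With these in place the argument finishes as in the proof of Theorem~\ref{thm ES}: I would decompose $M$ into its connected components, each a compact Riemannian manifold $\Gamma\backslash\CD_\U$, expand in $-\Delta$-eigenfunctions $\varphi_i$, invoke the sup-norm and summability estimates of \cite[Lem.~5.11]{Br12} (valid on any compact Riemannian manifold) to see that the iterated resolvent kernels $K_N(x,y)=\sum_{i>0}\lambda_i^{-N}\varphi_i(x)\ov{\varphi_i(y)}$ are continuous for $N$ large, and combine this with the spectral expansion $\theta^\circ_{\Phi_1}(g,h)=\sum_{i>0}\varphi_i(gz_0)\,\theta^{\varphi_i}_{\Phi_1}(h)$ of the theta lift to get $\varphi(g,h)=-\sum_{i>0}\lambda_i^{-N}\theta^{\varphi_i}_{\Phi_N}(h)\varphi_i(g)$, where $\varphi(g,h)$ is $\CG^{\bK-\bB}(h,\phi)(g_\infty z_0,g_f)$ after removing its locally constant part. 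The identity $\varphi(g,h)=\pair{\Delta^N\varphi(\cdot,h),K_N(\cdot,g)}$ together with the smoothness, left $\bH(F_0)$-invariance and parallel weight $n$ of $\theta_{\Phi_N}$ then shows $\varphi(g,\cdot)\in\CA_\infty(\bH(\BA_0),K_\bH,n)$; and since evaluation at a zero-cycle of degree $0$ on every connected component of $M$ kills the locally constant part, this evaluation coincides with that of $\varphi$, which is the assertion. (This is the hermitian counterpart of Corollary~\ref{coro ES O}.)

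I expect the main obstacle to be the second of the preparatory points above: checking that Bruinier's regularized automorphic Green function and its identities \eqref{ddc GB}, \eqref{Del GB} carry over to the unitary $M$ with the stated constants — this is where one leans on the generality of \cite{Br12} and on its unitary adaptation in \cite[\S8.3]{Z19} — and, relatedly, keeping the normalization of $\Delta$ and the parallel weight of the archimedean Schwartz functions $\phi_{i,\infty}$ consistent between the hermitian and quadratic pictures. Once these are pinned down, the spectral-theoretic part of the proof is purely formal.
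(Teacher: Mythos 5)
The paper's own proof is a one-line pull-back: apply Corollary~\ref{coro ES O} to $V$ viewed as the $2n$-dimensional $F_0$-quadratic space of signature $(2n-2,2)$ at $v_0$, and pull back along the embedding of the unitary Shimura variety $M_{\wt K}\otimes_{\bfF,\nu}\BC$ into the orthogonal Shimura variety $M_K(\BC)$ of \S\ref{s:Green} (cf.\ \cite[\S3.B]{Liu1}). Since Theorem~\ref{thm ES} is a \emph{pointwise} modularity statement, the pull-back of $\CG^{\bK-\bB,\circ}$ evaluated at any zero-cycle is in $\CA_\infty$; and the discrepancy $\CG^{\bK-\bB}-\CG^{\bK-\bB,\circ}$ is pulled back from a locally constant function on $M_K(\BC)$, hence is killed by a cycle of degree zero on every connected component of $M_{\wt K}\otimes\BC$.

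Your proposal instead re-runs the spectral argument directly on $M_{\wt K}\otimes\BC$ with the \emph{unitary} Laplacian $\Delta_{\U}$, and this has a genuine gap. The Laplacian does not commute with restriction along $\CD_{\U}\hookrightarrow\CD_{\SO}$: even on a totally geodesic complex submanifold, the ambient Laplacian of a function involves second derivatives in normal directions, so your parenthetical claim that Lemma~\ref{lem:Del Phi0} ``follows by restricting the orthogonal one'' is false. More seriously, the Green functions on $M_{\wt K}\otimes\BC$ are obtained from the orthogonal ones by pull-back; they are therefore related to $\Delta_{\SO}$-eigenvalue equations, and the identities you need --- the unitary analogues of \eqref{eq:Del phi}, \eqref{Del GB}, Lemma~\ref{lem:Del Phi0}, Lemmas~\ref{lem:Del i} and~\ref{lem:Del i K-B} --- are simply not available for $\Delta_{\U}$ without a new computation. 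Making your plan work would require the radial part of $\Delta_{\U}$ for the pair $(\U(n-1,1),\mr{Stab}(u))$ and unitary analogues of \cite{OT,Br12}; the symmetric pair $(\U(n-1,1),\U(n-2,1))$ is not the pair $(\SO(m,2),\SO(m-1,2))$ treated there, so ``repeating the computation in \S\ref{s:Green}'' does not deliver the same ODE operator $D$ or the same constants. The pull-back route sidesteps all of this, which is why the paper's proof is one sentence.
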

 \begin{proof}This is standard by the pull-back of the assertion Cor.~ \ref{coro ES O} for the induced quadratic space $V$, cf. \cite[\S3.B]{Liu1}.
 \end{proof}

\section{Almost Modularity}\label{s:alm mod}

In this section, we describe the simple arithmetic intersection theory we use in the paper. It is modeled on \cite{BGS} and takes into account the fact that our integral models are only defined away from finitely many primes. Let $S$ be a finite set of non-archimedean primes of a number field $\bfF$ and $\CX\to\Spec O_\bfF[S^{-1}]$ a regular, flat and proper scheme with smooth generic fiber $X$.

\subsection{Admissible extension}\label{ss:adm ext}

Recall that the arithmetic Chow group $\wh\Ch{}^1(\CX)$ (with $\BQ$-coefficients) is generated by arithmetic divisors, i.e. $\BQ$-linear combinations of tuples $\left(Z, g_Z\right)$, where $Z$ is a divisor on $\CX$ and $g_Z = (g_{Z,v})_{v\colon \bfF\to \ov \mbQ}$ a tuple of Green functions for the divisors $Z_{v}(\BC)$ on the complex manifold $X_{v}(\BC)$ with respect to $v\colon \bfF\incl\ov \BQ\subset\BC$, cf.\ \cite[\S3.3]{GS}. 
Its relations are given by the $\mbQ$-span of principal arithmetic divisors, i.e. tuples associated to rational functions $f\in \bfF(\CX)^\times$:
\begin{equation}\label{eq convention rat fct}
\left(\div(f),(- \log |f|^2_{v})_{ v\in\Hom_\BQ(\bfF,\ov\BQ)}\right).
\end{equation}
If for example $\bfF=\BQ$, then $(\mcX_p,0)=(0,2\log|p|)$ in $\wh\Ch{}^1(\CX)$, where $\mcX_p$ is the fiber of $\CX$ over a prime $p$. In case $p\in S$ and hence $\mcX_p = \emptyset$, this means $(0,2\log |p|) = 0$ in $\wh\Ch{}^1(\mcX)$.

Fix a K\"ahler metric on each $X_v(\mbC)$. It induces a Laplace operator on differential forms on $X_v(\BC)$.
We call a Green function $g_{Z,v}$ {\em admissible} (with respect to the chosen metrics) if
\begin{equation}\label{eq:admissible_PL}
dd^c\, g_{Z,v} +\delta_Z =\omega_{Z,v},
\end{equation}
where $\omega_{Z,v}$ is a {\em harmonic} $(1,1)$-form. An admissible Green function exists for every divisor and is unique up to adding a locally constant function, i.e. a function that is constant on each connected component of $X_v(\BC)$. We denote by $\wh\Ch{}^{1,\adm}(\CX)$ the subgroup of $\wh\Ch{}^1(\CX)$ generated by $(Z,g_Z)$ with all $g_{Z,v}$ admissible. Note that principal arithmetic divisors are admissible since for these $ω_{Z,v} = 0$ in \eqref{eq:admissible_PL}. So an arithmetic divisor defines an admissible Chow class if and only if it is itself admissible.

Consider the subgroup of $\wh\Ch{}^{1,\adm}(\CX)$ spanned by the classes of $(0, c)$ with $c = (c_v)_v$ a tuple of locally constant functions on the $X_v(\BC)$: 
$$
\wh\Ch{}^1_\infty(\CX):=\mr{span}\, \{(0, c)\mid c \text{ locally constant}\} \subset \wh \Ch{}^{1,\adm}(\CX).
$$
These are the arithmetic divisors ``supported at archimedean places".
Then there is a natural map
\begin{equation}\label{eq:iso Ch}
\xymatrix{\frac{\wh \Ch{}^{1,\adm}(\CX)}{\wh\Ch{}^1_\infty(\CX) }\ar[r] &\Ch^{1}(X),}
\end{equation}
sending  (the equivalence class of) $(Z, g_Z)$ to the generic fiber of $Z$.
\begin{lemma}  \label{lem:Ch iso}
Assume further that $\CX$ is smooth over $\Spec O_\bfF[S^{-1}]$. Then the map \eqref{eq:iso Ch} is an isomorphism.
\end{lemma}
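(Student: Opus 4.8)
The plan is to establish surjectivity and injectivity separately; surjectivity is formal, and the content of injectivity is a careful analysis of vertical divisors that is exactly where the smoothness hypothesis enters. For \emph{surjectivity}, I would represent a given class in $\Ch^1(X)$ by a $\BQ$-linear combination of prime divisors $Z_i$ on $X$, take their Zariski closures $\overline{Z_i}\subseteq\CX$ (these are horizontal prime divisors with generic fibre $Z_i$, since $\CX$ is regular and $\overline{Z_i}$ dominates the base), and equip each $Z_i$ with an admissible Green function, which exists by the discussion preceding the lemma. The resulting class in $\wh\Ch{}^{1,\adm}(\CX)$ maps to the given one.

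For \emph{injectivity}, suppose $(Z,g_Z)\in\wh\Ch{}^{1,\adm}(\CX)$ has trivial generic fibre, so that $Z_X=\div(f)$ for some $f\in\bfF(X)^\times=\bfF(\CX)^\times$. Subtracting the principal arithmetic divisor attached to $f$, I would replace $(Z,g_Z)$ by $(V,u)$ with $V:=Z-\div(f)$ \emph{vertical} (supported in the special fibres $\CX_\mfp$) and $u:=g_Z+\log|f|^2$. By Poincar\'e--Lelong, $u$ is a Green function for the zero divisor whose curvature form is harmonic, i.e.\ an admissible Green function for $0$; since the zero function is another such and admissible Green functions are unique up to a locally constant function, $u$ is locally constant. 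Hence $(V,u)=(V,0)+(0,u)$ with $(0,u)\in\wh\Ch{}^1_\infty(\CX)$, and everything reduces to showing that $(V,0)\in\wh\Ch{}^1_\infty(\CX)$ for every vertical divisor $V$.

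To handle vertical divisors I would use smoothness as follows. The ring $A:=H^0(\CX,\mathcal{O}_\CX)$ is a finite, normal $O_\bfF[S^{-1}]$-algebra, hence a product $\prod_i O_{\bfF_i}[S_i^{-1}]$ over finite (in fact unramified-away-from-$S$) extensions $\bfF_i/\bfF$, and correspondingly $\CX=\coprod_i\CX_i$ with each $\CX_i$ smooth and proper over $\Spec O_{\bfF_i}[S_i^{-1}]$ with geometrically connected, hence geometrically irreducible, fibres. Therefore every vertical prime divisor of $\CX$ is an entire special fibre $(\CX_i)_{\mathfrak P}$ for some prime $\mathfrak P$ of $O_{\bfF_i}[S_i^{-1}]$. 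Picking $h\geq 1$ with $\mathfrak P^{h}=(\pi)$ principal, $\pi\in\bfF_i^\times$, smoothness of $\CX_i$ over its base makes a uniformizer at $\mathfrak P$ pull back to a uniformizer along the fibre, so (viewing $\pi$ as a rational function on $\CX_i$, extended by $1$ on the other components) $\div(\pi)=h\,(\CX_i)_{\mathfrak P}$. Thus in $\wh\Ch{}^1(\CX)$ one has $\bigl((\CX_i)_{\mathfrak P},0\bigr)=\tfrac1h\bigl(\div(\pi),0\bigr)=\tfrac1h\bigl(0,(-\log|\pi|^2_v)_v\bigr)$, and since $\pi$ is a constant, each $-\log|\pi|^2_v$ is locally constant on $X_v(\BC)$; hence $\bigl((\CX_i)_{\mathfrak P},0\bigr)\in\wh\Ch{}^1_\infty(\CX)$. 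By $\BQ$-linearity $(V,0)\in\wh\Ch{}^1_\infty(\CX)$, which completes injectivity.

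The archimedean side is not the difficulty here: it is entirely subsumed in the quoted uniqueness of admissible Green functions, whose proof in turn rests on the vanishing of an exact harmonic form on a compact K\"ahler manifold. The main obstacle, and the point where smoothness of $\CX$ over $O_\bfF[S^{-1}]$ is genuinely used, is the structure of vertical divisors: one must know that every component of a special fibre becomes principal after adjoining the finite \'etale algebra of global functions and passing to $\BQ$-coefficients, so that the only relations needed among vertical classes are those coming from $\div(\pi)$ with $\pi$ a constant. Without smoothness, special fibres can be reducible or non-reduced in a way that produces genuinely new vertical classes, and the map \eqref{eq:iso Ch} would no longer be an isomorphism.
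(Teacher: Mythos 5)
Your proof is correct and follows essentially the same route as the paper: you identify the kernel with vertical classes, use smoothness to see that vertical prime divisors are whole fibre components, invoke the Stein factorization/$H^0$ to describe those components, and appeal to finiteness of the class number to show they become principal and hence land in $\wh\Ch{}^1_\infty(\CX)$. Your write-up fills in some details the paper compresses into ``by finiteness of the class number of $R$'' — in particular the explicit computation $\bigl((\CX_i)_{\mathfrak P},0\bigr)=\tfrac{1}{h}\bigl(0,(-\log|\pi|^2_v)_v\bigr)$ — but the substance and the key use of smoothness are the same.
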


\begin{proof}
The map $\wh\Ch{}^{1,\adm}(\mcX)\to \Ch^1(X)$ is surjective; its kernel is generated by all vertical classes. Those are the elements of $\wh\Ch{}^1_{\infty}(\mcX)$ and the vertical classes at the non-archimedean primes, i.e. classes of the form $(Z,0)$ with $Z\subseteq \mcX_v$ for some non-archimedean place $v\notin S$. For dimension reasons, any such $Z$ is a $\mbQ$-linear combination of irreducible components of the fiber $\mcX_v$. Since $\mcX$ is assumed to be smooth, those are actually the connected components of $\mcX_v$.

The Stein factorization $\mcX\to \Spec R \to  \Spec O_\bfF[S^{-1}]$ has the property that $R = \mcO_\mcX(\mcX)$ agrees with the normalization of $O_\bfF[S^{-1}]$ in $\bfF(\mcX)$, cf. \cite[Tag 03H0]{Stacks}. The connected components of $\mcX_v$ are then in bijection with $|\Spec R/\mfp_v|$. By finiteness of the class number of $R$, each is equivalent to an element of $\wh\Ch{}^1_\infty(\mcX)$ as was to be shown.
\end{proof}

\subsection{$S$-punctured arithmetic intersection pairing}
\label{ss:AIP}
We come to the arithmetic intersection pairing, cf. \cite[\S2.3]{BGS}. Assume in the following that $\CX$ is smooth over $O_\bfF[S^{-1}]$; write
$$
\BR_S:=\BR/{\rm Span}_\BQ\{\log |q_v| \mid v \in S\}.
$$
This quotient is viewed as a $\BQ$-vector space. 

\begin{definition}\label{def Z1} 
Let $\wt\CZ_{1}(\CX)$ be the group of $1$-cycles on $\CX$. Define $ \CZ_{1}(\CX) $  as the quotient of $\wt \CZ_{1}(\CX)$ by the subgroup generated by $1$-cycles that are rationally trivial on a closed fiber of $\CX$.  The latter means $\div(f) = 0$ whenever $Y\subseteq \mcX_v$ is a $2$-dimensional integral closed subscheme in a special fiber and $f\in \mbF_v(Y)^\times$ a meromorphic function.
\end{definition}

Then we have an arithmetic intersection pairing, adapting the definition in  \cite[\S2.3]{BGS} to our $S$-punctured scheme $\CX$,
\begin{align}
 \begin{gathered}
	\xymatrix@R=0ex{
	 \wh\Ch{}^1(\CX)\times\wt \CZ_{1}(\CX)  \ar[r] & \BR_S,
	}
	\end{gathered}
\end{align}
which factors through a pairing
 \begin{align}\label{eq:AI} 
 \begin{gathered}
	\xymatrix@R=0ex{
(\cdot,\cdot)\colon \quad\wh\Ch{}^1(\CX)\times \CZ_{1}(\CX)   \ar[r] & \BR_S
	}
	\end{gathered}
\end{align}
by \cite[Prop.\ 2.3.1 (ii)]{BGS}. We call it the \emph{$S$-punctured arithmetic intersection pairing} and next give its definition in terms of properly intersecting representatives. Assume that $Z\subseteq \mcX$ is a closed subscheme of codimension $1$, viewed as a divisor, and $C\subseteq \mcX$ a closed curve such that $Z\cap C$ is artinian. Let $g_Z = (g_{Z,v})_{v\colon \bfF \to \bar \mbQ}$ be a family of Green functions for $Z$. Then
\begin{equation}\label{eq:example_pairing}
\big((Z,g_Z), C\big) = \log |\mcO_{Z\cap C}| + \frac{1}{2}\sum_{v\colon \bfF\to \bar \mbQ} g_v(C(\mbC_v)).
\end{equation}
Extending this relation $\mbQ$-linearly in both arguments determines \eqref{eq:AI} uniquely.
Let $\CZ_1(\CX)_{\deg=0}$ be the subgroup of $\CZ_1(\CX)$ consisting of $1$-cycles with vanishing degree on each  connected component of the generic fiber of $\CX$. Then there is an induced $\BQ$-bilinear pairing
$$
\xymatrix{(\cdot\,,\cdot)\colon\frac{\wh \Ch{}^1(\CX)}{\wh\Ch{}^1_\infty(\CX) }\times \CZ_1(\CX)_{\deg=0}  \ar[r] &\BR_S,}
$$
which we may restrict to the admissible classes
\begin{align}
\xymatrix{(\cdot\,,\cdot)\colon\frac{\wh \Ch{}^{1,\adm}(\CX)}{\wh\Ch{}^1_\infty(\CX) }\times \CZ_1(\CX)_{\deg=0}  \ar[r] &\BR_S.}
\end{align}
By Lem.~ \ref{lem:Ch iso}, we obtain a $\mbQ$-bilinear pairing
\begin{align}\label{eq:adm int}
\xymatrix{(\cdot\,,\cdot)^{\adm}\colon\Ch^1(X)\times \CZ_1(\CX)_{\deg=0}  \ar[r] &\BR_S.}
\end{align}
Note again that the above are all pairings between $\BQ$-vector spaces only.

\subsection{Application: An almost modularity}
We now return to the case of our Shimura variety $M=M_{\wt K}$ and the integral model $\mcM = \CM_{\wt K}$ over $\Spec O_{\bfF}[\mfd^{-1}]$ defined in \S\ref{s:SV KR}. We endow the  automorphic line bundle $\bm{\omega}$ with its Petersson metrics $|\!|\cdot|\!|_{\rm Pet}$ at the archimedean places and use the induced  Kähler metric to define the notion of admissible extension in 
 \S\ref{ss:adm ext}.  Recall that we defined a divisor $Z(h,\phi)\in \Ch{}^1(M)$ for each $\phi\in \mcS(V(\BA_{0,f}))^{K}$ and $h\in \bH(\BA_0)$ in \S\ref{s:gen div}.  Then we can apply the pairing \eqref{eq:adm int}  to obtain ``an almost modularity" result:
 \begin{theorem}\label{thm mod}  Suppose that $\phi\in \mcS(V(\BA_{0,f}))^{K}$ is $ K_\bH$-invariant under the Weil representation.
Let $z\in \mcZ_1(\CM)_{\deg=0} $. Then
the generating function $h\in\bH(\BA_0)\mapsto\left(Z(h,\phi),  z\right )^\adm$
 lies in $\CA_{\rm hol}(\bH(\BA_0), K_\bH, n)_{\ov\BQ}\otimes_{\ov \BQ}\BR_{S,\ov\BQ}$, where $\BR_{S,\ov\BQ}:=\BR_{S}\otimes_{\BQ}\ov\BQ$. 
 \end{theorem}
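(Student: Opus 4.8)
The plan is to deduce the statement as a formal consequence of the generic-fiber modularity of the Kudla--Rapoport generating series (Theorem~\ref{thm:mod E}) by composing it with the $S$-punctured admissible intersection pairing built in \S\ref{ss:adm ext}--\S\ref{ss:AIP}. First I would set $S$ to be the set of non-archimedean places of $\bfF$ dividing $\mfd$, so that $O_\bfF[\mfd^{-1}] = O_\bfF[S^{-1}]$. By Theorem~\ref{thm:representabilityZ19} together with the projectivity recalled just after it (which uses the standing assumption $F_0\neq\mbQ$), the integral model $\CM=\CM_{\wt K}$ is smooth and proper over $\Spec O_\bfF[S^{-1}]$. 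Hence the entire formalism of \S\ref{ss:adm ext}--\S\ref{ss:AIP} applies with $\CX=\CM$; in particular, by Lemma~\ref{lem:Ch iso} we have at our disposal the $\mbQ$-bilinear admissible pairing \eqref{eq:adm int}, $(\cdot\,,\cdot)^{\adm}\colon \Ch^1(M)\times\CZ_1(\CM)_{\deg=0}\to\BR_S$, into whose second argument the hypothesis $z\in\CZ_1(\CM)_{\deg=0}$ places $z$.

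Next I would invoke Theorem~\ref{thm:mod E}: the formal generating series $Z(h,\phi)$ is an element of $\CA_{\rm hol}(\bH(\BA_0),K_\bH,n)_{\ov\BQ}\otimes_{\ov\BQ}\Ch^1(M)_{\ov\BQ}$ for the same $K_\bH$ that fixes $\phi$. Concretely, this means there are finitely many forms $f_1,\dots,f_r\in\CA_{\rm hol}(\bH(\BA_0),K_\bH,n)_{\ov\BQ}$ and classes $c_1,\dots,c_r\in\Ch^1(M)_{\ov\BQ}$ whose Fourier coefficients reproduce, term by term, the coefficients $Z(\xi,\omega(h_f)\phi)\,W^{(n)}_\xi(h_\infty)$ of $Z(h,\phi)$. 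I would then extend $(\cdot\,,z)^{\adm}$ $\ov\BQ$-linearly in its first variable to a map $\Ch^1(M)_{\ov\BQ}\to\BR_{S,\ov\BQ}$ (writing $\BR_{S,\ov\BQ}=\BR_S\otimes_\mbQ\ov\BQ$), and apply it coefficientwise to the above expansion of $Z(h,\phi)$. This yields the identity of formal series $\bigl(Z(h,\phi),z\bigr)^{\adm}=\sum_{j=1}^r f_j(h)\,\bigl(c_j,z\bigr)^{\adm}$. Since each scalar $(c_j,z)^{\adm}$ lies in $\BR_{S,\ov\BQ}$ and each $f_j$ lies in $\CA_{\rm hol}(\bH(\BA_0),K_\bH,n)_{\ov\BQ}$, the right-hand side is by definition $\sum_j f_j\otimes(c_j,z)^{\adm}\in\CA_{\rm hol}(\bH(\BA_0),K_\bH,n)_{\ov\BQ}\otimes_{\ov\BQ}\BR_{S,\ov\BQ}$, which is exactly the asserted conclusion.

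I do not expect a genuine analytic or geometric obstacle in this argument: all the real content has been absorbed into Theorem~\ref{thm:mod E} (modularity over $\ov\BQ$ with coefficients in $\Ch^1(M)$, via \cite{Liu1} and \cite{YZZ1}) and into the construction of the admissible pairing, notably Lemma~\ref{lem:Ch iso}. The only steps requiring a little care are bookkeeping ones, which I would spell out: (i) unwinding ``$Z(h,\phi)$ lies in the tensor-product space'' to a genuine finite expression $\sum_j f_j\otimes c_j$ matching Fourier coefficients, so that applying the $\ov\BQ$-linear functional $(\cdot\,,z)^{\adm}$ to the $c_j$ commutes with forming the generating series; and (ii) recording why the $\deg=0$ hypothesis on $z$ cannot be dropped, namely that it is precisely the condition making the ambiguity in an admissible Green function (locally constant functions, i.e.\ the subgroup $\wh\Ch{}^1_\infty(\CX)$) pair to $0$ with $z$, so that $(\cdot\,,z)^{\adm}$ descends to $\Ch^1(M)$ via Lemma~\ref{lem:Ch iso}. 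Neither point needs anything beyond the formalism already in place.
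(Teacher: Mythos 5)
Your proposal is correct and, in substance, is precisely the argument the paper has in mind when it says ``This follows from Thm.~\ref{thm:mod E}'': one takes the finite-dimensionality of $\CA_{\rm hol}(\bH(\BA_0),K_\bH,n)_{\ov\BQ}$ (noted in the Notation section) to write $Z(h,\phi)=\sum_j f_j\otimes c_j$, and then applies the $\ov\BQ$-linear extension of the functional $(\cdot\,,z)^{\adm}\colon\Ch^1(M)\to\BR_S$ coefficientwise, which is available exactly because $z\in\CZ_1(\CM)_{\deg=0}$ makes $\wh\Ch{}^1_\infty(\CM)$ pair to zero and hence lets Lemma~\ref{lem:Ch iso} descend the intersection pairing to the generic fiber. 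You have merely spelled out the bookkeeping that the paper leaves implicit; there is no gap.
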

 \begin{proof}This follows from Thm.~\ref{thm:mod E}.
 \end{proof}
To apply the result, we still need to find a convenient lifting of $Z(\xi,\phi)$ to $\wh \Ch{}^{1,\adm}(\CM)$. We can do so whenever $ϕ$ factors as $ϕ = ϕ_\mfd \otimes 1_{\wh{Λ}^\mfd}$ as in \eqref{eq:defn_int_model_KR_div}, which we assume from now on. Then, for $\xi\in F_{0,+}$, we endow the integral KR divisor $\mcZ(ξ,ϕ)$ with the automorphic Green function $\CG^{\bB}(\xi,\phi)$ to define an arithmetic KR divisor 
\begin{equation}\label{eq:KR  Ar}
 \wh\mcZ^\bB(ξ,ϕ)=\left(\CZ(ξ,ϕ),\left( \CG^{\bB}_{\nu}(\xi,\phi) \right)_{\nu}\right )\in\wh\Ch{}^{1} (\CM).
\end{equation}
 Since the automorphic  Green function is admissible by \eqref{ddc GB}, we even have $\wh\mcZ^\bB(ξ,ϕ)\in\wh\Ch{}^{1,\adm} (\CM)$. For $\xi=0$, we set
\begin{equation}\label{eq:KR  Ar 0}
\wh\CZ^\bB(0,\phi)=-\phi(0) \,c_1(\wh{\bm{\omega}})\in \wh\Ch{}^{1,\adm}(\CM),
\end{equation}
where $\wh{\bm{\omega}}=(\bm{\omega}_\CM,|\!|\cdot|\!|_{\rm Pet})$ is {\em any} extension of the automorphic line bundle $\bm{\omega}$ to a line bundle $\bm{\omega}_\CM$ on the integral model $\CM$, endowed with its Petersson metrics.

Finally, we define a family of arithmetic divisors with the Kudla's Green function:  for $\xi\in F_{0}^\times$,
\begin{align}\label{eq:Z Ku}
 \wh\mcZ^\bK(ξ,h_\infty,ϕ):=\left(\CZ(ξ,ϕ), \left( \CG^{\bK}_\nu(\xi,h_ {\nu|_{F_0}},\phi) \right)_{\nu} \right)\in\wh\Ch{}^{1} (\CM),
\end{align}
and for $\xi=0$
\begin{align}\label{eq:Z Ku 0}
\wh{\mcZ}^\bK(0, h_\infty, ϕ)=-\phi(0) \,c_1(\wh{\bm{\omega}})+\left(0, \left( \CG^{\bK}_\nu(0,h_ {\nu|_{F_0}},\phi)  \right)_{\nu} \right ),
\end{align}
 where $h_\infty=(h_v)_{v|\infty}\in \bH(F_{0,\BR})$; cf. \eqref{Gr Ku2}. The special value at $h_\infty=1$ is denoted by
 \begin{align}\label{eq:Z Ku h=1}
 \wh\mcZ^\bK(ξ,ϕ):=\left(\CZ(ξ,ϕ), \left( \CG^{\bK}_\nu(\xi,\phi) \right)_{\nu} \right)\in\wh\Ch{}^{1} (\CM), \quad \xi\neq 0,
\end{align}
cf.  \eqref{Gr Ku h=1}.
\part{Intersection with CM cycles}

\section{CM cycles}
\subsection{Derived CM cycles}\label{ss:LCM}
This section recapitulates the definition of the derived CM cycle in \cite[\S7]{Z19}. Recall from \S\ref{s:SV KR} that $\mcM=\CM_{\wt K}\to \Spec O_\bfF[\mfd^{-1}]$ denotes the integral model of the Shimura variety for group $\wt G$ and level $\wt K$. Let $F[t]^\circ_{\deg n}$ denote the monic polynomials $α$ of degree $n$ such that $α(0)t^n\ov{α}(t^{-1}) = α(t)$. For example, the $F$-linear characteristic polynomial of an element of $G(F_0) = \U(V)(F_0)$ lies in $F[t]^\circ_{\deg n}$.

Assume that $(A_0,A,\ov{η})\in \mcM(S)$ with $S$ connected and that $φ\in \End^\circ_F(A)$. Pick any point $s\in S$ and a prime $\ell$ that is invertible in $s$. Then $φ$ acts $F$-linearly on the $\ell$-adic rational Tate-module $V_\ell(A)$ and its $\mbQ_{\ell}\otimes_\mbQ F$-linear characteristic polynomial is known to have coefficients in $F$. It is moreover independent of the choice of $s$ and $\ell$ and we denote it by $\mr{char}(φ)$.

\begin{definition}
Let $α\in F[t]^\circ_{\deg n}$ and $µ\in K_\mfd\backslash G(F_{0,\mfd})/K_\mfd$. The \emph{big fat CM cycle} $\mcC(α,µ)\to \mcM$ is the scheme representing the functor taking $S/O_\bfF[\mfd^{-1}]$ to the set of isomorphism classes in
$$\left\{(A_0,A,\ov{η},φ) \left\vert
\begin{array}{c}\text{$(A_0,A,\ov{η})\in \mcM(S)$ and $φ\in \End_{O_F}(A)[\mfd^{-1}]$}\\
\text{s.th. $φ^*λ_A = λ_A$, $\mr{char}(φ) = α$ and $ηφη^{-1} \in µ$ for $η\in \ov{η}$}\end{array}\right\}\right..$$ 
\end{definition}

An auxiliary role is played by the Hecke correspondence $\mr{Hk}_µ$ defined by $µ$. It is, by definition, the scheme representing the functor taking $S/O_\bfF[\mfd^{-1}]$ to the set of isomorphism classes in
$$\left\{(A_0,A,\ov{η}_A,B,\ov{η}_B,φ) \left\vert
\begin{array}{c}\text{$(A_0,A,\ov{η}_A),\,(A_0,B,\ov{η}_B)\in \mcM(S)$ and}\\
φ\in \Hom_{O_F}(A,B)[\mfd^{-1}]\text{ s.th.}\\
\text{$φ^*λ_B = λ_A$ and $η_Bφη_A^{-1} \in µ$ for $η_A\in \ov{η}_A$, $η_B\in \ov{η}_B$}\end{array}\right\}\right..$$ 
The natural map
\begin{equation}
\label{eq Hecke mu}
\mr{Hk}_µ\to \mcM\times_{\Spec O_\bfF[\mfd^{-1}]}\mcM
\end{equation}
is then finite. It is immediate from the moduli descriptions that there is a decomposition
\begin{equation}\label{eq:Hecke_diag_intersection}
\xymatrix{\mr{Hk}_µ \times_{\mcM\times_{\Spec O_\bfF[\mfd^{-1}]} \mcM} Δ_{\mcM} = \coprod_{α\in F[t]^\circ_{\deg n}} \mcC(α,µ).}
\end{equation}
As \eqref{eq Hecke mu} is a finite map, only finitely many of the $\mcC(α,µ)$ are non-empty. Moreover, one sees that each map $\mcC(α,µ)\to \mcM$ is finite and unramified, which is \cite[Prop.~ 7.9]{Z19}. From $F_0\neq \mbQ$ it follows that $\mcC(α,µ)\to \Spec  O_\bfF[\mfd^{-1}]$ is proper.

We assume from now on that $α$ is irreducible. Then $E := F[t]/(α(t))$ is a degree $n$ field extension of $F$. It comes with a distinguished root of $α$, namely the image of $t$. The Galois involution of $F/F_0$ extends to $E$ by letting $t\mapsto t^{-1}$ and we denote by $E_0$ its fixed field. Whenever $(A_0,A,\ov{η},φ)\in \mcC(α,µ)$ one gets a \emph{$\ast$-embedding}
$$E\to \End^\circ(A),\ t\mapsto φ,$$
meaning an embedding that is equivariant for the Galois involution on the left and the Rosati involution on the right. It follows from the classification of endomorphism rings of abelian varieties (cf. \cite{Mum-AV}) that $\mcC(α,µ)\neq \emptyset$ only for $α$ such that $E_0$ is totally real and (then necessarily) $E/E_0$ a quadratic CM extension. We assume this to be the case and make the further assumption that $α$ is the characteristic polynomial of some element of $G(F_0)$.

The generic fiber $C(α,µ)$ of $\mcC(α,µ)$ is then $0$-dimensional as it consists of points with CM by $E$, but the integral structure may still be complicated, depending on the $O_F$-order generated by $α$. We set
$$\LCM(α,µ)' := [\mcO_{\mr{Hk}_µ}\overset{\mbL}{\otimes} \mcO_{Δ_\mcM}]\vert_{\mcC(α,µ)} \in K'_0(\mcC(α,µ)).$$
Here $K'_0(\mcC(α,µ))$ denotes the $K$-group of coherent sheaves with $\mbQ$-coefficients on the scheme $\mcC(α,μ)$. Concretely, $\LCM(α,μ)'$ is defined as follows. The cohomology sheaves $\mcE^i = H^i(\mcO_{\mr{Hk}_µ}\overset{\mbL}{\otimes} \mcO_{Δ_\mcM})$ have support on the intersection \eqref{eq:Hecke_diag_intersection}. Let $\mcE^i\vert_{\mcC(α,µ)} \subseteq \mcE^i$ denote the direct summand supported on $\mcC(α,µ)$. Then
$$\LCM(α,µ)' = \sum_{i\in \mbZ} (-1)^i [\mcE^i\vert_{\mcC(α,µ)}]$$
where $[\mcE]$ denotes the class of $\mcE$ in $K'_0(\mcC(α,µ))$. The non-trivial statement now is that $\LCM(α,µ)'$ lies in the filtration step generated by sheaves supported on $1$-dimensional subschemes, denoted as $F_1K'_0(\mcC(α,µ))$ in \cite[Appendix B]{Z19}. Given a coherent sheaf $\mcF$ on $\mcC(α,µ)$ with $\leq 1$-dimensional support $C$, one constructs a $1$-cycle as follows. Let $η_1,\ldots,η_r$ be the generic points of the $1$-dimensional irreducible components of $C$. Let $C_i$ denote the closure of $η_i$ with reduced scheme structure. Then take
$$\sum_{i = 1}^r \mr{len}_{\mcO_{C,η_i}} \mcF_{η_i} \cdot [C_i] \in \wt{\mcZ}_1(\mcM).$$
Since $\mcC(α,µ)$ is itself $1$-dimensional away from a finite set of primes, this descends to the $K$-group as a well-defined map
$$F_1K'_0(\mcC(α,µ))\to \mcZ_1(\mcM).$$
We define $\LCM(α,μ)$ as the image of $\LCM(α,μ)'$ in $\mcZ_1(\mcM)$.
This definition easily extends to a weighted version, compare \cite[(7.16)]{Z19}.
\begin{definition}\label{def:cm_cycle}
For $\varphi \in \mcS(K_\mfd\backslash G(F_{0,\mfd})/K_\mfd, \mbQ)$ a Hecke function at $\mfd$, we set
$$\LCM(α,\varphi) := \sum_{µ\in K_\mfd\backslash G(F_{0,\mfd})/K_\mfd} \varphi(µ)\, \LCM(α,µ) \in \mcZ_1(\mcM).$$
\end{definition}

\subsection{Elementary CM cycles}
We fix $α$ as in the previous section, in particular we assume that $E$ is a CM field. We write $F^1 = \{x\in \mr{Res}_{F/\mbQ} \mbG_m \mid N_{F/F_0}(x) = 1\}$ and $F^\mbQ := Z^\mbQ$ in this section. We similarly define the two tori $E^1 = \{x\in \mr{Res}_{E/\mbQ} \mbG_m \mid N_{E/E_0}(x) = 1\}$ and $E^\mbQ = \{x\in \mr{Res}_{E/\mbQ} \mbG_m \mid N_{E/E_0}(x) \in \mbG_m\}$. Consider the product torus
$$T:= F^\mbQ\times_{\mbG_m} E^\mbQ \isoarrow F^\mbQ \times E^1.$$
Given a $*$-embedding $σ\colon E\to  \End(V)$,
$$σ(E^\mbQ) = σ(E)\cap G^\mbQ.$$
Hence any such $σ$ induces an embedding $σ\colon T\to \wt G$ by taking the identity on the $F^\mbQ$-factors. Moreover, $σ$ makes $V$ into a one-dimensional $E/E_0$-hermitian space by lifting the hermitian form along the trace $\tr_{E/F}$. It is of signature $(0,1)$ at precisely one place $φ'_0\colon E_0\to \mbR$ which has to lie above $φ_0$. Let $Φ'$ denote the inverse image of $Φ$ under the restriction map $\Hom(E,\mbC)\to \Hom(F,\mbC)$, which is a CM type for $E$. Define a Shimura datum $h_T$ for $T$ where the $F^\mbQ$-component is \eqref{eq Sh dat Z} and where the $E^1$-component is given by
\begin{equation}\label{eq h_T}
(h_{T,φ'})_{φ'\in Φ'}\colon \Res_{\mbC/\mbR} \mbG_m \to  E^1_\mbR \isoarrow \prod_{φ'\in Φ'} \mbC^1
\end{equation}
with
$$h_{T,φ'}(z) = \begin{cases}
1 & \text{if $φ' \neq φ'_0$}\\
\ov{z}/z & \text{if $φ' = φ'_0$}.\end{cases}$$
Then $σ$ becomes a map of Shimura data, i.e. $σ\circ h_T \in \{\wt h\}$, and we denote by $\mathbf{E}$ the reflex field of $h_T$. It contains $\bfF$, as well as $E$ via $φ'_0$.
\begin{definition}\label{def element CM cycle}
Let $σ\colon E\to \End(V)$ be a $*$-embedding, $g\in G(\mbA_{0,f})$ any element and denote by $C(σ,g)$ the discrete Shimura variety (over $\mathbf{E}$)
$$C(σ,g):=[T(\mbQ)\backslash T(\mbA_f) / σ^{-1}(g\wt Kg^{-1})].$$
It naturally maps to the Shimura variety $\mathbf{E}\otimes_{\mathbf{F}}M_{g\wt Kg^{-1}}$. Applying the Hecke isomorphism $g\wt K\colon M_{g\wt Kg^{-1}} \to  M_{\wt K}$ defines the so-called \emph{elementary CM cycle}
$$C(σ,g) \to  \bfE\otimes_\bfF M_{\wt K}.$$
\end{definition}
Let $\mcC(σ,g)\to \Spec  O_\bfE[\mfd^{-1}]$ denote the integral model obtained from the normal closure of $ O_\bfE$ in the generic fiber $C(σ,g)$. It defines an element of $\mcZ_1( O_\bfE \otimes_{ O_\bfF}\mcM)$, cf. Def. \ref{def Z1}.

\subsection{Connected components}
The group $\wt G = F^{\mbQ}\times_{\mbG_m} G^{\mbQ}(V)$ has center $Z = F^{\mbQ}\times_{\mbG_m} F^{\mbQ}$. Using the isomorphism
$$\begin{array}{rcl}
\wt G & \isoarrow & F^\mbQ\times G\\
(z,g) & \mapsto & (z,z^{-1}g)
\end{array}$$
and that the derived group of $G$ is $\SU(V)$, one sees that $Z$ is also the maximal abelian quotient of $\wt G$,
$$\begin{array}{rcl}
π\colon \wt G^{\mr{ab}} & \isoarrow & Z\\
(z,g) & \mapsto & (z, z^{1-n} \det(g)).
\end{array}$$
\begin{proposition}\label{prop conn comp of M}
Taking $\mbC$-points with respect to the inclusion $\bfF\subseteq \mbC$, the map
$$\begin{aligned}
M_{\wt K}(\mbC) = \wt G(\mbQ)\backslash X\times \wt G(\mbA_f)/\wt K &\to Z(\mbQ)\backslash  Z(\mbA_f)/π(\wt K)\\
[x,h] &\mapsto π(h).
\end{aligned}$$
induces a bijection on connected components.
\end{proposition}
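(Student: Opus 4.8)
The plan is to compute $\pi_0$ of the left-hand side as an adelic double-coset space and compare it, through the quotient map $\pi\colon \wt G\to \wt G^{\mathrm{ab}}\cong Z$, with the right-hand side. Recall that $M_{\wt K}(\mbC) = \wt G(\mbQ)\backslash X\times \wt G(\mbA_f)/\wt K$, where $X$ is the $\wt G(\mbR)$-conjugacy class of $\wt h$. The first thing I would check is that $X$ is \emph{connected}: under $\wt G\cong Z^\mbQ\times \Res_{F_0/\mbQ}G$ it splits as $X_{Z^\mbQ}\times X_G$, where $X_{Z^\mbQ}$ is a single point (the group $Z^\mbQ$ being commutative) and $X_G$ is the orbit of $h$ under $\prod_{\varphi\in \Phi}\U(V_\varphi)$, hence connected because each unitary group $\U(V_\varphi)$ ($=\U(n-1,1)$ or $\U(n)$) is a connected real Lie group. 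Since $\wt G(\mbA_f)/\wt K$ is discrete, the connected components of $X\times \wt G(\mbA_f)/\wt K$ are then the sets $X\times\{g\wt K\}$, which $\wt G(\mbQ)$ merely permutes by left translation on $\wt G(\mbA_f)/\wt K$; this gives $\pi_0(M_{\wt K}(\mbC)) = \wt G(\mbQ)\backslash \wt G(\mbA_f)/\wt K$, and the map of the proposition is identified with $\wt G(\mbQ)g\wt K\mapsto Z(\mbQ)\pi(g)\pi(\wt K)$. It remains to show this map (to a discrete target) is surjective with connected fibres.

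For this I would record three facts about the kernel $\wt G^{\mathrm{der}} = \Res_{F_0/\mbQ}\SU(V)$ of $\pi$. First, $\wt G^{\mathrm{der}}$ is a simply connected semisimple group (a form of $\SL_n$), so $H^1(F_{0,v},\SU(V)) = 1$ at every non-archimedean place $v$ by Kneser's theorem, whence $H^1(\mbQ_p,\wt G^{\mathrm{der}}) = 1$ for all finite $p$ and $\pi\colon \wt G(\mbA_f)\to Z(\mbA_f)$ is surjective; this yields surjectivity of the map on double cosets. Second, $\pi\colon \wt G(\mbQ)\to Z(\mbQ)$ is \emph{also} surjective: in the coordinates $\wt G\cong Z^\mbQ\times \Res_{F_0/\mbQ}\U(V)$, $Z\cong Z^\mbQ\times \Res_{F_0/\mbQ}F^1$, the map reads $(z,g)\mapsto (z, z\det g)$, and $\det\colon \U(V)(F_0)\to F^1(F_0)$ is onto since $\U(1)\hookrightarrow \U(V)$ along a line in an orthogonal decomposition of $V$. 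Third, $\wt G^{\mathrm{der}}$ is $\mbQ$-simple and $\wt G^{\mathrm{der}}(\mbR) = \prod_{\varphi\in\Phi}\SU(V_\varphi)$ is non-compact (because $\SU(V_{\varphi_0}) = \SU(n-1,1)$ is), so by strong approximation $\wt G^{\mathrm{der}}(\mbQ)$ is dense in $\wt G^{\mathrm{der}}(\mbA_f)$.

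Finally I would argue that the fibres are connected. Fix $z\in Z(\mbA_f)$ and a lift $\tilde z\in \wt G(\mbA_f)$. Using the second fact, any class in the fibre over $Z(\mbQ)z\pi(\wt K)$ can be written as $[x, \gamma\tilde z w\wt K]$ with $\gamma\in \wt G(\mbQ)$ and $w\in \wt G^{\mathrm{der}}(\mbA_f)$, and since $\wt G^{\mathrm{der}}$ is normal in $\wt G$ this rewrites as $[x', \tilde z w'\wt K]$; thus the whole fibre is the image of $X\times \tilde z\,\wt G^{\mathrm{der}}(\mbA_f)\wt K/\wt K$. On the (discrete) second factor the normal subgroup $\wt G^{\mathrm{der}}(\mbQ)$ acts, and by the third fact — applied to the still-dense conjugate $\tilde z^{-1}\wt G^{\mathrm{der}}(\mbQ)\tilde z$ — this action is transitive; hence the fibre coincides with the image of the connected set $X\times\{\tilde z\wt K\}$ and is connected. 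Being the preimage of an open point of a discrete space, it is therefore a single connected component, which together with surjectivity gives the claimed bijection on $\pi_0$. The real content, as opposed to formal bookkeeping, is the fibre-connectedness step, and there the two load-bearing inputs are that $\wt G^{\mathrm{der}}$ is simply connected (automatic for special unitary groups) and that $\wt G^{\mathrm{der}}(\mbR)$ is non-compact — which is precisely the signature condition $\mathrm{sign}(V_{\varphi_0}) = (n-1,1)$ built into the Shimura datum. This is the instance for $(\wt G,\{\wt h\})$ of Deligne's general description of $\pi_0$ of a Shimura variety with simply connected derived group.
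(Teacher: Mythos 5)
Your proof is correct and covers the same ground as the paper's, though by a more self-contained route. The paper simply cites Milne's Theorem~5.17 (Deligne's general formula $\pi_0(\Sh_K)\cong T(\mbQ)^\dagger\backslash T(\mbA_f)/\nu(K)$ for a Shimura datum with simply connected derived group) and is then left only to check that $Z(\mbQ)^\dagger = Z(\mbQ)$, which it deduces from the connectedness of $Z(\mbR)$; you reprove the cited theorem from scratch for the specific group $\wt G$. Both arguments rest on the same load-bearing inputs: $\wt G^{\mathrm{der}} = \Res_{F_0/\mbQ}\SU(V)$ is simply connected, has trivial $H^1$ at all finite places by Kneser's theorem (hence $\pi$ is surjective on $\mbA_f$-points), and has non-compact real points so that strong approximation applies. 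Your surjectivity of $\pi\colon\wt G(\mbQ)\to Z(\mbQ)$ plays the same structural role as the paper's step $Z(\mbQ)^\dagger=Z(\mbQ)$. The gain of your approach is that it is self-contained and makes explicit where the signature condition $(n-1,1)$ enters (through non-compactness of $\SU(V_{\varphi_0})$); the paper's gain is brevity. One small notational slip that does not affect the conclusion: your formula $(z,g)\mapsto(z,z\det g)$ is the abelianization $\pi$ written with source in the $Z^\mbQ\times\Res_{F_0/\mbQ}\U(V)$ coordinates but target in the paper's $F^\mbQ\times_{\mbG_m}F^\mbQ$ coordinates on $Z$; in the $Z^\mbQ\times\Res_{F_0/\mbQ}F^1$ coordinates you declare for $Z$, it should read $(z,\det g)$. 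Either way, the fact you actually use, namely that $\det\colon\U(V)(F_0)\to F^1(F_0)$ is onto, is correct.
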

\begin{proof}
This is a direct application of \cite[Thm.~ 5.17]{Mil-Shimura}. The derived group $\wt G^\mr{der}$ equals $\SU(V)$, which is a form of $\SL_n$ and consequently simply connected. It follows that one may apply the cited result to obtain
$$π_0(M_K(\mbC)) \iso Z(\mbQ)^\dagger \backslash Z(\mbA_f)/π(\wt K),$$
where $Z(\mbQ)^\dagger := Z(\mbQ)\cap \mr{Im}(π\vert_Z\colon  Z(\mbR)\to Z(\mbR))$. In the case at hand, the real points $Z(\mbR)$ are connected and hence the map $π\vert_{Z(\mbR)}$ is surjective. It follows that $Z(\mbQ)^\dagger = Z(\mbQ)$ which concludes the proof.
\end{proof}

The degree of an elementary CM cycle $C(σ,g)$, viewed as a function on $π_0(M_{\wt K}(\mbC))$, may be understood through Prop.~ \ref{prop conn comp of M} as well. Namely, independently of $σ$, the composition $π\circ σ$ equals
\begin{equation}\label{eq:defn_nu}
ν := (\id, N_{E/F})\colon F^\mbQ\times E^1\to F^\mbQ\times F^1
\end{equation}
where we have used the identification $Z\iso F^\mbQ\times F^1,\ (z_1,z_2)\mapsto (z_1, z_1^{-1}z_2)$ for the target. The degree of $C(σ,g)$ is then a scalar multiple of the characteristic function $1_{π(g)\mr{Im}(ν(\mbA_f))π(\wt K)}$. We now transfer this observation to big fat CM cycles.

\begin{proposition}\label{prop cm-cycle decomp}
The CM cycle $C(α,µ)$ is a union of elementary CM cycles in the sense that there exists a finite set of pairs $(σ_i,g_i)_{i\in I}$ as in Def.~ \ref{def element CM cycle} such that over $\mbC$
\begin{equation}\label{eq CM_g decomp}
C(α,µ) = \sum_{i\in I} C(σ_i,g_i).
\end{equation}
\end{proposition}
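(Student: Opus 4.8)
The plan is to reduce \eqref{eq CM_g decomp} to a statement about complex points. Both $C(α,µ)$ and the $C(σ_i,g_i)$ are finite over their respective reflex fields, so the cycles in \eqref{eq CM_g decomp} are the pushforwards of the fundamental cycles of these finite schemes along their finite maps to $M_{\wt K}$. Fixing an embedding $\bfE\subseteq\mbC$ extending $\bfF\subseteq\mbC$, it is then enough to produce a finite index set $I$, data $(σ_i,g_i)_{i\in I}$ as in Def.~\ref{def element CM cycle}, and an isomorphism of $\mbC$-schemes over $M_{\wt K}\otimes_\bfF\mbC$
\begin{equation*}
 C(α,µ)\otimes_{\bfF}\mbC \;\cong\; \coprod_{i\in I} C(σ_i,g_i)\otimes_{\bfE}\mbC .
\end{equation*}

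To construct this isomorphism I would start from an $S$-point $(A_0,A,η,φ)$ of $C(α,µ)$, with $S$ a $\mbC$-scheme, and let $[x,h]\in \wt G(\mbQ)\backslash X\times\wt G(\mbA_f)/\wt K$ be its image in $M_{\wt K}$. The endomorphism $φ$ together with the $O_F$-action makes $A$ an abelian variety with CM by the field $E=F[t]/(α)$. Using the canonical identification of $\End^\circ_F(A)$ with the $F$-linear endomorphism algebra of the hermitian $F$-space $\Hom^\circ_F(A_0,A)$ (which is positive definite at all archimedean places and agrees with $V(\mbA_{0,f})$ at the finite ones), the datum of $φ$ becomes a $*$-embedding of the kind occurring in Def.~\ref{def element CM cycle}, hence an embedding of $\mbQ$-tori $σ\colon T\to\wt G$ that realizes $φ$ and whose image $T_φ$ is a $\mbQ$-torus. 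The moduli and signature conditions defining $\mcM$ then force $x=σ\circ h_T$ with $h_T$ as in \eqref{eq h_T}, make $σ$ a morphism of Shimura data, and exhibit $[x,h]$ on the elementary CM cycle attached to $σ$ and to a suitable Hecke translate of $h$.

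Next I would organize the points of $C(α,µ)$ according to the $\wt G(\mbQ)$-conjugacy class of the associated $σ$. The crucial finiteness statement is that, for the fixed $α$, there are only finitely many such classes: a $*$-embedding $σ$ as above amounts to a structure of one-dimensional $E/E_0$-hermitian space lifting the $F$-hermitian form along $\tr_{E/F}$, of the archimedean signature type dictated by \eqref{eq h_T}, and there are finitely many of these by finiteness of the relevant class numbers of hermitian modules. Fixing representatives $σ_i$ (and their $h_T$'s), a point with $σ=\mr{Int}(γ)σ_i$, $γ\in\wt G(\mbQ)$, has $[x,h]=[σ_i\circ h_T,\ γ^{-1}h]$, and the level condition $ηφη^{-1}\in µ$ becomes the requirement that $g:=γ^{-1}h$ lie in a finite, $µ$-determined union of double cosets $T(\mbQ)\backslash T(\mbA_f)\,g_{i,j}\,\wt K/\wt K$, each of which contributes exactly $C(σ_i,g_{i,j})$ after the Hecke translation built into Def.~\ref{def element CM cycle}. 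Re-indexing the pairs $(σ_i,g_{i,j})$ as $(σ_i,g_i)_{i\in I}$ and carrying this out functorially in $S$ gives the morphism into $\coprod_i C(σ_i,g_i)\otimes_\bfE\mbC$. For the inverse, a point of any $C(σ_i,g_i)$ carries an $E$-action through $σ_i$, hence an endomorphism $φ$ with $\mr{char}(φ)=α$ and $φ^*λ=λ$, and the defining double coset of $g_i$ forces $ηφη^{-1}\in µ$; this produces a point of $C(α,µ)$, and the two constructions are evidently mutually inverse over $M_{\wt K}$.

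The hard part will be the previous paragraph, on two counts. First, one must pin down the identification of $\End^\circ_F(A)$ with $\End_F$ of a hermitian space precisely enough that $φ$ yields a torus embedding $T\to\wt G$ which is defined over $\mbQ$ and of exactly the shape admitted in Def.~\ref{def element CM cycle} --- this requires matching $\Hom^\circ_F(A_0,A)$ with $V$ and keeping track of the local invariant at $φ_0$. Second, one needs the finiteness of $\wt G(\mbQ)$-conjugacy classes of $*$-embeddings realizing $α$ with the prescribed signatures. By contrast, translating the adelic condition $ηφη^{-1}\in µ$ into the double-coset bookkeeping of elementary CM cycles, though notationally heavy, is routine.
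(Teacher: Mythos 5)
Your overall strategy is the same as the paper's: pass to complex points, translate the moduli data into the group-theoretic description $[x,h]$ of $M_{\wt K}(\mbC)$, let $φ$ produce a torus element/\mbox{$*$-embedding}, and then organize the resulting uniformization into elementary CM cycles by conjugacy classes of $σ$ and by double-coset bookkeeping of the level datum.

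There is, however, a real slip in the step you flag as the hard part. You propose to make $φ$ into a $*$-embedding via ``the canonical identification of $\End^\circ_F(A)$ with the $F$-linear endomorphism algebra of the hermitian $F$-space $\Hom^\circ_F(A_0,A)$.'' With the paper's conventions, $\Hom^\circ_F(A_0,A)=\Hom_{O_F}(A_0,A)\otimes\mbQ$ is the space of quasi-homomorphisms of abelian varieties, and for a point of the generic fiber of $\mcC(α,µ)$ this space is zero: $A_0$ has CM by $F$ while $A$ has CM by the degree-$n$ extension $E$, so the CM types are incompatible (the paper records $V(A_0,A)=0$ over $C(\bfE)$ in the lemma immediately after Def.~\ref{def CM cycle algebraic}). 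Also, for any $A$ with CM by $E$ one has $\End^\circ_F(A)\cong E$, which is $n$-dimensional over $F$, not $n^2$-dimensional, so the asserted isomorphism with a matrix algebra cannot hold. What is actually needed (and what the parenthetical about positive-definiteness and agreement with $V(\mbA_{0,f})$ suggests you had in mind) is the Betti-homology Hom-space $W(\underline A)=-\Hom_F\big(H_1(A_0(\mbC),\mbQ),\,H_1(A(\mbC),\mbQ)\big)$, which is $n$-dimensional over $F$ and isometric to $V$ by \cite[Thm.~3.5]{RSZ3}; one then lets $φ$ act by post-composition and conjugates by a choice of isometry $γ\colon W(\underline A)\isoarrow V$ to obtain $u:=γφγ^{-1}\in G(F_0)$ with $\mr{char}(u)=α$. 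Once the identification is made with this space, the rest of your argument goes through and matches the paper's double-coset computation.

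On the finiteness of conjugacy classes: you appeal to finiteness of class numbers of hermitian modules, which is the right idea; the paper sidesteps an explicit argument by observing that $\mr{Hk}_µ\to\mcM\times\mcM$ is finite, so $C(α,µ)$ is a finite scheme and the sum in the decomposition must be finite.
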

\begin{proof}
The translation between moduli and group theory description of the complex points $M_{\wt K}(\mbC)$ is given in \cite[Thm.~3.5]{RSZ3}: For a point $\underline{A} = (A_0,A,\ov{η})\in M_{\wt K}(\mbC)$, the $n$-dimensional $F$-vector space
\begin{equation}\label{eq:hermitian_space_for_AV_over_C}
W(\underline{A}) := -\Hom_F(H_1(A_0(\mbC),\mbQ),H_1(A(\mbC),\mbQ))
\end{equation}
is isometric to $V$. \emph{Transport de structure} of the complex structure on $\Hom_F(\Lie A_0, \Lie A)$ and the level structure $\ov{η}$ along \emph{any} choice of isometry $γ\colon W(\underline{A})\isoarrow V$ defines a point $[x,g\wt K]\in X\times \wt G(\mbA_f)/\wt K$ whose $\wt G(\mbQ)$-orbit represents $\underline{A}$.

For fixed $\underline{A}$ and fixed isometry $γ$, corresponding to $[x,g\wt K]$, the $F$-linear, polarization-preserving quasi-endomorphisms $φ$ of $A$ with characteristic polynomial $α$ are in bijection with the set
$$\{u\in G(F_0)\mid \mr{char}(u) = α,\ u x u^{-1} = x\},$$
the bijection being $φ\mapsto γφγ^{-1}$. Elements $u\in G(F_0)$ with $\mr{char}(u) = α$ in turn are in bijection with $*$-embeddings $σ\colon E\to \End(V)$ via $σ(t):=u$. (Here, $t$ continues to denote the distinguished element of $E = F[t]/α(t)$.)
Given $u$, or equivalently $σ$, denote its centralizer in $G$ by $T_u$. Then $x$ factors over $T_{u,\mbR}$ and is, in fact, uniquely determined by the property that the composite map
$$\mbC^\times \overset{x}{\to}  T_{u,\mbR} \overset{σ^{-1}}{\to } T_\mbR$$
equals \eqref{eq h_T}. Taking the level structure into account, one obtains that
\begin{equation}\label{eq CM_g unif}
C(α,µ) = \wt G(\mbQ)\backslash \big\{(x,g,u)\}
\end{equation}
where $x\in X,\ g\in \wt G(\mbA_f)/\wt K,\ u\in G(F_0)$ and the $\wt G(\mbQ)$-action are as follows.
\begin{enumerate}
\item The characteristic polynomial of $u$ is $α$. Let $T_u$ be its centralizer; use $σ\colon t\mapsto u$ as above to identify it with $T$.
\item The map $x\colon \mbC^\times\to \wt G_\mbR$ is determined by $u$ as explained above.
\item The level structure datum $g\wt K$ and $u$ are compatible with $µ$ in the sense that $u \in gµg^{-1}$.
\item The action of $\wt G(\mbQ)$ on such triples is componentwise. It is the usual action on $x$ and $g$ and the conjugation action on $u$.
\end{enumerate}
All that is left is to rewrite \eqref{eq CM_g unif} in terms of elementary CM cycles. For this we fix representatives $u_i$ for $ G(F_0)\backslash \{u\in G(F_0)\text{ s.th. }\mr{char}(u) = α\}$ and decompose $µ$,
$$µ = \coprod_{j\in J} \wt Km_j.$$
Let $σ_i\colon T\to \wt G$ be the $*$-embedding corresponding to $u_i$ with image $T_i$. Let $x_i\in X$ be the point determined by $σ_i$. Then
$$\begin{aligned}
C(α,µ) &= \sum_{i,j} T_i(\mbQ)\backslash \left(\sum_{g \in \wt G(\mbA_f)/\wt K} 1_{(g\wt K m_jg^{-1})}(u_i)\cdot (x_i, g, u_i)\right)\\
& = \sum_{i,j} \sum_{\ov g\in T_i(\mbA_f)\backslash \wt G(\mbA_f)/\wt K} 1_{\ov g \wt K m_j\ov g^{-1}}(u_i)\cdot T_i(\mbQ)\backslash \left( \sum_{s\in T_i(\mbA_f)/T_i(\mbA_f)\cap \ov g\wt K\ov g^{-1}} (x_i, s\ov{g}, u_i)\right).
\end{aligned}$$
Here, the intersection $T_i(\mbA_f)\cap \ov g \wt K\ov g^{-1}$ is independent of the chosen representative $\ov g$. The innermost sum agrees with
$$\left[T_i(\mbQ)\backslash T_i(\mbA_f)/(T_i (\mbA_f)\cap \ov g\wt K\ov g^{-1})\right] \cdot \ov g\wt K,$$
which is precisely the elementary cycle $C(σ_i,\ov g)$.
\end{proof}

\begin{corollary}\label{cor cm-cycle degree decomp}
The degree of $C(α,µ)$, as a function on the set of connected components, is constant along $\mr{Im}(ν(\mbA_f))$-cosets.
\end{corollary}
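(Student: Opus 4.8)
The plan is to deduce this from the decomposition of $C(\alpha,\mu)$ into elementary CM cycles (Proposition~\ref{prop cm-cycle decomp}) together with the group-theoretic description of the connected components (Proposition~\ref{prop conn comp of M}). Via the bijection $\pi_0(M_{\wt K}(\mbC))\cong Z(\mbQ)\bs Z(\mbA_f)/\pi(\wt K)$ of the latter, ``constancy along $\mr{Im}(\nu(\mbA_f))$-cosets'' means invariance under the left translation action of the subgroup $\mr{Im}(\nu(\mbA_f))\subseteq Z(\mbA_f)$ on this double coset set, where $\nu$ is the map of \eqref{eq:defn_nu}.

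First I would observe that the degree function is additive under the decomposition: pushing $C(\alpha,\mu)$ and each $C(\sigma_i,g_i)$ forward to $M_{\wt K}\otimes_\bfF\mbC$ and reading off degrees on connected components, Proposition~\ref{prop cm-cycle decomp} shows that the degree function of $C(\alpha,\mu)$ is the finite sum of those of the $C(\sigma_i,g_i)$. Hence it suffices to treat a single elementary CM cycle $C(\sigma,g)$. For such a cycle I would invoke the fact recorded in the paragraph preceding Proposition~\ref{prop cm-cycle decomp}: its degree, as a function on $\pi_0(M_{\wt K}(\mbC))$, is a scalar multiple of the indicator function of $\pi(g)\,\mr{Im}(\nu(\mbA_f))\,\pi(\wt K)$. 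The remaining point is purely formal: since $Z$ is abelian, this subset of $Z(\mbQ)\bs Z(\mbA_f)/\pi(\wt K)$ is a single orbit of the left translation action of $\mr{Im}(\nu(\mbA_f))$, namely the orbit of the class of $\pi(g)$, because $Z(\mbQ)\,\mr{Im}(\nu(\mbA_f))\,\pi(g)\,\pi(\wt K)=Z(\mbQ)\,\pi(g)\,\mr{Im}(\nu(\mbA_f))\,\pi(\wt K)$. An indicator function of a single orbit is constant along orbits, and so is any scalar multiple; thus the degree of $C(\sigma,g)$ is constant along $\mr{Im}(\nu(\mbA_f))$-cosets, and summing over $i\in I$ yields the statement for $C(\alpha,\mu)$.

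I do not expect a genuine obstacle, since the substance is carried by the two cited propositions. The one point that should be stated with a little care is that the Hecke isomorphism $g\wt K\colon M_{g\wt Kg^{-1}}\to M_{\wt K}$ from Definition~\ref{def element CM cycle} acts on $\pi_0$ by translation by $\pi(g)$ (so that the image of the elementary cycle $C(\sigma,g)$ in $\pi_0(M_{\wt K}(\mbC))$ is the claimed coset); but this compatibility is already absorbed into the formula for the degree of an elementary CM cycle that we are using, so nothing further is needed.
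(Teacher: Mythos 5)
Your proof is correct and follows the same route as the paper: decompose $C(\alpha,\mu)$ into elementary CM cycles via Proposition~\ref{prop cm-cycle decomp}, note that each $C(\sigma_i,g_i)$ has degree function a scalar multiple of the indicator of the coset $\pi(g_i)\mr{Im}(\nu(\mbA_f))\pi(\wt K)$, and sum. The paper states this in one sentence; you merely spell out the (easy, but worth-noting) point that because $Z$ is abelian, such a subset is a single $\mr{Im}(\nu(\mbA_f))$-orbit, hence its indicator is constant along orbits.
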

\begin{proof}
Each of the cycles $C(σ_i,g_i)$ in \eqref{eq CM_g decomp} has degree function a scalar multiple of the indicator function of a double coset $1_{\mr{Im}(π(g_i)ν(\mbA_f))π(\wt K)}$.
\end{proof}

\subsection{Prescribed good reduction}
It is the derived cycle $\LCM(α,µ)$ that is of interest in the context of the AFL. The following result allows to suitably modify it by elementary CM cycles with maximal level at the prime one is interested in. Write $Λ = Λ^{\mfd}\subseteq V$ for the $ O_F[\mfd^{-1}]$-lattice used to define the level structure away from $\mfd$ in §\ref{ss:integral models}.

\begin{proposition}\label{prop modif cycle exists}
Let $p\nmid \mfd$ be a prime. There exists a $*$-embedding $σ\colon E\to \End(V)$ such that $Λ_p$ is $ O_{E,p}$-stable. For any such $σ$ there are pairs $λ_j \in \mbQ,\ g_j\in G(\mbA_{0,f}^p)$, with $j\in J$ finite, such that $C(α,µ) - \sum_{j\in J} λ_j \cdot C(σ,g_j)$ has degree $0$ on each connected component of $M_{\wt K}(\mbC)$.
\end{proposition}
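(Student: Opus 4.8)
The plan is to split the statement into two independent assertions: (i) the \emph{existence} of a $*$-embedding $\sigma\colon E\to \End(V)$ for which $\Lambda_p$ is $O_{E,p}$-stable, and (ii) the fact that, \emph{for any} such $\sigma$, the difference $C(\alpha,\mu)-\sum_j\lambda_j\,C(\sigma,g_j)$ can be arranged to have degree $0$ on each connected component of $M_{\widetilde K}(\mbC)$, using only cycles $C(\sigma,g_j)$ with $g_j\in G(\mbA_{0,f}^p)$. For (i), recall that $E/F$ is an unramified field extension of local fields at $p$ (because $p\nmid\mfd$ forces $p$ to be of good, unramified type, and $E_0/E$ being CM with $p$ odd means $E_p/F_p$ is a product of unramified field extensions); here one uses that $\alpha$ is the characteristic polynomial of an element of $G(F_0) = \U(V)(F_0)$, so $V_p$ is a free rank-one $E_p$-module compatible with the hermitian form up to the trace $\tr_{E/F}$. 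Since $\Lambda_p$ is a self-dual $O_{F,p}$-lattice (as $p\nmid\mfd$) and $E_p/F_p$ is unramified, the standard structure theory of hermitian lattices over unramified extensions shows there is an $O_{E,p}$-stable self-dual lattice $\Lambda_p'$, and by Witt-type transitivity one can conjugate $\sigma$ by an element of $G(F_{0,p})$ to move $\Lambda_p'$ onto $\Lambda_p$; this produces the desired $\sigma$, which can be taken globally since the local condition at $p$ is the only constraint beyond being a $*$-embedding, and $*$-embeddings $E\to\End(V)$ exist because $\alpha$ is realized in $\U(V)(F_0)$.

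For (ii), the key input is Corollary~\ref{cor cm-cycle degree decomp} together with Proposition~\ref{prop conn comp of M}: the degree function $\deg C(\alpha,\mu)$ on $\pi_0(M_{\widetilde K}(\mbC))\iso Z(\mbQ)\backslash Z(\mbA_f)/\pi(\widetilde K)$ is constant along cosets of $\mr{Im}(\nu(\mbA_f))$, where $\nu = (\id,N_{E/F})$. Each elementary cycle $C(\sigma,g)$ likewise has degree a scalar multiple of $1_{\pi(g)\mr{Im}(\nu(\mbA_f))\pi(\widetilde K)}$. Therefore the span of the degree functions of the $C(\sigma,g)$, as $g$ ranges over $G(\mbA_{0,f})$, is exactly the space of functions on $\pi_0$ that are constant along $\mr{Im}(\nu(\mbA_f))$-cosets — which contains $\deg C(\alpha,\mu)$. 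Hence there exist $\lambda_j\in\mbQ$ and $g_j\in G(\mbA_{0,f})$ with $\deg C(\alpha,\mu) = \sum_j\lambda_j\deg C(\sigma,g_j)$ on every connected component, i.e. the difference has degree $0$ everywhere. The only remaining point is the constraint $g_j\in G(\mbA_{0,f}^p)$: since we chose $\sigma$ so that $\Lambda_p$ is $O_{E,p}$-stable, the torus $T$ has a well-defined maximal compact at $p$ contained in $\sigma^{-1}(g\widetilde K g^{-1})$ for $g$ trivial at $p$; more importantly, the relevant cosets $\pi(g)\mr{Im}(\nu(\mbA_f))\pi(\widetilde K)$ that we need to hit are already achievable using $g$'s that are trivial at $p$, because at $p$ the group $\mr{Im}(\nu(G(F_{0,p})))$ together with $\pi(\widetilde K_p)$ already surjects onto the local component $Z(\mbQ_p)/\pi(\widetilde K_p)$ of the class set — this follows from $\Lambda_p$ being self-dual and $O_{E,p}$-stable, so that the local norm map is surjective onto the unit part and the target at $p$ is exhausted by the maximal compact.

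I would organize the write-up as: first prove existence of $\sigma$ (a lattice-theoretic lemma over the unramified extension $E_p/F_p$), then observe that by Corollary~\ref{cor cm-cycle degree decomp} the function $\deg C(\alpha,\mu)$ lies in the $\mbQ$-span of $\{\deg C(\sigma,g):g\in G(\mbA_{0,f})\}$, and finally argue that one may take the $g$'s in $G(\mbA_{0,f}^p)$ by analyzing the component group's local factor at $p$ and using that $\nu$ is surjective on $p$-adic points modulo the maximal compact. The main obstacle I expect is this last step — controlling the place $p$: one must verify that restricting to level structures that are maximal (and fixed) at $p$ does not shrink the achievable set of degree functions, which comes down to a surjectivity statement for the local norm/similitude maps at $p$ combined with the precise description of $\pi_0$ from Proposition~\ref{prop conn comp of M}. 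The existence of $\sigma$ is comparatively routine given the good-reduction hypotheses, but it is genuinely needed to make the $p$-adic maximality meaningful.
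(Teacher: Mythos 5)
Your treatment of the second assertion is correct and essentially the paper's argument: one observes via Cor.~\ref{cor cm-cycle degree decomp} that $\deg C(\alpha,\mu)$ lies in the $\mbQ$-span of the functions $\deg C(\sigma,g)$, which are nonzero scalar multiples of the indicators $1_{\pi(g)\mathrm{Im}(\nu(\mbA_f))\pi(\wt K)}$, and then one shows that representatives $g_j$ can be taken trivial at $p$. (The paper phrases the latter via weak approximation for $F^1$ and surjectivity of $\pi\colon G(\mbA_f)\to F^1(\mbA_f)$; your appeal to surjectivity of the local norm at $p$ is a close variant of the same point.)

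The first assertion, however, has a genuine gap. You claim that $E_p/F_p$ is a product of unramified field extensions ``because $p\nmid\mfd$.'' That is not implied by the hypotheses: $p\nmid\mfd$ constrains only the ramification of $F/F_0$ and the existence of a self-dual $\Lambda_p$, whereas $E_0/F_0$ is an arbitrary degree-$n$ totally real extension and can be ramified above $p$. Even setting this aside, ``standard structure theory over unramified extensions'' does not give what you want: for a fixed $*$-embedding $\sigma$, the induced rank-one $E/E_0$-hermitian structure on $V_p$ need not admit any self-dual $O_{E,p}$-lattice at all. For each place $v$ of $E_0$ above $w\mid p$ that is inert in $E$, the factor $V_v$ admits a self-dual $O_{E_v}$-lattice only when it is quasi-split as an $F_w$-hermitian space, and a randomly chosen $\sigma$ can fail this at some such $v$. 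The content of the paper's argument is a parity trick that you do not address: since $V_w$ itself is quasi-split, the number of ``bad'' (non-quasi-split) inert factors above a given $w$ is even; twisting the $1$-dimensional $E$-hermitian structure at exactly those bad places produces a new space $V'$ which is $F$-isometric to $V$ (the discriminant changes at an even number of places) and all of whose inert factors are quasi-split. Transporting $\sigma$ through an isometry $V\iso V'$ yields a $*$-embedding for which some self-dual $O_{E,p}$-lattice exists, and only then does one conjugate (using density of $\U(V)$ in $\U(V_p)$ and transitivity on self-dual lattices) to move that lattice onto $\Lambda_p$. Without this modification step, the existence of a suitable $\sigma$ is not established.
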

\begin{proof}
For the first statement, it is enough to find a $*$-embedding $σ$ such that there is \emph{some} self-dual $ O_{E,p}$-lattice in $V_p$. Indeed, the group $\U(V_p)$ acts transitively on the set of self-dual lattices in $V_p$. The claim then follows from the fact that $\U(V)$ is dense in $\U(V_p)$ which allows to conjugate $σ$ suitably.

We have assumed $α$ to be the characteristic polynomial of an element of $G(F_0)$, so there exists some $*$-embedding $σ$ and we fix one. Let $w$ be a place of $F_0$ above $p$ and let $V_{w}$ be the corresponding factor of $V_p$. If $w$ is split in $F$, then all places $v$ of $E_0$ above $w$ are also split (in $E$) and the existence of a self-dual $ O_{E,w}$-lattice in $V_{w}$ is automatic.

So assume that $w$ is inert in $F$ and consider the orthogonal decomposition
$$V_{w} = V_{w,\mr{split}} \times \prod_{v\mid w\ \mr{non-split}} V_{v}$$
where the split factor is the product of all factors for places $v\mid w$ of $E_0$ that split in $E$ and where the remaining factors are for the inert $v$. (Note that $v$ is necessarily inert if it is non-split because $w$ is inert in $F$.) We claim that a self-dual $ O_{E,w}$-lattice exists if and only if all inert factors $V_{v}$ are quasi-split as $F_w$-hermitian spaces, i.e. have discriminant in $N_{F/F_0}(F_w^\times)$. Namely in this case let $L\subseteq V_{v}$ be any $ O_{E,v}$-lattice. Then the dual lattice $L^\vee$ is also $ O_{E,v}$-stable and hence of the form $L^\vee = π^rL$ where $π\in E_{v}$ is a uniformizer. The index $[L:π^rL]$ is even since $V_{v}$ is assumed quasi-split, but it also equals $rf$ where $f$ is the inertia degree of $E_{v}/F_{w}$. This degree is necessarily odd, so $r$ has to be even and $π^{r/2}L$ is a self-dual lattice.

Since $V_{w}$ is itself quasi-split, there is an even number of ``bad'' non-quasi-split factors $V_{v}$. We claim that one may suitably replace those in a sense we now explain.
Lifting the hermitian form along $\mr{tr}_{E/F}$ makes $V$ into a $1$-dimensional hermitian $E$-vector space. Changing its discriminant at the evenly many ``bad'' $v$ just exhibited defines a new $1$-dimensional $E/E_0$-hermitian space $V'$. Composing its hermitian form again with $\mr{tr}_{E/F}$ defines an $n$-dimensional hermitian $F$-vector space $V'$ together with a $*$-embedding $σ'\colon E\to \End(V')$.

We claim that $V\iso V'$ as hermitian $F$-vector spaces and that all inert factors $V'_{v}$ of $V'$ are quasi-split. Indeed, the localizations $V_{u}$ and $V'_{u}$ are isomorphic for all places $u\neq w$ of $F_0$ by construction. Since $V$ and $V'$ must differ at an even number of places, this already implies $V\iso V'$ over $F$. The claim on the inert factors of $V'$ follows from the elementary fact that two $E_{v}$-hermitian spaces are isomorphic if and only if their restrictions to $F_{w}$ along the trace of $E_{v}/F_{w}$ are.
Taking $σ\colon E\to \End(V)$ as the embedding induced from some choice of isometry $V\iso V'$ concludes the proof of the first statement in the proposition.

Now fix $σ$ such that $Λ_p$ is $ O_{E,p}$-stable. Let
$$J:=Z(\mbQ)\backslash  Z(\mbA_f)/π(\wt K)\mr{Im}(ν(\mbA_f)).$$
For each $j\in J$ choose some element $g_j\in G(\mbA_{0,f}^p)$ lifting $j$. Note for the existence of $g_j$ that $ν\vert_{F^\mbQ}$ is the identity and we are merely considering $j$ up to $\mr{Im}(ν(\mbA_f))$. So we only need weak approximation for $F^1$ (i.e. the density of $F^1(\mbQ) \subseteq F^1(\mbQ_p)$), which follows from the fact that any element of $F^1(\mbQ_p)$ is of the form $x/σ(x)$ with $x\in F^\times(\mbQ_p)$. Then we use the surjectivity of $π\colon G(\mbA_f)\to F^1(\mbA_f)$. Setting
$$λ_j:= \frac{\deg C(α,µ)_j}{\deg C(σ,g_j)},$$
where $C(α,µ)_j$ is the part of the cycle contained in the connected components bundled by $j$, constructs the sought for $(λ_j,g_j)$ by Cor.~ \ref{cor cm-cycle degree decomp}.
\end{proof}

\section{Elementary CM cycle intersection}\label{s:el CM}
\subsection{Main Result}
\label{ss:Setting}
Throughout this section, $σ\colon E\to \End(V)$ is a $*$-embedding such that $Λ[\mfd^{-1}]$ is stable under $ O_E[\mfd^{-1}]$ and $g\in G(F_{0,\mfd})$.
Let $C:=C(σ,g)$ be the corresponding elementary CM cycle, denote by $\mcC := \mcC(σ,g) \in \mcZ_1( O_\bfE\otimes_{ O_\bfF} \mcM)$ its integral model viewed as a cycle.

Also, $ϕ = ϕ_\mfd \otimes 1_{\wh {Λ}^\mfd} \in \mcS(V(\mbA_{0,f}))^K$ will always denote a Schwartz function that is the standard function away from $\mfd$. Even though all definitions and results in the following only depend on the $ϕ_\mfd$-component, we will usually use $ϕ$ in our notation.

The arithmetic intersection formalism from \S\ref{ss:AIP} provides intersection numbers
$$\left( \widehat{\mcZ}^\bK(ξ,h_\infty,ϕ),\ \mcC \right) \in \mbR_\mfd$$
for $ξ\in F_0$ and $h_\infty\in \bH(F_{0,\mbR})$. The result below relates these to the Fourier coefficients of a modular function. Since $h_\infty$ is allowed to vary, it will be slightly stronger to formulate this for uniform (in $h_\infty$) lifts to $\mbR$. To this end, denote by
$$\left\langle\widehat{\mcZ}^\bK(ξ,h_\infty,ϕ),\ \mcC \right\rangle\in \mbR$$
the following smooth function in $h_\infty$. If $ξ \neq 0$, it is the intersection number in the sense of \eqref{eq:example_pairing}. Thm.~\ref{thm CM inter} below shows that $\mcZ(ξ,ϕ) \cap \mcC$ is artinian for $\xi\neq 0$, so \eqref{eq:example_pairing} is applicable. For $ξ = 0$, in accordance with definition \eqref{eq:Z Ku 0}, choose any lift $λ$ of $(-\phi(0) \, c_1(\wh{\bm{\omega}}), \mcC)$ to $\mbR$, then put
\begin{equation}\label{eq:def_lift_omega}
\left\langle\widehat{\mcZ}^\bK(0,h_\infty,ϕ),\ \mcC \right\rangle = λ + \left\langle  \left(0, \left( \CG^{\bK}_\nu(0,h_ {\nu|_{F_0}},\phi)  \right)_{\nu} \right ) ,\ \mcC \right\rangle.
\end{equation}

For each non-archimedean place $v$ of $F_0$, we define a distinguished maximal compact open subgroup of $\bH( F_{0,v})$
\begin{align}\label{eq:max open}
K_{\bH,v}^\circ:=m'( c_v)^{-1} \bH( O_{F_0,v}) m'( c_v),
\end{align}
where $c_v O_{F_0,v}=\delta_{F_0/\BQ,v}$ and $m'( c_v)= \left(\begin{matrix}c_v &\\ &1\end{matrix}\right)$ (note that $m'( c_v)$  is in $\GL_2(F_{0,v})$ rather than $\SL_2(F_{0,v})$). Set $K_{\bH}^{\mfd,\circ}=\prod_{v\nmid\fkd}K_{\bH,v}^\circ$.
\begin{theorem}\label{thm elementary CM modularity}
Assume that the CM type $Φ$ is unramified at all $p\nmid \mfd$ in the sense below. Assume further that $ϕ_\mfd \in \mcS(V(F_{0,\mfd}))^{K_\mfd\times K_{\bH,\mfd}}$ is $K_{\bH, \mfd}$-invariant under the open compact subgroup $K_{\bH, \mfd} \subseteq \bH(F_{0,\mfd})$ and put $K_\bH=K_{\bH,\mfd}\times K_{\bH}^{\mfd,\circ}$.

Then there exist coefficients $ε_ξ \in \big(\sum_{\ell\mid \mfd} \mbQ \log \ell\big)$ when $ξ \neq 0$, resp. $ε_0\in \mbR$, such that the generating series
\begin{equation}\label{eq CM gen series}
\sum_{ξ\in F_0}\left[ε_ξ + \left\langle \widehat{\mcZ}^\bK(ξ,h_\infty,ϕ),\ \mcC \right\rangle \right]W_\xi^{(n)}(h_\infty) 
\end{equation}
is the restriction to $h_\infty\in \bH(F_{0,\BR})$ of some element in $\CA_\infty(\bH(\BA_0),K_\bH,n)$.
\end{theorem}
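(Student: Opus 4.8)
The plan is to reduce Theorem~\ref{thm elementary CM modularity} to the known modularity of the incoherent Siegel--Eisenstein series attached to the one-dimensional hermitian $E/E_0$-space $V$ (viewed through $\sigma$), together with the archimedean modularity already established in Theorem~\ref{thm:dif inf}. Concretely, the elementary CM cycle $\mcC = \mcC(\sigma,g)$ is cut out by CM points for the torus $T = F^{\mbQ}\times_{\mbG_m} E^{\mbQ}$, so its intersection with a KR divisor $\mcZ(\xi,\phi)$ records, place by place, local densities of representations of $\xi$ by the rank-one hermitian lattice $O_{E}$-module that $\sigma$ endows on $\Lambda$. The first step is therefore to establish a \emph{local--global decomposition} of the geometric intersection number: for $\xi\neq 0$,
$$
\big(\widehat{\mcZ}^{\bK}(\xi,h_\infty,\phi),\ \mcC\big)
= \sum_{\nu} \big(\widehat{\mcZ}^{\bK}(\xi,h_\infty,\phi),\ \mcC\big)_\nu
$$
as an identity in $\mbR_\mfd$, where the sum runs over all places $\nu$ of $\mathbf{E}$; the archimedean terms come from Kudla's Green function $\mcG^{\bK}_\nu(\xi,h_{\nu|_{F_0}},\phi)$, while each finite $\nu\nmid\mfd$ term is an arithmetic-length contribution computed on the integral model $\mcC\to\Spec O_{\mathbf E}[\mfd^{-1}]$. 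This is exactly the kind of decomposition proved by Howard~\cite{H-CM} (and earlier Kudla--Rapoport--Yang~\cite{KRY}) when $F_0 = \mbQ$; I would reprove it over a general totally real $F_0$ using the moduli description in \S\ref{ss:LCM}--\S\ref{s:el CM} and the fact that $\mcC$ has good (or at worst mild) reduction at $p\nmid\mfd$ by Proposition~\ref{prop modif cycle exists} and the unramifiedness hypothesis on $\Phi$.

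The second step is to identify each local term with the Fourier coefficient of a local Whittaker/Eisenstein factor. For a finite place $v\nmid\mfd$ the intersection multiplicity of $\mcZ(\xi,\phi)$ with the CM point at $v$ is, by Serre's Tor-formula combined with the deformation theory of $p$-divisible groups with $O_E$-action (the Gross--Keating / Kudla--Rapoport style computation, here in the split or unramified-inert situation where it is elementary), equal to the $v$-adic density $\mr{Den}_v(\xi,\phi_v)$ representing $\xi$ by the hermitian $O_{E,v}$-module. By the Siegel--Weil formula for the rank-one hermitian space this density is precisely the $v$-th local Whittaker function $W_{\xi,v}(1,s_0,\phi_v)$ of the Siegel--Eisenstein series $E(h, s, \Phi^{(E)})$ of weight $n$ on $\bH$ attached to the incoherent collection built from $V$ over $E_0$. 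The archimedean factors are matched using Lemma~\ref{lem:Del Phi0} and the explicit shape of $\mcG^{\bK}_\nu$ in \eqref{Gr Ku1}: the generating series of $\big(\widehat{\mcZ}^{\bK}(\xi,h_\infty,\phi),\mcC\big)_\nu$ over $\xi$ at an archimedean $\nu$ is the corresponding archimedean Whittaker function $W^{(n)}_{\xi,\infty}$, up to the harmless $\mr{Ei}$-correction which is absorbed into $\varepsilon_0$ and the choice of Green function. Assembling the product over all places and summing over $\xi$ then yields, after adding the correction constants $\varepsilon_\xi\in\sum_{\ell\mid\mfd}\mbQ\log\ell$ that account for the $S$-punctured ambiguity in $\mbR_\mfd$ and for the normalization of $\lambda$ in \eqref{eq:def_lift_omega}, precisely the value $E(h_\infty, s_0, \Phi^{(E)}_f)$ of an honest Siegel--Eisenstein series of weight $n$, which lies in $\CA_\infty(\bH(\BA_0),K_\bH,n)$ (in fact in the holomorphic subspace); the level $K_\bH = K_{\bH,\mfd}\times K_{\bH}^{\mfd,\circ}$ with the twisted maximal compacts \eqref{eq:max open} is exactly the level that appears because of the different $\delta_{F_0/\mbQ}$ entering the local sections at $v\nmid\mfd$.

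The main obstacle I anticipate is the local intersection computation at the finite non-archimedean places over a general $F_0$, i.e. the rigorous version of step one: one must show that the derived tensor product defining $\LCM$ (or rather the honest scheme-theoretic intersection $\mcZ(\xi,\phi)\cap\mcC$, which Theorem~\ref{thm CM inter} asserts is artinian) has length equal to the predicted local density at every $v\nmid\mfd$, including the inert and ramified-in-$E$ behaviour of places of $F_0$ that are split or unramified in $F$. When $F_0=\mbQ$ this is Howard's theorem \cite{H-CM}; the generalization is "straightforward" in the sense that the deformation theory is insensitive to the base totally real field once $p$ is unramified in $F$ and $\Phi$, but it requires carefully checking that the RSZ moduli problem localizes correctly at each place of $E_0$ above $p$ and that the normalization $\mcC(\sigma,g)$ of $O_{\mathbf E}$ does not introduce extra length. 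A secondary, purely bookkeeping, obstacle is tracking all constants into $\mbR_\mfd$ versus $\mbR$: the $\varepsilon_\xi$ for $\xi\neq 0$ must genuinely lie in $\sum_{\ell\mid\mfd}\mbQ\log\ell$ (so that the statement in $\CA_\infty$ is meaningful), which forces one to be precise about how the vertical components at $\ell\mid\mfd$ of $\mcZ(\xi,\phi)$ and $\mcC$ — which are \emph{not} seen by the $\mfd$-punctured model — are discarded, and to invoke Corollary~\ref{cor cm-cycle degree decomp} to handle the $\xi=0$ term consistently across connected components.
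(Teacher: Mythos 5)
Your overall architecture — decompose the intersection of $\widehat{\mcZ}^\bK(\xi,h_\infty,\phi)$ with $\mcC$ into local contributions (Prop.~\ref{prop CM intersection decomp}), compute the nonarchimedean lengths by Gross-type deformation theory (\S\ref{subsect deg nonarch}), match each with a local analytic quantity, and assemble — is indeed the paper's strategy. But the identity of the analytic object and the handling of the archimedean place both contain errors.

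You want to assemble into "the value $E(h_\infty,s_0,\Phi^{(E)}_f)$ of an honest Siegel--Eisenstein series... (in fact in the holomorphic subspace)," with the $\Ei$-correction "absorbed into $\varepsilon_0$." This is wrong on three counts. First, the geometric nonarchimedean lengths are $\max\{0,\tfrac{1+v(\delta_{E_0/F_0}\zeta)}{2}\}\log q_v$ (Thm.~\ref{thm CM inter}~(2)), which match the \emph{derivative} $\partial\Orb(\zeta,\phi'_v)$ of \eqref{eq FC of derivative key identity}, not the value $\Orb(\zeta,\phi'_v)$; so the analytic side must be $\partial_s\big|_{s=0}$ of the generating integral, not its value. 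Second, the archimedean contribution $-\tfrac{\tau(Z^\mbQ)[\bfE:E_0]}{2}\Ei(4\pi a_v v(\zeta))\Orb(\zeta,\phi)$ from Thm.~\ref{thm CM inter}~(3) depends on $h_\infty$ (through $a_v$) and on $\zeta$, so it is a genuine non-constant, non-holomorphic piece of the $\xi$-th Fourier coefficient, matched by $\partial\Orb(\zeta,h_v,\phi'_v)$ in Prop.~\ref{prop orb inf}; it cannot be absorbed into the constants $\varepsilon_\xi\in\sum_{\ell\mid\mfd}\mbQ\log\ell$, and this is exactly why the statement is $\CA_\infty$, not $\CA_{\rm hol}$. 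Third, the paper does \emph{not} identify the analytic generating function with a standard Siegel--Eisenstein series: its modularity comes from the smoothness/entirety of the regularized theta integral $J(h,\phi',s)$ of \eqref{Zan} via Thm.~\ref{thm n=1 gl} (from \cite[Thm.~12.9]{Z19}), and Remark~\ref{rem SE} explicitly cautions that $J$ "may not be associated to a standard section" for the test functions appearing here. If you want to invoke Siegel--Eisenstein modularity directly you would first have to prove this identification, which the paper deliberately sidesteps. Finally, the $K_\bH$-invariance at the good places is not automatic from "standard sections": it is supplied by Lem.~\ref{lem:Weil KH}, which checks invariance under the $\delta_{F_0/\mbQ}$-twisted maximal compact \eqref{eq:max open} for the specific transfers $\phi'_{c_w}$.
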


\begin{definition}\label{def:unramified_CM_type}
The CM type $Φ$ is unramified at a prime $p$ if it satisfies the following condition.  For any choice of isomorphism $ι\colon \mbC\isoarrow \mbC_p$ (equivalently, for one choice), the resulting $p$-adic CM type $ι\circ Φ$ is the inverse image under $\Hom(F_p,\mbC_p)\to \Hom(F_p^u,\mbC_p)$ of a $p$-adic CM type $Φ^u$ for $F_p^u$. Here, $F_p^u$ denotes the maximal subalgebra of the $p$-adic completion $F_p$ that is unramified over $\mbQ_p$.
\end{definition}

Assuming $Φ$ to be unramified simplifies the computation of intersection numbers in that it allows to apply the comparison isomorphism from \cite{M-Th}. It is also the assumption that makes the arguments of Howard \cite{H-CM} from the case $F_0 = \mbQ$ go through without essential changes. (Note that $Φ$ is trivially unramified if $F_0 = \mbQ$.)

The proof of Thm. \ref{thm elementary CM modularity} is by computing the intersection numbers in question and comparing them with the Fourier coefficients of a known-to-be-modular analytic generating series, which is carried out in the current section and the next. The final assembly of results will happen after Prop. \ref{prop:Weil del J em}. The following remark provides two a priori reduction steps.

\begin{remark}\label{rmk:intersection_generating_series}
(1) An immediate relation for the intersection numbers of interest is
$$\left\langle \widehat{\mcZ}^\bK(ξ,h_\infty,ϕ),\ \mcC(σ,g)\right\rangle_{O_{\bfE}\otimes_{O_\bfF}\mcM_{\wt K}} = \left\langle \widehat{\mcZ}^\bK(ξ,h_\infty,gϕ),\ \mcC(σ,1)\right\rangle_{O_{\bfE}\otimes_{O_\bfF}\mcM_{g\wt K g^{-1}}}$$
where the two intersections are taken for the Shimura variety of the indicated level (after base change to $ O_\bfE[\mfd^{-1}]$). In particular we may (and will) assume that $g = 1$ from now on.

(2) Extending the base field from $\bfE$ to $\bfE'$ in the above definition multiplies the intersection numbers by $[\bfE':\bfE]$. So we may replace $\bfE$ by a larger number field and assume in the following that all the points $C(\mbC)$ are defined over $\bfE$. Moreover, we are free to compute the intersection numbers on the normalization of $\mcC$ and will take $\mcC := \coprod_{C(\bfE)} \Spec  O_\bfE[\mfd^{-1}]$ in the following.
\end{remark}

\subsection{The intersection $\widehat{\mcZ}^\bK(ξ,h_\infty,ϕ)\cap\, \mcC$}
\label{ss:describing CM intersection}

The definition of the elementary CM cycle $C$ is in terms of Shimura data. We now explain why its points correspond to abelian varieties with $O_E[\mfd^{-1}]$-action under the isomorphism $\Sh_{\wt K}\big(\wt G, \big\{\wt h\big\}\big) \iso M_{\wt K}$. To this end, we recall some details of the construction of this map that will not be used elsewhere: To a Deligne homomorphism that occurs in the conjugacy class of the Shimura datum,
$$\wt h = (h_{Z^\mbQ}, h)\colon \mbC \to (\mbR\otimes_\mbQ F) \times \End_{\mbR \otimes_{\mbQ} F}(\mbR \otimes _\mbQ V),$$
one associates the two complex tori,
$$A'_0 = (\mbR\otimes_{\mbZ} Λ_0)/ Λ_0,\ \ A' = (\mbR\otimes_\mbZ (Λ_0\otimes_{O_F} Λ))/(Λ_0\otimes_{O_F} Λ),$$
where the complex structures are given by $\wt h$. Here, $Λ_0$ is an invertible $O_F$-module together with an $F$-valued hermitian form $(\ ,\ )_0$ such that $Λ_0[\mfd^{-1}]$ is self-dual with respect to the $\mbQ$-valued alternating pairing $\mr{tr}_{F/\mbQ}ζ^{-1}(\ ,\ )_0$; the element $ζ\in F^\times$ is an auxiliary choice of traceless element as in \cite[§3.1]{RSZ3}. The forms on $Λ_0$ and $Λ_0\otimes_{O_F}Λ$ then define quasi-polarizations on $A'_0$ and $A'$ that are principal away from $\mfd$; we will not need this and refer to \cite[§3.4]{RSZ3}.

Given a coset $(z,g)\in \wt G(\mbA_f)/\wt K$ (cf.~\S\ref{ss:S data}) as an additional datum, one then considers the isogeneous quasi-polarized abelian varieties $(A_0, A)$ with Tate modules equal to
$$z\wh{Λ_0}\ \ \text{and}\ \ (z\otimes z^{-1}g)(\wh{Λ_0}\otimes_{\wh{O_F}} \wh{Λ}).$$
Together with a suitable definition of level structure, these define the image of $[\wt h, (z,g)]$ in $M_{\wt K}(\mbC_ν)$, where $ν\colon E\to \mbC$ lies above $φ_0$. We refer to \cite[Thm. 3.5]{RSZ3} and the references there for the full construction, because we just need the following consequence. For $(A_0, A, \ov{η}) \in C(\bfE)$, the map $σ\colon E \to \End_F(V)$ centralizes the corresponding pair $(h,g)$ and hence provides an action $E\to \End^\circ(A)$. Since $σ(O_E)$ is assumed to stabilize $Λ[\mfd^{-1}]$, one even obtains that
$$σ(E)\cap \End(A)[\mfd^{-1}] = σ(O_E[\mfd^{-1}]).$$

The universal object over $\mcC$ is obtained as the Néron model of the universal object over $C$ and hence also has the $ O_E[\mfd^{-1}]$-action by functoriality.
For $(A_0,A,\ov{η})\in \mcC$, we will use the shorthand
$$L(A_0,A) := \Hom_{O_F}(A_0,A)\ \ \ \text{and}\ \ \ V(A_0,A):= \Hom_{F}^\circ(A_0,A).$$ We endow these with the $E$-valued hermitian form $(\ ,\ )$ such that
$$\langle\ ,\ \rangle = \tr_{E/F}\circ (\ ,\ ).$$
Next, we define a notion of KR divisor on $\mcC$, very similarly to \cite[Def. 3.5.1]{H-CM}.
\begin{definition}\label{def CM cycle algebraic}
For $ζ\in E^\times_0$ and $µ\subseteq V(F_{0,\mfd})$ stable under $K_\mfd$, let
$$\mcC(ζ,µ) := \left\{(A_0, A,\ov{η},x)\left\vert \begin{array}{c}\text{$(A_0,A,\ov{η})\in \mcC$ and $x\in L(A_0,A)[\mfd^{-1}]$}\\
\text{s.th. $(x,x)=ζ$ and $η(x)\in µ$ for $η\in \ov{η}$} \end{array}\right\}\right..$$
\end{definition}
\begin{lemma}
Each $\mcC(ζ,µ)$ is an artinian scheme. It is empty if $ζ$ is not totally positive.
\end{lemma}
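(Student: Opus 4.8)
The plan is to follow, over a general totally real $F_0$, the argument of \cite[§3.5]{H-CM}; the passage from $\mbQ$ to $F_0$ introduces no new idea here. Everything reduces to two assertions: (i) the forgetful morphism $\mcC(\zeta,\mu)\to\mcC$ is finite and unramified; and (ii) the $E/E_0$-hermitian form $(\,,\,)$ on $V(A_0,A)=\Hom_F^\circ(A_0,A)$ is totally positive definite at \emph{every} geometric point of $\mcC$, and moreover a characteristic-zero CM point of $\mcC$ carries no $x\in L(A_0,A)[\mfd^{-1}]$ with the prescribed invariants. Granting these: by (i), $\mcC(\zeta,\mu)$ is finite over $\mcC$, hence over $\Spec O_\bfE[\mfd^{-1}]$; the second part of (ii) says its generic fibre is empty, so it is supported on a finite set of closed points of $\mcC$ and is therefore the spectrum of a finite product of artinian local rings; and the first part of (ii) forces the equation $(x,x)=\zeta$ to be unsolvable unless $\zeta$ is totally positive, giving the emptiness claim.

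For (i): on the normalization $\mcC=\coprod_{C(\bfE)}\Spec O_\bfE[\mfd^{-1}]$, the pulled-back abelian scheme $\mcA$ carries the $O_E[\mfd^{-1}]$-action induced by $\sigma$ (cf.\ \S\ref{ss:describing CM intersection}) and is isogenous to $\mcA_0\otimes_{O_F}O_E$, so $L(\mcA_0,\mcA)=\Hom_{O_F}(\mcA_0,\mcA)$ is a projective $O_E[\mfd^{-1}]$-module of rank one equipped with its $E$-valued hermitian form. The scheme $\mcC(\zeta,\mu)$ is cut out inside $\mcC$ by imposing $(x,x)=\zeta$ and $\eta(x)\in\mu$ on the universal section $x$ of (the $\mfd$-localization of) this module: the first condition is (a pair of) polynomial equation(s) in the coordinates of $x$, and the second is an open-and-closed condition at the finitely many places above $\mfd$. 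Hence after an étale cover $\mcC(\zeta,\mu)\to\mcC$ becomes a closed immersion, in particular it is finite and unramified, exactly as for the Kudla--Rapoport divisors on $\mcM$, cf.\ \cite[Prop.~7.3]{Z19}. The first part of (ii) is then routine: since positive-definiteness only concerns the archimedean places of $E_0$ and $\mcC$ is flat over $O_\bfE[\mfd^{-1}]$ with a section through every point, one reduces to the case of $\mbC_\nu$, where it follows from the positivity of the Riemann form on $\Hom_F^\circ(A_0,A)$ — recorded in \S\ref{ss:KR} for the $F$-valued form $\tr_{E/F}(\,,\,)$ — refined along the $O_E$-action: the signature of $V(A_0,A)$ at an archimedean place $v$ of $E_0$ is $(1,0)$ precisely because $\Phi'$ is the preimage of $\Phi$, so the embedding $E\hookrightarrow\mbC$ underlying $v$ that lies in $\Phi'$ restricts to an element of $\Phi$ (compare \cite[Lem.~3.5.1]{H-CM}).

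The substantive point — and the main obstacle — is the remaining part of (ii): that the generic fibre $C(\zeta,\mu)$ is empty even for $\zeta$ totally positive. This is checked exactly as in \cite[§3.5]{H-CM}: the hermitian $O_E[\mfd^{-1}]$-lattice attached to a characteristic-zero CM point of $\mcC$ does not represent $\zeta$ compatibly with the level datum $\mu$, equivalently the CM points do not lie on the relevant Kudla--Rapoport divisor in characteristic zero; an extra homomorphism of the required type can only appear after reduction modulo finitely many primes. It is here that one uses the precise comparison of the hermitian spaces at CM points, and it is this analysis that feeds into the explicit evaluation of the resulting (finite) intersection numbers against Fourier coefficients of Siegel--Eisenstein series carried out in the remainder of the section.
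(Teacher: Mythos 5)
Your proposal contains a claim that directly contradicts the statement you are trying to prove, and this error is structural rather than a slip. In step (i) you assert that over $\mcC$ the universal abelian scheme $\mcA$ ``is isogenous to $\mcA_0\otimes_{O_F}O_E$, so $L(\mcA_0,\mcA)=\Hom_{O_F}(\mcA_0,\mcA)$ is a projective $O_E[\mfd^{-1}]$-module of rank one.'' This is false. Over the generic fiber, $O_E\otimes_{O_F} A_0$ is a CM abelian variety of CM type $\Phi'$, while $A$ (with its $O_E$-action induced by $\sigma$) has CM type $\Phi'$ with $\varphi'_0$ replaced by $\ov{\varphi'_0}$; these differ, so there is no $O_E$-linear quasi-isogeny between them and therefore $V(A_0,A)=\Hom^\circ_F(A_0,A)=0$ at every point of $C(\bfE)$. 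This vanishing \emph{is} the content of the lemma's first assertion: $\mcC(\zeta,\mu)$ is finite over $\mcC$, hence over $\Spec O_\bfE[\mfd^{-1}]$, and since its generic fiber is already empty it is artinian. Your framing — a nonzero rank-one hermitian $O_E$-lattice that ``does not represent $\zeta$ compatibly with the level datum'' — describes a situation that never occurs in characteristic zero; the $\Hom$-module is identically zero there, not merely missing $\zeta$-vectors. Nonvanishing only appears after reduction at (inert) finite places, which is what the subsequent proposition in the section establishes and which is genuinely where the $O_E$-lattice comparison you invoke lives.

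Concretely, the paper's argument is one paragraph: the first claim reduces to showing $V(A_0,A)=0$ for all $(A_0,A,\eta)\in C(\bfE)$; a nonzero $x\colon A_0\to A$ would produce an $O_E$-linear isogeny $O_E\otimes_{O_F}A_0\to A$, impossible by the CM type mismatch described above; and the second claim follows immediately from positivity of the Rosati involution, which makes $V(A_0,A)$ (at the special fibers where it is nonzero) totally positive definite. Your step (ii), first part, is essentially this last remark and is fine. But steps (i) and the ``substantive point'' of (ii) should be replaced by the CM type argument; as written they assume what must be proved.
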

\begin{proof}
The first statement is equivalent to $V(A_0,A) = 0$ for all $(A_0,A,\ov{η})\in C(\bfE)$. Assume for the sake of contradiction that there were $0\neq x\colon A_0\to A$. It would imply the existence of an $ O_E$-linear isogeny
$$ O_E\otimes_{ O_F} A_0 \to  A.$$
But the domain has CM type $Φ'$, while the codomain has CM type $\{Φ' \cup φ'_0\}\setminus \{\ov{φ'_0}\}$, so such an isogeny cannot exist.

The spaces $V(A_0,A)$ are positive definite, implying the second statement.
\end{proof}

Since $\mcC$ is normal by definition, the $\mcC(ζ,µ)$ define Cartier divisors. (Using the normality is actually not necessary as Prop. \ref{prop CM intersection decomp} below shows.) Next we define a Green function for $\mcC(ζ,µ)$, still assuming $ζ\neq 0$.

 Given a point $(A_0,A,\ov{η})\in C(\bfE_ν)$ above some place $ν\colon \bfE\to \mbC$, consider the hermitian $F$-vector space $W(A_0,A)$ from \eqref{eq:hermitian_space_for_AV_over_C}. It comes with a hermitian $E$-action (because $E$ acts on $A$ compatibly with the polarization) and we view it as a $1$-dimensional hermitian $E$-vector space by lifting its $F$-valued form along $\mr{tr}_{E/F}$. Let $v = ν\vert_{E_0}$. Then $W(A_0,A)$ is $v$-nearby to $V$, meaning that it is negative definite at $v$, positive definite at all other archimedean places, and furthermore $W(A_0,A)(\mbA_{E_0,f}) \iso V(\mbA_{E_0,f})$. In fact, there is a natural equality of hermitian $\mbA_{F,f}$-modules,
$$W(A_0,A)(\mbA_{F_0,f}) = {\prod_{p < \infty}}'V_p(A_0, A),$$
so every choice $η\in \ov{η}$ defines an isometry $η\colon W(A_0, A)(\mbA_{F_0,f}) \iso V(\mbA_{F_0,f})$ of hermitian $\mbA_{F,f}$-modules. Since $(A_0, A, \ov{η}) \in C$, the choice can even be made $E$-linearly.
\begin{definition}\label{def CM stack arith divisors}
\begin{enumerate}
\item Let $h_\infty\in \SL_2(E_{0,\mbR})$ be a parameter. Define the function $g(ζ,h_\infty,µ)$ on $C(\mbC\otimes_{\mbQ} \bfE)$ as follows. Its value at a point $(A_0,A,\ov{η})\in C(\bfE_ν)$ is\footnote{In this paper we use $\Ei(4πa_v v(ζ))$ instead of $\Ei(2πa_v v(ζ))$ due to our different convention from \cite[\S12]{Z12}.}
\begin{equation}\label{eq def g_nu}
\sum_{x\in W(A_0, A)_ζ} -\Ei(4πa_v v(ζ)) 1_{µ}(ηx).
\end{equation}
Here, $v = ν\vert_{E_0}$ is the place of $E_0$ below $ν$, while $η\in \ov{η}$ is any choice. The sum \eqref{eq def g_nu} is well-defined because $µ$ is $K$-stable. Also, $a_{v}$ is as in  the Iwasawa decomposition \eqref{h infty} of the $v$-component of $h_\infty$.

\item If $ϕ_\mfd = \sum_i λ_i 1_{µ_i} \in \mcS(V(F_{0,\mfd}))^{K_\mfd}$, set
$$\widehat{\mcC}^\bK(ζ,h_\infty,ϕ) := \sum_{i} λ_i \left(\mcC(ζ,µ_i),g(ζ,h_\infty,µ_i)\right).$$ 
\end{enumerate}
\end{definition}

\begin{definition}\label{def CM stack arith divisor 0th}
The cycle for $ζ = 0$ is again only defined as an element of $\wh\Ch{}^1(\mcC)$:
$$\wh\mcC^\bK(0,h_\infty, ϕ) := -ϕ(0) \big[\,\wh {\bm{\omega}}\vert_{\mcC}  + \big(0,\, (\log|a_{v}|)_{ν,\, v = ν\vert_{E_0}}\big)\,\big],$$
cf. \eqref{eq:Z Ku 0}.\end{definition}

\begin{proposition}\label{prop CM intersection decomp}
For every $ξ\in F_0$ and $h_\infty \in \SL_2(F_{0,\mbR})$, there is an identity of Arakelov divisors on $\mcC$,
\begin{equation}\label{eq:CM_cycle_intersection_identity}
\widehat{\mcZ}^\bK(ξ,h_\infty,ϕ)\cap \mcC = \sum_{ζ\in E_0,\ \tr_{E_0/F_0}(ζ) = ξ} \widehat{\mcC}^\bK(ζ,h_\infty,ϕ).
\end{equation}
\end{proposition}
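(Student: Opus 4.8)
The plan is to compare the two sides of \eqref{eq:CM_cycle_intersection_identity} after the reductions of Remark~\ref{rmk:intersection_generating_series}, so that we may assume $g = 1$, that all points of $C$ are rational over $\bfE$, and that $\mcC = \coprod_{C(\bfE)}\Spec O_\bfE[\mfd^{-1}]$. Then $\mcC$ is regular, an Arakelov divisor on it is a finite $\mbQ$-cycle supported in the closed fibres together with a real number at every complex place of $\bfE$, and I would match the \emph{finite part} and the \emph{Green-function part} of \eqref{eq:CM_cycle_intersection_identity} separately. For the finite part, first assume $ξ \neq 0$. Taking Néron models over $C$, the universal abelian scheme over $\mcC$ carries an $O_E[\mfd^{-1}]$-action extending $σ$, so for $(A_0,A,η)\in \mcC(S)$ the lattice $L(A_0,A)[\mfd^{-1}] = \Hom_{O_F}(A_0,A)[\mfd^{-1}]$ is an $O_E[\mfd^{-1}]$-module on which, $σ$ being a $*$-embedding, the $F$-valued hermitian form satisfies $\langle ax,y\rangle = \langle x,\ov a y\rangle$ for $a\in E$, hence equals $\tr_{E/F}$ of a unique $E/E_0$-hermitian form $(\,,\,)$. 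Since $E = E_0\otimes_{F_0}F$, one has $(x,x)\in E_0$ and $\langle x,x\rangle = \tr_{E_0/F_0}((x,x))$ for $x\in L(A_0,A)$. Comparing the moduli description of $\mcZ(ξ,µ)$ in \S\ref{ss:KR} with that of $\mcC(ζ,µ)$ in Definition~\ref{def CM cycle algebraic}, and using that the $x$ with $\langle x,x\rangle = ξ$ form a finite set (they lie in $L(A_0,A)[\mfd^{-1}]$, positive definite at all archimedean places), I would get a decomposition of functors, hence of schemes over $\mcC$,
\[
\mcZ(ξ,µ)\times_\mcM\mcC = \coprod_{\substack{ζ\in E_0\\ \tr_{E_0/F_0}(ζ) = ξ}}\mcC(ζ,µ).
\]
As $\mcZ(ξ,µ)$ is étale-locally on $\mcM$ an effective Cartier divisor and $\mcC$ meets it properly by Theorem~\ref{thm CM inter}, the left side is étale-locally an effective Cartier divisor on $\mcC$; the pieces on the right being disjoint, each $\mcC(ζ,µ)$ is then an effective Cartier divisor on $\mcC$ too — which reproves the Cartier property without invoking normality. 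Taking images with multiplicities and extending $\mbQ$-linearly to $ϕ = ϕ_\mfd\otimes 1_{\wh Λ^\mfd}$ gives the equality of finite parts for $ξ \neq 0$; for $ξ = 0$ the finite part of the left side is $-ϕ(0)\,c_1(\wh{\bm{\omega}})\vert_\mcC = -ϕ(0)\,\wh{\bm{\omega}}\vert_\mcC$, which matches the corresponding term of Definition~\ref{def CM stack arith divisor 0th}, while $\mcC(ζ,µ) = \emptyset$ unless $ζ$ is totally positive, so the remaining $ζ \neq 0$ with $\tr_{E_0/F_0}(ζ) = 0$ contribute only archimedean data.

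For the Green-function part, fix a complex place $ν$ of $\bfE$ and put $w = ν\vert_{F_0}$, $v = ν\vert_{E_0}$; then $M_{\wt K}(\mbC_ν)$ is uniformized by the $w$-nearby hermitian $F$-space of $V$, which under $σ$ is the $v$-nearby hermitian $E$-space $V^{(v)}$. For $P\in C(\bfE)$ corresponding to $s\in T(\mbQ)\backslash T(\mbA_f)/σ^{-1}(\wt K)$, with associated CM point $z_P$ fixed by $σ(T)$, formula \eqref{Gr Ku2} gives that $\CG^{\bK}_ν(ξ,h_w,ϕ)(z_P)$ equals $\sum_{x}ϕ(s^{-1}x)\bigl(-\Ei(-2πa_w R(x,z_P))\bigr)$, the sum over $x\in V^{(v)}$ with $\langle x,x\rangle = ξ$. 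The classical evaluation of the majorant at a CM point (cf.\ \cite[\S11]{K}, \cite[\S4.4]{KRY}, \cite[\S3.5]{H-CM}) expresses $R(x,z_P)$ as an explicit negative rational multiple of $v((x,x))$; together with the normalization fixed in Definition~\ref{def CM stack arith divisors} (cf.\ the footnote there) this turns the summand into $-\Ei(4πa_v v((x,x)))$ (with $a_v = a_w$), and grouping by $ζ = (x,x)$ recovers $\sum_{ζ:\, \tr_{E_0/F_0}(ζ) = ξ} g(ζ,h_\infty,ϕ)(P)$. For $ξ = 0$ the same computation applies once one separates off the term $x = 0$, which contributes $-ϕ(0)\log|a_v|$ by \eqref{Gr Ku m=0} (matching the $\log|a_v|$-term in Definition~\ref{def CM stack arith divisor 0th}), the remaining $x \neq 0$ having $(x,x)\neq 0$ because $V^{(v)}$ is anisotropic over $E$ and being matched $ζ$ by $ζ$ as before. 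Assembling the finite and archimedean parts yields \eqref{eq:CM_cycle_intersection_identity}.

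The step I expect to be the main obstacle is the archimedean one: keeping track of which nearby hermitian space uniformizes $M_{\wt K}(\mbC_ν)$ at each complex place $ν$, identifying its CM points with those produced by the $*$-embedding $σ$, and pinning down the exact constant in the majorant identity so that it is compatible with the $\Ei(4πa_v v(ζ))$ convention of Definition~\ref{def CM stack arith divisors}. Once this bookkeeping is settled, everything reduces to the one-dimensional computation already carried out by Kudla, Kudla--Rapoport--Yang and Howard.
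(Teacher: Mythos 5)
Your proposal is correct and follows essentially the same route as the paper: reduce to $ϕ_\mfd = 1_µ$, match the finite parts via the obvious decomposition of the moduli functor $\mcZ(ξ,µ)\times_\mcM\mcC = \coprod_{\tr ζ=ξ}\mcC(ζ,µ)$, evaluate the Kudla Green function at the CM points through the complex uniformization and the identity $R(u,x) = v((u,u))$ (up to convention constants), and handle $ξ=0$ by splitting off the $u=0$ and $\wh{\bm\omega}$ contributions. The only cosmetic differences are that you cite classical references for the majorant-at-a-CM-point computation rather than writing the one-line identity directly, and that your aside on recovering the Cartier property without normality is already noted parenthetically in the paper just before the proposition.
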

\begin{proof}
One is reduced to $ϕ_\mfd = 1_µ$ being an indicator function. In case $ξ \neq 0$, it is immediate from the moduli description that
$$\mcZ(ξ,µ) \cap \mcC = \coprod_{ζ\in E_0,\ \tr_{E_0/F_0}(ζ)=ξ} \mcC(ζ,µ),$$
proving the claim for the algebraic parts of the divisors. Still assuming $ξ \neq 0$, we turn to the archimedean components. Let $\CG^\bK_ν(ξ,h_\infty,µ)$ denote the $ν$-component Green function of $\widehat{\mcZ}^\bK(ξ,h_\infty,µ)$ (cf. \eqref{Gr Ku2}), where $ν\colon \bfE\to \mbC$. The aim is to compute $\CG^\bK_ν(ξ,h_\infty,µ)(\underline A)$ for $\underline A = (A_0, A, \ov{η})\in \mcC(\bfE_ν)$. Set $v := ν\vert_{E_0}$, $w := ν\vert_{F_0}$ and recall that $W(A_0, A)$ (as $E$-hermitian space) is the $v$-nearby space of $V$. The value of $\CG^\bK_ν$ at $\underline A$ is by definition
\begin{equation}\label{eq CM eval Green Fct}
\sum_{u\in W(A_0, A)_ξ} -\Ei\big(4πa_{w}R(u,x)\big)\cdot 1_µ(η u),
\end{equation}
where $η\in \ov{η}$ is any choice and where $x$ is a certain negative line in $\ell_x \subset \mbR \otimes_{F_0, w} W(A_0, A)$ that is determined by the Hodge structure on $W(A_0, A)$. (The precise definition, which we do not need, may be found in \cite[\S3]{RSZ4}.) But the Hodge structure on $W(A_0,A)$ is $E$-stable, so this line is the unique $E$-stable negative definite one, namely the $v$-eigenspace $\ell_x = \mbC \otimes_{E,v} W(A_0, A)$. Thus
$$R(u,x) = \langle\mr{pr}_{\ell_x}u,\mr{pr}_{\ell_x}u\rangle = v\big((u,u)\big)$$
for every $u\in W(A_0, A)$. It follows that \eqref{eq CM eval Green Fct} may be rewritten as
$$\sum_{ζ\in E_0,\ \mr{tr}_{E_0/F_0}(ζ) = ξ}\ \sum_{u\in W(A_0, A)_ζ} -\Ei\big(4πa_{v} v(ζ)\big)\cdot 1_µ(ηu)$$
as was to be shown.

The case $ξ = 0$ is handled analogously: Subtracting the term for $ζ = 0$ in \eqref{eq:CM_cycle_intersection_identity}, we obtain
\begin{equation}\label{Gr Ku2 repeat}
\wh{\mcZ}^\bK(0,h_\infty,µ) \cap \mcC - \wh {\mcC}^\bK(0, h_\infty, µ) = \left(0,\ \bigg(\sum_{(u,g)} 1_µ(g^{-1}u)\cdot \left(\CG_\nu^{\bf K}(u, h_\infty)\times  {\bf 1}_{g \,\wt K} \right) \bigg)_ν \right)
\end{equation}
where the $\nu$-term on the right hand side is a sum over $(u,g)\in V^{(w)}_{\xi=0}(F_0)\times \wt G(\BA_{f})/\wt K$ with $u\neq 0$, cf. \eqref{Gr Ku2}. Namely the contributions of the $(u = 0)$-summand in \eqref{Gr Ku m=0} and of the metrized automorphic bundle $\wh{\bm{\omega}}\vert_\mcC$ in \eqref{eq:Z Ku 0} precisely cancel with Def. \ref{def CM stack arith divisor 0th}.
The same arguments as before refine this further according to $(u,u) = ζ \neq 0,\ \mr{tr}_{E_0/F_0}(ζ) = 0$.
\end{proof}

From now on we always consider $V$ as a $1$-dimension hermitian $E$-vector space. Let $\mbV$ denote the rank $1$ hermitian $\mbA_E$-module which is positive definite at all archimedean places of $E$ and which is such that $\mbV_f \iso \mbA_{E,f}\otimes_E V$. It is \emph{incoherent} in the sense of Kudla \cite{K}. Given $ζ\in E_0^\times$, let $\Diff(ζ,\mbV)$ denote the set of places $v$ such that $\mbV_{v}$ does not represent $ζ$. This set is always non-empty of odd cardinality since the unique adelic hermitian space that represents $ζ$ is the adelification of $(E, ζ N_{E/E_0})$ which is coherent. For a non-split place $v$ of $E_0$, we again denote by $V^{(v)}$ the nearby hermitian $E$-vector space; it is the unique hermitian space that represents $ζ$ whenever $\Diff(ζ,\mbV) = \{v\}$.

We write
$$\mbA_E^1 = \{x\in \mbA_E^\times|N_{E/E_0}(x) = 1\}$$
and similarly with other objects where $N_{E/E_0}$ is defined, e.g. $E^1$ (as earlier) or $\wh O_E^1$. 
An important role is played by the following orbital integral. Let $ϕ\in \mcS(\mbV_f)$ be as before, let $ζ \neq 0$, assume $\mbV_{f,ζ}\neq \emptyset$ and let $x_ζ\in \mbV_{f,ζ}$ be some choice. Set
\begin{equation}\label{eq def orbital integral CM}
\Orb(ζ,ϕ) := \int_{\mbA^1_{E,f}} ϕ(t\cdot x_\zeta) dt
\end{equation}
where the volume is normalized by the open compact $σ^{-1}(K)\subseteq \mbA^1_{E,f}$. It is independent of the choice of $x_ζ$ since the hyperboloid $\mbV_{f,ζ}$ is a principal homogeneous space for $\mbA^1_{E,f}$. By assumption, $ϕ^\mfd = 1_{\widehat{Λ^\mfd}}$ is the characteristic function for an $ O_E[\mfd^{-1}]$-lattice and $σ^{-1}(K^\mfd) = O_E[\mfd^{-1}]\, \wh{\ }{}^{\,\,,\,1}$. Hence one may factorize
\eqref{eq def orbital integral CM} as
\begin{equation}
\Orb(ζ,ϕ) = \Orb_\mfd(ζ,ϕ_\mfd) \prod_{v < \infty,\, v\nmid \mfd} \Orb_{v}(ζ,ϕ_{v})
\end{equation}
where the local volumes are normalized by $σ^{-1}(K_\mfd)$ and the $ O_{E,v}^1$ respectively and where the local orbital integrals are defined similarly to \eqref{eq def orbital integral CM}.

We also need the following volumes:
\begin{equation}\label{eq volumes}
τ(Z^\mbQ) := \mr{Vol}\left(Z^\mbQ(\mbQ)\bs Z^\mbQ(\mbA_f)\right),\ \ \ τ(E^1) := \mr{Vol}\left(E^1\bs \mbA^1_{E,f}\right),\ \ \ τ(T) := τ(Z^\mbQ)τ(E^1)
\end{equation}
where the measures are normalized by $K_{Z^\mbQ}$ and $σ^{-1}(K)$, respectively. These are simply the degrees of the respective $0$-dimensional Shimura varieties,
$$τ(Z^\mbQ) = \deg_{\bfE}(\bfE\otimes_{\bfF} M_0)\ \ \ \text{and}\ \ \ τ(T) = \deg_{\bfE} C.$$
We use the arithmetic degree notation $\widehat{\deg}\ \widehat{\mcC}^\bK(ζ,h_\infty,ϕ) := \big( \widehat{\mcC}^\bK(ζ,h_\infty,ϕ), \mcC\big)$ in the following. For $ζ \neq 0$ it is meant in the sense of \eqref{eq:example_pairing}, for $ζ = 0$ it is defined by first choosing a representing metrized divisor for $\wh {\bm{\omega}}\vert_{\mcC}$ in Def. \ref{def CM stack arith divisor 0th} and then applying \eqref{eq:example_pairing}. Then $\widehat{\deg}\ \widehat{\mcC}^\bK(ζ,h_\infty,ϕ)$ is a smooth function in $h_\infty$ in both cases. In the case $ζ = 0$, it is only canonical up to $\sum_{\ell \mid \mfd}\mbQ \log \ell$.
\begin{theorem}\label{thm CM inter}
Let $ζ\in E_0^\times$ and $h_\infty\in \bH(E_{0,\mbR})$.
\begin{enumerate}[wide, labelindent=0pt, labelwidth=!, label=(\arabic*), topsep=2pt, itemsep=2pt]
\item If $\#\Diff(ζ,\mbV) > 1$, then $\widehat{\mcC}^\bK(ζ,h_\infty,ϕ) = (\emptyset, 0)$.
\item If $\Diff(ζ,\mbV) = \{ v\}$ is a singleton with $v$ non-archimedean and $v\nmid \mfd$, then $\widehat{\mcC}^\bK(ζ,h_\infty,ϕ)$ has the form $(\mcC(ζ,ϕ), 0)$. The cycle $\mcC(ζ,ϕ)$ has support above places $ν$ of $\bfE$ with $ν\mid v$ only and
\begin{equation}\label{eq Arakelov deg non-archimedean}
\widehat{\deg}\ \widehat{\mcC}^\bK(ζ,h_\infty,ϕ) = τ(Z^\mbQ)[\bfE:E_0] \max\left\{0,\frac{1+v(δ_{E_0/F_0}ζ)}{2}\right\} \Orb(ζ,ϕ^{v}) \log q_{v}.
\end{equation}
Here $δ_{E_0/F_0}$ denotes the different ideal of the indicated field extension. 
\item If $\Diff(ζ,\mbV) = \{ v\}$ is a singleton with $v$ archimedean, then $\widehat{\mcC}^\bK(ζ,h_\infty,ϕ)$ has the form $(\emptyset, (g_ν)_{ν\colon \bfE\to \mbC})$ where $g_ν\neq 0$ only for $ν\mid v$. For its degree,
\begin{equation}\label{eq Arakelov deg archimedean}
\widehat {\deg}\ \widehat{\mcC}^\bK(ζ,h_\infty,ϕ) = -\frac{τ(Z^\mbQ)[\bfE:E_0]}{2} \cdot \Ei\big(4πa_{v} v(ζ)\big) \Orb(ζ,ϕ).
\end{equation}
\item Finally, the $0$-th term is the following smooth function up to $\sum_{\ell\mid \mfd}\mbQ\log \ell$,
\begin{align}\label{eq:deg infty}
\widehat {\deg}\ \widehat{\mcC}^\bK(0,h_\infty,ϕ) = \wh\deg \,(\wh {\bm{\omega}}\vert_\mcC)  \phi(0) - \frac{τ(Z^\BQ)[\bfE:E_0]}{2}\tau(E^1) \phi(0)\ \sum_{v\colon E_0\to \mbR} \log |a_{v}|.\end{align}
\end{enumerate}
\end{theorem}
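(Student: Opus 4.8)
The plan is to reduce the global intersection computation over $\mcC$ to purely local terms, exactly as in the split $\xi\neq 0$ cases, and then identify each with a product of local densities. Concretely, after the reductions of Remark~\ref{rmk:intersection_generating_series}, we have $g=1$ and $\mcC = \coprod_{C(\bfE)}\Spec O_\bfE[\mfd^{-1}]$. The key structural input is Proposition~\ref{prop CM intersection decomp}, which already rewrites $\widehat{\mcZ}^\bK(\xi,h_\infty,\phi)\cap\mcC$ as a sum over $\zeta\in E_0$ with $\tr_{E_0/F_0}(\zeta)=\xi$ of the local divisors $\widehat{\mcC}^\bK(\zeta,h_\infty,\phi)$. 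So the theorem is really a statement about each $\widehat{\mcC}^\bK(\zeta,h_\infty,\phi)$ separately, and the archimedean versus non-archimedean dichotomy is governed by $\Diff(\zeta,\mbV)$.

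The main steps I would carry out are as follows. \emph{Step 1 (vanishing when $\#\Diff>1$).} Observe that $\mcC(\zeta,\mu)$ is nonempty only if the nearby hermitian space at the relevant place represents $\zeta$, which forces $\Diff(\zeta,\mbV)$ to be a singleton contained in the archimedean places plus the place below the support; combined with the fact that the Green function \eqref{eq def g_nu} is a sum over $x$ with $(x,x)=\zeta$ in $V^{(v)}$, both the algebraic part and every archimedean term vanish unless $\Diff(\zeta,\mbV)$ is a singleton. \emph{Step 2 (non-archimedean singleton).} When $\Diff(\zeta,\mbV)=\{v\}$ with $v\nmid\mfd$ finite, the archimedean Green functions all vanish (the relevant nearby space is positive definite at every archimedean place), so $\widehat{\mcC}^\bK(\zeta,h_\infty,\phi)=(\mcC(\zeta,\phi),0)$, and one must compute $\log|\mcO_{\mcC(\zeta,\phi)}|$. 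Here I would invoke the comparison isomorphism of \cite{M-Th} (legitimate since $\Phi$ is unramified at $p$) to reduce the length computation over the local ring to the one in Howard \cite{H-CM}, whose argument for $F_0=\mbQ$ goes through verbatim over a general totally real $F_0$ under the unramifiedness hypothesis; this yields the factor $\max\{0,(1+v(\delta_{E_0/F_0}\zeta))/2\}$ times a product of local orbital integrals away from $v$, together with $\tau(Z^\mbQ)[\bfE:E_0]$ accounting for the degree of the CM cycle and the splitting of $v$ in $\bfE$. \emph{Step 3 (archimedean singleton).} When $\Diff(\zeta,\mbV)=\{v\}$ with $v$ archimedean, $\mcC(\zeta,\phi)=\emptyset$ (the space representing $\zeta$ is nowhere-the-nearby-space at the complex place, so no point contributes), and $\widehat{\deg}$ is computed purely from \eqref{eq def g_nu}; summing $-\Ei(4\pi a_v v(\zeta))\cdot 1_\mu(s^{-1}x)$ over $x$ and over $s\in T(\mbQ)\bs T(\mbA_f)/(\sigma^{-1}\wt K)$ gives the orbital integral $\Orb(\zeta,\phi)$ times $\tau(Z^\mbQ)$ (from the $Z^\mbQ$-factor of $T$) and $\tfrac12[\bfE:E_0]$ (from the archimedean normalizations), with the sign from $-\Ei$. \emph{Step 4 ($\zeta=0$).} Here the divisor is $-\phi(0)[\wh{\bm\omega}|_\mcC + (0,(\log|a_v|))]$; pairing with $\mcC$ gives $\phi(0)$ times $\wh\deg(\wh{\bm\omega}|_\mcC)$ from the line bundle part and $-\tfrac12\phi(0)\deg_\bfE(C)\sum_v\log|a_v|$ from the archimedean part, where $\deg_\bfE C = \tau(T) = \tau(Z^\mbQ)\tau(E^1)$; rewriting $\deg_\bfE C$ using \eqref{eq volumes} produces the stated formula, up to the ambiguity $\sum_{\ell\mid\mfd}\mbQ\log\ell$ coming from the choice of integral extension of $\bm\omega$.

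The main obstacle is Step~2: the explicit length computation $\log|\mcO_{\mcC(\zeta,\phi)}|$ at a place $v$ of bad reduction for the CM point. This is where one genuinely needs the deformation-theoretic input — identifying the local ring with a quotient of a Lubin--Tate-type deformation ring and counting its length via the theory of quasi-canonical liftings (Gross, Keating, Howard). The role of the unramifiedness of $\Phi$ at $p$ is precisely to make the $p$-divisible group of the CM abelian variety decompose in a way that the comparison isomorphism of \cite{M-Th} applies, after which Howard's computation in \cite{H-CM} carries over with only notational changes; verifying that nothing in his argument used $F_0=\mbQ$ in an essential way (beyond what the unramified hypothesis already supplies) is the technical heart. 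The remaining steps are bookkeeping: tracking the volume normalizations $\tau(Z^\mbQ),\tau(E^1),\tau(T)$ against the measures used to define $\Orb$, and checking that the factor $[\bfE:E_0]$ correctly records the number of places $\nu\mid v$ of $\bfE$ together with their residue degrees, which is where the properness of $\mcC$ over $\Spec O_\bfE[\mfd^{-1}]$ and the fact that all points of $C$ are $\bfE$-rational (Remark~\ref{rmk:intersection_generating_series}(2)) are used.
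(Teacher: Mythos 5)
Your proposal follows essentially the same route as the paper: after the reductions of Remark~\ref{rmk:intersection_generating_series}, handle the archimedean degree by unfolding the Green function \eqref{eq def g_nu} over $T(\mbQ)\bs T(\mbA_f)/\sigma^{-1}(\wt K)$ to extract $\tau(Z^\mbQ)$, the factor $[\bfE:E_0]$ from the places $\nu\mid v$, and the normalization $\tfrac12$; handle the non-archimedean degree via the comparison isomorphism of \cite{M-Th}, Gross's theorem on canonical liftings and the length formula $\mathrm{len}\,\mcO_{\mcZ(u)} = \tfrac12(1+v(\delta_{E_0/F_0}(u,u)))$, exactly as in \cite{H-CM}; and read off case (4) from the definitions. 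The one step you describe as a black box — that Howard's computation ``goes through verbatim'' after applying \cite{M-Th} — is precisely the paper's non-archimedean local section, which reproves it from scratch via explicit Dieudonné modules (Proposition~\ref{prop herm lattice locally}) and the slope analysis establishing $V(A_0,A)\iso V^{(v)}$ for inert $v$ and $V(A_0,A)=0$ for split $v$; but the claim itself and the role you assign to the unramified CM type hypothesis are correct.
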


Only inert places may occur in case (2). So an implicit statement is that $\mcC(ζ,ϕ)$ never has support above a split place of $E_0$. Sections \S\ref{subsect deg arch}--\S\ref{subsect deg nonarch} are devoted to the proof of the theorem. Note that there is nothing to show for case (4) which follows directly from definitions. (The factor $1/2$ here is due to the normalization of the arithmetic intersection pairing in \S\ref{ss:AIP}, resulting from the convention \eqref{eq convention rat fct}.)

\subsection{Archimedean component}
\label{subsect deg arch}
Let $ζ\in E_0^\times$, let $ν$ be an archimedean place of $\bfE$, set $v := ν\vert_{E_0}$. As before (cf. proof of Prop.~ \ref{prop CM intersection decomp}), for all $(A_0,A,\ov{η})\in C(\bfE_ν)$, $W(A_0,A) \iso V^{(v)}$. Hence $W(A_0,A)$ represents $ζ$ if and only if $\Diff(ζ,\mbV) = \{v\}$, which proves (1) of Thm.~ \ref{thm CM inter} for the archimedean component of $\widehat{\mcC}^\bK(ζ,h_\infty,ϕ)$.

Now assume $\Diff(ζ,\mbV) = \{v\}$. In other words, assume that $\mbV_f$ represents $ζ$, that $v(ζ)<0$ and that $v'(ζ)>0$ for $v'\neq v$ archimedean.
Then one obtains after choice of a base point $c_0 \in C(\bfE_ν)$
$$\begin{aligned}
\sum_{c\in C(\bfE_ν)} g_ν(ζ,h_\infty,ϕ)(c)\
& = \sum_{t\in T(\mbQ)\backslash T(\mbA_f)/σ^{-1}(\wt K)} g_ν(ζ,h_\infty,ϕ)(tc_0) \\
& = \sum_{t\in T(\mbQ)\backslash T(\mbA_f)/σ^{-1}(\wt K)}\ \sum_{x\in V^{(v)}_ζ}-\Ei(4πa_{v}v(ζ))\cdot ϕ(tx)\\
& = -τ(Z^\mbQ) \Ei(4πa_{v} v(ζ)) \Orb(ζ,ϕ).\end{aligned}$$
The last equality follows since $\vol( σ^{-1}(\wt K))=1$ by our convention. Taking the sum over the $[\bfE:E_0]$-many different $ν\mid v$ results in the claimed identity
$$\widehat{\deg}\ \mcC^\bK(ζ,h_\infty,ϕ) = -\frac{τ(Z^\mbQ)[\bfE:E_0]}{2} \Ei(4πa_{v} v(ζ)) \Orb(ζ,ϕ).$$
The factor $1/2$ here is due to the normalization of the arithmetic intersection pairing in \S\ref{ss:AIP}, resulting from the convention \eqref{eq convention rat fct}.
\qed

\subsection{Non-archimedean components (locally)}
The aim of this section is to provide some auxiliary results for $p$-divisible groups with CM that will be needed for statement (2) of Thm.~ \ref{thm CM inter}. Our arguments are very close to those of \cite{H-CM}, albeit disguised by the use of \cite{M-Th}. Let $E/E_0$ be an unramified quadratic extension of $p$-adic local fields. Let $π\in E_0$ be a uniformizer and let $\breve E$ be the completion of a maximal unramified extension of $E$. Denote its residue field by $\mbF$. The point of departure for our computation of deformation lengths is Gross' result on canonical liftings. Let $\mbY$ be a formal height $1$ strict $ O_E$-module over $\mbF$ and let $Y$ be its canonical lift to $ O_{\breve E}$. Let also $\ov {\mbY}$ and $\ov {Y}$ denote the same formal groups, but with the conjugate $ O_E$-action. Write $Y_k$ and $\ov{Y}_k$ for their reductions modulo $π^{k+1}$. All homomorphisms in the following are $ O_E$-linear homomorphisms, unless indicated otherwise.

\begin{theorem}[Gross]
Let $u\in \Hom(\mbY, \ov{\mbY})$. Then $u$ lies in $\Hom(Y_k, \ov{Y}_k)$ if and only if it lies in $π^k\Hom(\mbY,\ov{\mbY})$.
\end{theorem}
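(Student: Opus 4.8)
The plan is to recall Gross' theory of canonical and quasi-canonical liftings for Lubin--Tate formal groups and to deduce the statement from the Serre--Tate deformation theory in its crystalline incarnation. Write $M(\mbY)$ for the (relative) Dieudonné/crystal module of $\mbY$; since $\mbY$ is a height $1$ strict $O_E$-module, $M(\mbY)$ is a free $O_{\breve E}$-module of rank $1$, and likewise for $M(\ov\mbY)$. First I would set up the comparison: by the theory recalled in \cite{M-Th} (which is exactly why $Φ$ is assumed unramified), deformations of $\mbY$ to $O_{\breve E}/π^{k+1}$ are classified by the Hodge filtration on $M(\mbY)\otimes O_{\breve E}/π^{k+1}$, and a homomorphism $u\colon \mbY\to\ov\mbY$ lifts to $Y_k\to\ov Y_k$ if and only if the induced map on crystals carries the Hodge filtration of $Y_k$ into that of $\ov Y_k$. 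Because $Y$ is the \emph{canonical} lift, its Hodge filtration is the one coming from the unit-root/multiplicative--\'etale decomposition, and similarly for $\ov Y$; the point is that these two canonical splittings of $M(\mbY)\otimes O_{\breve E}$ and $M(\ov\mbY)\otimes O_{\breve E}$ are compatible under reduction.

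Next I would make the computation explicit in rank $1$. Both $M(\mbY)$ and $M(\ov\mbY)$ are free of rank $1$; fix generators so that $\Hom(\mbY,\ov\mbY)\otimes O_{\breve E}$ is identified with $O_{\breve E}$. An element $u\in\Hom(\mbY,\ov\mbY)$ acts on crystals by a scalar, and the obstruction to lifting $u$ modulo $π^{k+1}$ is measured by the discrepancy between the canonical filtrations of $Y_k$ and $\ov Y_k$ inside $M(\mbY)\otimes O_{\breve E}/π^{k+1}$ and $M(\ov\mbY)\otimes O_{\breve E}/π^{k+1}$. Since the canonical lift has its Hodge filtration equal to the canonical (Frobenius-stable) line, and the conjugate $O_E$-action interchanges the two eigen-lines of the semisimple Frobenius, the relative position of the two filtrations is exactly ``off by one uniformizer at each level''. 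Tracking this through, one finds that $u$ lifts to level $k$ precisely when the scalar is divisible by $π^k$, i.e.\ when $u\in π^k\Hom(\mbY,\ov\mbY)$. This is the content of Gross' original argument \cite[Prop.\ 2.3 and its proof]{Gross-canonical}, and I would simply transcribe it, using \cite{M-Th} to justify the passage to crystals in the present (strict $O_E$-module) formalism.

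I would then note the two containments separately to keep the logic clean. For the ``if'' direction: if $u\in π^k\Hom(\mbY,\ov\mbY)$, write $u = π^k v$; then $v$ is an honest homomorphism $\mbY\to\ov\mbY$, but multiplication by $π^k$ factors through the $k$-th infinitesimal neighbourhood in the sense that the induced map on crystals kills $\mr{Fil}^1$ modulo $π^{k+1}$ for degree reasons, so $u$ lifts. For the ``only if'' direction: if $u$ lifts to $\Hom(Y_k,\ov Y_k)$, reduce modulo successive powers of $π$ and use that at each stage the obstruction class is nonzero unless one more factor of $π$ is present; this is where the precise shape of the canonical filtration enters and where I expect the main work to lie.

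The main obstacle is the bookkeeping of normalizations in the crystalline comparison of \cite{M-Th}: one must be sure that ``canonical lift'' in Gross' sense corresponds to the correct (Frobenius-stable) Hodge filtration under the equivalence used there, and that the conjugate $O_E$-structure is matched correctly, so that the two canonical splittings really are off by exactly one uniformizer per level rather than by some other power or a unit. Once that dictionary is pinned down, the rank $1$ linear algebra is routine and the theorem follows; I do not expect to need anything beyond what is already set up in this section together with \cite{M-Th} and Gross' original paper.
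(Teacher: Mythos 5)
The paper does not contain a proof of this statement: it is quoted as a known theorem of Gross (from his 1986 Inventiones paper on canonical and quasi-canonical liftings) and serves as the starting point for the deformation-length computations that follow. There is therefore no in-paper argument for you to recover, and your proposal is an independent re-proof. You also choose a crystalline/Grothendieck--Messing route, whereas Gross's original proof is a direct computation with the Lubin--Tate power series law (essentially that $[\pi]_Y(X)\equiv X^{q}\ (\mathrm{mod}\ \pi)$ and its iterates); both routes are in principle viable, so the choice of strategy is not itself a problem.

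There is, however, one concrete error and one substantial gap in your write-up. The error: you invoke the unramifiedness of $\Phi$ and the comparison theorem of \cite{M-Th} from the outset as the justification for the crystalline setup. Neither is relevant to this theorem. The present statement lives entirely in Lubin--Tate theory for the one-dimensional, height-one strict $O_E$-module $\mbY$ and its canonical lift $Y$ over $O_{\breve E}$; no CM type appears, and passing to (relative) Dieudonné modules for strict $O_{E}$-modules is classical and does not require \cite{M-Th}. The $p$-adic CM type $\Phi'$, its unramifiedness, and the display-theoretic comparison of \cite{M-Th} enter only \emph{after} the theorem, in the transition from polarized strict $O_{E_0}$-modules with $O_E$-action to honest polarized $p$-divisible groups with $O_E$-action (the paragraph beginning ``In order to apply this result\dots'' and Corollary~\ref{cor local defo length}); citing them here mislocates the load-bearing hypotheses. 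The gap: the whole content of the theorem is the step you postpone to the end as ``bookkeeping,'' namely the claim that the canonical Hodge filtrations of $Y_k$ and $\ov Y_k$ are ``off by exactly one uniformizer per level.'' Making this precise requires writing out the Dieudonné module of $\mbY$ as a strict $O_{E_0}$-module of rank two over $O_{\breve E}$, decomposing it along $O_E\otimes_{O_{E_0}}O_{\breve E}\cong O_{\breve E}\times O_{\breve E}$, identifying the canonical lift's Hodge line as one of the two factors, observing that a conjugate-linear $u$ swaps the factors, and then tracking how multiplication by $\pi$ interacts with the filtration condition modulo $\pi^{k+1}$. Likewise ``multiplication by $\pi^k$ factors through the $k$-th infinitesimal neighbourhood\dots for degree reasons'' is not an argument in the ``if'' direction. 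Until these computations are actually carried out, you have a plausible plan rather than a proof, and it would be cleaner either to transcribe Gross's power-series argument or to cite it outright, as the paper does.
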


The underlying strict $ O_{E_0}$-modules of the above groups are supersingular of height $2$. Choosing principal polarizations of $Y$ and $\ov{Y}$ as strict $ O_{E_0}$-modules, compatibly with the $ O_E$-action, one gets a hermitian form on $\Hom(\mbY, \ov{\mbY})$ by the usual rule $(x,y) := y^* \circ x$. It is easily checked through Dieudonné theory that there is an isometry
$$\Hom(\mbY, \ov{\mbY}) \iso ( O_E, πN_{E/E_0}).$$
Denoting by $\mcZ(u)\subseteq \Spf  O_{E}$ the locus to which $u\colon \mbY\to \ov{\mbY}$ extends as a homomorphism $Y_k\to \ov{Y}_k$, Gross' result may be reformulated as
$$\mr{len}\  \mcO_{\mcZ(u)} = \frac{1 + v\big((u,u)\big)}{2}.$$
Here $v$ is the normalized valuation of $E_0$.

In order to apply this result in the context of polarized abelian varieties one needs to pass from supersingular polarized strict $ O_{E_0}$-modules to ``plain'' polarized $p$-divisible groups with $ O_{E}$-action. This has been achieved in \cite{M-Th} through the use of displays. Let $Φ'$ be a $p$-adic CM type for $E$ satisfying $Φ'\sqcup \ov{Φ'} = \Hom(E,\mbC_p)$ and which is the inverse image of a CM type ${Φ'}{}^{,u}$ for the maximal subfield $E^u$ of $E$ unramified over $\mbQ_p$, similar to Def. \ref{def:unramified_CM_type}. Also fix an element $φ'_0\in Φ'$ and extend it to a map $φ'_0\colon \breve E\to \mbC_p$. Then $φ'_0(\breve E)$ contains the reflex fields for $Φ'$ and $\{\ov{φ'_0}\}\cup Φ' \setminus \{φ'_0\}$.

By definition, a \emph{hermitian $ O_{E}$-$\mbZ_p$-module} of signature $(1,0)$ (resp. $(0,1)$) in the sense of \cite{M-Th} is a supersingular $p$-divisible group of height $[E:\mbQ_p]$ together with an $ O_{E}$-action of CM type $Φ'$ (resp. $(Φ'\cup \{\ov{φ'_0}\}) \setminus \{φ'_0\}$) and a principal polarization as $p$-divisible group that is compatible with the $ O_{E}$-action. By the main result \cite[Thm.~ 3.1]{M-Th}, the categories of such objects are equivalent to those of the polarized height $2$ strict $ O_{E_0}$-modules with $ O_{E}$-action considered before. Hence Gross' result carries over in the following form.

Let $X$ and $\ov X$ be hermitian $ O_{E}$-$\mbZ_p$-modules of signature $(1,0)$ resp. $(0,1)$ over $\Spf  O_{\breve E}$. (The notation is suggestive, but note that $\ov X$ is not the same as $X$ with conjugate $ O_{E}$-action.) Let $\mbX$ and $\ov \mbX$ (resp. $X_k$ and $\ov X_k$) denote their reductions modulo $π$ (resp. $π^{k+1}$). Endow $\Hom(\mbX, \ov \mbX)$ with the usual $ O_{E}$-valued hermitian form. Let $\mcZ(u)\subseteq \Spf  O_{\breve E}$ denote the locus where the homomorphism $u\colon \mbX\to \ov \mbX$ lifts to a map $X_k\to \ov X_k$.
\begin{corollary}
For $u\colon \mbX\to \ov \mbX$ a homomorphism,
$$\mr{len}\  \mcO_{\mcZ(u)} = \frac{1 + v \big( (u,u) \big)}{2}.$$
\end{corollary}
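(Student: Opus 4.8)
The plan is to deduce the Corollary directly from Gross's Theorem (on canonical liftings of formal $ O_E$-modules of height $1$) via the equivalence of categories \cite[Thm.~3.1]{M-Th}. The key point is that this equivalence respects all the structures appearing in the statement: the $ O_E$-action, the polarizations, and—crucially—deformation-theoretic invariants such as the length of the locus $\mcZ(u)$.

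First I would recall that under \cite[Thm.~3.1]{M-Th} a hermitian $ O_E$-$\mbZ_p$-module of signature $(1,0)$ (resp. $(0,1)$) over an $ O_{\breve E}$-algebra corresponds to a polarized strict formal $ O_{E_0}$-module of height $2$ equipped with an $ O_E$-action of the appropriate CM type. Applying this to $X$ and $\ov X$ over $\Spf O_{\breve E}$ yields the canonical lifts $Y$ and $\ov Y$ of a height-$1$ formal $ O_E$-module $\mbY$ and its conjugate $\ov{\mbY}$; that the lift is \emph{canonical} (i.e.\ $Y$ corresponds to $X$ and not to some non-canonical deformation) follows because the equivalence is compatible with base change and the special fibre $\mbX$ corresponds to $\mbY$, whose only $O_{\breve E}$-lift with the correct CM type on both the group and its polarization is the canonical one by Gross. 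Next, the equivalence identifies $\Hom(\mbX,\ov{\mbX})$ with $\Hom(\mbY,\ov{\mbY})$ as $ O_E$-modules, and under a standard Dieudonné-module computation one checks that both are isometric to $( O_E, π N_{E/E_0})$ with their polarization-induced hermitian forms; so the form is matched on the nose. Finally, since the equivalence is functorial and exact in the appropriate sense, the closed formal subscheme $\mcZ(u)\subseteq \Spf O_{\breve E}$ cut out by the condition that $u$ deforms to level $k$ is the \emph{same} subscheme whether computed on the $\mbX$-side or the $\mbY$-side—it is defined by the vanishing of the obstruction to lifting a homomorphism, and \cite[Thm.~3.1]{M-Th} carries homomorphisms and their lifts along bijectively. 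Hence $\mr{len}\, \mcO_{\mcZ(u)}$ is computed by Gross's Theorem as $\mr{len}\,\mcO_{\mcZ(u)} = (1+v((u,u)))/2$, where I use the reformulation of Gross's result in terms of $v((u,u))$ already recorded in the excerpt.

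I would carry this out in three steps: (i) invoke \cite[Thm.~3.1]{M-Th} to pass from $(X,\ov X)$ to $(Y,\ov Y)$ and verify that $Y,\ov Y$ are the canonical lifts of $\mbY,\ov{\mbY}$; (ii) match the hermitian forms, identifying $\Hom(\mbX,\ov{\mbX})\iso\Hom(\mbY,\ov{\mbY})\iso( O_E,π N_{E/E_0})$ compatibly with $u\mapsto u$; (iii) transport the length identity $\mr{len}\,\mcO_{\mcZ(u)} = (1+v((u,u)))/2$ from the $\mbY$-side (Gross) to the $\mbX$-side, using that $\mcZ(u)$ is intrinsic to the pair of homomorphisms and hence unchanged by the equivalence.

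The main obstacle is step (iii): making precise that the lifting locus $\mcZ(u)$ is preserved by the equivalence. This requires knowing not merely that \cite[Thm.~3.1]{M-Th} is an equivalence of categories of objects, but that it is compatible with deformations over $\Spf O_{\breve E}$-algebras—equivalently, that it is induced by an isomorphism of the relevant moduli/deformation functors. One should confirm that \cite{M-Th} provides this in the displays-theoretic framework (so that lifting $u$ modulo $π^{k+1}$ on one side corresponds exactly to lifting it on the other), and that the polarization compatibility is genuinely used so that ``canonical lift'' really is forced. Once that compatibility is in hand, everything else is bookkeeping, and the corollary follows immediately.
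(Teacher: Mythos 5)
Your proposal is correct and is essentially the paper's argument: the Corollary appears with no proof in the text precisely because it is presented as the immediate transport, via the categorical equivalence of \cite[Thm.~3.1]{M-Th}, of the preceding reformulation of Gross' theorem (including the Dieudonn\'e-theoretic isometry $\Hom(\mbY,\ov{\mbY})\iso(O_E,\pi N_{E/E_0})$) from polarized strict $O_{E_0}$-modules to hermitian $O_E$-$\mbZ_p$-modules. Your three steps, including the careful remark that one needs the equivalence to be compatible with deformations so that $\mcZ(u)$ is preserved, are exactly the content the paper compresses into ``Hence Gross' result carries over in the following form.''
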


Assume now that $F/F_0$ is an unramified quadratic extension with $F\subseteq E$ and $F_0 = F \cap E_0$. Assume that the CM type $Φ'$ actually is the inverse image of a CM type $Φ^u$ for $F^u$, the maximal subfield of $F$ unramified over $\mbQ_p$. Let $X_0$ be a hermitian $ O_F$-$\mbZ_p$-module over $\Spf  O_{\breve E}$ of signature $(1,0)$ with respect to $Φ$, denote its reduction mod $π$ by $\mbX_0$. Then
$$X'_0 :=  O_{E_0}\otimes_{ O_{F_0}} X_0$$
has CM by $ O_{E}$ of CM type $Φ'$, i.e is of signature $(1,0)$. Choosing a perfect symmetric $ O_{F_0}$-linear pairing on $ O_{E_0}$ endows $X'_0$ with a principal polarization. By the uniqueness of hermitian $ O_{E}$-$\mbZ_p$-modules of signature $(1,0)$, cf. \cite[\S2]{M-Th}, it is isomorphic to $X$. With the same notational conventions as before one obtains
\begin{equation}\label{eq comp herm lattices}
\Hom_{ O_F}(\mbX_0, \ov \mbX) \isoarrow \Hom(\mbX, \ov \mbX),\ \ u\mapsto \id_{O_{E_0}}\otimes u
\end{equation}
and analogous isomorphisms for the (unique) deformations of all involved objects to $ O_{\breve E}/π^{k+1}$. This allows to also apply Gross' result to deformations of $u\in \Hom_{ O_F}(\mbX_0, \ov \mbX)$.
Note that this is the space we care for most, i.e. it is the local analogue of $L(A_0,A)$ from \S\ref{ss:describing CM intersection}.

The subtlety now lies with the fact that the hermitian forms in \eqref{eq comp herm lattices} are different. The space $\Hom_{ O_F}(\mbX_0, \ov \mbX)$ naturally carries the $ O_F$-valued form $\langle x, y\rangle := y^*\circ x$. It lifts naturally to the $ O_{E}$-valued form $(\ ,\ )$ such that $\mr{tr}_{E/F} (x,y) = \langle x, y\rangle$. For later use, the following proposition also treats the case $\Hom_{ O_F}(\mbX_0, \mbX)$, to which the above formalities extend mutatis mutandis.

\begin{proposition}\label{prop herm lattice locally}
There are isomorphisms of hermitian lattices
$$\Hom_{ O_F}(\mbX_0, \mbX) \iso ( O_E,\ δ_{E_0/F_0}^{-1} N_{E/E_0}),\ \ \ \Hom_{ O_F}(\mbX_0, \ov \mbX) \iso ( O_E,\ π δ_{E_0/F_0}^{-1} N_{E/E_0}),$$
where $δ_{E_0/F_0}$ generates the different ideal of $E_0/F_0$.
\end{proposition}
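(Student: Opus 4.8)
The plan is to compute the two hermitian lattices explicitly via Dieudonné/display theory, following the strategy of \cite[\S3.2]{H-CM} but transported through the equivalences of \cite{M-Th}. First I would reduce to the case where $E^u = F^u$, i.e. where $E/F$ is totally ramified of degree $[E_0:F_0]$ (if $E/F$ has an unramified part one extracts it and is left with a totally ramified situation over a larger unramified base); this is harmless because all the objects in sight are base-changed along $O_{F_0}\hookrightarrow O_{E_0}$ and the unramified factor only contributes a product decomposition that does not affect the comparison of the forms.

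The core computation is then as follows. We have $X = X'_0 = O_{E_0}\otimes_{O_{F_0}} X_0$ with its induced $O_E$-action, and the chosen principal polarization on $X$ comes from a perfect symmetric $O_{F_0}$-bilinear pairing $b$ on $O_{E_0}$, while $X_0$ carries its own principal polarization $\lambda_0$. The point is that the $O_{F_0}$-valued pairing $\langle x,y\rangle = y^*\circ x$ on $\Hom_{O_F}(\mbX_0,\mbX)$ (resp.\ on $\Hom_{O_F}(\mbX_0,\ov\mbX)$, where one uses $\ov\mbX\iso O_{E_0}\otimes_{O_{F_0}}\mbX_0$ as a polarized $O_{E_0}$-module but with the \emph{conjugate} $O_E$-action so that the extra twist by a uniformizer $\pi$ of $E_0$ appears) decomposes according to $b$, and the lift of this pairing to an $E$-valued form involves $\mr{tr}_{E/F}$. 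Unwinding, $\Hom_{O_F}(\mbX_0,\mbX)$ is identified with $O_E$ acting on itself, and the $E$-valued hermitian form $(x,y)$ is characterized by $\mr{tr}_{E/F}(x,y) = \langle x,y\rangle$; a standard computation with the trace pairing (the inverse different $\delta_{E_0/F_0}^{-1}$ enters precisely because $\{z\in E_0 : \mr{tr}_{E_0/F_0}(z\, O_{E_0})\subseteq O_{F_0}\} = \delta_{E_0/F_0}^{-1}$) yields $(x,y) = \delta_{E_0/F_0}^{-1} x\ov y$ after rescaling, i.e.\ the lattice $(O_E,\ \delta_{E_0/F_0}^{-1}N_{E/E_0})$. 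For $\Hom_{O_F}(\mbX_0,\ov\mbX)$ one repeats the argument, noting that passing from $\mbX$ to $\ov\mbX$ as an $O_{E_0}$-module with conjugate $O_E$-action multiplies the hermitian form by a uniformizer $\pi$ of $E_0$ (this is exactly the $\pi$ appearing in $\Hom(\mbY,\ov\mbY)\iso (O_E,\pi N_{E/E_0})$ recalled above), giving $(O_E,\ \pi\,\delta_{E_0/F_0}^{-1}N_{E/E_0})$.

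The main obstacle I expect is bookkeeping the polarizations and conjugations correctly: one must carefully track how the chosen symmetric pairing $b$ on $O_{E_0}$ interacts with the Rosati/adjoint involution, verify that the resulting $O_F$-valued form is the one induced by $\lambda_0$ and $\lambda_{\ov\mbX}$ (up to a unit, which can be absorbed since $O_E^\times$ acts transitively on generators and $N_{E/E_0}(O_E^\times) = O_{E_0}^\times$), and pin down the exact power of $\pi$ in the $\ov\mbX$ case. Once the dictionary between the $F$-valued and $E$-valued forms via $\mr{tr}_{E/F}$ is set up, the inverse-different factor is forced by the duality of the trace pairing $O_{E_0}\times O_{E_0}\to O_{F_0}$, and the claimed isomorphisms follow. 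I would also remark that the two statements are consistent with the previously recalled fact $\Hom(\mbX,\ov\mbX)\iso(O_E,\pi N_{E/E_0})$ when $E_0 = F_0$, $\delta_{E_0/F_0} = O_{F_0}$, which serves as a sanity check.
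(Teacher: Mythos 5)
Your high-level strategy---work through the equivalence of \cite{M-Th} between hermitian $O_E$-$\mbZ_p$-modules and polarized height~$2$ strict $O_{E_0}$-modules with $O_E$-action, use $X' _0 = O_{E_0}\otimes_{O_{F_0}} X_0$ to relate $\Hom_{O_F}(\mbX_0,\cdot)$ to $\Hom_{O_{E}}(\cdot,\cdot)$, and invoke the duality of the trace pairing to produce $\delta_{E_0/F_0}^{-1}$---is the right idea and is essentially what the paper does, though the paper proceeds by writing out the Dieudonné modules $M = O_F\otimes_{\mbZ_p}\breve\mbZ_p$, $N = O_E\otimes_{\mbZ_p}\breve\mbZ_p$ with explicit Verschiebung and a polarization built from totally imaginary generators $\delta_{F/\mbQ_p}$, $\delta_{E/\mbQ_p}$ of the differents. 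Your inverse-different factor then appears as $\delta_{E_0/F_0}^{-1} = \delta_{F/\mbQ_p}\delta_{E/\mbQ_p}^{-1}$; your sanity check ($E_0 = F_0$ collapsing to Gross's $\Hom(\mbY,\ov\mbY)\iso(O_E,\pi N_{E/E_0})$) is correct and worth keeping.

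However, there is a genuine gap in how you obtain the factor $\pi$. You write that $\ov\mbX\iso O_{E_0}\otimes_{O_{F_0}}\mbX_0$ as a polarized $O_{E_0}$-module ``but with the conjugate $O_E$-action.'' This is precisely what the paper warns against: $\ov X$ has CM type $(\Phi'\cup\{\ov{\varphi'_0}\})\setminus\{\varphi'_0\}$, which differs from $\Phi'$ \emph{only} at $\varphi'_0$, whereas the conjugate action would have CM type $\ov{\Phi'}$ and differ at every place. (What is true is that, after passing through the \cite{M-Th} equivalence to polarized height~$2$ $O_{E_0}$-modules $\mbY$, the object corresponding to $\ov\mbX$ is $\ov\mbY$, which \emph{is} the $O_E$-conjugate of $\mbY$; but $\mbX$ and $\mbY$ live in different categories, and your wording conflates them.) Because the CM types of $\mbX$ and $\ov\mbX$ differ only at $\varphi'_0$, the subspace $\Hom\big((M,V),(N,V)\big)\subseteq O_E\otimes_{\mbZ_p}\breve\mbZ_p$ is \emph{not} obtained from the $\mbX$-case by a global conjugation; one has to compute it directly from the Verschiebung, which in the paper amounts to exhibiting the generator $x = (x_j)_{j\in\mbZ/2f}$ with $x_j = \pi$ for $j\in\{0,\dots,f-1\}$ and $x_j = 1$ otherwise. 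That computation is where the single factor $\pi$ actually comes from, and it is missing from your sketch; the assertion that ``the extra twist by $\pi$ appears'' from the conjugate action does not stand as written. Similarly, ``a standard computation with the trace pairing yields $(x,y) = \delta_{E_0/F_0}^{-1}x\ov y$'' needs to be spelled out: you must verify that the $O_F$-valued form inherited from the polarizations is exactly (up to a unit) the one induced by a perfect $O_{F_0}$-pairing on $O_{E_0}$, which requires fixing a specific imaginary generator of the different on both the $F$- and $E$-sides.
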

\begin{proof}
Denote by $M := M(\mbX_0)$ the (covariant) Dieudonné module of $\mbX_0$ and by $N := M(\mbX)$ (resp. $N := M(\ov \mbX)$) the one of $\mbX$ (resp. $\ov \mbX$), depending on the case one is interested in. Up to isomorphism,
\begin{equation}
\label{eq Def M}
M =  O_F\otimes_{\mbZ_p} \breve {\mbZ}_p = \bigoplus_{i\colon F^u\to \breve \mbQ_p} M_i
\end{equation}
endowed with the Verschiebung
\begin{equation}\label{eq Dieud Module 1}
\big[V\colon M_{i+1}\to M_i\big] := σ^{-1} \cdot \begin{cases} p & \text{if}\ i \in Φ^u\\
1 & \text{if}\ i \in \ov{Φ^u}.\end{cases}
\end{equation}
Here $σ$ is the Frobenius of $\breve \mbZ_p$ and $Φ^u$ is the CM type for $F^u$ that induces $Φ$. Again up to isomorphism, there is a unique way to define a polarization on $(M,V)$ that is compatible with the $ O_F$-action. Namely endow $M$ with the $\breve\mbZ_p$-linear extension of a perfect, $ O_F$-hermitian, skew-symmetric pairing
\begin{equation}\label{eq bilin F}
 O_F\times  O_F\to \mbZ_p.
\end{equation}
These are all of the form
$$\langle m_1,m_2\rangle := \mr{tr}_{F/\mbQ_p}(δ_{F/\mbQ_p}^{-1}m_1\ov{m}_2)$$
where $δ_{F/\mbQ_p}$ denotes a totally imaginary generator of the different $\mcD_{F/\mbQ_p}$ and where $m\mapsto \ov m$ is the Galois conjugation $\mr{conj}_{F/F_0} \otimes\mr{id}_{\breve \mbZ_p}$.

The description of $N$ is analogous. The underlying $\breve \mbZ_p$-module with $ O_E$-action is (in both cases)
\begin{equation}
\label{eq Def N}
N =  O_{E}\otimes_{\mbZ_p} \breve {\mbZ}_p = \bigoplus_{j\colon E^u\to \breve \mbQ_p} N_j.
\end{equation}
For $\mbX$ the Verschiebung is
\begin{equation}\label{eq Dieud Module 2}
\big[V\colon N_{j+1}\to N_j\big] := σ^{-1} \cdot \begin{cases} p & \text{if}\ j \in Φ'^{,u}\\
1 & \text{if}\ j \in \ov{Φ'{}^{,u}}.\end{cases}
\end{equation}
In case of $\ov \mbX$ it equals
\begin{equation}\label{eq Dieud Module 3}
\big[V\colon N_{j+1}\to N_j\big] := σ^{-1} \cdot \begin{cases} p & \text{if}\ j\in Φ'^{,u} \setminus \{φ'_0\}\\
p/π & \text{if}\ j = φ'_0\\
1 & \text{if}\ j \in \ov{Φ'{}^{,u}}\setminus\{\ov{φ'_0}\}\\
π & \text{if}\ j = \ov{φ'_0}.
\end{cases}
\end{equation}
The polarization is defined as in the case of $M$ through the choice of a totally imaginary generator $δ_{E/\mbQ_p}$ of the different $\mcD_{E/\mbQ_p}$.

Let $1_M\in M$ and $1_N\in N$ denote the unit elements (i.e. choices of generators) in Defs. \eqref{eq Def M} and \eqref{eq Def N}. Then $\Hom(M,N)$ identifies with $ O_E\otimes_{\mbZ_p}\breve \mbZ_p$ via $u\mapsto u(1_M)/1_N$. The dual map $u^\vee$ is then
$$\begin{array}{rcl}
\mcD_{E/\mbQ_p}^{-1}\otimes_{\mbZ_p} \breve \mbZ_p \underset{δ_{E/\mbQ_p}}{\iso}  O_E\otimes_{\mbZ_p}\breve \mbZ_p  &
\to &
 O_F\otimes_{\mbZ_p}\breve \mbZ_p \underset{δ_{F/\mbQ_p}}{\iso} \mcD_{F/\mbQ}^{-1}\otimes_{\mbZ_p}\breve \mbZ_p\\
x & \mapsto & δ_{F/\mbQ_p} \mr{tr}_{E/F}(δ_{E/\mbQ_p}^{-1} \ov x)
\end{array}$$
where by $\mr{tr}_{E/F}$ we really mean the $\breve \mbZ_p$-linear extension of the trace. The natural lift along $\mr{tr}_{E/F}$ is simply
$$x\mapsto δ_{F/\mbQ_p}δ_{E/\mbQ_p}^{-1} \ov x = δ_{E_0/F_0}^{-1} \ov x,$$
where we have set $δ_{E_0/F_0}^{-1} = δ_{F/\mbQ_p}δ_{E/\mbQ_p}^{-1}$. This element lies in $E_0$ and generates the inverse different $\mcD^{-1}_{E_0/F_0}$.

In order to complete the proof, we need to identify the subspace $\Hom\big((M,V),(N,V)\big) \subseteq  O_E\otimes_{\mbZ_p}\breve \mbZ_p$. In case $N = M(\mbX)$ this is $ O_E\otimes_{\mbZ_p} 1$. A generator as $ O_E$-module is given by the identity and the above shows
$$\Hom_{ O_F}(\mbX_0,\mbX) \iso ( O_E,\ δ_{E_0/F_0}^{-1} N_{E/E_0}).$$
To understand the case $N = M(\ov \mbX)$ we identify $\{j\colon E^u\to \mbC_p\}$ with $\mbZ/2f\mbZ$, where $2f = [E^u:\mbQ_p]$, such that $σ\circ j = j+1$ and such that $φ'_0\vert_{E^u} = 0$. Then $\Hom\big((M,V),(N,V)\big)$ is generated by $x = (x_j)_{j\in \mbZ/2f} \in  O_E\otimes_{\mbZ_p}\breve \mbZ_p$ with
$$x_j = \begin{cases}
π & \text{if }j\in \{0,\ldots,f-1\},\\
1 & \text{if }j\in \{f,\ldots,2f-1\}.\end{cases}$$	
It follows that
$$\Hom_{ O_F}(\mbX_0,\ov \mbX) \iso ( O_E,\ πδ_{E_0/F_0}^{-1} N_{E/E_0})$$
as claimed.
\end{proof}
Combining the proposition with the comparison isomorphism \eqref{eq comp herm lattices} and Gross' formula one obtains the
\begin{corollary}\label{cor local defo length}
For $u\colon \mbX_0\to \ov\mbX$ a homomorphism,
$$\mr{len}\ \mcO_{\mcZ(u)} = \frac{1 + v \big( δ_{E_0/F_0} (u,u) \big)}{2}.$$
\end{corollary}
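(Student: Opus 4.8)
The plan is to simply combine the two main inputs already assembled in this subsection. First I would recall the comparison isomorphism of hermitian $O_F$-lattices from \eqref{eq comp herm lattices}, which identifies $\Hom_{O_F}(\mbX_0,\ov\mbX)$ with $\Hom(\mbX,\ov\mbX)$ via $u\mapsto \id_{O_{E_0}}\otimes u$, together with the parallel identification of the (unique) deformations of all relevant objects to $O_{\breve E}/\pi^{k+1}$. Under this isomorphism the deformation locus $\mcZ(u)$ for $u\in\Hom_{O_F}(\mbX_0,\ov\mbX)$ matches the deformation locus $\mcZ(\id\otimes u)$ for $\id\otimes u\in\Hom(\mbX,\ov\mbX)$, so Gross' formula in the form of the Corollary following that construction gives
$$\mr{len}\ \mcO_{\mcZ(u)} = \frac{1 + v\big((\id\otimes u,\id\otimes u)\big)}{2},$$
where $(\ ,\ )$ on the right is the \emph{standard} $O_E$-valued form $N_{E/E_0}$ on $\Hom(\mbX,\ov\mbX)\iso(O_E,\pi N_{E/E_0})$.

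The only remaining point is to translate the right-hand side back into the hermitian form on $\Hom_{O_F}(\mbX_0,\ov\mbX)$ that we actually use, namely the $O_E$-valued form $(\ ,\ )$ characterized by $\tr_{E/F}(x,y)=\langle x,y\rangle$ with $\langle x,y\rangle = y^*\circ x$. This is exactly what Prop.~\ref{prop herm lattice locally} records: it gives an isometry $\Hom_{O_F}(\mbX_0,\ov\mbX)\iso(O_E,\ \pi\,\delta_{E_0/F_0}^{-1}N_{E/E_0})$. Comparing with the standard form $(O_E,\pi N_{E/E_0})$ carried across by \eqref{eq comp herm lattices}, the two $O_E$-valued forms on $\Hom_{O_F}(\mbX_0,\ov\mbX)$ differ precisely by the factor $\delta_{E_0/F_0}^{-1}$. (Concretely: if $(x,x)$ denotes our form, then the value of the standard form pulled back via \eqref{eq comp herm lattices} is $\delta_{E_0/F_0}(x,x)$, since both forms are free rank-one $O_E$-hermitian forms and the proposition pins down the generator's self-pairing.) Substituting into Gross' formula yields
$$\mr{len}\ \mcO_{\mcZ(u)} = \frac{1 + v\big(\delta_{E_0/F_0}(u,u)\big)}{2},$$
which is the claim.

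I do not expect any serious obstacle here; the statement is genuinely a formal corollary of the two preceding results. The one thing to be careful about is the bookkeeping of which hermitian form is which — specifically, keeping straight that the isometry $u\mapsto\id_{O_{E_0}}\otimes u$ of \eqref{eq comp herm lattices} is an isometry for the \emph{standard} $O_E$-forms (the ones of the shape $N_{E/E_0}$, scaled by $\pi$ or not depending on $\mbX$ versus $\ov\mbX$), whereas the form $(\ ,\ )$ appearing in the final statement and in Def.~\ref{def CM cycle algebraic} is the one obtained by lifting $\langle\ ,\ \rangle=\tr_{E/F}$-down of the polarization pairing; the discrepancy between the two is exactly the $\delta_{E_0/F_0}$-twist computed in Prop.~\ref{prop herm lattice locally}. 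Once this is spelled out, the valuation identity follows by direct substitution, so the proof is a single short paragraph invoking Prop.~\ref{prop herm lattice locally}, the comparison isomorphism \eqref{eq comp herm lattices}, and Gross' formula.
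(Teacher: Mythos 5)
Your proposal is correct and takes the same route as the paper: Gross' formula gives the length in terms of the standard hermitian form $(O_E,\pi N_{E/E_0})$ on $\Hom(\mbX,\ov\mbX)$, the comparison isomorphism \eqref{eq comp herm lattices} transports $\mcZ(u)$ across, and Prop.~\ref{prop herm lattice locally} identifies the $\delta_{E_0/F_0}$-discrepancy between that form and the trace-lifted form $(\,,\,)$ on $\Hom_{O_F}(\mbX_0,\ov\mbX)$. The paper's proof is the single sentence ``Combining the proposition with the comparison isomorphism \eqref{eq comp herm lattices} and Gross' formula one obtains the Corollary,'' and you have unpacked exactly that chain, including the correct observation that the ratio of the two rank-one $O_E$-hermitian forms is pinned down (up to a unit, which is all the valuation sees) by comparing the generator self-pairings.
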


\subsection{Non-archimedean components (globally)}
\label{subsect deg nonarch}
We return to the global setting. Let $ζ\in E_0^\times$, let $ν$ be a non-archimedean place of $\bfE$ not dividing $\mfd$ and set $v := ν\vert_{E_0}$. Also let $\mbF$ denote an algebraic closure of the residue field $\mbF_ν$.
\begin{proposition}
For $(A_0,A,\ov{η})\in \mcC(\mbF)$ the following hold.
\begin{enumerate}
\item If $v$ is split in $E$, then $V(A_0,A) = 0$.
\item If $v$ is inert in $E$, then $V(A_0,A) \iso V^{(v)}$.
\end{enumerate}
\end{proposition}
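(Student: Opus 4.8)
The plan is to use the CM abelian variety $A_0$ together with its $O_E$-action to reduce the computation of $V(A_0,A) = \Hom_F^\circ(A_0,A)$ to a statement about $p$-divisible groups, and then apply the local analysis of the previous section together with the Honda--Tate and Tate isogeny theorems. First I would observe that since $(A_0,A,\eta)\in \mcC(\mbF)$, the abelian variety $A$ carries an action of $O_E[\mfd^{-1}]$ via $\sigma$, and $A_0$ carries an action of $O_F$; since $v\nmid \mfd$, both $A_0$ and $A$ have good reduction at $\nu$ and we may work with their special fibers over $\mbF$. The space $\Hom_F^\circ(A_0,A)$ only depends on the $p$-divisible groups $A_0[p^\infty]$ and $A[p^\infty]$ away from $p$, and on the prime-to-$p$ Tate modules; by Tate's theorem over finite fields (and taking the colimit over $\mbF$) the whole space $\Hom_F^\circ(A_0,A)$ is detected by the local homomorphism spaces at all places $w$ of $\mbQ$ together with the $\ell$-adic realizations.

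Next I would split according to whether $v$ is split or inert in $E$. When $v$ is inert (equivalently, $w := \nu|_{F_0}$ is inert in $F$, since $E/F$ is determined by $\alpha$ and the relevant quadratic extensions are compatible), the local $p$-divisible group $A[p^\infty]$ at the place above $p$ decomposes according to $F_0\otimes \mbQ_p$, and the factor above $v$ is, by the signature condition and the Eisenstein/Kottwitz conditions in Definition~\ref{def RSZ glob}, a hermitian $O_E$-$\mbZ_p$-module of signature $(1,0)$ in the sense of \cite{M-Th}; likewise $A_0[p^\infty]$ gives one of the CM abelian variety factors. The uniqueness statement \cite[\S2]{M-Th} of such modules over $\mbF$ (or rather over $O_{\breve E}$ and its reduction), exactly as recalled before Proposition~\ref{prop herm lattice locally}, then identifies $A[p^\infty]$ with the group denoted $\mbX$ there (up to isogeny) and $A_0[p^\infty]$ with $\mbX_0$. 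Combining the comparison isomorphism \eqref{eq comp herm lattices}, Proposition~\ref{prop herm lattice locally}, and the analogous prime-to-$p$ statements (where the hermitian modules are simply self-dual $O_{E,\ell}$-lattices and the local homomorphism spaces match those of $V\otimes \mbQ_\ell$), one finds that $V(A_0,A)$ is a hermitian $E$-space whose localizations agree with those of $\mbV$ at every place except $v$, where it is the negative-definite form; by the Hasse principle for hermitian spaces this forces $V(A_0,A)\iso V^{(v)}$. When $v$ is split in $E$, the $O_E$-action forces an isogeny $O_E\otimes_{O_F}A_0 \to A$ were $V(A_0,A)\neq 0$, but the CM types of source and target differ (source has type $\Phi'$, target has type $\{\Phi'\cup\varphi_0'\}\setminus\{\ov{\varphi_0'}\}$), exactly as in the proof that $\mcC(\zeta,\mu)$ is artinian over $\bfE$; this contradiction gives $V(A_0,A)=0$.

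The main obstacle I anticipate is the careful bookkeeping needed to pass from the global spaces $V(A_0,A)$ to the local hermitian modules of \cite{M-Th} and to check that the signature and polarization data match on the nose — in particular verifying that the reduction of $A$ at $\nu$ really is supersingular in the relevant $O_{E_0}$-module sense (so that the comparison isomorphism of \cite{M-Th} applies) and that the $O_E$-action on $A[p^\infty]_v$ has the asserted CM type. This is where the hypothesis that $\Phi$ is unramified at $p$ (Definition~\ref{def:unramified_CM_type}) enters, guaranteeing that the $p$-adic CM types are pulled back from unramified subfields so that \cite{M-Th} is applicable verbatim, just as in \cite{H-CM}. Once the dictionary is in place, the split/inert dichotomy and the Hasse principle argument are routine.
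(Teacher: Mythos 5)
Your plan rightly invokes the local hermitian $O_E$-$\mbZ_p$-module formalism of \cite{M-Th} and the split/inert dichotomy, and you correctly identify the unramifiedness of $\Phi$ as what makes \cite{M-Th} applicable, but there are two genuine gaps and one error. The error: you assert that $v$ inert in $E$ is \emph{equivalent} to $w:=v|_{F_0}$ inert in $F$. Only one implication holds — a place $v$ of $E_0$ lying over an inert $w$ of $F_0$ can perfectly well split in $E$, which is exactly why the paper's slope computation must separately treat ``$w/w_0$ split but $w_0\neq v$.'' The first gap: in the inert case you assemble local identifications $\mbQ_\ell\otimes V(A_0,A)\iso V_\ell$ (resp. the $p$-adic analogue via Prop.~\ref{prop herm lattice locally}) and then invoke the Hasse principle. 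But the Hasse principle presupposes that $V(A_0,A)$ is already a nonzero one-dimensional hermitian $E$-space; that nonvanishing is the hard part, and it does not follow from Tate's theorem plus local nonvanishing of $\Hom$-spaces, because an abelian variety over $\ov{\mbF}_p$ is not determined up to isogeny by its local realizations alone. The paper establishes $V(A_0,A)\neq 0$ by showing $A'_0:=O_E\otimes_{O_F}A_0$ and $A$ have $O_E$-linearly isogeneous $p$-divisible groups (the slope computation \eqref{eq slopes CM}) and then deforming to characteristic $0$, where $O_E$-CM abelian varieties with the same CM type lie in a single isogeny class. Your sketch omits both steps.

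The second gap concerns your appeal to the artinian lemma's CM-type contradiction in the split case, and your claim that the Kottwitz/signature condition places $A[v^\infty]$ directly in the uniqueness class of \cite[\S2]{M-Th}. Both rely on CM types, but in characteristic $p$ the Lie algebra — hence the CM type in the naive sense — is not an isogeny invariant (the derivative of Frobenius is zero), and the signature condition constrains only the Hodge filtration, not the Newton polygon. What the CM type does determine, via the Shimura--Taniyama formula \eqref{eq slopes CM}, is the slope datum; one then argues with slopes. This is exactly the paper's route: when $v$ is split, $\lambda_w\neq\lambda_u$ so $\Hom$ of the relevant isoclinic $p$-divisible groups vanishes (giving (1)); when $v$ is inert, $\lambda_w=\lambda_u=1/2$ so $A[v^\infty]$ and $A_0[v^\infty]$ are supersingular and the $\mbX_0,\ov{\mbX}$ framework of Prop.~\ref{prop herm lattice locally} applies. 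Your argument is morally in the right direction but needs the slope computation to be rigorous, and needs an entirely separate idea for the nonvanishing in (2).
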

\begin{proof}
Recall that $\bfE$ is by definition a subfield of $\mbC$. Choose an isomorphism $\mbC \iso \ov \mbQ_p$ which induces the prime $ν$. This allows to view $Φ$ and $Φ'$ as $p$-adic CM types. Let $w$ be an arbitrary prime of $E$ over $p$ and let $w_0$, $u$ and $u_0$ denote its restrictions to $E_0$, $F$ and $F_0$, respectively. The isotypic factors $A_0[u^\infty]$ and $A[w^\infty]$ have to be isoclinic $p$-divisible groups since they have CM by a field. Their slopes are necessarily
\begin{equation}\label{eq slopes CM}
λ_{u}:=
\frac{\# Φ \cap \Hom(F_u, \ov \mbQ_p)}
{\#\Hom(F_u, \ov \mbQ_p)}
\ \ \mathrm{resp.}\ \ 
λ_w:=
\frac{\# \left(\{\ov{φ'_0}\}\cup Φ'\setminus \{φ'_0\}\right) \cap \Hom(E_w, \ov \mbQ_p)}
{\#\Hom(E_w, \ov \mbQ_p)}.
\end{equation}
In general one has
$$\frac{\# Φ\cap \Hom(F_u, \ov \mbQ_p)}{\#\Hom(F_u, \ov \mbQ_p)} = \frac{\# Φ'\cap \Hom(E_w, \ov \mbQ_p)}{\#\Hom(E_w, \ov \mbQ_p)}$$
since $Φ'$ is the inverse image of $Φ$ under the restriction map $\Hom(E,\mbC)\to \Hom(F,\mbC)$. One deduces the following properties.
\begin{itemize}
\item If $w/w_0$ is inert, then $u/u_0$ is also inert and $λ_w = λ_u = 1/2$.
\item If $w/w_0$ is split and $w_0\neq v$, then $λ_w = λ_u$ and $λ_w + λ_{\ov{w}} = 1$. Indeed, the addition of $φ'_0$ and removal of $\ov{φ'_0}$ in \eqref{eq slopes CM} does not come in since $φ'_0$ lies above $v$.
\item If $w/w_0$ is split and $w_0 = v$, then
$$\# \left(\{\ov{φ'_0}\}\cup Φ'\setminus \{φ'_0\}\right)\cap \Hom(E_w, \ov \mbQ_p) = \# Φ'\cap \Hom(E_w, \ov \mbQ_p) \pm 1.$$
In particular, $λ_w\neq λ_{u}$ and one obtains statement (1) of the proposition.
\end{itemize}
Now assume that $v$ is inert. The first aim is to show $V(A_0, A) \neq 0$. The above computation of slopes shows that $A_0[v^\infty]$ and $A[v^\infty]$ are supersingular. In other words, they are (isomorphic to) the $ O_F$-$\mbZ_p$-module $\mbX_0$ and $ O_E$-$\mbZ_p$-module $\ov{\mbX}$ from Prop.~ \ref{prop herm lattice locally}, respectively. It follows that there is an $ O_E$-linear isogeny $ O_E\otimes_{ O_F} A_0[v^\infty]\to A[v^\infty]$. It extends to an isogeny $ O_E\otimes_{ O_F} A_0[p^\infty]\to A[p^\infty]$ by the above computations of slopes.

Set $A'_0:= O_{E}\otimes_{ O_F} A_0$ which is an abelian varieties over $\mbF$ with CM by $ O_{E}$ of CM type $ Φ'$. The just given argument shows that $A'_0[p^\infty]$ and $A[p^\infty]$ are $ O_{E}$-linearly isogeneous. This does not yet yield that $A'_0$ and $A$ are isogeneous, but it shows at least that $A'_0$ is isogeneous to some $A'$ with CM by $ O_{E}$ of CM type $Φ$. Then $A'$ together with its CM deforms to characteristic $0$. But over $\mbC$, any two $ O_{E}$-CM abelian varieties with same CM type are isogeneous. This finally proves that $A'_0$ and $A$ are isogeneous and hence $V(A_0,A) \neq 0$.

For dimension reasons one obtains that
$$\mbQ_\ell \otimes_{\mbQ} V(A_0,A) \iso
\begin{cases}\Hom_{F_\ell}(\RV_\ell(A_0), \RV_\ell(A)) & \ell \neq p\\
\Hom^\circ_{F_p}(A_0[p^\infty],A[p^\infty])& \ell = p.\end{cases}$$
In particular $V(A_0,A)$ has to be of dimension $1$ over $E$. It is positive definite by the positivity of the Rosati involution. 
Its localizations at $\ell \neq p$ are isomorphic to $V_\ell$ (viewed as $1$-dimensional $E$-vector space) by the existence of a level structure.

This leaves us to check $V^{(v)}_{w_0}\iso V(A_0,A)_{w_0}$ for $w_0$ above $p$. If $w_0$ is inert in $E$ with $u_0 := w_0\vert_{F_0}$, the statement $\Hom(A_0[u_0^\infty],A[w_0^\infty]) = V^{(v)}_{w_0}$ is precisely Prop.~ \ref{prop herm lattice locally}. (Both cases of the proposition occur, depending on whether $v = w_0$ or not.) For $w_0$ split in $E$, there is nothing to show since there is only a single isomorphism class of $1$-dimensional hermitian spaces.
\end{proof}

\emph{Proof of Thm. \ref{thm CM inter} (1) and (2).}
Part (1) is implied by the previous proposition: $\mbF\otimes_{ O_\bfE[\mfd^{-1}]}\mcC(ζ,ϕ) \neq \emptyset$ only if $ζ$ is represented by $V^{(v)}$, which can only happen if $v$ is inert and $\Diff(ζ,\mbV) = \{v\}$.

Assume this to be the case, and let $c = (A_0,A,\ov{η},u)\in \mcC(ζ,ϕ)(\mbF)$ be any point. The completed local ring of $\mcC(ζ,ϕ)$ in $c$ is $ O_{\breve \bfE_ν}$, which is finite free of degree equal to the ramification index $e(ν/v)$ over $ O_{\breve E_{v}}$. Applying the Serre--Tate Theorem and Cor.~ \ref{cor local defo length}, one obtains
$$\mr{len}\ \mcO_{\mcC(ζ,ϕ),c} = e(ν/v)\cdot \frac{1 + v(δ_{E_0/F_0} ζ)}{2}.$$
The length is in particular independent of $c$ and all that is left to do is to count the points of $\mcC(ζ,ϕ)(\mbF)$.

For $c$ as above, the level structure yields a coset of identifications
$$σ^{-1}(K^p) η^p \colon  \mbA_{E,f}^p\otimes_{E} V(A_0, A) \iso \mbV_f^p.$$
By Prop.~ \ref{prop herm lattice locally}, it can naturally be extended to an identification
$$σ^{-1}(K^v)η^{v}\colon  \mbA_{E,f}^{v}\otimes_{E} V( A_0, A)\iso \mbV_f^v$$
by looking at $p$-divisible groups. By ``natural'' we mean that it is such that $x\in V( A_0, A)$ lies in $\Hom(A_0,A)[\mfd^{-1}]$ if and only if $η^{v,\mfd}(x) \in \widehat {Λ}^{v,\mfd}$ and if it is a homomorphism at $v$.
The latter is equivalent to $v(δ_{E_0/F_0}ζ) > 0$ by Prop.~ \ref{prop herm lattice locally}. It follows that for a quasi-homomorphism $x\in V(A_0,A)_ζ$, the length of $\mcC(ζ,ϕ)$ at the point $(A_0,A,\ov{η},x)$ is
$$e(ν/v) ϕ^{v}(η^{v}(x)) \max\left\{0, \frac{1+v(δ_{E_0/F_0} ζ)}{2}\right\}.$$
The set of such $x$ form a simply transitive $E^1$-orbit while $\mcC(ζ,ϕ)(\mbF)$ is a simply transitive $T(\mbQ)\backslash T(\mbA_f)/σ^{-1}(\wt K)$-orbit. So
\begin{equation}\begin{split}
\widehat{\deg}_ν\ \mcC(ζ,ϕ)  = &\, τ(Z^\mbQ) e(ν/v)\,\max\left\{0,\frac{1 + v(δ_{E_0/F_0} ζ)}{2}\right\}\cdot  \log q_ν  \\
& \ \ \times\sum_{t\in E^1 \backslash \mbA^1_f/σ^{-1}(K)}\ \sum_{x\in V(A_0,A)_{ζ}} ϕ^{v}(tx) \\
= &\, τ(Z^\mbQ) e(ν/v)\, \max\left\{0,\frac{1 + v(δ_{E_0/F_0} ζ)}{2}\right\} \cdot \Orb(ζ,ϕ^{v}) \cdot \log q_ν.
\end{split}	
\end{equation}
Summing over all $ν\vert v$ one obtains the claimed expression
\begin{equation}
\widehat{\deg}_{v}\ \mcC(ζ,ϕ) = τ(Z^\mbQ)[\bfE:E_0] \cdot\max\left\{0,\frac{1 + v(δ_{E_0/F_0} ζ)}{2}\right\} \cdot \Orb(ζ,ϕ^{v}) \log q_{v}.
\end{equation}
\qed

\section{Comparison with analytic generating series}

\subsection{The analytic generating series}
\label{ss:analytic_gen_series}
Consider the split $E_0$-quadratic space $V'=E_0\times E_0$ with its quadratic form $u'= (u_1,u_2)\mapsto \fkq(u')=u_1u_2$. The special orthogonal group $\SO(V',\fkq)$ can be identified with the $E_0$-group $G':=\GL_{1,E_0}$, via the action on the $V'$ by $g\cdot (u_1,u_2)= (g^{-1}u_1,gu_2)$. 

Denote by $χ\colon \BA_{E_0}^\times/E_0^\times\to \{\pm 1\}$ the quadratic character associated to $E/E_0$ by class field theory. 
For $\phi'\in \CS(V'(\BA_{E_0}))$ we
recall from \cite[\S12]{Z19} that one defines a regularized integral
\begin{align}\label{Zan}
J(\phi', s)=\int_{[G']}\left(\sum_{u'\in V'(E_0)} \phi'(g^{-1}\cdot u')\right)|g|^s\chi(g)\,dg.
\end{align}
It decomposes  as a sum 
\begin{align}\label{Zan0}
J(\phi', s)=\Orb(0_\pm, \phi',s)+\sum_{\zeta=\fkq(u')\in E_0^\times}\Orb(\zeta, \phi',s),
\end{align}
where for $\zeta=\fkq(u'_\zeta)\neq 0$,
\begin{align}\label{Zan1.5}
\Orb(\zeta, \phi',s):=\int_{G'(\BA_{E_0})} \phi'(g^{-1}\cdot u'_\zeta) |g|^s\chi(g)\,dg.
\end{align}
(Note that this notation slightly differs from \cite[\S12]{Z19}, where it was denoted by $\Orb(u'_\zeta, \phi',s)$.) We refer to {\em loc. cit.} for the definition of nilpotent orbital integrals 
$\Orb(0_\pm, \phi',s):=
\Orb(0_+, \phi',s)+
\Orb(0_-, \phi',s)$.

Define the analytic generating function,
$$J(h,\phi', s):=J(\omega(h)\phi', s),\quad h\in \bH(\BA_{E_0}).
$$
We recall from \cite[Thm.~12.9]{Z19} for future reference.
\begin{theorem}\label{thm n=1 gl}
The function $\bH(\BA_{E_0})\times \BC\ni (h,s) \mapsto J(h,\phi', s)$ is smooth, left $\bH(E_0)$-invariant, and entire in $s\in\BC$.
\end{theorem}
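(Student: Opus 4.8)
The statement asserts three properties of the analytic generating series $J(h,\phi',s)$: smoothness in $h$, left $\bH(E_0)$-invariance, and holomorphy (entireness) in $s$. Since this is cited verbatim as \cite[Thm.~12.9]{Z19}, the plan is to reduce it to the basic structure of the regularized integral \eqref{Zan}--\eqref{Zan1.5} and of the Fourier-type decomposition into orbital integrals. First I would recall the precise definition of the regularization in \cite[\S12]{Z19}: the sum $\sum_{u'\in V'(E_0)}\phi'(g^{-1}u')$ defines a theta-type kernel on $[G']=G'(E_0)\bs G'(\BA_{E_0})$, and $J(\phi',s)$ is obtained by integrating this kernel against $|g|^s\chi(g)$ after subtracting the contribution of the ``constant term'' along the (one-dimensional) minimal parabolic; the resulting integral converges in some right half-plane and the remaining nilpotent part $\Orb(0_\pm,\phi',s)$ is defined by the standard Tate-integral trick (splitting $\BA_{E_0}^\times$ into $|g|\leq 1$ and $|g|\geq 1$, using functional equations / zeta factors), which is where the meromorphic and ultimately entire continuation in $s$ comes from.

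The key steps, in order, would be: (1) establish absolute convergence of \eqref{Zan0} for $\Re(s)$ in a suitable range, using the rapid decay of Schwartz functions and the fact that for each fixed $\zeta\neq 0$ the hyperboloid $V'_\zeta$ is a single $G'(\BA_{E_0})$-orbit (or empty), so $\Orb(\zeta,\phi',s)$ is literally an orbital integral over $G'(\BA_{E_0})\simeq\BA_{E_0}^\times$ and factorizes into local Tate-type integrals that are visibly entire in $s$ (the $\chi$-twist ensures no poles); (2) observe that only finitely many $\zeta$ contribute for a fixed $h$ in a compact set once one controls the support of $\omega(h)\phi'$, so that termwise statements suffice locally and one gets smoothness and invariance in $h$ from the corresponding properties of $\omega(h)\phi'$ (the Weil representation acts smoothly on $\CS(V'(\BA_{E_0}))$, and $\bH(E_0)$-invariance is the standard equivariance of the theta kernel under $\bH(E_0)\times G'(E_0)$, which is left untouched by the regularization since the latter is $G'(E_0)$-invariant); (3) handle the nilpotent term $\Orb(0_\pm,\phi',s)$ separately: here one genuinely uses the Tate functional equation to pass the half-plane of convergence past $s=0$ and to check that the potential poles (coming from $\zeta_{E_0}(s)$-type factors and the regularization) cancel because $\chi$ is a nontrivial Hecke character of $\BA_{E_0}^\times/E_0^\times$, yielding an entire function; and (4) for general $s$, define $J(h,\phi',s)$ by this continuation and verify that the resulting function is still smooth in $h$ by differentiating under the (now meromorphically continued, but pole-free) integral sign.

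The main obstacle is step (3): controlling the nilpotent orbital integral $\Orb(0_\pm,\phi',s)$ and proving that the continuation in $s$ has \emph{no} poles — i.e. that the apparent pole of the regularized Tate integral is killed by the nontriviality of $\chi$ (equivalently, by $E\neq E_0$). This is exactly the point where one must invoke the structure of Tate's thesis over the number field $E_0$ with the quadratic character $\chi$ attached to $E/E_0$, and be careful that the regularization procedure (subtraction of the constant term) interacts cleanly with the functional equation. All the other steps — convergence for $\Re(s)\gg 0$, factorization of the nonzero orbital integrals, smoothness and $\bH(E_0)$-invariance — are formal consequences of properties of the Weil representation and of Schwartz functions, so I expect them to be routine. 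Since the entire statement is already available as \cite[Thm.~12.9]{Z19}, in the paper itself I would simply cite that result and not reproduce the argument; the sketch above is the reasoning that underlies it.
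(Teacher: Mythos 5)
Your proposal correctly identifies that the paper supplies no proof of this statement but simply recalls it from \cite[Thm.~12.9]{Z19}, which is exactly how the paper handles it. The underlying mechanism you sketch (Tate-thesis regularization of the nilpotent term, entirety via nontriviality of $\chi$, convergence and equivariance from rapid decay and Weil-representation functoriality) is a reasonable account of what happens in that reference, but there is nothing further in the present paper to compare against.
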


\begin{remark}\label{rem SE}  
The integral \eqref{Zan} can be viewed as the theta lifting for the pair $$(\SO(V',\fkq),\quad\SL_2),$$ from the automorphic representation $\chi|\cdot|^s$ of $\SO(V')\simeq \GL_{1}$ to $\SL_2$ (cf. \cite[Rem.~12.10]{Z19}). Therefore,  $ J(h,\phi', s)$ should be a degenerate Eisenstein series for the induced representation $\Ind_{B(\BA_{E_0})}^{\bH(\BA_{E_0})}(\chi\, |\cdot|^s)$ ($B$ the Borel subgroup of upper triangular matrices). However, it may not be associated to a standard section in our application below.\end{remark}

\label{subsect eis series}
\subsection{Local Results}
\label{ss:Eis_local}

Let  $E_0$ be a local field and $E/E_0$ a (possibly split) quadratic extension. Denote by $χ\colon E_0^\times\to \{\pm 1\}$ the corresponding character from local class field theory. For $\zeta\in E_0^\times$, we 
define the normalized local orbital integral
with a transfer factor,
$$
\Orb(\zeta,\phi',s)=\chi(u_1)|u_1|^{-s}\int_{E_0^\times}\phi'(u_1 t^{-1},t u_2)|t|^s\chi(t)\,d^\times t
$$
where $(u_1,u_2)\in E_0\times E_0$ is any choice such that $\zeta=u_1u_2$. Set 
$$
\Orb(\zeta,\phi')=\Orb(\zeta,\phi',0),\quad \del(\zeta,\phi')=\frac{d}{ds}\Big|_{s=0} \Orb(\zeta,\phi',s).
$$
For $h\in \bH(E_0)$, we set 
$$
\Orb(\zeta,h,\phi',s):=\Orb(\zeta,\omega(h)\phi',s)
$$
and similarly for the other orbital integrals.

Now 
let $E_0$ be a local $p$-adic field and $q$ the cardinality of its residue field. Let $E/E_0$ be an unramified quadratic field extension. Let $v$ be the normalized valuation on $E_0$ and $|\cdot | = q^{v(\cdot)}$ the normalized absolute value. For each $c\in E_0^\times$, consider the characteristic function
\begin{align}\label{eq:phi'c}
\phi'_c={\bf 1}_{  O_{E_0}\times c \cdot O_{E_0} }\in \CS(V'(E_0)).
\end{align}
Normalize the Haar measure on $E_0^\times$ such that $\vol(O_{E_0}^\times)=1$.

\begin{proposition}\label{prop orb non-arch}
We have
$$
\Orb(\zeta,\phi'_c) =\begin{cases}
1 & \text{if}\ v(c^{-1}\zeta)\geq 0\ \text{and}\ \equiv 0\!\!\mod 2, \\
0& \quad\text{otherwise}.
\end{cases}
$$
When $v(\zeta)- v(c)$ is odd,
\begin{align}\label{eq FC of derivative key identity}
\del(\zeta,\phi'_c) =-
\max\left\{0, \frac{1+v(c^{-1}ζ)}{2}\right\}  \log q.
\end{align}
\end{proposition}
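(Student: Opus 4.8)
The statement is an explicit local computation of the orbital integral $\Orb(\zeta,\phi'_c,s)$ attached to the split space $V'=E_0\times E_0$, the function $\phi'_c = \mathbf 1_{O_{E_0}\times cO_{E_0}}$, the unramified quadratic character $\chi$, and of its derivative at $s=0$. So my plan is a direct unwinding of the definition.

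\textbf{Plan of proof.} Write $\zeta = u_1 u_2$ with $u_1, u_2 \in E_0^\times$; by definition
\[
\Orb(\zeta,\phi'_c,s) = \chi(u_1)|u_1|^{-s}\int_{E_0^\times}\phi'_c(u_1 t^{-1}, t u_2)\,|t|^s\chi(t)\,d^\times t.
\]
I would choose the convenient normalization $u_1 = 1$, $u_2 = \zeta$, so that $\phi'_c(t^{-1}, t\zeta) = \mathbf 1_{O_{E_0}}(t^{-1})\mathbf 1_{cO_{E_0}}(t\zeta) = \mathbf 1_{v(t)\le 0}\cdot \mathbf 1_{v(t)\ge v(c)-v(\zeta)}$. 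Setting $k = v(c) - v(\zeta) = -v(c^{-1}\zeta)$, the integral becomes a finite geometric-type sum over $t$ with $\max(0,k)\le -v(t)$... more precisely over integers $n = v(t)$ with $k \le n \le 0$:
\[
\Orb(\zeta,\phi'_c,s) = \sum_{n=k}^{0} q^{-ns}\chi(\varpi)^n,
\]
where $\varpi$ is a uniformizer and I use $\mathrm{vol}(O_{E_0}^\times)=1$ and $\chi$ unramified so $\chi(t)=\chi(\varpi)^{v(t)}$. This sum is empty (equals $0$) unless $k\le 0$, i.e. $v(c^{-1}\zeta)\ge 0$. Since $E/E_0$ is unramified, $\chi(\varpi)=-1$, so the sum is $\sum_{n=k}^{0}(-1)^n q^{-ns}$.

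\textbf{Evaluating at $s=0$ and differentiating.} At $s=0$ the sum is $\sum_{n=k}^0 (-1)^n$, which is $1$ if $-k = v(c^{-1}\zeta)$ is even (and $\ge 0$) and $0$ if it is odd; this is exactly the first displayed formula. For the derivative, $\frac{d}{ds}q^{-ns} = -n\log q\, q^{-ns}$, so when $v(\zeta)-v(c) = -k$ is odd (the case of interest, where the value at $s=0$ vanishes term-by-term in pairs) I get
\[
\del(\zeta,\phi'_c) = -\log q \sum_{n=k}^{0} n(-1)^n.
\]
For $k$ odd negative, pairing consecutive terms $\{2m-1, 2m\}$ gives $(2m-1)(-1)^{2m-1} + 2m(-1)^{2m} = -(2m-1)+2m = 1$ for each of the $\tfrac{-k+1}{2}$ pairs; hence $\sum_{n=k}^0 n(-1)^n = \tfrac{-k+1}{2} = \tfrac{1+v(c^{-1}\zeta)}{2}$, and this only occurs when $k\le 0$ (otherwise the sum is empty and $\del = 0$, matching the $\max\{0,\cdot\}$). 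So $\del(\zeta,\phi'_c) = -\max\{0,\tfrac{1+v(c^{-1}\zeta)}{2}\}\log q$, which is \eqref{eq FC of derivative key identity}. I should double-check the edge case where $v(\zeta)-v(c)$ is odd but $k>0$: then the defining integral runs over an empty set, so both $\Orb$ and $\del$ vanish, consistent with both formulas.

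\textbf{Main obstacle.} There is no real obstacle — this is a bookkeeping computation. The only points requiring care are: (i) checking the transfer factor $\chi(u_1)|u_1|^{-s}$ is trivial for the chosen representative $u_1=1$ and, more importantly, that the whole expression is independent of the choice of $(u_1,u_2)$ with $u_1u_2=\zeta$ (this follows from a change of variables $t\mapsto u_1 t$ combined with $\chi(u_1^2)=1$ and the multiplicativity of $|\cdot|$); (ii) getting the parity sign right, i.e. that $\chi(\varpi)=-1$ because $E/E_0$ is \emph{unramified} quadratic; and (iii) tracking which of $v(c^{-1}\zeta)$ versus $v(\zeta)-v(c)$ appears with which sign. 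I would present the computation compactly as the geometric sum above and then separate into the two parity cases.
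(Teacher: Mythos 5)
Your computation is correct and fills in what the paper leaves implicit: its proof of Prop.~\ref{prop orb non-arch} is simply the sentence ``This follows by a straightforward computation,'' so there is no alternative argument in the source to contrast with. Unwinding the definition with the representative $(u_1,u_2)=(1,\zeta)$, reducing to a finite alternating sum $\sum_{n=k}^{0}(-1)^n q^{-ns}$ with $k=-v(c^{-1}\zeta)$ using $\chi(\varpi)=-1$ and $\vol(O_{E_0}^\times)=1$, and then evaluating the sum and its $s$-derivative at $s=0$, is exactly the intended calculation. Two small presentational points: the claim that the derivative formula also covers the empty-sum case $k>0$ deserves the one-line check you gave (for $v(c^{-1}\zeta)$ odd and $\le -1$, the quantity $\tfrac{1+v(c^{-1}\zeta)}{2}\le 0$, so the $\max$ is $0$, matching the vanishing integral), and your parenthetical remark ``where the value at $s=0$ vanishes term-by-term in pairs'' is slightly informal — what matters is just that the alternating sum of an even number of $\pm1$'s is $0$ — but neither affects correctness.
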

\begin{proof}
This follows by a straightforward computation.
\end{proof}

We recall from \cite[\S12]{Z12} the analogous result for the archimedean extension $E\mbC$ of $E_0=\mbR$,  the standard character $ψ_0(b) := \exp(2πib)$.
Now we have $V'=\BR\times \BR$ and let
\begin{align}\label{eq:Gau}
\phi'(x,y)=2^{-1}(x+y)e^{-\pi(x^2+y^2)}\in \CS(\BR\times \BR).
\end{align}
 Write $h\in \SL_2(\BR)$ according to the Iwasawa decomposition (cf. \eqref{kappa in SO2})
\begin{align*}
h=\left(\begin{matrix} 1 & b \\
& 1
\end{matrix}\right)
\left(\begin{matrix} a^{1/2} & \\
& a^{-1/2}
\end{matrix}\right)\,\kappa_\theta,\quad a\in\BR_{+},\quad b\in \BR.
\end{align*}
Normalize the Haar measure on $E_0^\times=\BR^\times$ to be $dt/|t|$ where $dt$ is the Lebesgue measure on $\BR$. We recall the following result from \cite[Lem.~12.5]{Z12}.\footnote{Here the Gaussian functions differ slightly from \cite[\S12]{Z19} which should be corrected as the ones here.}
\begin{proposition}\label{prop orb inf}
Let $\zeta\in\BR^\times$. Then
\begin{align*}
\Orb(\zeta,h,\phi')=\chi_1(\kappa_\theta)\begin{cases}a^{1/2}e^{2 \pi i \zeta(b+ia)} ,& \zeta>0,\\
0, & \zeta<0,
\end{cases}
\end{align*}
and when $\zeta<0$,
\begin{align}\label{eq FC of derivative key identity inf}
\del(\zeta,h,\phi')=\frac{1}{2}\chi_1(\kappa_\theta)a^{1/2} \,e^{2\pi i  \zeta(b+ia )}\,\Ei(-4\pi a|\zeta|).
\end{align}
Here the character $\chi_1$ is defined by \eqref{chi}.
\end{proposition}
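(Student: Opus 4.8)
The two assertions are purely local computations of archimedean orbital integrals for the split binary quadratic space $V' = \mbR\times\mbR$ with $\fkq(x,y) = xy$, so I will verify them directly by explicit integration. The key preliminary step is to make the Weil representation action $\omega(h)\phi'$ explicit for $h$ in Iwasawa form. Since $\dim V' = 2$, the representation factors through $\SL_2$, and for $h = n(b)\,m(a^{1/2})\,\kappa_\theta$ one has a standard formula: translation by $n(b)$ multiplies $\phi'(u')$ by $\psi_0(b\,\fkq(u')) = e^{2\pi i b u_1 u_2}$, translation by $m(a^{1/2})$ rescales the argument and introduces a factor $|a|^{1/2}$ (the absolute value of the similitude to the power $\dim V'/4 = 1/2$), and the action of $\kappa_\theta\in\SO(2)$ acts on the Gaussian $\phi'$ of \eqref{eq:Gau} through its weight. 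The point is that $\phi'(x,y) = 2^{-1}(x+y)e^{-\pi(x^2+y^2)}$ is, up to the factor $2^{-1}(x+y)$, the standard Gaussian on the positive-definite plane and is an eigenvector of $\wt\SO(2,\mbR)$ of weight $\frac12 + \frac12 = 1$ (the quadratic-form Gaussian has weight $\dim V'/2 - 1 = 0$ and the linear factor $x+y$ raises the weight by $1$, exactly as in the discussion below \eqref{eq:def Phi}); hence $\omega(\kappa_\theta)\phi' = \chi_1(\kappa_\theta)\,\phi'$. This reduces the computation to $h = n(b)m(a^{1/2})$.

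\textbf{Computing $\Orb(\zeta,h,\phi')$.} With the transfer-factored definition $\Orb(\zeta,\phi',s) = \chi(u_1)|u_1|^{-s}\int_{\mbR^\times}\phi'(u_1 t^{-1}, t u_2)|t|^s\chi(t)\,d^\times t$ and $d^\times t = dt/|t|$, pick the representative $(u_1,u_2) = (1,\zeta)$. After applying $\omega(n(b)m(a^{1/2}))$, the integrand becomes $a^{1/2}\cdot 2^{-1}(a^{1/2}t^{-1} + a^{1/2}t\zeta)\,e^{2\pi i b\zeta}\,e^{-\pi a(t^{-2} + t^2\zeta^2)}$ against $|t|^s\chi(t)\,dt/|t|$; at $s = 0$ the character $\chi$ for $E = \mbC/\mbR$ is $\chi = \sgn$, so only the odd part of the rest survives. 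Substituting $t\mapsto$ (a suitable power) and using the classical Gaussian integrals $\int_0^\infty e^{-\pi a(t^{-2}+t^2\zeta^2)}\,dt$ and its variants — which evaluate to elementary expressions involving $e^{-2\pi a|\zeta|}$ — one obtains $0$ when $\zeta < 0$ (the relevant integral has a sign that kills it, reflecting that the positive-definite nearby space does not represent negative $\zeta$) and $\chi_1(\kappa_\theta)a^{1/2}e^{2\pi i\zeta(b+ia)}$ when $\zeta > 0$; note $e^{2\pi i b\zeta}e^{-2\pi a\zeta} = e^{2\pi i\zeta(b+ia)}$. I expect this to be a short but slightly fiddly substitution-and-complete-the-square exercise.

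\textbf{Computing $\del(\zeta,h,\phi')$ for $\zeta < 0$.} Here one differentiates the $s$-integral at $s = 0$. Since the value at $s=0$ vanishes for $\zeta<0$, the derivative picks up $\log|t|$ inside the integral: $\del(\zeta,h,\phi') = \chi_1(\kappa_\theta)a^{1/2}e^{2\pi i b\zeta}\int_{\mbR^\times}2^{-1}(t^{-1}+t\zeta)\,e^{-\pi a(t^{-2}+t^2\zeta^2)}\sgn(t)\log|t|\,\frac{dt}{|t|}$ (plus the $-s\log|u_1|$ term from the transfer factor, which contributes nothing since $|u_1|=1$). Folding $t$ and $-t$ and making the substitution $r = t^2|\zeta|$ (or the analogous one that converts the integrand into $\int_1^\infty e^{-cr}\,dr/r$-type shape), the integral collapses to a constant multiple of $\int_r^\infty e^{-u}\,du/u = -\Ei(-r)$ evaluated at $r = 4\pi a|\zeta|$; tracking the constants gives the factor $\frac12$ and the claimed formula \eqref{eq FC of derivative key identity inf}. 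This is the step most prone to constant-chasing errors — the normalization of the measure, the $2^{-1}$ in the Gaussian, and the precise argument $4\pi a|\zeta|$ (as opposed to $2\pi a|\zeta|$, cf.\ the footnote about differing conventions) all have to be reconciled. Since the statement is quoted from \cite[Lem.~12.5]{Z12}, I would simply cite that reference and indicate that the only change is the harmless rescaling of the Gaussian noted in the footnote, so that no further argument is needed beyond recording the formulas.
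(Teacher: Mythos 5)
Your proposal is correct and ultimately takes the same route as the paper: the paper gives no proof at all but simply states the result as a recall from \cite[Lem.~12.5]{Z12}, with the footnote flagging the change of Gaussian normalization, exactly as you conclude at the end. Your preliminary sketch of the direct computation (Weil action on $\phi'$ in Iwasawa coordinates, completion of the square to produce $e^{2\pi i\zeta(b+ia)}$ for $\zeta>0$ and vanishing for $\zeta<0$, and $\log|t|$ appearing in the $s$-derivative leading to $\Ei$) is sound and matches what one finds in \cite{Z12}, but the paper does not reproduce any of it.
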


\subsection{Comparison}
From now on $F_0,F,E_0$ and $E$ again denote the global fields as in previous sections. 
Given an element $c\in (\mbA^{\mfd}_{E_0,f})^\times$, denote by $\mbV_c$ the rank-$1$ adelic hermitian space
$$\mbV_c := V_\mfd \times (\mbA^{\mfd}_{E,f}, c x\ov y) \times \mbV_\infty$$
where $\mbV_\infty$ is positive definite at all archimedean places, and $V_\mfd$ is the base change to $F_{\mfd}$ from the $F/F_0$-hermitian space $V$ fixed in \S\ref{ss:integral models}, which is now viewed as an $E/E_0$-hermitian space (see the paragraph after the proof of Prop. \ref{prop CM intersection decomp}). Define a Schwarz function $ϕ_c\in \mcS(\mbV_c)$ as
$$ϕ_c := ϕ_\mfd\otimes 1_{\wh {O_E}}^\mfd \otimes ϕ_\infty$$
where $ϕ_\mfd$ is from the definition \eqref{eq:KR gen} of the KR-generating series and where $ϕ_\infty$ is the Gaussian 
$$ϕ_\infty(x) =e^{-2π\,\mr{tr}_{E_0/\mbQ}\pair{x, x}}.$$
For $v|\infty$, we normalize the local Haar measure on $E^1_{v}$ by $\vol(E^1_{v})=1$.

Choose $\phi'=\prod_{v}\phi'_v$ such that for every $v$, $\phi'_v$ is a transfer of $\phi_v$ (and zero function on the other isomorphism class of hermitian space), i.e., they satisfy
$$
\Orb(\zeta,\phi'_v)=\begin{cases} \Orb(\zeta,\phi_v),& \text{if $\BV_v$ represents $\zeta$},\\
0, &\text{otherwise}.
\end{cases}
$$

\begin{proposition}\label{prop orb V}
\begin{enumerate}
\item
Let $v\nmid\fkd$ and $v<\infty$. Then
\begin{align}\label{eq:orb phi non-arch}
 \Orb(ζ,ϕ_{v})=\begin{cases}
1 & \text{if}\ v(δ_{E_0/F_0}ζ)\geq 0\ \text{and}\ \equiv 0\!\!\mod 2, \\
0& \quad\quad\text{otherwise}.
\end{cases}
\end{align}

\item
Let $v\mid\infty$. Then, for  $
h=\left(\begin{matrix} 1 & b \\
& 1
\end{matrix}\right)
\left(\begin{matrix} a^{1/2} & \\
& a^{-1/2}
\end{matrix}\right)\,\kappa_\theta\in  \SL_2(\BR)$ (cf. \eqref{kappa in SO2}),  we have
\begin{align}\label{eq:orb phi arch}
 \Orb(ζ,h_v,ϕ_{v})=\chi_1(\kappa_\theta)\begin{cases}
a^{1/2}e^{2 \pi i \zeta(b+ia)}& \text{if}\ ζ\geq 0, \\
0& \text{otherwise}.
\end{cases}
\end{align}
\end{enumerate}
\end{proposition}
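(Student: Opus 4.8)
The plan is to reduce the global statement to the local orbital integral computations of \S\ref{ss:Eis_local}, which are already available as Prop.~\ref{prop orb non-arch} and Prop.~\ref{prop orb inf}. The key observation is that $\Orb(\zeta, \varphi_v)$ in the statement is a \emph{hermitian} orbital integral over $\mbA^1_{E,f}$ (resp.\ $E^1_v$), while the cited local propositions are phrased for the \emph{quadratic} split space $V' = E_0\times E_0$ and its transfer partner. So the first step is to make the matching of measures and transfer factors precise: by our choice of $\varphi' = \prod_v \varphi'_v$ with $\varphi'_v$ a transfer of $\varphi_v$, one has $\Orb(\zeta, \varphi_v) = \Orb(\zeta, \varphi'_v)$ whenever $\mbV_v$ represents $\zeta$, with both sides normalized so that the relevant maximal compact has volume $1$. (Here one must check that the compact $\sigma^{-1}(K_v) = O^1_{E,v}$ used to normalize $\Orb(\zeta,\varphi_v)$ matches $\vol(O_{E_0}^\times) = 1$ under the identification $E^1_v \cong E_0^\times$ coming from Hilbert 90, i.e.\ $t/\overline t$; this is the only real bookkeeping point.)

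For part (1), $v \nmid \mfd$ and $v < \infty$: here $\varphi_v = 1_{\widehat{O_E}_v}$ relative to the $E$-hermitian form $\langle x, y\rangle = \tr_{E/F} (x,y)$, but the lattice $\Lambda_v$ was self-dual for the \emph{$F$}-valued form. By Prop.~\ref{prop herm lattice locally} (or rather its elementary local consequence), passing from the $F$-form to its natural lift to an $E$-form rescales by $\delta_{E_0/F_0}^{-1}$, so the $E$-form on $O_{E,v}$ is $\delta_{E_0/F_0} N_{E/E_0}$ up to a unit. Plugging $c = \delta_{E_0/F_0}$ into $\varphi'_c = 1_{O_{E_0}\times c\cdot O_{E_0}}$ of \eqref{eq:phi'c} and invoking Prop.~\ref{prop orb non-arch} gives exactly $\Orb(\zeta,\varphi_v) = 1$ if $v(\delta_{E_0/F_0}\zeta)\ge 0$ and even, and $0$ otherwise. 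One should also note that at a split place $v$ the hermitian space is the hyperbolic plane, $\varphi_v = 1_{O_{E,v}}$ is the standard function, and the same formula holds by the split case of the computation (where $N_{E/E_0}$ becomes the product form); this is consistent with case (2) of Thm.~\ref{thm CM inter} which already showed no split contribution occurs in the arithmetic intersection.

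For part (2), $v \mid \infty$: the archimedean hermitian space $\mbV_\infty$ is positive definite and $1$-dimensional over $E\otimes_{E_0}\mbR \cong \mbC$, so $E^1_v \cong U(1)$ acts on the norm-$1$ sphere. The Gaussian $\varphi_\infty(x) = e^{-2\pi\,\tr_{E_0/\mbQ}\langle x,x\rangle}$ restricted to the real quadratic space underlying $\mbV_v$ is, after the standard identification of the $2$-dimensional positive definite $E_0\otimes\mbR$-hermitian line with $\mbR^2$, proportional to the Gaussian $\varphi'$ of \eqref{eq:Gau} (up to the weight-$1$ factor, since the hermitian Weil representation sees weight $n = \dim_F V$ while the split quadratic one sees weight $1$ — but here $\dim_E V = 1$). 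Thus Prop.~\ref{prop orb inf} applied with the Weil-representation action of $h$ gives the asserted formula $\Orb(\zeta, h_v, \varphi_v) = \chi_1(\kappa_\theta)\, a^{1/2} e^{2\pi i \zeta(b+ia)}$ for $\zeta > 0$ and $0$ for $\zeta < 0$, matching the value-at-$\zeta\ge 0$ half of Prop.~\ref{prop orb inf}.

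The main obstacle is not any deep argument but rather the careful reconciliation of three normalization conventions at once: (i) the lattice duality (the $\delta_{E_0/F_0}$-twist between $F$-form and $E$-form, which is exactly Prop.~\ref{prop herm lattice locally}); (ii) the Haar measure identifications $E^1_v \leftrightarrow E_0^\times$ via $t\mapsto t/\overline t$, $\vol = 1$ on both sides; and (iii) the transfer-factor normalization built into the definition of $\Orb(\zeta,\varphi'_v,s)$ in \S\ref{ss:Eis_local} (the factor $\chi(u_1)|u_1|^{-s}$) versus the bare hermitian integral \eqref{eq def orbital integral CM}. Once these are aligned the proof is a one-line citation of Props.~\ref{prop orb non-arch} and~\ref{prop orb inf}, so I would write the proof as: ``By the choice of transfer $\varphi'_v$ and the measure normalizations above, $\Orb(\zeta,\varphi_v) = \Orb(\zeta,\varphi'_{\delta_{E_0/F_0}})$ for finite $v\nmid\mfd$ and $\Orb(\zeta,h_v,\varphi_v) = \Orb(\zeta,h_v,\varphi')$ for $v\mid\infty$; now apply Prop.~\ref{prop orb non-arch} and Prop.~\ref{prop orb inf} respectively.''
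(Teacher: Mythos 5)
Your argument is circular. You invoke the transfer relation $\Orb(\zeta,\phi_v) = \Orb(\zeta,\phi'_v)$ as a premise, but in the paper's logical order this relation is the \emph{conclusion} drawn from combining Prop.~\ref{prop orb V} with Prop.~\ref{prop orb non-arch}. Look at the sentence that immediately follows the proposition you are proving: ``By Prop.~\ref{prop orb V}~\eqref{eq:orb phi non-arch} and Prop.~\ref{prop orb non-arch} we can and do choose $\phi'_v$ as $\phi'_{c_v}$.'' In other words, the identification of the transfer of $\phi_v$ as $\phi'_{c_v}$ is \emph{justified by} the present proposition, not something known a priori. Smooth transfer is an existence theorem; to know that the explicit function $\phi'_{c_v}$ is a transfer of the explicit function $\phi_v$, you must compute both orbital integrals and compare them. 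So you cannot cite Prop.~\ref{prop orb non-arch} through the transfer relation to deduce Prop.~\ref{prop orb V} — that is exactly backwards.

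The intended ``straightforward computation'' is a direct integration, and it is genuinely one line. The orbital integral \eqref{eq def orbital integral CM} is $\Orb(\zeta,\phi_v) = \int_{E_v^1}\phi_v(t\,x_\zeta)\,dt$ with $\vol(O_{E,v}^1)=1$. For $v\nmid\mfd$ non-archimedean with $\phi_v = 1_{O_{E,v}}$, observe that the $E$-hermitian form on $O_{E,v}$ obtained by lifting the self-dual $F$-form along $\tr_{E/F}$ is $\delta_{E_0/F_0}^{-1}N_{E/E_0}$ (note: $\delta^{-1}$, not $\delta$ as you wrote; equivalently $c=\delta_{E_0/F_0}^{-1}$, matching the paper's normalization below Prop.~\ref{prop orb V}). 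For inert $v$, $E^1_v\subseteq O_{E,v}^\times$, so the integrand is constant equal to $1_{O_{E,v}}(x_\zeta)$, and $x_\zeta\in O_{E,v}$ together with the representability of $\zeta$ by $\mbV_v$ amounts to $v(\delta_{E_0/F_0}\zeta)\geq 0$ and even. For $v\mid\infty$, $E^1_v\simeq U(1)$ is compact of volume $1$ and $\omega(h)\phi_v$ is $E^1_v$-invariant, so $\Orb(\zeta,h_v,\phi_v) = \omega(h_v)\phi_v(x_\zeta) = \chi_1(\kappa_\theta)\,a^{1/2}e^{2\pi i\zeta(b+ia)}$ when $\zeta>0$ and $0$ otherwise. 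No detour through $V'$, $\phi'$, or Prop.~\ref{prop herm lattice locally} is needed (that proposition concerns the lattice $\Hom_{O_F}(\mbX_0,\mbX)$ in a different context, not $\Lambda$, so your citation of it is also slightly off-target). Your ``bookkeeping'' about measures and the transfer factor $\chi(u_1)|u_1|^{-s}$ is irrelevant here precisely because you should never touch the $V'$-side in this proof.
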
\begin{proof}
This follows by a straightforward computation.
\end{proof}

  By Prop.~\ref{prop orb V} \eqref{eq:orb phi non-arch} and Prop.~\ref{prop orb non-arch} 
 we can and do choose $\phi'_v$ as $\phi'_{c_v}$ defined by \eqref{eq:phi'c} when $v$ is unramified (including the split case which is easy to verify); by Prop.~\ref{prop orb V} \eqref{eq:orb phi arch} and Prop.~\ref{prop orb inf}  we can and do choose $\phi'_v$ as \eqref{eq:Gau} when $v|\infty$.

Let $δ_{E_0/F_0}$, $δ_{E_0/\mbQ}$ and $δ_{F_0/\mbQ}$ denote the different ideals of the indicated field extensions; we will also view them as elements in the respective rings of ideles. We now apply the previous considerations in the case of the incoherent adelic hermitian space $\mbV$ from Thm.~ \ref{thm CM inter}. It is of the form $\mbV_c$ for $c\in \mbA^{\mfd,\times}_{E_0,f}$ satisfying
$$c\,\wh {O_{E_0}}{}^\mfd = δ_{E_0/F_0}^{-1}\wh {O_{E_0}}{}^\mfd.$$
The Schwartz function $ϕ$ considered in the same theorem agrees with $ϕ_c$ for a suitable choice of isomorphism $\mbV \iso \mbV_c$.

Set
\begin{equation}\label{eq def an series}
\delJ(h,\phi') := \frac{d}{ds}\Big|_{s=0} J(h,\phi',s),\quad h\in \bH(\BA_{E_0}).
\end{equation}
Write
$$\delJ(h,\phi') = \sum_{ζ\in E_0} \delJ(\zeta,h,\phi') $$
for its Fourier expansion (cf. \eqref{eq:def F exp}). 
\begin{theorem}\label{thm comparison CM cycle}
\begin{enumerate}[wide, labelindent=0pt, labelwidth=!, label=(\arabic*), topsep=2pt, itemsep=2pt]
The following statements hold true.
\item For all $0\neq ζ$ with $\Diff(ζ,\mbV)=\{v\}$ a singleton and $v\nmid \mfd$, we have equalities in $\BR$:
\begin{equation}
2τ(Z^\mbQ)[\bfE:E]\cdot  \delJ(\zeta,h_\infty,\phi') =- \widehat{\deg}\ \widehat{\mcC}^\bK(ζ,h_\infty,ϕ) W_\zeta^{(1)}(h_\infty).
\end{equation}
\item If instead $0\neq \zeta$ with $\Diff(ζ,\mbV)=\{v\}$ and $v\mid \mfd$, then $\delJ(\zeta,h_\infty,\phi')=d_\zeta \, W_\zeta^{(1)}(h_\infty)$ for a constant $d_\zeta$ such that
$$2τ(Z^\mbQ)[\bfE:E]\cdot  d_\zeta  \in \BQ\log q_v\subseteq\sum_{\ell\mid \mfd} \mbQ\log \ell.$$
\item When $\zeta=0$, there is a constant $d_0^{\mr{corr}}\in \mbR$ such that
$$2τ(Z^\mbQ)[\bfE:E]\cdot   \delJ(0,h_\infty,\phi') = [d_0^{\mr{corr}} -\widehat {\deg}\ \widehat{\mcC}^\bK(0,h_\infty,ϕ)]W_0^{(1)}(h_\infty).$$
\end{enumerate}
\end{theorem}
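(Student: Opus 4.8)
The plan is to prove all three parts of Thm.~\ref{thm comparison CM cycle} by matching Fourier coefficients through the orbital integral factorizations already available. First I would treat the generic case (1). By Thm.~\ref{thm n=1 gl} the analytic series $J(h,\phi',s)$ is a sum of local orbital integrals over the places of $E_0$, so for $\zeta\neq 0$ with $\Diff(\zeta,\mbV)=\{v\}$ one has
\begin{equation*}
\delJ(\zeta,h_\infty,\phi') = \Big(\frac{d}{ds}\Big|_{s=0}\Orb_v(\zeta,h_v,\phi'_v,s)\Big)\prod_{w\neq v}\Orb_w(\zeta,h_w,\phi'_w),
\end{equation*}
since $\Orb_v(\zeta,\phi'_v)=0$ forces the $s$-derivative to land on the single vanishing factor (this is the usual ``$\Diff$ singleton'' mechanism). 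For $w\nmid\infty\mfd$, Prop.~\ref{prop orb V}(1) and our choice $\phi'_w=\phi'_{c_w}$ identify $\Orb_w(\zeta,\phi'_w)=\Orb_w(\zeta,\phi^w)$ with the local factor of $\Orb(\zeta,\phi^v)$; for $w\mid\infty$, Prop.~\ref{prop orb V}(2) supplies the Whittaker function $W^{(1)}_\zeta(h_\infty)$; and the derivative factor at $v$ is computed by Prop.~\ref{prop orb non-arch} (if $v$ finite) to be $-\max\{0,(1+v(\delta_{E_0/F_0}\zeta))/2\}\log q_v$, or by Prop.~\ref{prop orb inf} (if $v$ archimedean) to be $\tfrac12\chi_1(\kappa_\theta)a^{1/2}e^{2\pi i\zeta(b+ia)}\Ei(-4\pi a|\zeta|)$. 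Comparing the resulting product with the formulas of Thm.~\ref{thm CM inter}(2),(3) — recalling $\widehat{\mcC}^\bK(\zeta,h_\infty,\phi)$ splits off $W^{(1)}_\zeta(h_\infty)$, the extra factor $[\bfE:E_0]$ from summing over $\nu\mid v$ against $[\bfE:E]\cdot 2 = [\bfE:E_0]$, and a sign — gives the claimed identity. The only genuine arithmetic to keep track of is the bookkeeping of constants: the factor $2$, the ratio $[\bfE:E]$ vs.\ $[\bfE:E_0]$, the transfer-factor normalization $\chi(u_1)|u_1|^{-s}$, and the different $\delta_{E_0/F_0}$ threaded through $c$; I would do this once carefully and cite it for both finite and archimedean $v$.

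For part (2), the $\Diff(\zeta,\mbV)=\{v\}$ with $v\mid\mfd$ case, the same factorization applies but now the offending factor is $\frac{d}{ds}\big|_{s=0}\Orb_v(\zeta,\phi'_v,s)$ where $\phi'_v$ is merely a transfer of $\phi_\mfd$ rather than the explicit $\phi'_{c_v}$. The point is that this derivative is still a rational multiple of $\log q_v$: the local orbital integral $\Orb_v(\zeta,\phi'_v,s)$ is a finite $\mbQ$-linear combination of monomials $q_v^{-ms}$ (its Mellin-type structure over a $p$-adic field), so its $s$-derivative at $0$ lies in $\mbQ\log q_v$, while the remaining factors $\prod_{w\neq v}\Orb_w(\zeta,\phi'_w)$ are rational (indeed the archimedean ones contribute exactly $W^{(1)}_\zeta(h_\infty)$ with rational prefactor by Prop.~\ref{prop orb V}(2)). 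Multiplying by $2\tau(Z^\mbQ)[\bfE:E]$ — a positive real, but the statement only asks membership in $\mbQ\log q_v$ after this scaling, and $\tau(Z^\mbQ)$ is a rational number times nothing transcendental, so I should either absorb it or note that the asserted membership is as stated with the $\tau(Z^\mbQ)$ factor included — one lands in $\mbQ\log q_v\subseteq\sum_{\ell\mid\mfd}\mbQ\log\ell$. I would define $d_\zeta$ precisely as $\delJ(\zeta,h_\infty,\phi')/W^{(1)}_\zeta(h_\infty)$ and verify it is $h_\infty$-independent from the shape of the archimedean factor.

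For part (3), the $\zeta=0$ term, I would isolate $\delJ(0,h,\phi')$ from the nilpotent orbital integrals $\Orb(0_\pm,\phi',s)$ in \eqref{Zan0}. This Fourier coefficient is a sum of a finite term (from the places where something genuinely degenerates) plus a term proportional to $\sum_{v\mid\infty}\log|a_v|$ coming from the archimedean regularized integral; its structure should mirror Thm.~\ref{thm CM inter}(4), which expresses $\widehat{\deg}\,\widehat{\mcC}^\bK(0,h_\infty,\phi)$ as $\widehat{\deg}(\widehat{\bm\omega}|_\mcC)\phi(0)$ minus a multiple of $\tau(E^1)\phi(0)\sum_{v}\log|a_v|$. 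Matching the $\log|a_v|$-coefficients pins down the comparison up to a genuine real constant $d_0^{\mr{corr}}$ (which packages $\widehat{\deg}(\widehat{\bm\omega}|_\mcC)$, the $\tau$-volume normalizations, and the ambiguity of the $\zeta=0$ arithmetic divisor modulo $\sum_{\ell\mid\mfd}\mbQ\log\ell$), and I would simply \emph{define} $d_0^{\mr{corr}}$ to be whatever makes the equation hold and record that it is a well-defined element of $\mbR$. The main obstacle throughout is neither deep nor conceptual: it is the disciplined matching of all normalization constants — the transfer factor, the two volume factors $\tau(Z^\mbQ),\tau(E^1)$, the field-degree factors, the $\tfrac12$ from the convention \eqref{eq convention rat fct}, and the different ideal $\delta_{E_0/F_0}$ — across the analytic side (Props.~\ref{prop orb non-arch},~\ref{prop orb inf},~\ref{prop orb V}) and the geometric side (Thm.~\ref{thm CM inter}); I expect this reconciliation, rather than any new estimate or identity, to be where essentially all the work lies.
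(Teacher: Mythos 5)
Your proposal for parts (1) and (2) follows the same route as the paper: the Leibniz-rule factorization around the unique place in $\Diff(\zeta,\mbV)$, the explicit local derivative formulas from Propositions~\ref{prop orb non-arch} and~\ref{prop orb inf}, the archimedean Whittaker identification via Proposition~\ref{prop orb V}, the transfer identity away from $v$, and the final bookkeeping ($2[\bfE:E]=[\bfE:E_0]$, the different $\delta_{E_0/F_0}$ threaded through $c$) matched against Theorem~\ref{thm CM inter}(2),(3). This is all correct and essentially identical to the paper's argument.

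Part (3), however, has a genuine gap. You observe that both sides should have the shape (constant)$\;+\;$(coefficient)$\cdot\sum_{v\mid\infty}\log|a_v|$, and you then propose to ``define $d_0^{\mathrm{corr}}$ to be whatever makes the equation hold.'' That move is only legitimate if the $\log|a_v|$-coefficients on the two sides already agree --- and that agreement is the \emph{sole} nontrivial content of part (3). On the geometric side the coefficient (inside the bracket) is $\frac{\tau(Z^\mbQ)[\bfE:E_0]}{2}\tau(E^1)\phi(0)$ from Theorem~\ref{thm CM inter}(4); on the analytic side it is $2\tau(Z^\mbQ)[\bfE:E]\,\Orb(0_+,\phi')$, arising from the structure of the nilpotent orbital integral as worked out in \cite[Prop.~14.5]{Z19}. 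Equating these two (using $[\bfE:E_0]=2[\bfE:E]$) is exactly Jacquet's identity $\Orb(0_+,\phi') = -\Orb(0_-,\phi') = \tfrac{1}{2}\tau(E^1)\phi(0)$, which the paper invokes explicitly. Without that identity there is no reason a single real constant $d_0^{\mathrm{corr}}$ can make the equation hold for all $h_\infty$ simultaneously; you should supply this step. With it, the rest of your proposal stands.
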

\begin{proof}By the choice of $\phi'$ and $\phi$, we have $\delJ(\zeta,h_\infty,\phi')\neq 0$ only for $ζ = 0$ or $ζ$ with $\Diff(ζ,\mbV) = \{v\}$ a singleton.  

We first consider the case $\Diff(\mbV,ζ) = \{v\}$ with $v\nmid \mfd$ non-archimedean. Then $v$ is necessarily inert in $E$ and $v(δ_{E_0/F_0}ζ)$ is odd. If $v(δ_{E_0/F_0}ζ) \leq 0$, then both sides in statement (1) vanish by Thm.~ \ref{thm CM inter} and Prop.~ \ref{prop orb non-arch}. In case $v(δ_{E_0/F_0}ζ)\geq 1$ we would like to apply the identity \eqref{eq FC of derivative key identity} to $c=\delta_{E_0/F_0}^{-1}$ and to obtain
\begin{align*}
\delJ(\zeta,h_\infty,\phi')=&\del(\zeta,\phi'_v) \Orb(\zeta,h_\infty,\phi'^{v})
\\=&-\max\left\{0, \frac{1+v(\delta_{E_0/F_0}ζ)}{2}\right\} \Orb(\zeta,\phi'^{v}) \log q_v. 
\end{align*}  Since $\zeta$ is totally positive, by Prop.~\ref{prop orb V} \eqref{eq:orb phi arch}, the archimedean components give us
\begin{align}\label{eq:Orb2Wh}  
 \prod_{v|\infty}\Orb(\zeta,h_v,\phi'_v)= W_\zeta^{(1)}(h_\infty).
\end{align}
Taking into account Thm.~ \ref{thm CM inter} (2) \eqref{eq Arakelov deg non-archimedean}, statement (1) (for non-archimedean $v$) follows.

We now assume $v$ to be archimedean.  Then $\zeta$ is negative exactly at $v$.
By \eqref{eq FC of derivative key identity inf},  we have
\begin{align*}
\delJ(\zeta,h_\infty,\phi')=&\del(\zeta,h_v,\phi'_v) \Orb(\zeta,h_\infty^v,\phi{}'^{v})
\\=& \frac{1}{2}a_v^{1/2} \,e^{2\pi i \, v(\zeta)(b_v+ia_v )}\,\Ei(4\pi a_vv(\zeta)) \Orb(\zeta, h_\infty^v,\phi'^{v}). 
\end{align*}
Now statement (1) (for archimedean $v$) follows from Thm.~\ref{thm CM inter} (3) and Prop.~\ref{prop orb V} \eqref{eq:orb phi arch}.

Coming to Part (2), we now assume $\Diff(\mbV,ζ) = \{v\}$ with $v\mid \mfd$. Then we still have 
 \begin{align*}
\delJ(\zeta,h_\infty,\phi')=&\del(\zeta,\phi'_v) \Orb(\zeta,h_\infty,\phi'^{v}).
\end{align*}
By \eqref{eq:Orb2Wh}, we may set $d_\zeta=\del(\zeta,\phi'_v) \Orb(\zeta,\phi'^{v,\infty})$.
 Clearly, $\Orb(\zeta,\phi'_{v},s)$ is a polynomial in $q_{v}^s$ and $q_v^{-s}$ with $\mbQ$-coefficients. Hence
$$\del(\zeta,\phi'_{v})\in \mbQ\log q_{v}.$$
Similarly we have  $\Orb(\zeta,\phi'^{v,\infty})\in \mbQ$ 
 and hence the term $ d_\zeta$ lies in $\mbQ\log q_{v}$.

We come to the claim (3) on the $0$-th Fourier coefficient. This essentially follows from the last part of the proof of \cite[Prop.~14.5]{Z19}. More precisely, by {\em loc. cit.} we have
$$
\delJ(0, h_\infty, \phi')=\left(\Orb(0_+, \phi')\ \sum_{v\colon E_0\to \mbR} \log |a_{v}|+c\right)W_0^{(1)}(h_\infty),
$$
for some constant $c\in \BR$, and
$$\Orb(0_+, \phi')=-\Orb(0_-, \phi')=\frac{1}{2}\tau(E^1)\phi(0).
$$
(This last identity was a result of Jacquet.) We obtain
$$  \delJ(0,h_\infty,\phi') = \left(\frac{1}{2}  \tau(E^1) \cdot \phi(0)\ \sum_{v\colon E_0\to \mbR} \log |a_{v}|+c\right) W_0^{(1)}(h_\infty).$$
By Thm.~\ref{thm CM inter} (4), statement (3) is proven.
\end{proof}

\begin{lemma}\label{lem:Weil KH}
Let $v\nmid\fkd$ be a non-archimedean place of $F_0$ (in particular $v$ is unramified in $F/F_0$).
\begin{enumerate}[wide, labelindent=0pt, labelwidth=!, label=(\arabic*), topsep=2pt, itemsep=2pt]
\item  Let $\phi'_v=\prod_{w|v, w\in\Sigma_{E_0}}\phi'_w$ be as above. Then $\phi_v'$ is  $K_{\bH,v}^{\circ}$-invariant under the Weil representation.  

\item  Let $\phi_v=\prod_{w|v, w\in\Sigma_{E_0}}\phi_w$ be as above. Then  $\phi_v$ is  $K_{\bH,v}^{\circ}$-invariant under the Weil representation.  
\end{enumerate}
Here $K_{\bH,v}^{\circ}$ is defined by \eqref{eq:max open}, and the group $\bH(F_{0,v})$ is viewed as a natural subgroup of $\bH(E_0\otimes_{F_0}F_{0,v})$. 

\end{lemma}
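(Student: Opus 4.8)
The plan is to reduce the invariance statement, which concerns functions on $V'(E_0\otimes_{F_0}F_{0,v})=\prod_{w\mid v}V'(E_{0,w})$ (resp. $V(E_0\otimes_{F_0}F_{0,v})$), to a single local lattice and then to generators of a maximal compact subgroup. First I would record that both $\phi'_v=\bigotimes_{w\mid v}\mathbf 1_{L'_w}$ and $\phi_v=\bigotimes_{\mathfrak w\mid v}\mathbf 1_{\Lambda_{\mathfrak w}}$ are tensor products of characteristic functions of lattices: $L'_w=O_{E_0,w}\times c_wO_{E_0,w}$ with $c_wO_{E_0,w}=\delta_{E_0/F_0,w}^{-1}$ by the choice of transfer made before the lemma, and $\Lambda_{\mathfrak w}$ is the localization of $\Lambda[\mfd^{-1}]$, an $O_E$-lattice that is self-dual for $\tr_{E/F}(\cdot,\cdot)$ — hence $\delta_{E/F,\mathfrak w}^{-1}$-modular for the hermitian form $(\cdot,\cdot)$ — because $\Lambda[\mfd^{-1}]$ was chosen self-dual for the $F$-valued form in \S\ref{ss:integral models} and $O_E[\mfd^{-1}]$-stable (cf. Prop.~\ref{prop herm lattice locally}). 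On each tensor factor the Weil representation of $\bH(F_{0,v})$ is the restriction, along $\bH(F_{0,v})\hookrightarrow\bH(E_{0,w})$, of the Weil representation of the $E_{0,w}$-quadratic space attached to $\psi_{E_0,w}:=\psi_{\mbQ}\circ\tr_{E_0/\mbQ}$, whose conductor is $\delta_{E_0/\mbQ,w}^{-1}$. So it suffices to fix one $w\mid v$, write $L$ for $L'_w$ or $\Lambda_{\mathfrak w}$, and show $\mathbf 1_L$ is fixed by the image of $K_{\bH,v}^\circ$ in $\SL_2(E_{0,w})$.

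Next I would unwind \eqref{eq:max open}: that image is $m'(c_v)^{-1}\SL_2(O_{F_0,v})m'(c_v)$ with $c_vO_{F_0,v}=\delta_{F_0/\mbQ,v}$. Since $\SL_2$ of a local ring is generated by elementary matrices, and conjugation by $\diag(c_v^{-1},1)$ sends the upper unipotent $n(b)$ to $n(c_v^{-1}b)$ and the lower unipotent $\bar n(b)$ to $\bar n(c_vb)$, this group is generated by $\{n(b):b\in\delta_{F_0/\mbQ,v}^{-1}\}$ and $\{\bar n(c):c\in\delta_{F_0/\mbQ,v}\}$ (and in particular contains $m(a)=\diag(a,a^{-1})$ for $a\in O_{F_0,v}^\times$). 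I would then invoke the standard formulas for $\omega_{\psi_{E_0,w}}$ from \cite[\S11]{Z19}: $\omega(n(b))\Phi(x)=\psi_{E_0,w}(b\,\mathfrak q(x))\Phi(x)$, $\omega$ of the Weyl element $w_0$ is a normalized $\psi_{E_0,w}$-Fourier transform, and $\bar n(c)=w_0\,n(-c)\,w_0^{-1}$. Thus $\omega(n(b))\mathbf 1_L=\mathbf 1_L$ once $b\,\mathfrak q(L)\subseteq\delta_{E_0/\mbQ,w}^{-1}$; and since $\omega(w_0)^{-1}\mathbf 1_L$ is a scalar multiple of $\mathbf 1_{L^\vee}$, where $L^\vee=\{y:\psi_{E_0,w}(\langle y,L\rangle)=1\}$ is the $\psi_{E_0,w}$-dual lattice, $\omega(\bar n(c))\mathbf 1_L=\mathbf 1_L$ once $c\,\mathfrak q(L^\vee)\subseteq\delta_{E_0/\mbQ,w}^{-1}$. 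Invariance under $m(O_{F_0,v}^\times)$ is then automatic ($|a|=1$, $aL=L$, and the discriminant character of the split space $V'_w$, resp. of $V_w$ which is unramified over $F_{0,v}$ since $v\nmid\mfd$, is unramified at $v$), or simply follows since $m(O_{F_0,v}^\times)$ already lies in the group generated by the two families.

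What is left, and where the actual work sits, is the verification of the two inclusions, which is a routine but slightly delicate computation with relative differents in the towers $E/E_0/\mbQ$ and $E/F/F_0$. One checks $\mathfrak q(L)\subseteq\delta_{E_0/F_0,w}^{-1}$ in both cases — immediate for $L'_w$, and for $\Lambda_{\mathfrak w}$ a consequence of its $\delta_{E/F,\mathfrak w}^{-1}$-modularity — and, by a direct computation using $F/F_0$ unramified at $v$, that $L^\vee=\delta_{F_0/\mbQ,v}^{-1}L$, so $\mathfrak q(L^\vee)=\delta_{F_0/\mbQ,v}^{-2}\mathfrak q(L)\subseteq\delta_{F_0/\mbQ,v}^{-2}\delta_{E_0/F_0,w}^{-1}$. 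Multiplicativity of the different in $E_0/F_0/\mbQ$ gives $\delta_{E_0/\mbQ,w}=\delta_{E_0/F_0,w}\cdot\delta_{F_0/\mbQ,v}O_{E_0,w}$, and both inclusions drop out: for $b\in\delta_{F_0/\mbQ,v}^{-1}$ one gets $b\,\mathfrak q(L)\subseteq\delta_{F_0/\mbQ,v}^{-1}\delta_{E_0/F_0,w}^{-1}=\delta_{E_0/\mbQ,w}^{-1}$, and for $c\in\delta_{F_0/\mbQ,v}$ one gets $c\,\mathfrak q(L^\vee)\subseteq\delta_{F_0/\mbQ,v}^{-1}\delta_{E_0/F_0,w}^{-1}=\delta_{E_0/\mbQ,w}^{-1}$. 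The main obstacle is purely bookkeeping: the extension $E/E_0$ (equivalently $E_0/F_0$) may be ramified at $v$ even though $v\nmid\mfd$, and one must see that this ramification enters $\mathfrak q(L)$ and the conductor of $\psi_{E_0,w}$ in exactly cancelling ways — the twist by $m'(c_v)$ being precisely what compensates for the (possibly nontrivial) conductor of $\psi$ itself.
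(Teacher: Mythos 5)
Your proof is correct and follows essentially the same route as the paper: you reduce to the generators $m(a)$, $n(b)$, $n^-(b')$ of $K_{\bH,v}^{\circ}$, verify invariance on each via the explicit Weil-representation formulas (multiplication by $\psi(b\,\fkq(x))$ for $n(b)$, Fourier transform for $n^-(b')$), and carry out the same different-bookkeeping in the tower $E_0/F_0/\mbQ$. Your explicit identification of the dual lattice $L^\vee=\delta_{F_0/\mbQ,v}^{-1}L$ with respect to $\psi_{E_0,w}$ makes the lower-unipotent step a bit more transparent than the paper's phrasing, but the content and conclusion are the same, and your remark that the proof of part (2) rests on $\Lambda[\mfd^{-1}]$ being $\delta_{E/F}^{-1}$-modular for the lifted hermitian form matches the paper's ``applies verbatim''.
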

\begin{proof}We only prove (1) since the proof applies verbatim to (2).
Note that the group  $K_{\bH,v}^{\circ}$ is generated by the following matrices 
$$m(a):=\left(\begin{matrix} a &  \\
& a^{-1}
\end{matrix}\right),\quad 
n(b):=\left(\begin{matrix} 1 & b \\
& 1
\end{matrix}\right), \quad n^-(b'):=\left(\begin{matrix} 1 &  \\
b'& 1
\end{matrix}\right),
$$where $ a\in O_{F_0,v}^\times,  b\in \delta_{F_0/\BQ,v}^{-1},b'\in \delta_{F_0/\BQ,v}.$ The invariance under such $m(a)$ is clear.  Note that our function $\otimes_{w|v}\phi'_{w,c_w}$ defined by \eqref{eq:phi'c} for a generator $c_w\in E_{0,w}^\times$ of the fractional ideal $\delta_{E_0/F_0,w}^{-1} $ is then invariant under $n(b)$ when $\psi_v(\tr_{E_0/F_0}(b\,(c_w)_{w|v}))=1$. This equality is equivalent to $\psi_{\BQ_p}(\tr_{E_0/\BQ}(b\,(c_w)_{w|v})))=1$, which holds if $b\delta_{E_0/F_0,w}^{-1} \subseteq \delta_{E_0/\BQ,w}^{-1}$ for every $w|v$, i.e., if $b\in \delta_{F_0/\BQ,v}^{-1}$.
The invariance for such $n^-(b')$ is proved similarly by considering the Fourier transform $\wh{\phi'_{w,c_w}}$, which is ${\bf 1}_{ c_w^{-1} O_{E_0,w}\times  O_{E_0,w} }$ up to a constant multiple. 
\end{proof}

\begin{proposition}\label{prop:Weil del J em}
Let $\phi'=\phi'_\fkd\otimes\phi'^{\fkd}$ be as above and suppose that $\phi'_\fkd$ is $K_{\bH,\fkd}$-invariant under the Weil representation. Then the function $h\in \bH(\BA_0)\mapsto\delJ(h, \phi')$  lies in $\CA_{\infty}(\bH({\BA_0}),K_\bH,n)$ with $K_\bH=K_{\bH,\fkd}\times K_{\bH}^{\fkd,\circ}$.
\end{proposition}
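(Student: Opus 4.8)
The plan is to deduce the proposition from the two results just established: Theorem~\ref{thm n=1 gl}, which gives that $(h,s)\mapsto J(h,\phi',s)$ is smooth, left $\bH(E_0)$-invariant and entire in $s$, together with the $K_{\bH}$-invariance statements of Lemma~\ref{lem:Weil KH}. Since $\delJ(h,\phi') = \frac{d}{ds}\big|_{s=0} J(h,\phi',s)$ and differentiation in the parameter $s$ commutes with the $\bH(\BA_{E_0})$-action (the action does not involve $s$), every invariance property of $J(\cdot,\phi',s)$ is inherited by $\delJ(\cdot,\phi')$. In particular smoothness and left $\bH(E_0)$-invariance of $\delJ(h,\phi')$ are immediate from Theorem~\ref{thm n=1 gl}.

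First I would record the right $K_{\bH}$-invariance. By hypothesis $\phi'_{\fkd}$ is $K_{\bH,\fkd}$-invariant under the Weil representation $\omega$. By Lemma~\ref{lem:Weil KH}(1), for every non-archimedean $v\nmid\fkd$ the component $\phi'_v$ is $K_{\bH,v}^{\circ}$-invariant under $\omega$; taking the product over all such $v$ shows $\phi'^{\fkd}$ is $K_{\bH}^{\fkd,\circ}$-invariant. Hence $\phi' = \phi'_{\fkd}\otimes\phi'^{\fkd}$ is invariant under $K_{\bH}=K_{\bH,\fkd}\times K_{\bH}^{\fkd,\circ}$ under the Weil representation of $\bH(\BA_{0,f})$ (viewed as a subgroup of $\bH(\BA_{E_0,f})$). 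Since $J(h\kappa,\phi',s) = J(h,\omega(\kappa)\phi',s) = J(h,\phi',s)$ for $\kappa\in K_{\bH}$, the same holds after applying $\frac{d}{ds}\big|_{s=0}$, giving right $K_{\bH}$-invariance of $\delJ(h,\phi')$.

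Next I would pin down the archimedean weight. The generating function $J(h,\phi',s)$ is defined via $J(\omega(h)\phi',s)$, and at each archimedean place $v$ of $F_0$ the local Schwartz function $\phi'_v$ was chosen to be the Gaussian \eqref{eq:Gau}, which has weight $1$ under $\SO(2,\BR)$ acting through $\omega$ (this is the content of the transformation law visible in Prop.~\ref{prop orb inf}, where the factor $\chi_1(\kappa_\theta) = e^{i\theta}$ appears). Therefore $J(h\kappa_\theta^{\infty},\phi',s)$ transforms by the parallel weight-$n$ character $\prod_{v\mid\infty}\chi_n(\kappa_{\theta_v})$ — here one must be careful that, although each local factor contributes weight $1$ over $E_0$, under the embedding $\bH(F_{0,v})\hookrightarrow \bH(E_0\otimes_{F_0}F_{0,v}) = \bH(F_{0,v})\times\bH(F_{0,v})$ the diagonal element $\kappa_\theta$ maps to $(\kappa_\theta,\kappa_\theta)$, and the two Gaussian factors at the two places of $E_0$ above $v$ each contribute $e^{i\theta}$, for a total of $e^{2i\theta}$; summing over the $n$... wait — rather, one should track that the total weight over the $[E_0:F_0]=2$ archimedean factors above each real place of $F_0$ is $2$, but the relevant Whittaker normalization in \eqref{eq:Orb2Wh} is $W^{(1)}_\zeta$ over $E_0$, i.e. $W^{(n)}$ over $F_0$ after accounting for $\dim V = n$; I would follow the bookkeeping of \cite[\S12]{Z19} to confirm the parallel weight is exactly $n$. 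Differentiation in $s$ preserves the weight since $\omega(\kappa_\theta)$ is $s$-independent, so $\delJ(h,\phi')$ has parallel weight $n$ under $\prod_{v\mid\infty}\SO(2,\BR)$. Combining smoothness, left $\bH(F_0)$-invariance, right $K_{\bH}$-invariance and parallel weight $n$ gives exactly the defining properties of $\CA_{\infty}(\bH(\BA_0),K_{\bH},n)$ (recall no finiteness condition under the center of the universal enveloping algebra is imposed), completing the proof.

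The main obstacle I anticipate is purely bookkeeping rather than conceptual: correctly matching the archimedean normalizations between the $E_0$-setting (where $J$ naturally lives, with weight-$1$ Gaussians at each of the $2[F_0:\BQ]$ archimedean places of $E_0$) and the $F_0$-setting of $\CA_{\infty}(\bH(\BA_0),K_{\bH},n)$, including the precise relation $\dim_F V = n$ versus the local weight contributions, and verifying that the restriction of $h$ from $\bH(\BA_{E_0})$ to $\bH(\BA_0)$ interacts correctly with these weights — this is precisely the kind of normalization already handled in \cite[\S12, \S14]{Z19}, so I would cite and adapt rather than redo it.
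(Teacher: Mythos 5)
Your approach is the same as the paper's: smoothness and left invariance from Theorem~\ref{thm n=1 gl}, right $K_\bH$-invariance from Lemma~\ref{lem:Weil KH}, and the weight from Proposition~\ref{prop orb inf}.

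However, your weight bookkeeping goes astray at the crucial point. You write $\bH(E_0\otimes_{F_0}F_{0,v}) = \bH(F_{0,v})\times\bH(F_{0,v})$ and refer to ``$[E_0:F_0]=2$''; this conflates $[E:E_0]=2$ with $[E_0:F_0]$. In fact $E=F[t]/(\alpha)$ with $\alpha$ irreducible of degree $n$, so $[E:F]=n$ and therefore $[E_0:F_0]=n$. Hence for a real place $v$ of $F_0$, the algebra $E_0\otimes_{F_0}F_{0,v}\cong\BR^n$, the embedding $\bH(F_{0,v})\hookrightarrow\bH(E_0\otimes_{F_0}F_{0,v})\cong\SL_2(\BR)^n$ is the diagonal into $n$ copies, and since by Proposition~\ref{prop orb inf} each archimedean Gaussian $\phi'_w$ contributes weight $1$ via the factor $\chi_1(\kappa_\theta)$, the restriction of $\kappa_\theta$ to the diagonal picks up $(e^{i\theta})^n = \chi_n(\kappa_\theta)$. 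This $[E_0:F_0]=n$ fact, not any relation to $\dim V$, is what produces the parallel weight $n$; no extra normalization matching against $W^{(n)}$ is needed once this is in place. You correctly sense that your computation is off and defer to \cite[\S12]{Z19}, but the resolution is elementary and should be stated directly, since it is the entire content of the weight claim.
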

\begin{proof}By Thm.~\ref{thm n=1 gl}, $\delJ$ is smooth. The weight-$n$ condition follows from Prop.~\ref{prop orb inf}. The $K_\bH$-invariance follows from Lem.~\ref{lem:Weil KH}.
\end{proof}

\begin{proof}[Proof of Thm. \ref{thm elementary CM modularity}.]
Given $ϕ\in \mcS(V(\mbA_{0,f}))$ as in Thm. \ref{thm elementary CM modularity}, choose a matching function $ϕ'\in \mcS(V'(\mbA_{E_0}))$ as in Thm. \ref{thm comparison CM cycle}. Define
$$f(h) := -2τ(Z^\mbQ)[\bfE:E]\cdot \delJ(h, ϕ'),\ \ \ h\in \bH(\mbA_0).$$
Then $f \in \CA_{\infty}(\bH({\BA_0}),K_\bH,n)$ by Prop. \ref{prop:Weil del J em} and this is also the space that appears in Thm. \ref{thm elementary CM modularity}. Restricting to $\bH(F_{0,\infty})$, Thm. \ref{thm comparison CM cycle} provides the Fourier expansion
$$f(h_\infty) = \sum_{ξ\in F_0} \left(ε_ξ + \sum_{\mr{tr}_{E_0/F_0}(ζ) = ξ} \widehat{\deg}\ \widehat{\mcC}^\bK(ζ,h_\infty,ϕ) \right) \cdot W_ξ^{(n)}(h_\infty)$$
for correcting scalars $ε_ξ \in \sum_{\ell\mid \mfd} \mbQ\log \ell$ when $ξ \neq 0$, resp. $ε_0\in \mbR$. The inner sum equals
$$\left\langle \widehat{\mcZ}^\bK(ξ,h_\infty,ϕ),\ \mcC \right\rangle$$
by Prop. \ref{prop CM intersection decomp} and the proof of Thm. \ref{thm elementary CM modularity} is complete.
\end{proof}

\part{Application to the AFL}

\section{Orbital integrals}

In this paper, we mainly consider the semi-Lie algebra version of the AFL and hence require the analytic notions from \cite{Z19} only in this setting. We refer to \cite[\S2]{Z19} and \cite{Z14} for the group setting and its relation with the definitions here.

\subsection{Orbit matching}\label{ss: orb match}
Consider the symmetric space
\begin{equation}\label{Sn def}
     S_{n} := \{\,γ\in \Res_{F/F_0}\GL_n\mid γ\ov {γ}=1_n\,\},
\end{equation}
and the $F_0$-vector space
\begin{equation}\label{V' def}
V'_{n}=F_0^{n}\times (F_0^{n})^\ast,
\end{equation} 
where $(F_0^{n})^\ast=\Hom_{F_0}(F_0^n,F_0)$ denotes the $F_0$-linear dual space. For convenience we will identify $F_0^{n}$ (resp. $(F_0^{n})^\ast$) with the space of column vectors (resp. row vectors). With the tautological pairing,
we will view $V_n'$ as an $(F_0\times F_0)/F_0$-hermitian space.
Let
\[
   G' :=\GL_{n,F_0},
\]
and consider the (diagonal) action of $G'$ on the product $S_{n}\times V'_{n}$,
$$
h\cdot(\gamma, (u_1,u_2))=(h^{-1} \gamma h, ( h^{-1}u_1,u_2 h)).
$$
Next, whenever $V$ is an $F/F_0$-hermitian space of dimension $n$, we let $\U(V)$ act diagonally on the product $\U(V)\times V$. (Here, we view $V$ as a $2n$-dimensional vector space over $F_0$.)

There is  a natural bijection of orbit spaces of \emph{regular semisimple} elements,  cf. \cite[\S2.2]{Z19},
\begin{align}\label{eq:orb mat 1}
 \xymatrix{\coprod_{V} \bigl[(\U(V)\times V)(F_0) \bigr]_\rs	   \ar[r]^-\sim&  [(S_{n}\times
V'_{n})(F_0)]_\rs},
\end{align}
where the disjoint union runs over the set of isometry classes of $F/F_0$-hermitian spaces $V$ of dimension $n$. Here the left (resp. right) hand side denotes the orbits under the action of the group $\U(V)(F_0)$ (resp. $\GL_{n}(F_0)$). The bijection is called the {\em matching relation} between regular semisimple orbits and denoted by $(g,u)\leftrightarrow (γ,u')$.

For $\alpha\in F[t]^\circ_{\deg n}$, we denote by $S_{n}(\alpha)$ the subscheme of $S_n$ consisting of elements with characteristic polynomial $\alpha$. For $\xi\in F_0$, we let $V_{n,\xi}'$ denote the subscheme of $V_{n}'$ defined by $u_2u_1=\xi$. We denote by $[(S_{n}(\alpha)\times V_{n}')(F_0)]$ and $[(S_{n}(\alpha)\times V_{n,\xi}')(F_0)]$ the set of $\GL_{n}(F_0)$-orbits. Similar notation applies to unitary groups.

\subsection{Orbital integral matching: Smooth transfer}\label{ss: transfer}

We recall the definition of orbital integrals from \cite[\S11]{Z19}, also cf. \cite[\S2.2]{RSZ2}. Now let $F/F_0$ be a quadratic extension of local fields of characteristic zero (the split $F=F_0\times F_0$ is similar and simpler). Let 
$$\xymatrix{\eta=\eta_{F/F_0}\colon F_0^\times\ar[r]&\{\pm 1\}}$$ 
be the quadratic character associated to $F/F_0$ by local class field theory. Here and thereafter we will also denote by $\eta$ the character   $\eta\circ \det$ of $\GL_n(F_0)$.

For $(\gamma, u') \in (S_{n} \times V_{n}')(F_0)_\rs$, $\Phi' \in \CS((S_{n} \times
V_{n}' )(F_0))$, and $s \in \BC$, we define the orbital integral
\begin{equation}\label{Orb(gamma,f',s)}
   \Orb((\gamma,u'),\Phi',s) := \int_{\GL_{n}(F_0)} \Phi'(h\cdot (\gamma, u')) \lv \det h \rv^s \eta(h)\, dh,
\end{equation}
with special values
\begin{align}\label{eq:orb s=0}
   \Orb((\gamma,u'), \Phi') &:=\omega(\gamma,u') \Orb((\gamma,u'), \Phi', 0),\ \ \text{and}\\
	\del((\gamma,u'), \Phi') &:= \omega(\gamma,u')\,\frac{d}{ds} \Big|_{s=0} \Orb((\gamma,u'), \Phi', s).
\end{align}
Here, $\omega(\gamma,u')\in \{\pm 1\}$ is the so-called transfer factor from \cite[(2.13)]{Z19}. We emphasize that, unlike in \cite{Z14}, the transfer factor is already included in the special values here.

On the unitary side, for
$(g,u)\in(\U(V)\times V)(F_0)_\rs$ and  $\Phi \in \CS((\U(V)\times V)(F_0))$, we define
\begin{align}\label{eq:orb U lie}
   \Orb((g,u),\Phi) := \int_{\U(V)(F_0)} \Phi(h\cdot (g,u)) \, dh.
\end{align}

For simplicity let us now assume that $F$ is non-archimedean. Then there are exactly two isometry classes of $F/F_0$-hermitian spaces with dimension $n$, denoted by $V_0$ and $V_1$. When $F/F_0$ is unramified, we will assume that $V_0$ has a self-dual lattice.  Then the orbit bijection \eqref{eq:orb mat 1} specializes to 
\[
 \xymatrix{ \bigl[(\U(V_0)\times V_0)(F_0) \bigr]_\rs \coprod \bigl[(\U(V_1)\times V_1)(F_0) \bigr]_\rs	   \ar[r]^-\sim&  [(S_{n} \times V_{n}')(F_0)]_\rs}.
\]
\begin{definition}\label{def st loc}
A function $\Phi' \in \CS((S_{n} \times
V_{n})(F_0))$ and a pair of functions $(\Phi_0,\Phi_1) \in \CS((\U(V_0)\times V_0)(F_0)) \times \CS((\U(V_1)\times V_1)(F_0))$ are   (smooth) \emph{transfers} of each other if for each $i \in \{0,1\}$ and each $(g,u) \in (\U(V_i)\times V_i)(F_0)_\rs$,
\begin{align}\label{eq:def st loc}
   \Orb((g,u),\Phi_i) = \Orb((\gamma, u'),\Phi')
\end{align}
whenever $(\gamma, u')\in \left(S_{n} \times
V_{n}'\right)(F_0)_\rs$ matches $(g,u)$.
\end{definition}
\begin{definition}\label{def part st}
For a fixed $V\in\{V_0,V_1\}$ and a regular semisimple $\alpha\in F[t]^\circ_{\deg n}$,  we say that $\Phi'$ partially (relative to $\alpha$) transfers to $\Phi\in \CS(\U(V)\times V)(F_{0})$, if we only require the equality \eqref{eq:def st loc} in Def.~ \ref{def st loc} to hold for matching orbits $(\gamma, u')\in (S_{n} (\alpha)\times
V'_n)(F_{0})_\rs$ and $(g,u)\in (\U(V)(\alpha)\times V)(F_{0})_\rs$; and $ \Orb((\gamma, u'),\Phi')=0$ for any other $(\gamma, u')\in (S_{n} (\alpha)\times
V'_n)(F_{0})_\rs$.
\end{definition}

Smooth transfers exist for $p$-adic fields and commute with the Weil representation (see \cite[Appendix A]{Z19}). 
For archimedean local fields, a specific partial transfer of the Gaussian test functions has been constructed in \cite[\S12]{Z19}.

\subsection{Modular analytic generating functions}
\label{ss: n>1}
Now we return to the global situation. Our goal  in this subsection is to define an analytic generating series (of orbital integrals) that will match the generating series defined later by intersection numbers. Our proof of the AFL conjecture will be based on a comparison of the two generating series.

Recall from \S\ref{ss:LCM} that we have fixed an irreducible $\alpha\in  F[t]^\circ_{\deg n}$ such that the field $E=F[t]/(\alpha)$ is a CM extension of a totally real subfield $E_0$.  Then $S_n(\alpha)(F_0)$ consists of exactly one $G'(F_0)$-orbit and we fix a representative $\gamma\in S_n(\alpha)(F_0)$.

Via the action of $E_0$ induced by $\gamma$, the vector space $V'_n$ carries the structure of a rank-one free $(E_0\times E_0)/E_0$-hermitian module. Let \begin{align}\label{eq: q' on V'}
\fkq'\colon 
\xymatrix@R=0ex{ V'_n \ar[r] &E_0}
\end{align}
be the associated quadratic form over $E_0$.

Now, for every $v\mid \infty$, we fix the archimedean $\Phi'_v\in\CS((S_n\times V')(F_{0,v}))$ to be the (partial) Gaussian test function constructed in \cite[\S12.4, (12.10), (12.11)]{Z19} relative to $\gamma$. 
For $h\in \bH(\BA_0), \Phi'=\otimes_v\Phi'_v\in \CS((S_n\times V')(\BA_0))$ with $\Phi_v'$ the fixed Gaussian test function for $v\mid\infty$, and $s\in\BC$, one defines a regularized integral in {\em loc. cit.}
\begin{align}\label{eq: def J s}
J(h,\Phi',s)=&\int_{[G']} \left(\sum_{(\gamma,u')\in (S_n(\alpha)\times V')(F_0)}r(g)\omega(h)\Phi'(\gamma, u')\right)|\det(g)|_{F_0}^s\eta(g)\,dg.
\end{align}
Here the Weil representation $\omega$ of $\bH(\BA_0)$ is though the factor $V'$ with its natural quadratic form. By \cite[Thm.~12.14]{Z19}, the function $(h,s)\in \bH(\BA_0)\times\BC\mapsto J(h,\Phi', s)$ is smooth in $(h,s)$, entire in $s\in\BC$, and left invariant under $\bH(F_0)$. There is an expansion according to the orbits
\begin{align}\label{Z g2l}
J(h,\Phi',s)=&\sum_{(\gamma,u')\in [(S_n(\alpha)\times V')(F_0)] \atop u'\neq 0} \Orb((\gamma,u'),\omega(h)\Phi',s)\\
&= \Orb((\gamma,0_\pm),\omega(h)\Phi',s)+\sum_{(\gamma,u')\in [(S_n(\alpha)\times V')(F_0)]_\rs} \Orb((\gamma,u'),\omega(h)\Phi',s).\notag
\end{align}
Here we refer to \cite[\S12.6]{Z19} for the regularization defining $\Orb((\gamma,0_\pm),\omega(h)\Phi',s)$, and the orbit of $(\gamma,0)$ has no contribution essentially because the character $\eta$ is non-trivial when restricted to its stabilizer (see the proof of \cite[Thm.~12.9]{Z19}). For regular semisimple $(\gamma,u')$ and  decomposable $\Phi'=\otimes_v\Phi'_v\in  \CS((S_n\times V')(\BA_0))$, we have
\begin{align}\label{orb gl}
\Orb((\gamma,u'),\Phi',s):=\prod_{v\in\Sigma_{F_0}}\Orb((\gamma,u'),\Phi'_{v},s),
\end{align}
where the local orbital integrals are defined by \eqref{Orb(gamma,f',s)}. Then, for $\xi\in F_0^\times$, the $\xi$-th Fourier coefficient of $J(\cdot,\Phi',s)$ (cf. \eqref{eq:def F coeff}) is equal to
\begin{align}\label{eqn: xi coeff gl}
\sum_{(\gamma,u')\in [(S_n(\alpha)\times V'_\xi)(F_0)]} \Orb((\gamma,u'),\omega(h)\Phi',s).
\end{align}

\subsection{The decomposition of the special value at $s=0$}
\label{ss:del J}
We set the special value at $s=0$,
\begin{equation*}
J(h,\Phi'):=J(h,\Phi',0) .
\end{equation*}
\[
\Orb((\gamma,u'),\omega(h)\Phi'):=\Orb((\gamma,u'),\omega(h)\Phi',0).
\]
Then the decomposition \eqref{Z g2l} specializes to
\begin{equation}\label{J s=0}
J(h,\Phi')=\sum_{(\gamma,u')\in [(S_n(\alpha)\times V')(F_0)]\atop u'\neq 0} \Orb((\gamma,u'),\omega(h)\Phi').
\end{equation}

We set the special value of the first derivative at $s=0$,
\begin{equation}
\begin{aligned}
   \delJ(h,\Phi')&:=\frac{d}{ds}\Big|_{s=0}   J(h,\Phi',s),\\
 \del((\gamma,u'),\Phi'_v) &:= \frac{d}{ds}\Big|_{s=0}  \Orb((\gamma,u'),\Phi'_v,s).
  	\end{aligned}
\end{equation}

\begin{lemma}\label{lem:delJ sm}Suppose that $\Phi'$ is $K_\bH$-invariant under the Weil representation. Then
we have $\delJ(-,\Phi')\in \CA_\infty(\bH(\BA_{0}),K_\bH, n)$.
\end{lemma}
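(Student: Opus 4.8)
\textbf{Proof plan for Lemma \ref{lem:delJ sm}.}

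The plan is to differentiate the orbit expansion \eqref{Z g2l} term by term and to check the three defining properties of $\CA_\infty(\bH(\BA_0),K_\bH,n)$ — smoothness, left $\bH(F_0)$-invariance, parallel weight $n$ at the archimedean places, and right $K_\bH$-invariance — for $\delJ(-,\Phi')$. Smoothness and left $\bH(F_0)$-invariance are immediate from \cite[Thm.~12.14]{Z19}, which already asserts that $(h,s)\mapsto J(h,\Phi',s)$ is smooth in both variables, entire in $s$, and left $\bH(F_0)$-invariant; applying $\tfrac{d}{ds}|_{s=0}$ preserves all of these, since differentiation in the $s$-parameter commutes with the group action in $h$ and the series in \eqref{eq: def J s} converges together with all its $s$-derivatives locally uniformly on $\bH(\BA_0)\times\BC$ by the analysis in \emph{loc.~cit.}

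For the weight and level, I would argue as follows. The archimedean test functions $\Phi'_v$ for $v\mid\infty$ are the fixed partial Gaussians from \cite[\S12.4]{Z19}, and by their construction they transform under $\prod_{v\mid\infty}\SO(2,\BR)\subseteq\prod_{v\mid\infty}\bH(F_{0,v})$ via the Weil representation with parallel weight $n$ (this is exactly the point of the choice of Gaussian, paralleling the weight computation for $\Phi_0^+$ in \S\ref{s:Green} and Prop.~\ref{prop orb inf}). Since the Weil representation $\omega$ of $\bH(\BA_0)$ appears in \eqref{eq: def J s} only through $\omega(h)\Phi'$, and the $s$-dependence enters only through $|\det g|^s\eta(g)$ inside the $[G']$-integral, the whole of $J(h,\Phi',s)$ — hence its $s$-derivative at $0$ — inherits the parallel weight $n$ under $\prod_{v\mid\infty}\SO(2,\BR)$ from $\Phi'_\infty$. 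For the right $K_\bH$-invariance: by hypothesis $\Phi'$ is $K_\bH$-invariant under the Weil representation, so $\omega(hk)\Phi'=\omega(h)\Phi'$ for $k\in K_\bH$, and therefore $J(hk,\Phi',s)=J(h,\Phi',s)$ identically in $s$; differentiating gives $\delJ(hk,\Phi')=\delJ(h,\Phi')$. (This is the place where the choice $K_\bH=K_{\bH,\fkd}\times K_\bH^{\fkd,\circ}$ and Lem.~\ref{lem:Weil KH} are meant to guarantee that the relevant $\Phi'$ actually is $K_\bH$-invariant, but for the lemma as stated this is simply an assumption.)

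The only genuinely substantive point — the main obstacle — is the moderate-growth / rationality bookkeeping: one must make sure that differentiating \eqref{Z g2l} in $s$ does not destroy the ``$\CA_\infty$'' membership, i.e.\ that $\delJ(-,\Phi')$ is still a smooth function of uniform moderate growth with the stated weight. This is already built into \cite[Thm.~12.14]{Z19} and \cite[Thm.~12.9]{Z19}, which establish the requisite analytic control on $J(h,\Phi',s)$ near $s=0$ uniformly in $h$ (the key being that each regularized orbital integral $\Orb((\gamma,u'),\omega(h)\Phi',s)$ is holomorphic at $s=0$ with derivative bounded locally uniformly in $h$, and the sum over regular semisimple orbits together with the nilpotent term $\Orb((\gamma,0_\pm),\omega(h)\Phi',s)$ converges appropriately). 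Granting those inputs, the argument is a short formal verification; I would spell out only the weight computation for $\Phi'_\infty$ and the invariance manipulation, and cite \cite[Thm.~12.9, Thm.~12.14]{Z19} for everything else.
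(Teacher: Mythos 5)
Your proof is correct and follows essentially the same route as the paper: both cite \cite[Thm.~12.14]{Z19} for the smoothness (and left $\bH(F_0)$-invariance) of $J(h,\Phi',s)$ and then read off the weight and right $K_\bH$-invariance formally from the choice of archimedean test function and the hypothesis on $\Phi'$. One small remark: the space $\CA_\infty(\bH(\BA_0),K_\bH,n)$ as defined in \S\ref{notation} does \emph{not} impose a moderate-growth condition (only $\CA_{\rm hol}$ does), so the bookkeeping you flag as the ``main obstacle'' in your final paragraph is in fact not required — which is precisely why the paper's own proof is a one-line citation.
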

\begin{proof}
By \cite[Thm.~12.14]{Z19}, the function $(h,s)\in \bH(\BA_0)\times \BC\mapsto J(h,\Phi',s)$ is smooth. It follows that $\delJ(-,\Phi')$ is smooth.
\end{proof}

Now we introduce
\begin{equation}\label{delJ}
\begin{aligned}
 \delJ_v(h,\Phi') &:=       \delJ_v(\omega(h)\Phi'),\quad \text{where}\\
       \delJ_v(\Phi') &:=\sum_{(\gamma,u')\in [(S_n(\alpha)\times V')(F_0)]\atop u'\neq0} \del((\gamma,u'),\Phi'_v)\cdot  \Orb((\gamma,u'),\Phi'^{v}).
  	\end{aligned}
\end{equation}

Then by Leibniz's rule, we obtain a decomposition, 
\begin{align}\label{eqn J' dec}
   \delJ(h,\Phi')= \del(0_\pm,\omega(h)\Phi')+\sum_{v}    \delJ_v(h,\Phi'),
   \end{align}
where the nilpotent term $\del(0_\pm,\omega(h)\Phi')$ is defined in \cite[\S12.7]{Z19} and we do not need the precise form in this paper.

\subsection{Preparation for the comparison}\label{ss:prep comp}
Let  $V$ be the $n$-dimensional $F/F_0$-hermitian space we used to define the Shimura variety $\Sh_{\wt K}\bigl(\wt G, \big\{\wt h\big\}\bigr)$ in \S\ref{ss:S data}. 

Let $\Phi=\otimes_{v<\infty}\Phi_v \in\CS((\U(V)\times V)(\BA_{0,f}))$ be a pure tensor. 
Let  $\Phi'= \otimes_{v}\Phi'_v \in \CS((S_{n} \times
V'_n)(\BA_0))$ be a pure tensor such that 
\begin{itemize}
\item  for every $v\mid\infty$,  $\Phi'_v$ is the fixed partial Gaussian test function, and 
\item for every non-archimedean $v$, $\Phi_v'$ partially (relative to $\alpha$) transfers to $\Phi_v$ (Def.  \ref{def part st}).
\end{itemize}

If $v$ is a place of $F_0$ that splits in $F$ (necessarily non-archimedean), then $\delJ_v(h,\Phi')=0$.
If $v$ is non-split (including the archimedean places), let $V^{(v)}$ be the $v$-nearby hermitian space of $V$. We recall that $V^{(v)}\simeq V$ if $v$ is the unique archimedean place $\varphi_0$  where $V$ has signature $(n-1,1)$ (cf. \S\ref{ss:S data}), and otherwise  $V^{(v)}$  is characterized by the following conditions 
\begin{itemize}
\item
 for all $w$ other than $v$ and $\varphi_0$, we have $V^{(v)}_{w}\simeq V_w$;
\item at the place $\varphi_0$, $V^{(v)}_{\varphi_0}$ is positive definite;
\item at the place $v$, $V^{(v)}_{v}$ has signature $(n-1,1)$ if $v$ is archimedean.
\end{itemize}
Then, the sub-sum of regular semisimple terms in $\delJ_v(h,\Phi')$  (cf. \eqref{delJ}) is a sum over orbits $(\gamma,u')\in [(S_n(\alpha)\times V')(F_{0})]_\rs $ matching orbits in $ [(\U(V^{(v)})\times V^{(v)})(F_0)]_\rs$.
Moreover, there is a Fourier expansion (cf. \eqref{eq:def F coeff})
\begin{align}
\label{dJ v qexp}
\delJ_v(h,\Phi')=\sum_{\xi\in F_0}\delJ_v(\xi,h, \Phi'),
\end{align}
where $\delJ_v(\xi,h, \Phi')$ is the sub-sum, 
\begin{align}
\label{dJ v xi}
\delJ_v(\xi,h,\Phi')=\sum_{(\gamma,u')\in [(S_n(\alpha)\times V'_\xi)(F_0)]\atop u'\neq 0} \del((\gamma,u'),\omega(h_v)\Phi'_v)\cdot  \Orb((\gamma,u'),\omega(h^v)\Phi'^{v}).
\end{align}
Then we set the value at $h=1$ normalized by the Whittaker function at the archimedean place
\begin{align}\label{eq:delJ v xi}
\delJ_v(\xi, \Phi'):= W^{(n)}_\xi(1)^{-1}\, \delJ_v(\xi,1,\Phi'),
\end{align}
where $ W^{(n)}_\xi(h_\infty):=\prod_{v\mid\infty} W^{(n)}_{v,\xi}( h_v)$ and each local factor $W^{(n)}_{v,\xi}$ is defined by \eqref{Whit}. 
 
Now let $v\nmid\fkd$ be a non-archimedean non-split place. By \eqref{dJ v qexp} and  \eqref{dJ v xi}, and the fact that $\Phi'_\infty$ is a (partial) Gaussian test function,  we have $\delJ_v(\xi,h, \Phi')=0$ unless $\xi\geq 0$ and
  \begin{align}\label{eq:delJ v qexp 1}
\delJ_v(h,\Phi')=\sum_{\xi\in F_0,\,\xi\geq 0}\delJ_v(\xi, \omega(h_f)\Phi')\,W^{(n)}_\xi(h_\infty).
\end{align}
Moreover, \eqref{eq:delJ v xi} becomes
\begin{align}\label{eq:delJ v xi 1}
\delJ_v(\xi, \Phi')=\sum_{(\gamma,u')\in [(S_n(\alpha)\times V'_\xi)(F_0)]\atop u'\neq 0}  \del((\gamma,u'),\Phi'_v)\cdot \Orb((\gamma,u'),\Phi'^{v,\infty}).
\end{align}
Finally we introduce
\begin{align}\label{eq:delJ xi 1}
\delJ (\xi, \Phi'):=\sum_{v} \delJ_v(\xi, \Phi'),
\end{align}
where the local terms vanish for all but finitely many places $v$ of $F_0$ (for a fixed pair $(\xi, \Phi')$).

\section{Intersection numbers and AFL}
In this section we recall the relevant RZ spaces, the local analog of CM cycles and local KR divisors, and the statement of the AFL conjecture. Then we globalize the intersection problem to the Shimura varieties we introduced in Part 1. We also state an application to the arithmetic intersection conjecture of Rapoport, Smithling and the second author in \cite{RSZ3}, assuming the AFL conjecture that will be proved in the next section. 
\subsection{Unitary RZ spaces}\label{ss:RZ}
Let $F/F_0$ be an unramified quadratic extension of $p$-adic local fields with $p$ odd and let $n\geq 1$. In this section, we recall the definition of the Rapoport--Zink formal moduli scheme $\CN_n = \CN_{n, F/F_0}$ associated to the unitary group for the quasi-split 
$n$-dimensional hermitian $F$-vector space, cf. \cite[\S4]{RSZ2} and \cite[\S3]{Z19}.

Let $\breve F$ denote the completion of a maximal unramified extension of $F$. For $\Spf O_{\breve F}$-schemes $S$ (i.e. a $O_{\breve F}$-scheme on which $p$ is locally nilpotent), we consider triples $(X, \iota, \lambda)$, where
\begin{enumerate}
\item[$\bullet$]
$X$ is a $p$-divisible group of absolute height $2nd$ and dimension $n$ over $S$, where  $d:=[{F_0}: \BQ_p]$, 
\item[$\bullet$]  $\iota$ is an action of $O_{F}$ such that the induced action of $O_{F_0}$ on $\Lie X$ is via the structure morphism $O_{F_0}\to \CO_S$,  and
\item[$\bullet$] $\lambda$ is a principal ($O_{F_0}$-relative) polarization. 
\end{enumerate}
Hence $(X, \iota|_{O_{F_0}})$ is a strict $O_{F_0}$-module of relative height $2n$ and dimension $n$. We require that the Rosati involution $\Ros_\lambda$ induces on $O_{F}$  the non-trivial Galois automorphism in $\Gal({F}/{F_0})$, denoted by $ O_F\ni a\mapsto \ov a$, and that the \emph{Kottwitz condition} of signature $(n-1,1)$ is satisfied, i.e.
\begin{equation}\label{kottwitzcond}
   \charac \bigl(\iota(a)\mid \Lie X;\, T\bigr) = (T-a)^{n-1}(T-\ov a) \in \CO_S[T]
	\quad\text{for all}\quad
	a\in O_{F} . 
\end{equation} 
An isomorphism $(X, \iota, \lambda) \isoarrow (X', \iota', \lambda')$ between two such triples is an $O_{F}$-linear isomorphism $\varphi\colon X\isoarrow X'$ such that $\varphi^*(\lambda')=\lambda$.

Over the residue field $\mbF$ of $O_{\breve {F}}$, there is a triple $(\BX_n, \iota_{\BX_ n}, \lambda_{\BX_n})$ such that $\BX_n$ is  supersingular, unique up to $O_F$-linear quasi-isogeny compatible with the polarization. We fix such a triple which we call the {\em framing object}. Then $\CN_n$ by definition represents the functor over $\Spf O_{\breve F}$ that associates to each $S$ the set of isomorphism classes of quadruples $(X, \iota, \lambda, \rho)$ over $S$, where the last entry is an $O_F$-linear quasi-isogeny of height zero defined over the special fiber $\ov S:=S\times_{ \Spf O_{\breve F}} \Spec \mbF$,
\[
   \rho \colon X\times_S\ov S \to \BX_n \times_{\Spec \mbF} \ov S,
\]
such that $\rho^*((\lambda_{\BX_n})_{\ov S}) = \lambda_{\ov S}$. The map $\rho$ is called the {\em framing}.
The formal scheme $\CN_n$ is  formally locally of finite type and formally smooth over $\Spf O_{\breve {F}}$ of relative dimension $n-1$.

The group of quasi-automorphisms of the framing object is
$$\Aut^\circ (\BX_n,\iota_{\BX_n},\lambda_{\BX_n}) = \{g\in \End^\circ_F(\mbX_n, ι_{\mbX_n}),\ g^\vee\circ λ_{\mbX_n} \circ g = λ_{\mbX_n}\}.$$
The condition $g^\vee \circ λ_{\mbX_n}\circ g = λ_{\mbX_n}$ may also be formulated as $gg^* = \mr{id}$, where $g \mapsto g^\ast=\Ros_{\lambda_{\BX_n}}(g)$ denotes the Rosati involution. Then $\Aut^\circ (\BX_n,\iota_{\BX_n},\lambda_{\BX_n})$ acts on $\CN_n$ by changing the framing: 
$$g\cdot (X,\iota,\lambda,\rho) = (X,\iota,\lambda, g \circ \rho).$$

Another description is as follows. Taking $n=1$, there is a unique triple $(\mbE, ι_{\mbE}, λ_{\mbE})$ over $\mbF$ with signature $(1,0)$. Set
\begin{align}\label{eq:BVn}
\BV_n:=\Hom^\circ_{O_F}(\BE, \BX_n),
\end{align}
which is an $n$-dimensional hermitian $F$-vector space with respect to the hermitian form
$$
\pair{x,y}= \lambda_{\mbE}^{-1}\circ y^\vee\circ \lambda_{\mbX_n} \circ x\in \End^\circ _{F}(\mbE)\simeq F.
$$
It is the unique (up to isomorphism) $n$-dimensional hermitian space that does {\em not} contain a self-dual $O_F$-lattice.
 Then there is a natural isomorphism
\begin{equation}\label{Aut cong U}
   \Aut^\circ (\BX_n,\iota_{\BX_n},\lambda_{\BX_n}) \cong \U\bigl(\BV_n\bigr)(F_0), 
\end{equation}
where $\Aut^\circ (\BX_n,\iota_{\BX_n},\lambda_{\BX_n})$ acts by composition on $\mbV_n$.

\subsection{Local intersection numbers}\label{ss:Int}

Now we introduce the local intersection numbers $\Int(g,u)$. In \cite{KR-U1}, Kudla and Rapoport have defined for every non-zero $u\in \BV_n$ a certain divisor $\CZ(u)$ on $\CN_n$, the so-called  (local) \emph{KR divisor}. For its definition, note that $\mcN_1 \iso \Spf O_{\breve F}$, so $(\mbE, ι_{\mbE}, λ_{\mbE})$ deforms to a unique triple $(\mcE, ι_{\mcE}, λ_{\mcE})$ over $O_{\breve F}$, called its \emph{canonical lift}. (This is the universal object over $\CN_1$ with Galois conjugated $O_F$-action.) Then $\mcZ(u)$ is defined as the locus where the quasi-homomorphism $u\colon \BE\to \BX_n$ lifts to a homomorphism from $\CE$ to the universal object over $\CN_n$. By \cite[Prop.\ 3.5]{KR-U1}, $\CZ(u)$ is a relative divisor (or empty). It is the local analog of the global divisor considered in \S\ref{ss:KR}. It follows from the definition that if $g\in \U(\BV_n)(F_0)$, then
\begin{align}\label{act KR}
g \CZ(u)=\CZ(gu).
\end{align}

For simplicity we will write $\CN_n\times\CN_n$ for the fiber product $\CN_n\times_{\Spf O_{\breve F}}\CN_n$.
For $g\in \U(\BV_n)(F_0)$, let $\Gamma_g\subseteq \CN_n\times\CN_n$ be the graph of the automorphism of $ \CN_n$ induced by $g$.
The fixed point locus of $g$ is defined as the intersection
\begin{align}\label{Ng}
\CN_n^g :=\Gamma_g\,\cap \Delta_{\CN_n},
\end{align}
viewed as a closed formal subscheme of $\CN_n$.
We also form the {\em  derived fixed point locus}, denoted by $\LN^g_n$, i.e. the derived tensor product
\begin{align}\label{der Ng}
\LN^g_n :=\Gamma_g\,\jiao \Delta_{\CN_n}:=\CO_{\Gamma_g}\Ltimes_{\CO_{\CN_n\times\CN_n}}\CO_{\Delta_{\CN_n}}
\end{align}
viewed as an element in $K_0^{\CN_n^g}(\CN_n)$, cf. \cite[Appendix B]{Z19}. The  fixed point locus  and its derived version are the local analogs of the global CM cycle and its derived version in \S\ref{ss:LCM}.

For a pair $(g,u)\in  (\U(\BV_n)\times  \BV_n)(F_0)_\rs$, we now set
\begin{equation}\label{def int g u}
   \Int(g,u) := \la \mcZ(u),\, \LN_n^g\ra _{\CN_n} := \chi\left({\CN_{ n}}, \, \mcO_{\mcZ(u)} \Ltimes_{\CO_{\CN_n}}\, \LN_n^g\right) . 
\end{equation}
Here, for a finite complex $\CF$ of coherent $\mcO_{\CN_n}$-modules, we define its Euler–Poincar\'e characteristic as 
$$
\chi(\CN_n,\CF)=\sum_{i,j} (-1)^{i+j} \mr{len}_{O_{\breve F}} H^{j}(\CN_n,H_i(\CF))
$$
if the lengths are all finite.

When $(g,u)$ is regular semi-simple, the intersection $\CZ(u)\cap\CN^g_n$ is a proper {\em scheme} over $\Spf O_{\breve{F}}$ and hence the right-hand side of \eqref{def int g u} is finite.
The number $\Int(g,u)$ depends only on the $\U(\BV_n)(F_0)$-orbit of $(g,u)$. There is an equivalent definition that does not involve the derived fixed point locus  $\LN^g_n$ (cf. \cite[Rem.~ 3.1]{Z19}),
\begin{equation}\label{eq:Int alt}
\Int(g,u)= \chi\left({\CN_{ n}\times\CN_n}, \, \CO_{\Gamma_g}\Ltimes_{\CO_{\CN_n\times\CN_n}} \CO_{\Delta_{\CZ(u)}}\right) . 
\end{equation}

\subsection{The AFL conjecture}\label{ss:AFL}
The Arithmetic Fundamental Lemma conjecture states that $\Int(g,u)$ equals a derivative of the orbital defined in \eqref{Orb(gamma,f',s)}.
\begin{conjecture}[AFL, semi-Lie algebra version]\label{AFLconj rs}
\label{AFL lie}
Suppose that $(\gamma,u')\in (S_{n}\times V_n')({F_0})_\rs$ matches the element $(g,u)\in ( \U(\BV_{n})\times  \BV_{n})(F_0)_\rs$. Then 
\[
\del\bigl((\gamma,u'), \mathbf{1}_{(S_{n}\times V_n')(O_{F_0})}\bigr) 
	   = -\Int(g,u)\cdot\log q. 
\]
\end{conjecture}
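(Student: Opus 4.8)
The strategy is the ``global-to-local'' argument of \cite{Z19}, now running through the generic fiber of the Shimura variety rather than its integral model, which is what the first two parts of this paper are set up to allow. First I would globalize the given local datum: choose a CM field $F/F_0$ with a place $v_0$ of $F_0$ (inert in $F$) at which the completion recovers the local unramified quadratic extension of the conjecture, and an $n$-dimensional $F/F_0$-hermitian space $V$ with the signature conditions of \S\ref{ss:S data}, so that the nearby hermitian space $V^{(v_0)}$ is the one with no self-dual lattice. This produces the Shimura variety $M_{\wt K}$ and its smooth integral model $\mcM_{\wt K}$ over $\Spec O_{\bfF}[\mfd^{-1}]$. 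Next choose an irreducible $\alpha\in F[t]^\circ_{\deg n}$ giving the fixed regular semisimple orbit, with $E=F[t]/(\alpha)$ a CM field, matching the element $(g,u)$ at $v_0$ and matching elements of $\U(V^{(w)})$ at the other bad non-split places $w$. Then choose a decomposable test function $\Phi' = \otimes_v \Phi'_v$ on $(S_n\times V'_n)(\mbA_0)$ that is $\mathbf{1}_{(S_n\times V'_n)(O_{F_0,v})}$ at $v_0$, a partial Gaussian at archimedean places, and, at the remaining finite non-split places, a partial transfer of a Schwartz function $\Phi_w$ chosen so that the corresponding KR divisor generating series has maximal level (so that \eqref{eq:defn_int_model_KR_div} applies) and so the CM cycle has a modification with prescribed good reduction by Prop.~\ref{prop modif cycle exists}.

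The heart of the argument is an identity of generating series. On the analytic side, Leibniz's rule gives the decomposition \eqref{eqn J' dec} of $\delJ(h,\Phi')$ into a nilpotent term plus local terms $\delJ_v(h,\Phi')$; the function $\delJ(-,\Phi')$ is a (non-holomorphic) automorphic form on $\bH(\mbA_0)$ of weight $n$ by Lem.~\ref{lem:delJ sm}. On the geometric side, I would form an arithmetic-intersection generating series of the arithmetic KR divisors $\wh\mcZ^\bK(\xi,h_\infty,\phi)$ with a degree-zero modification of the big fat CM cycle $\LCM(\alpha,\varphi)$: by Prop.~\ref{prop modif cycle exists} subtract a $\mbQ$-linear combination of elementary CM cycles $C(\sigma,g_j)$ with maximal level at $v_0$ so that the result $z\in \mcZ_1(\mcM)_{\deg=0}$. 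Applying the $S$-punctured pairing \eqref{eq:adm int} (with $S$ the bad finite places) and Thm.~\ref{thm mod} (i.e.\ the almost-modularity coming from Thm.~\ref{thm:mod E}/\cite{YZZ1}), the series $h\mapsto (Z(h,\phi),z)^{\adm}$ is holomorphic modular of weight $n$ valued in $\mbR_{S}$. Meanwhile the elementary CM cycle contributions are controlled by Thm.~\ref{thm elementary CM modularity}: their Kudla-Green arithmetic intersection generating series is modular (non-holomorphic, weight $n$), and via Thm.~\ref{thm comparison CM cycle} they match, place by place, the local analytic terms $\delJ_v$ coming from places where the hermitian space does \emph{not} represent the relevant $\zeta$. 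Thus, combining these, the difference between $\delJ(h,\Phi')$ and a sum of geometric intersection generating series is an automorphic form on $\bH(\mbA_0)$ of weight $n$ whose $\xi$-th Fourier coefficient, for those $\xi$ whose local components away from a single inert place $v$ are ``good'', reduces to a single term: on the geometric side the local intersection number on $\mcN_n^{g}$ against $\mcZ(u)$ arising from the supersingular locus at $v$ via the $p$-adic uniformization theorem \cite{RZ}, and on the analytic side $\del((\gamma,u'),\mathbf{1}_{(S_n\times V'_n)(O_{F_{0,v}})})\cdot \Orb((\gamma,u'),\Phi'^{v})$.

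The final step is to extract the purely local identity from the equality of Fourier coefficients of modular forms. The point is that one modular form is determined by ``sufficiently many'' of its Fourier coefficients; varying the auxiliary data ($\alpha$, the test functions away from $v_0$, the level) across a suitable family while keeping the local datum at $v_0$ fixed, and using linear independence of the orbital integrals $\Orb((\gamma,u'),\Phi'^{v_0})$ as $\Phi'^{v_0}$ ranges, lets one isolate the coefficient corresponding to the fixed $(g,u)$ and cancel the archimedean Whittaker factors $W^{(n)}_\xi(h_\infty)$ via \eqref{eq:Orb2Wh}. This yields $\del((\gamma,u'),\mathbf{1}_{(S_n\times V'_n)(O_{F_{0,v_0}})}) = -\Int(g,u)\log q$ for \emph{strongly} regular semisimple $(g,u)$ (the extra regularity being needed to ensure the uniformization isolates exactly the local term on $\mcN_n$ with no excess intersection in the global model); finally, by the local constancy result \cite{M-LC}, both sides are locally constant in $(g,u)$ on the regular semisimple locus, so the identity spreads out from strongly regular semisimple elements to all regular semisimple ones, proving Conjecture~\ref{AFL lie}.

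\textbf{Main obstacle.} The delicate part is the last extraction step: ensuring that the global arithmetic intersection number, computed on $\mcM_{\wt K}$, really decomposes as a sum of purely local contributions (one per bad place) plus archimedean and CM-correction terms with no leftover ``error'', and that the modularity/linear-independence input is strong enough to isolate a single Fourier coefficient and a single orbital integral. This is exactly where the assumption $q\geq n$ enters --- it guarantees, via the analysis of orbital integrals and test functions at the split auxiliary places, that enough independent test functions $\Phi'^{v_0}$ are available and that the relevant local orbital integrals are nonzero --- and where the unramified-CM-type hypothesis (Def.~\ref{def:unramified_CM_type}) is used so that Thm.~\ref{thm CM inter} and \cite{M-Th} apply at all the finite places under consideration.
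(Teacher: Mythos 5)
Your proposal tracks the paper's overall strategy closely: globalize via local constancy from \cite{M-LC}, work with an irreducible $\alpha$, modify the derived CM cycle to degree zero by Prop.~\ref{prop modif cycle exists}, apply the $S$-punctured admissible pairing of \S\ref{s:alm mod} together with generic-fiber modularity (Thm.~\ref{thm:mod E}/\cite{YZZ1}), compare with the analytic generating series through the elementary-CM-cycle computation (Thm.~\ref{thm elementary CM modularity}, Thm.~\ref{thm comparison CM cycle}), and use the Ehlen--Sankaran-type Green-function comparison to convert to a holomorphic modularity statement. That is indeed the route the paper takes, and the sequence of reductions is correctly identified.

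However, there is a genuine omission in the final extraction step. After one has the identity of holomorphic modular forms $\delJ_{\rm hol}(h)=-\Int(h,\Phi)^\circ$ and restricts attention to Fourier coefficients $\xi$ coprime to $B$, one is \emph{not} automatically reduced to the single local term at $v$: the decompositions \eqref{sum inert} and \eqref{eq:delJ v xi 1} still contain contributions at every non-split non-archimedean $w\nmid\mfd$. The paper kills these by an \emph{induction on the dimension $n$}: for $w\nmid\xi$ the AFL identity is reduced to the $(n-1)$-dimensional case via \cite[Prop.~4.12]{Z19}, and for $w\mid\xi$ (so $w\nmid B$, maximal-order case) it is known by \cite[Prop.~3.10]{Z19}. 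The constraint $q\geq n$ enters precisely at this inductive reduction (it is the hypothesis under which \cite[Prop.~4.12]{Z19} applies), not at the split auxiliary places as you suggest. Without the inductive argument, the ``single-term'' claim in your extraction step does not hold. Separately, your justification for restricting to strongly regular semisimple elements (``no excess intersection'') is off: what is really needed is irreducibility of $\alpha$ so that $E=F[t]/(\alpha)$ is a CM field and the CM-cycle formalism (elementary CM cycles, Thm.~\ref{thm elementary CM modularity}) applies; the passage back to general regular semisimple elements via \cite{M-LC} is correct.
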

The case of a fixed $u$ with $(u,u)\in O_{F_0}^\times$ has been studied first historically, cf. \cite{Z12}, and is known as the \emph{group version} of the AFL, cf. also \cite[Conj. 3.2]{Z19}. In that case there is a natural isomorphism 
$$
\CZ(u)\simeq \CN_{n-1}.
$$
By \cite[Prop. 4.12]{Z19},
the intersection number \eqref{eq:Int alt} is equal to the one considered in \cite{Z12},
$$
\chi\left({\CN_{ n-1}\times\CN_n}, \, \CO_{(1,g)\cdot\Delta_{\CN_{n-1}}  }\Ltimes_{\CO_{\CN_{n-1}\times\CN_n}} \CO_{\Delta_{\CN_{n-1}}}\right),
$$
the orbital integral reduces to the one in {\it loc. cit.} as well, and hence the group version for $\mbV_{n+1}$ is equivalent to Conj. \ref{AFLconj rs}, at least if $q_{F_0}\geq n+1$. 

\subsection{Global intersection numbers}
We now recall the relation of local and global intersection numbers which relies on the non-archimedean uniformization for $\mcM$. We resume the notation from \S\ref{s:SV KR}.
Let $ν\nmid \mfd$ be a non-archimedean place of $\bfF$ and set $v = ν\vert_{F_0}$. Assume $v$ to be inert in $F$. There is a uniformization along the basic locus (cf. \cite[Proof of Thm.~ 8.15]{RSZ3})
\begin{equation}\label{eq unif1}
    \CM_{  O_{\breve \bfF_\nu}}\sphat = \wt G^{(v)} (\BQ)\Big\bs \Bigl[ \CN'\times \wt G (\BA_f^p) /K_{\wt G}^p \Bigr].
\end{equation}
Here the left-hand side is the formal completion of the base change $ \mcM\otimes\mcO_{\breve\bfF_ν}$ along the basic locus of the geometric special fiber $\mcM\otimes\ov{\mbF}_ν$. The basic locus here is the closed subspace consisting of $(A_0,A,\ov{η})$ where $A[v^\infty]$ is a supersingular $p$-divisible group. On the right hand side, $\wt G^{(v)}$ is the analog of $\wt G$, but for the nearby hermitian space $V^{(v)}$. The formal scheme $\mcN'$ is an RZ space of PEL type in \cite{RZ} whose precise definition we omit. We now make the additional assumption that the CM type $Φ$ is unramified at $p$ (cf. Def. \ref{def:unramified_CM_type}). Then the comparison isomorphism \cite[Thm 3.1]{M-Th} applies to $\mcN'$ and \eqref{eq unif1} may be rewritten as
\begin{equation}\label{eq unif2}
  \CM_{  \mcO_{\breve \bfF_\nu}}\sphat \,
	   = \wt G^{(v)}(\BQ) \Big\bs \Bigl[  \CN_{n,F_v/F_{0,v}} \times \wt G (\BA_f^{v}) /\wt K^v \Bigr].
\end{equation}
Here $\mcN_{n,F_v/F_{0,v}}$ is the RZ space from \S\ref{ss:RZ} for the quadratic extension $F_v/F_{0,v}$ and, by abuse of notation,
$$
\wt G (\BA_f^{v}) /\wt K^v = 
    \wt G (\BA_f^{p})/\wt K^p\times \bigl(Z^\BQ(\BQ_p)/K_{Z^\BQ, p}\bigr)\times \prod_{w\mid p,\, w\neq v}G(F_{0,w})/ K_w,
$$
where the product is over places $w$ of $F_0$ above $p$. (In \cite{RSZ3} it is assumed that $v$ is unramified over $\mbQ$ which explains why the assumption on the CM type $Φ$ does not figure explicitly.)

Now we come to the global intersection numbers.
Let $α\in F[t]^\circ_{\deg n}$ be a fixed irreducible polynomial, $\varphi = \varphi_\mfd \otimes 1_{K^\mfd}\in \mcS(K\backslash G(\mbA_{0,f})/K)$ and $ϕ = ϕ_\mfd \otimes 1_{\widehat{Λ}^\mfd}\in \mcS(V(\BA_{0,f}))$. Set $Φ := \varphi\otimes \phi$.
For each $ξ\in F_0$, equation \eqref{eq:AI} defines an intersection number of $\wh{\mcZ}^\bB(ξ,ϕ)$ and $\LCM(α,φ)$ in $\mbR_\mfd$. (Recall that $\LCM(α,φ)$ is defined in Def. \ref{def:cm_cycle}.) However, due to the passage to $\wh\Ch {}^1$ in \eqref{eq:AI}, the information of the individual place-by-place contributions is lost. We assume $ξ \neq 0$ from now on. Then we can work with the representing divisor $\wh{\mcZ}^\bB(ξ,ϕ)$ directly to define a more specific intersection number
\begin{equation}\label{eq int def}
\Int(ξ,Φ) := \frac{1}{τ(Z^\mbQ) [\bF:F]} \left( \widehat{\mcZ}^\bB(ξ,ϕ), \LCM(α,\varphi)\right) \in \mbR
\end{equation}
that lifts the $\mbR_{\mfd}$-valued intersection number from \eqref{eq:AI}.
It is defined as a sum of an archimedean and a non-archimedean part. The \emph{archimedean contribution} is defined as usual by evaluating the Green function of $\wh{\mcZ}^\bB(ξ,ϕ)$ on $\LCM(α,φ)_\bfF$. This is well-defined since the relations in the definition of $\mcZ_1(\mcM)$ are supported in special fibers only (cf. Def. \ref{def Z1}), so that the horizontal part of $\LCM(α,φ)$ is unambiguous. The \emph{non-archimedean contribution} is the arithmetic Euler--Poincaré characteristic
\begin{equation}\label{eq ad hoc int}
χ\big(\mcM,\ \mcO_{\mcZ(ξ,ϕ)} \Ltimes_{\mcO_{\mcM}}\LCM(α,φ)\big).
\end{equation}
The cycles do not intersect in the generic fiber by \cite[Thm. 9.2 (i)]{Z19}, so \eqref{eq ad hoc int} is a finite number. With this definition of $\Int(ξ,Φ)$, we even obtain a well-defined place-by-place decomposition
$$\Int(ξ,Φ) = \sum_{v \in \Sigma_{F_0},\ v\nmid \mfd} \Int_v(ξ,Φ).$$
Note that the local factor vanishes for all but finitely many $v$ (for a fixed pair $(ξ,Φ)$).
The $v$-term here is the sum of the contributions of all $ν\in \Sigma_{\bfF}$ lying above $v$. Part (b) of the following theorem relies on the non-archimedean uniformization. Also see Thm.~\ref{thm CM inter} (1) and (2) for the analogous result for elementary CM cycles.
\begin{theorem}[\protect{\cite[Thm. 9.4]{Z19}}] \label{thm loc glob decomp}
Let $ν$ be a non-archimedean place of $\bfF$ and $v = ν\vert_{F_0}$.
\begin{altenumerate}
\renewcommand{\theenumi}{\alph{enumi}}
\item
If $v$ is split in $F$ and $\xi\neq 0$, then $(\mcC(α,\varphi) \cap \mcZ(ξ,ϕ))_ν = \emptyset$. In particular $\Int_v(ξ,ϕ) = 0$.
\item
Assume $v$ to be inert in $F$ and $ξ\neq 0$. Then
\begin{equation}\label{sum inert}
\Int_{v}(\xi,\Phi) =  
2\log q_{v} \sum_{(g,u)\in [(G^{(v)}(\alpha)\times V^{(v)}_\xi)(F_0)]} \Int(g,u) \cdot \Orb\left((g,u), \Phi^{v}\right).
\end{equation}
Here $G^{(v)} = \U(V^{(v)})$ and $G^{(v)}(α)$ is the subvariety of elements with characteristic polynomial $α$. Moreover, $\Int(g,u)$ is the local intersection number from \eqref{def int g u}.
\end{altenumerate}
\end{theorem}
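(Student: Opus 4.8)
The plan is to follow the strategy of \cite[\S9]{Z19}, the one new ingredient for a general base field $F_0$ being the non-archimedean uniformization \eqref{eq unif2}, which rests on the comparison isomorphism of \cite{M-Th} and on the standing assumption that $\Phi$ is unramified at $p$.

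First I would localize the intersection. When $\xi\neq 0$, the cycles $\mcZ(\xi,\phi)$ and $\mcC(\alpha,\varphi)$ are disjoint on the generic fiber by \cite[Thm.~9.2 (i)]{Z19}, so the arithmetic Euler--Poincar\'e characteristic \eqref{eq ad hoc int} only receives contributions from the special fibers at places $\nu\mid v$ with $v\nmid\mfd$. A geometric point of $\mcC(\alpha,\varphi)\cap\mcZ(\xi,\phi)$ over $\ov{\mbF}_\nu$ gives an abelian variety $A$ with an $O_E$-action (from the distinguished $\alpha$-endomorphism) together with a non-zero $u\in\Hom_{O_F}(A_0,A)[\mfd^{-1}]$ with $\pair{u,u}=\xi$. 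Computing the slopes of the isoclinic factors $A_0[w^\infty]$ and $A[w^\infty]$ for $w\mid v$, exactly as in the proof of Thm.~\ref{thm CM inter} (1)--(2) (equivalently, as in \cite[\S9]{Z19}), one finds that such a $u$ can exist only if $v$ is inert in $F$ and $A[v^\infty]$ is supersingular. This yields (a); and for inert $v$ it shows that $\mcC(\alpha,\varphi)\cap\mcZ(\xi,\phi)$ is supported on the basic locus of $\mcM\otimes\ov{\mbF}_\nu$, where \eqref{eq unif2} is available.

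Then I would transport both cycles through \eqref{eq unif2}. On the Rapoport--Zink side one has the local KR divisors $\mcZ(u)\subseteq\mcN_{n,F_v/F_{0,v}}$ of \S\ref{ss:Int} and, via the decomposition \eqref{eq:Hecke_diag_intersection} on $\mcN_n\times\mcN_n$, the derived fixed-point loci $\LN_n^g$ for $g\in\U(\BV_n)(F_0)$ of characteristic polynomial $\alpha$. Under the uniformization, the basic part of $\mcZ(\xi,\phi)_\nu$ becomes the $\phi^v$-weighted sum over $u\in V^{(v)}_\xi$ of the $\mcZ(u)$, while the restriction of $\LCM(\alpha,\varphi)$ becomes, over each relevant double coset, the class $\LN_n^g$ weighted by $\varphi^v$. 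Reorganizing the resulting sum over $g$ by means of the orbit matching \eqref{eq:orb mat 1}, the Euler--Poincar\'e characteristic \eqref{eq ad hoc int} turns into a sum over orbits $(g,u)\in[(G^{(v)}(\alpha)\times V^{(v)}_\xi)(F_0)]$ of the local intersection numbers $\Int(g,u)=\pair{\mcZ(u),\LN_n^g}_{\mcN_n}$ of \eqref{def int g u}, each multiplied by the product over the finite places $w\neq v$ of the remaining local orbital integrals of $\Phi^v$; with the Haar measures normalized as in \S\ref{ss: transfer}, this product is exactly $\Orb((g,u),\Phi^v)$. Summing over the places $\nu\mid v$, comparing lengths over $O_{\breve F}$ with those in the global special fibers, and using that the normalization $\tau(Z^\mbQ)[\bfF:F]$ in \eqref{eq int def} absorbs the residual adelic volume, produces the factor $2\log q_v$ and gives \eqref{sum inert}.

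The hard part will be the bookkeeping in the uniformization step: identifying the pull-backs of the global KR divisor and of the \emph{derived} CM cycle with their RZ counterparts $\mcZ(u)$ and $\LN_n^g$ — which is where \cite{M-Th} and the $K$-theoretic formalism of \cite[Appendix B]{Z19} enter — and keeping track of the adelic volume factors so that the orbital integral $\Orb((g,u),\Phi^v)$ emerges with the correct normalization, together with the precise origin of the constant $2\log q_v$. All of this is carried out in \cite[\S9]{Z19} for $F_0=\mbQ$; the only modification for general $F_0$ is to replace the uniformization used there by \eqref{eq unif2}.
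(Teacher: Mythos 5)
Your proposal is correct and reconstructs the argument that the paper implicitly adopts: the paper itself does not reprove the statement but cites \cite[Thm.~9.4]{Z19}, having just established the one modification needed for a general totally real $F_0$, namely the non-archimedean uniformization \eqref{eq unif2} via the comparison isomorphism of \cite{M-Th} under the unramified CM type hypothesis. Your sketch — disjointness on the generic fiber, the slope/supersingularity analysis giving (a) and localizing (b) to the basic locus, then transport through \eqref{eq unif2} and reorganization by the orbit matching \eqref{eq:orb mat 1} to produce \eqref{sum inert} with the volume normalization absorbing into $\Orb((g,u),\Phi^v)$ — is exactly the strategy of \cite[\S 9]{Z19} that carries over verbatim once \eqref{eq unif2} is available.
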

There is an analogous result for archimedean $ν$ which is proved via complex uniformization  (cf. Thm.~\ref{thm CM inter} (3)). Since this is identically the same as \cite[Cor. 10.3]{Z19}, we will not repeat here.

\subsection{Application to diagonal intersection}

In \cite{RSZ3}, Rapoport, Smithling and the second author study an intersection problem of arithmetic diagonal cycles that is motivated by the Arithmetic Gan--Gross--Prasad Conjecture, cf. \cite{Z12, GGP}. They formulate a conjectural formula for the local contributions to the resulting intersection numbers, cf. \cite[Conj.~8.13~(i)]{RSZ3}. These local terms decompose just as $\Int_v(ξ,Φ)$ does in \eqref{sum inert} above. In the hyperspecial case of \cite{RSZ3}, this reduces their conjecture to the AFL. Anticipating our proof of the AFL for $q_v\geq n$, cf. Thm. \ref{thm main} below, we formulate here the implied intersection identity in the diagonal setting.

Let $u\in V$ be a non-isotropic vector with orthogonal complement $W$. Define the following groups (over $\Spec \mbQ$ and $\Spec F_0$, respectively).
\begin{align*}
H &:= U(W),\ \ \text{an algebraic group over $F_0$,}\\
H^\BQ &:= \bigl\{g \in \Res_{F_0/\BQ} \GU(W) \bigm| c(g)\in \BG_m\bigr\},\ \ \text{$c$ the similitude factor},\\
\wt H &:= Z^\BQ \times_{\BG_m} H^\BQ \iso Z^{\mbQ}\times \Res_{F_0/\mbQ}H,\ \ \mr{and}\\
\wt {HG} &:= Z^\BQ \times_{\BG_m} H^\BQ \times_{\BG_m} G^\mbQ \iso Z^{\mbQ}\times \Res_{F_0/\mbQ}H \times \Res_{F_0/\mbQ} G.
\end{align*}
Let $\{h_H\}$ and $\{h_{\wt H}\}$ be the Shimura data for $H$ and $\wt H$ defined in \S\ref{ss:S data} with $W$ instead of $V$. The product $\{h_{Z^\mbQ} \times h_H \times h_G\}$ defines a Shimura datum for $\wt {HG}$. The reflex field for $\wt H$ and $\wt{HG}$ is again $\bfF$. Let $K_H\subseteq H(\mbA_{0,f})$ be a level subgroup and let $K_{\wt H} := K_{Z^\mbQ}\times K_H$ as well as $K_{\wt{HG}} := K_{Z^\mbQ}\times K_H\times K_G$. Then, for the respective Shimura varieties over $\Spec \bfF$,
$$\Sh_{K_{\wt{HG}}}\big(\wt{HG},\{h_{\wt{HG}}\}\big) =
\Sh_{K_{\wt H}}\big(\wt H, \{h_{\wt H}\}\big)
\times_{\Sh_{K_{Z^\mbQ}}(Z^\mbQ, h_{Z^\mbQ})}
\Sh_{K_{\wt G}}\big(\wt G, \{h_{\wt G}\}\big).$$
Here we wrote $K_{\wt G} := \wt K$ and $\{h_{\wt G}\} := \big\{\wt h\}$ to make the notation more coherent. There is an embedding $H\hookrightarrow G$ that identifies $H$ with the stabilizer of $u$ in $G$. Recall that $K_G^\mfd = \mr{Stab}(\wh{Λ}^\mfd)$ is the stabilizer of a self-dual $O_F[\mfd^{-1}]$-lattice in $V$. We now impose
\begin{itemize}
\item $u\in \wh{Λ}^\mfd$ with $(u,u) \in O_F[\mfd^{-1}]^\times$ and
\item $K_H = H\cap K_G$.
\end{itemize}
In particular, the data of $W$ and $K_H$ satisfy the good reduction assumptions away from $\mfd$ in \S\ref{ss:integral models} and we obtain a smooth integral model $\mcM_{\wt H}$ over $O_\bfF[\mfd^{-1}]$ for the Shimura variety of $\wt H$. It parametrizes tuples $(A_0,ι_0,λ_0,\ov{η}_0, A, ι, λ, \ov{η})$ in the sense of Def. \ref{def RSZ glob}, but for $W$ instead of $V$ and $n-1$ instead of $n$. The closed immersion of Shimura varieties for $\wt H$ and $\wt G$ extends to a closed immersion of integral models with moduli description
\begin{equation}
\label{eq embed H G}
\begin{aligned}
\mcM_{\wt H} & \to \mcM_{\wt G}\\
(A_0,A,\ov{η}) & \mapsto (A_0, A_0\times A, (\id_{A_0} \mapsto u) \times \ov{η}).
\end{aligned}
\end{equation}
The product
$$\mcM_{\wt{HG}} := \mcM_{\wt H}\times_{\mcM_0} \mcM_{\wt G}$$
is a smooth integral model for $\Sh_{K_{\wt {HG}}}\big(\wt{HG}, \{h_{\wt{HG}}\}\big)$. The definition of the Hecke correspondence \eqref{eq Hecke mu} can be made for $\mcM_{\wt H}$ in exactly the same way; it moreover extends by linearity from the case of a coset $µ$ to the Hecke algebra at $\mfd$. Taking the product, we obtain for every function of the form
$$φ_\mfd = 1_{K_{Z^\mbQ, \mfd}} \otimes φ_{H,\mfd} \otimes φ_{G,\mfd} \in \mcS\big(K_{\wt {HG},\mfd}\bs \wt{HG}(\mbQ_{\mfd})/K_{\wt {HG},\mfd}\big)$$
a Hecke operator
$$R(φ_\mfd)\colon \mr{Hk}_{φ_\mfd} \to \mcM_{\wt {HG}} \times_{\mcM_0} \mcM_{\wt{HG}}.$$
(Only the component $φ_{H,\mfd}\otimes φ_{G,\mfd}$ is used to define the correspondence; the trivial $1_{K_{Z^\mbQ}}$ acts trivially.) When we write $R(φ)$ with $φ\in \mcS\big(K_{\wt {HG}}\bs \wt{HG}(\mbA_f)/K_{\wt {HG}}\big)$ in the following, it is understood that $φ = φ_\mfd\otimes φ^\mfd$ with $φ^\mfd = 1_{K_{\wt {HG}}}^\mfd$ standard and $R(φ) = R(φ_\mfd)$. This completes the setting for the definition of the semi-global intersection number from \cite[(8.18)]{RSZ3} in the hyper-special case. Let $v\mid \mfd$ be a place of $F_0$ that is inert in $F$. We define, for $ν\in \Sigma_\bfF$ with $ν\mid v$,
\begin{equation}\label{eq int semi-global}
\begin{aligned}
\Int^\natural_ν(φ) :=& \pair{R(φ)^*\mcM_{\wt H},\ \mcM_{\wt H}}_{\mcM_{\wt {HG}},\, v},\\
\Int_v(φ) :=& \frac{1}{τ(Z^\mbQ)[\bfF:F]} \sum_{ν\mid v} \Int^\natural_ν(φ).
\end{aligned}
\end{equation}
Here, the map $\mcM_{\wt H}\to \mcM_{\wt {HG}}$ is the graph of \eqref{eq embed H G}.
The following is \cite[Conj. 8.13]{RSZ3} in the present notation for $v$. We refer to \cite[\S7]{RSZ3} for the notions of transfer and of Gaussian test function in the present setting.
\begin{theorem}\label{thm semi-global}
Assume that $q_v\geq n$ and that the CM type $Φ$ is unramified at $p$ as in Def. \ref{def:unramified_CM_type}. Let $φ=φ_\mfd\otimes φ^{\mfd}$ be as above with $φ^\mfd$ standard. Assume $φ_\mfd$ to be completely decomposed, $φ_\mfd = \otimes_{w\mid \mfd} φ_w$, and let $φ'=\otimes_{w}φ'_w\in \mcS(G'(\BA_{F_0}))$ be a Gaussian test function such that  $\otimes_{w < \infty}φ'_w$ is a smooth transfer of $φ$.
Assume that for some prime $\ell$ prime to $v$ and some place $\lambda$ above $\ell$, both $φ_λ$ and $φ'_λ$ have regular support. Then
$$
\Int_{v}(φ)=-\delJ_{v}(φ'). 
$$
\end{theorem}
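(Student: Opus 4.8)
\textbf{Proof plan for Theorem \ref{thm semi-global}.}
The strategy is to reduce the semi-global intersection identity to the AFL conjecture, which is available for $q_v \geq n$ by Thm.~\ref{thm main}. First I would apply the non-archimedean uniformization \eqref{eq unif2} at a place $\nu \mid v$, using the assumed unramifiedness of $\Phi$ at $p$ and the comparison isomorphism \cite[Thm.~3.1]{M-Th}, to express each local term $\Int^\natural_\nu(\varphi)$ in terms of the Rapoport--Zink space $\mcN_{n,F_v/F_{0,v}}$ and the local intersection numbers $\Int(g,u)$ from \eqref{def int g u}. Concretely, the graph $R(\varphi)^*\mcM_{\wt H}$ meets the diagonal copy of $\mcM_{\wt H}$ inside $\mcM_{\wt{HG}}$ only along the supersingular locus (the cycles are disjoint in the generic fiber, by the argument analogous to \cite[Thm.~9.2]{Z19}), so the uniformization identifies $\Int^\natural_\nu(\varphi)$ with a weighted sum over regular semisimple orbits $(g,u)$ of the form
\begin{align*}
\Int^\natural_\nu(\varphi) = \tau(Z^\mbQ)\sum_{(g,u)\in [(G^{(v)}(\alpha)\times V^{(v)}_\xi)(F_0)]_{\rs}} \Int(g,u)\cdot \Orb\bigl((g,u),\Phi^v\bigr),
\end{align*}
where $\Phi^v$ is built from $\varphi^v$ together with the fixed vector $u$, entirely parallel to the structure of Thm.~\ref{thm loc glob decomp}(b) and of \cite[Conj.~8.13, \S8]{RSZ3}. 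Summing over the $[\bfF:F]$-many places $\nu\mid v$ and dividing by $\tau(Z^\mbQ)[\bfF:F]$ as in \eqref{eq int semi-global} gives $\Int_v(\varphi) = \sum_{(g,u)}\Int(g,u)\cdot\Orb((g,u),\Phi^v)$, up to the normalizing $\log q_v$.

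Second, I would invoke the AFL (Conj.~\ref{AFLconj rs}, now a theorem by Thm.~\ref{thm main} under $q_v\geq n$) to replace each $\Int(g,u)\log q_v$ by $-\partial\Orb((\gamma,u'),\mathbf{1}_{(S_n\times V'_n)(O_{F_0})})$ for the matching orbit $(\gamma,u')$ on the linear side. Here the hypothesis that $\varphi_v$ and $\varphi'_v$ have regular support at an auxiliary place $\lambda$ is what allows one to spread out the identity over \emph{all} regular semisimple orbits (not merely strongly regular ones), using the density argument and the local constancy result of \cite{M-LC} exactly as in \cite{Z19,RSZ3}; in particular it ensures the Hecke operators involved are supported on regular semisimple elements so that the orbital integral decomposition on both sides matches term by term. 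After this substitution, the sum over $(g,u)$ on the geometric side becomes a sum over matching orbits $(\gamma,u')$ of products $\partial\Orb((\gamma,u'),\Phi'_v)\cdot\Orb((\gamma,u'),\Phi'^v)$, which is by definition (cf.~\eqref{delJ}, \eqref{eq:delJ v xi 1}) precisely $-\delJ_v(\varphi')$, once one checks that the archimedean Gaussian test functions match the Whittaker normalization — this is the same bookkeeping as in \S\ref{ss:prep comp} and uses that $\varphi'$ is a Gaussian test function whose finite part transfers $\varphi$.

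The main obstacle, and the step requiring the most care, is the transition from the \emph{group} version of the AFL (in which a vector $u$ with $(u,u)\in O_{F_0}^\times$ is fixed and $\mcZ(u)\iso \mcN_{n-1}$) to the form needed here. The identity \eqref{eq int semi-global} is literally a self-intersection of $\mcM_{\wt H}$ inside $\mcM_{\wt{HG}}$, i.e.\ the diagonal-cycle intersection of \cite{RSZ3}, and unwinding \eqref{eq embed H G} shows the relevant vector is exactly the fixed $u\in\wh\Lambda^\mfd$ with $(u,u)\in O_F[\mfd^{-1}]^\times$; so one is in the situation where \cite[Prop.~4.12]{Z19} applies and the group version for $\mbV_{n}$ suffices — but one must track that $q_v\geq n$ is exactly the inequality needed for that equivalence (the remark after Conj.~\ref{AFLconj rs} states it as $q_{F_0}\geq n+1$ for $\mbV_{n+1}$, i.e.\ $q_v\geq n$ for $\mbV_n$). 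A secondary technical point is that the orbital integrals appearing from the uniformization are taken over $\U(V^{(v)})(F_0)$ with a particular normalization of measures, and one must confirm these agree with the normalizations in \cite{RSZ3} and in \S\ref{ss: n>1}; this is routine but must be done to get the constant (and the sign) in $\Int_v(\varphi) = -\delJ_v(\varphi')$ exactly right. Everything else is a direct transcription of the corresponding arguments in \cite[\S\S8--9]{RSZ3} and \cite[\S9]{Z19}.
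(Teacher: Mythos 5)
Your proposal is correct and takes essentially the same route as the paper, which proves the theorem by citing \cite[Thm.~8.15]{RSZ3} for the reduction to the AFL (noting that the argument there applies to arbitrary $n$ once the CM type is unramified) and then invoking Thm.~\ref{thm main} for the AFL itself. Your writeup is a faithful unpacking of the uniformization step hidden in that citation; the only minor imprecision is that \cite{M-LC} and the regular-support hypothesis play different roles (the former spreads the AFL from strongly regular to all regular semisimple orbits, while the latter kills non-regular-semisimple contributions to the intersection number so that the orbital decomposition has no nilpotent terms), but you do mention both and the overall logic is sound.
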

Here $G'=\Res_{F/F_0}(\GL_{n-1} \times \GL_n)$ and $\delJ_{v}$ is the $v$-th component of the global distribution $\delJ$ on $G'(\BA_{F_0})$ defined in \cite[\S7.2]{RSZ3}. (Note that this distribution $\delJ_{v}$  is related to but not the same as the $\delJ_v$-distribution defined in \S\ref{ss: n>1} and \ref{ss:del J}.)
\begin{proof}
The statement was already shown for $n\leq 3$ and $v$ unramified over $p$ in \cite[Thm. 8.15]{RSZ3}. As long as the CM type $Φ$ is unramified, the proof applies for arbitrary $n$ and reduces the statement to the AFL identity. This we prove in the next section whenever $q_v\geq n$.
\end{proof}

\section{Proof of the AFL}\label{s:proof}
This last section is devoted to the proof the AFL in \S\ref{ss:AFL}, at least for not too small residue cardinality:
\begin{theorem}\label{thm main}
Conjecture \ref{AFLconj rs} holds under the assumption that the residue cardinality of $F_0$ is $\geq n$.
\end{theorem}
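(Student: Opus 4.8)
The strategy is the standard ``globalization'' argument, but now using the modularity results established in Parts 1 and 2 rather than the modularity of arithmetic KR divisors on the integral model. By the local constancy results of \cite{M-LC}, it suffices to verify the AFL identity for \emph{strongly} regular semisimple $(g,u)$, so we may assume the relevant $\alpha$ is irreducible and that $E = F[t]/(\alpha)$ is a CM extension of a totally real field $E_0$; moreover, by the equivalence recalled in \S\ref{ss:AFL}, proving the semi-Lie version (Conj.~\ref{AFLconj rs}) for $q_{F_0}\geq n$ gives the group version for $q_{F_0}\geq n+1$ (and conversely), so we work exclusively with the semi-Lie version. First I would choose global fields $F/F_0$, a CM type $\Phi$, and a hermitian space $V$ globalizing the given local data, arranging that $\Phi$ is unramified at the place $v$ of interest (Def.~\ref{def:unramified_CM_type}) and that the place $v$ is inert in $F$; this is possible by a standard choice-of-globalization lemma. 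One also globalizes the element $(g,u)$ to a global $\alpha$ and an irreducible polynomial as above, and picks the test function $\Phi = \varphi\otimes\phi$ to be the indicator function $\mathbf 1_{(S_n\times V_n')(O_{F_0})}$ at $v$ and matching (partial) transfers elsewhere, with Gaussian test functions at archimedean places.

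\textbf{Key steps.} The heart is a comparison of two modular generating series in $\CA_{\rm hol}(\bH(\BA_0),K_\bH,n)$ (or its ``almost'' variant valued in $\BR_{S}$). On the geometric side: form the CM cycle $\LCM(\alpha,\varphi)\in \CZ_1(\mcM)$, use Prop.~\ref{prop modif cycle exists} to subtract a $\BQ$-linear combination of elementary CM cycles $C(\sigma,g_j)$ with \emph{maximal} level at $v$ so that the difference, call it $z$, lands in $\CZ_1(\mcM)_{\deg=0}$. Apply Thm.~\ref{thm mod} (``almost modularity'') to get that $h\mapsto (Z(h,\phi),z)^{\rm adm}$ is modular; combining with the arithmetic KR divisor $\wh\mcZ^\bB(\xi,\phi)$ (endowed with the automorphic Green function, which is admissible by \eqref{ddc GB}) lets one recover the genuine intersection numbers $\Int(\xi,\Phi)$ up to (i) archimedean corrections controlled by the Green-function comparison Thm.~\ref{thm:dif inf} (difference between Bruinier's and Kudla's Green functions is modular after projecting off locally constant functions) and (ii) the elementary-CM-cycle corrections, whose intersection generating series is modular by Thm.~\ref{thm elementary CM modularity}. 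The net conclusion is that $h\mapsto \sum_\xi \Int(\xi,\Phi)W^{(n)}_\xi(h_\infty)$ (suitably corrected) agrees, up to a holomorphic modular form with Fourier coefficients in $\sum_{\ell\mid\mfd}\BQ\log\ell$ and up to terms supported at split/archimedean places, with the analytic series $\delJ(h,\Phi')$ of \S\ref{ss: n>1}, which is itself modular by \cite[Thm.~12.14]{Z19}. On the analytic side, the place-by-place decomposition \eqref{eqn J' dec} identifies the $v$-component $\delJ_v(\xi,\Phi')$ with (a sum over matching orbits of) $\del((\gamma,u'),\Phi'_v)\cdot \Orb((\gamma,u'),\Phi'^v)$, while on the geometric side Thm.~\ref{thm loc glob decomp}(b) identifies $\Int_v(\xi,\Phi)$ with $2\log q_v\sum \Int(g,u)\,\Orb((g,u),\Phi^v)$. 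Since both generating series are determined by their Fourier coefficients, and since the correction terms from \S\S\ref{s:Green}--\ref{s:el CM} contribute only at places dividing $\mfd$ and at archimedean/split places, subtracting them and comparing the $\xi$-th Fourier coefficient at the single place $v$ yields
$$
2\log q_v\sum_{(g,u)}\Int(g,u)\,\Orb((g,u),\Phi^v) = -2\sum_{(\gamma,u')}\del((\gamma,u'),\Phi'_v)\,\Orb((\gamma,u'),\Phi'^v)
$$
as identities of real numbers. Finally, a density/linear-independence argument on the orbital integrals $\Orb((g,u),\Phi^v)$ away from $v$ (using freedom in choosing $\Phi^v$, that $q_v\geq n$ guarantees enough matching orbits, and that smooth transfer exists and is surjective) isolates a single orbit and produces $\Int(g,u)\log q = -\del((\gamma,u'),\mathbf 1_{(S_n\times V_n')(O_{F_0})})$, which is exactly Conj.~\ref{AFLconj rs}.

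\textbf{Main obstacle.} The delicate point is bookkeeping the correction terms so that, after comparing Fourier coefficients at $v$, \emph{nothing} survives except the desired local identity. Concretely: the ``almost modularity'' of Thm.~\ref{thm mod} only gives membership in $\CA_{\rm hol}(\bH(\BA_0),K_\bH,n)_{\ov\BQ}\otimes\BR_{S,\ov\BQ}$ — the generating series is determined by its Fourier coefficients only \emph{modulo} $\Span_\BQ\{\log q_\ell : \ell\in S\}$, where $S$ contains the ramified places $\mfd$; one must check that the specific coefficient $\varepsilon_\xi\in \sum_{\ell\mid\mfd}\BQ\log\ell$ from Thm.~\ref{thm elementary CM modularity}, the constant $d_\xi$ from Thm.~\ref{thm comparison CM cycle}(2), and the constant $d_0^{\rm corr}$ from Thm.~\ref{thm comparison CM cycle}(3), together with the split-place vanishing from Thm.~\ref{thm loc glob decomp}(a) and Prop.~\ref{prop orb V}, exactly cancel the ambiguity. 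This requires that one may choose the globalization so that $v\nmid \mfd$ (so $v$ does not contribute to the ambiguity) and that the archimedean Green-function difference (Thm.~\ref{thm:dif inf}, only giving membership in $\CA_\infty$, not $\CA_{\rm hol}$) can nonetheless be matched against the archimedean orbital-integral derivatives via Prop.~\ref{prop orb inf}; the matching of archimedean terms must be verified to be a genuine equality of Fourier coefficients and not just ``up to a modular form,'' which is where the precise normalization of Green functions and transfer factors recalled in \S\ref{s:Green} and \S\ref{ss: transfer} is used. The second, more technical, obstacle is the final separation-of-orbits step: one needs that for $q_v\geq n$ the local orbital integrals $(g,u)\mapsto \Orb((g,u),\Phi^v)$, as $\Phi^v$ ranges over test functions, span enough functions on the (finite, for fixed $\alpha$) orbit space to isolate each orbit — this uses that at the inert non-archimedean places away from $v$ one has the freedom of smooth transfer and that the remaining places are handled by density of global orbits in products of local ones (an approximation argument à la \cite{Z19}).
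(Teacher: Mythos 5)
Your overall architecture is correct and matches the paper's: globalize with unramified CM type at $p$, modify the derived CM cycle by elementary ones to land in $\CZ_1(\mcM)_{\deg=0}$, apply the almost modularity of Thm.~\ref{thm mod} and the Green-function comparison and elementary-CM modularity to produce two elements of $\CA_{\rm hol}(\bH(\BA_0),K_\bH,n)_{\ov\BQ}\otimes\BR_{\mfd,\ov\BQ}$, and reduce the AFL to an identity of these modular forms. You also correctly place the orbit-isolation step (the paper's Prop.~\ref{prop:aux_orbit_separation}, i.e.\ choosing $\Phi^v$ so that at most one orbit contributes to the $\xi$-th Fourier coefficient of $\delJ_v$ and of $\Int_v$).

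However, there is a genuine gap in how you propose to close the argument. You write that "both generating series are determined by their Fourier coefficients," so "comparing the $\xi$-th Fourier coefficient at the single place $v$" yields the local identity, with the final separation achieved by a "density/linear-independence argument on the orbital integrals $\Orb((g,u),\Phi^v)$." This conflates two things and, more importantly, does not actually prove the identity $\delJ_{\rm hol}(h)=-\Int(h,\Phi)^\circ$ of holomorphic modular forms, which is the crux. Knowing that two modular forms agree "up to a holomorphic modular form with Fourier coefficients in $\sum_{\ell\mid\mfd}\BQ\log\ell$" gives you nothing: you must prove the difference vanishes. The paper does this via an \emph{induction on the dimension $n$}, which your proposal never mentions. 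Concretely (proof of Thm.~\ref{thm AFL0}): one first invokes \cite[Lem.~13.6]{Z19} to reduce to checking Fourier coefficients at $\xi$ coprime to the conductor $B$ of the order $O_F[t]/(\alpha)$; for such $\xi$ and any $w\nmid\mfd$ one faces the dichotomy that either $w\nmid\xi$ (so $(u,u)$ is a unit, and \cite[Prop.~4.12]{Z19} reduces the local AFL to dimension $n-1$, which holds by the induction hypothesis) or $w\mid\xi$ but $w\nmid B$ (so the order is maximal at $w$ and \cite[Prop.~3.10]{Z19} applies). This is where the hypothesis $q\geq n$ really enters: one enlarges $\mfd$ to swallow all places with residue cardinality $<n$, ensuring every $w\nmid\mfd$ has $q_w\geq n$ so the inductive hypothesis applies. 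Your suggestion that $q_v\geq n$ "guarantees enough matching orbits" for a spanning argument is not the mechanism; it is the floor on residue cardinality that lets the $n{-}1$ AFL be invoked at all places where the modular form comparison must be carried out. Without the inductive reduction and the maximal-order input, the comparison of Fourier coefficients does not close, and the proof as sketched does not establish the theorem.
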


\subsection{Globalization}\label{ss:glob}
Starting from the data of the AFL identity in question (an unramified quadratic extension of $p$-adic local fields, the integer $n$ and the regular semi-simple pair $(g,u)$), we choose a quadratic extension $F/F_0$ of number fields with $F$ CM and $F_0$ totally real such that:
\begin{itemize}
\item $F_0$ has a place $v$ above $p$ which is inert in $F$ and such that $F_v/F_{0,v}$ is the extension of local fields $F_v/F_{0,v}$ one we would like to prove an AFL identity for.
\item All places $w\neq v$ of $F_0$ above $p$ are split in $F$.
\end{itemize}
Next, choose an $n$-dimensional $F/F_0$-hermitian vector space $V$ such that:
\begin{itemize}
\item The signature of $V$ is $(n-1,1)$ at a unique place $φ_0\colon F_0\to \mbR$ and $(n,0)$ otherwise.
\item The localization $V_p$ contains a self-dual $O_{F,p}$-lattice $Λ_p$.
\end{itemize}
Let $V^{(v)}$ be the $v$-nearby positive definite hermitian space of $V$ and choose an isomorphism $V^{(v)}_v \iso \mbV_n$ to view $(g,u) \in (G^{(v)}\times V^{(v)})(F_{0,v})$. Here we recall that $\BV_n$ is defined by \eqref{eq:BVn}. Note that such an isomorphism is only unique up to $G^{(v)}(F_{0,v})$ so it is really the orbit $G^{(v)}(F_{0,v})\cdot (g,u)$ that is canonical. The main result of \cite{M-LC} is that the intersection number $\Int(g,u)$ is locally constant (for the $p$-adic topology) in $(g,u)$. On the analytic side, it is a priori clear that $\del (γ,u')$ is locally constant in $(γ,u')$. Moreover, the matching relation preserves $p$-adic closeness since it is defined through the invariants of $(g,u)$ and $(γ,u')$, cf. \S\ref{ss: transfer}. It follows that we may replace $(g,u)$ by a suitable $p$-adically close enough \emph{global} pair with the following additional properties.
\begin{itemize}
\item The characteristic polynomial $α$ of $g$ (now an element of $F[t]^\circ_{\deg n}$) is irreducible over $F$.
\item The self-pairing $ξ := \pair{u,u}$ (now an element of $F_0$) is non-zero.
\item For all places $w\neq v$ of $F_0$ above $p$, the ``standard'' orbital integral for the orbit of $(g,u)$ does not vanish
\begin{equation}\label{eq aux additional}
\Orb((g,u),1_{\mr{Stab}(Λ_w)}\otimes 1_{Λ_w}) \neq 0,
\end{equation}
where $Λ_w\subseteq Λ_p$ is the factor corresponding to $w$.
\end{itemize}
Next, we choose a CM type $Φ$ for $F$ which we assume to be unramified at $p$ in the sense of Def. \ref{def:unramified_CM_type}. For example, one may first choose an unramified $p$-adic CM type $Φ\subseteq \Hom(F,\mbC_p)$ and then transport it via any choice of isomorphism $\mbC \iso \mbC_p$. Enlarging $\mfd$ while keeping $p\nmid \mfd$, we assume that $Φ$ is unramified at all $\ell \nmid \mfd$. The choice of $Φ$ also defines an extension of $φ_0$ to $F$. Finally, we complete the data so far to a Shimura datum as in \S\ref{s:SV KR}, subject only to the condition that $p\nmid \mfd$.
\begin{proposition}[\protect{\cite[Proof of Lem. 13.7 and Prop. 13.8]{Z19}}]\label{prop:aux_orbit_separation}
Assume that for all non-archimedean places $w$ of $F_0$, $w\nmid v\mfd$,
\begin{equation}
\label{eq aux cond}
\Orb((g,u),1_{K_w}\otimes 1_{Λ_w}) \neq 0.
\end{equation}
Fix a compact subset $\Omega_v$ of $(G^{(v)} \times V^{(v)})(F_{0,v})$ containing $(g,u)$. Then, after possibly shrinking $K_\mfd$, there exist choices for $ϕ^v = ϕ_\mfd \otimes 1_{\widehat{Λ}^{v\mfd}}\in \mcS(V(\BA^v_{0,f}))$ and $\varphi^v = \varphi_\mfd \otimes 1_{K^{v\mfd}}\in \mcS(K^v\backslash G(\mbA^v_{0,f})/K^v)$ such that the following are equivalent for $(\wt g, \wt u)\in (G^{(v)}(α)\times V^{(v)}_ξ)(F_0)$: 
\begin{enumerate}[wide, labelindent=0pt, labelwidth=!, label=(\arabic*), topsep=2pt, itemsep=2pt]
\item The $G^{(v)}(F_{0,v})$-orbit of $(\wt g, \wt u)$ in $(G^{(v)} \times V^{(v)})(F_{0,v})$ intersects nontrivially with $\Omega_v$ and the away-from-$v$ orbital integral
$\Orb((\wt g,\wt u),Φ^v)$ is non-vanishing with $Φ^v = ϕ^v\otimes \varphi^v$.
\item There is an equality of orbits $G^{(v)}(F_0) \cdot (\wt g, \wt u) = G^{(v)}(F_0)\cdot (g,u)$.
\end{enumerate} 
\end{proposition}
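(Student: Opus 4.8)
The proof is essentially that of \cite[proof of Lem.~13.7 and Prop.~13.8]{Z19}, and the plan is to recall its structure, the passage from $F_0=\mbQ$ to a general totally real field being harmless. Since $\alpha$ is irreducible, $E=F[t]/(\alpha)$ is a CM field with maximal totally real subfield $E_0$, and via the action of $t$ on $\gamma$ any pair $(\wt g,\wt u)\in(G^{(v)}(\alpha)\times V^{(v)}_\xi)(F_0)$ with $\wt u\neq 0$ endows $V^{(v)}$ with the structure of a $1$-dimensional $E/E_0$-hermitian space of which $\wt u$ is a generator. Hence such a pair, up to $G^{(v)}(F_0)$-conjugacy, is completely described by the invariant $\zeta:=\pair{\wt u,\wt u}_E\in E_0^\times$, which satisfies $\tr_{E_0/F_0}(\zeta)=\xi$, together with the requirement that the resulting hermitian $F$-vector space be isometric to $V^{(v)}$ at every place. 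Write $\zeta_0\in E_0^\times$ for the invariant of $(g,u)$; as $V^{(v)}$ is positive definite at all archimedean places ($v$ being non-archimedean), $\zeta_0$, and hence $\xi$, is totally positive.

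\emph{Finiteness.} First I would fix an arbitrary large compact open subgroup at the places of $\mfd$ together with auxiliary functions $\phi^v_0=\phi^0_\mfd\otimes 1_{\wh\Lambda^{v\mfd}}$ and $\varphi^v_0=\varphi^0_\mfd\otimes 1_{K^{v\mfd}}$ supported there, and show, as in \cite[Lem.~13.7]{Z19}, that the set $S$ of invariants $\zeta$ coming from global pairs $(\wt g,\wt u)$ whose $G^{(v)}(F_{0,v})$-orbit meets $\Omega_v$ and for which $\Orb((\wt g,\wt u),\phi^v_0\otimes\varphi^v_0)\neq 0$ is \emph{finite}. Indeed: non-vanishing of the standard local orbital integrals $\Orb_w((\wt g,\wt u),1_{K_w}\otimes 1_{\Lambda_w})$ for $w\nmid v\mfd$ forces, by the usual lattice-theoretic interpretation, an integrality condition on $\zeta$ away from a fixed finite set of places; the condition at $\Omega_v$ bounds the archimedean components of the moments $\pair{\wt g^{\,i}\wt u,\wt u}$, hence $|\zeta|_w$ for $w\mid v$; the support condition at $\mfd$ bounds $|\zeta|_w$ for $w\mid\mfd$; and finally $\zeta$ totally positive with $\tr_{E_0/F_0}(\zeta)=\xi$ forces every conjugate of $\zeta$ to lie in a bounded interval. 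Thus $\zeta$ ranges over the lattice points of a fixed bounded region of the affine subspace $\{x\in E_0 : \tr_{E_0/F_0}(x)=\xi\}$, a finite set.

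\emph{Separation.} Write $S=\{\zeta_0,\zeta_1,\dots,\zeta_k\}$ and let $(g_i,u_i)$ represent the corresponding orbits for $i\geq 1$. For each $i$ one has $\zeta_i\neq\zeta_0$ in $E_0$, hence $\zeta_i\neq\zeta_0$ in $E_0\otimes_{F_0}F_{0,w}$ for \emph{every} place $w$; in particular the $G^{(v)}(F_{0,w})$-orbits of $(g_i,u_i)$ and of $(g,u)$ are distinct at each $w\mid\mfd$. Since there are only finitely many $i$, I would then shrink $K_\mfd$ and choose bi-$K_\mfd$-invariant $\phi_\mfd,\varphi_\mfd$ supported in a neighbourhood of the $\mfd$-adic orbit of $(g,u)$ small enough to miss all the orbits of the $(g_i,u_i)$; the resulting $\Orb_\mfd((g,u),\phi_\mfd\otimes\varphi_\mfd)$ is nonzero while $\Orb_\mfd((g_i,u_i),\phi_\mfd\otimes\varphi_\mfd)=0$ for $i\geq 1$. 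Setting $\phi^v=\phi_\mfd\otimes 1_{\wh\Lambda^{v\mfd}}$, $\varphi^v=\varphi_\mfd\otimes 1_{K^{v\mfd}}$ and $\Phi^v=\phi^v\otimes\varphi^v$, hypothesis \eqref{eq aux cond} gives $\Orb((g,u),\Phi^v)=\Orb_\mfd((g,u),\phi_\mfd\otimes\varphi_\mfd)\prod_{w\nmid v\mfd}\Orb_w((g,u),1_{K_w}\otimes 1_{\Lambda_w})\neq 0$, so both $(1)\Rightarrow(2)$ (by finiteness of $S$) and $(2)\Rightarrow(1)$ hold. The main obstacle is the finiteness step, and within it the delicate point is translating non-vanishing of the standard orbital integrals into the integrality of $\zeta$; the remaining arguments are bookkeeping with local orbit data.
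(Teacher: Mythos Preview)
Your reconstruction is faithful to the argument the paper invokes from \cite[Lem.~13.7 and Prop.~13.8]{Z19}; the paper itself gives no proof beyond that citation, and your two-step outline (finiteness of contributing invariants $\zeta$, then separation at $\mfd$) is exactly the intended mechanism. One slip to fix: in the finiteness step you write that the condition at $\Omega_v$ ``bounds the archimedean components of the moments $\pair{\wt g^{\,i}\wt u,\wt u}$''; since $v$ is non-archimedean, $\Omega_v$ bounds the $v$-adic absolute values of these moments (and hence $|\zeta|_w$ for the places $w$ of $E_0$ above $v$), while the archimedean bound comes, as you correctly say afterwards, from total positivity together with $\tr_{E_0/F_0}(\zeta)=\xi$.
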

Enlarging $\mfd$, we may assume that condition \eqref{eq aux cond} is satisfied. Since we already imposed \eqref{eq aux additional}, we may do so while maintaining $p\nmid \mfd$. For example, $\mfd$ now also contains all places $w\neq v$ where $α$ or $ξ$ are not integral. We may find and fix a compact $\Omega_v$ as in the proposition such that if an orbit in $(G^{(v)}(α)\times V^{(v)}_ξ)(F_0)$ contributes non-trivially to \eqref{sum inert}, or matches to an orbit in $(S_n(α)\times V'_ξ)(F_0)$ that contributes non-trivially to \eqref{eq:delJ v xi 1}, then it intersects $\Omega_v$. (Loosely speaking, existence of $\Omega_v$ follows from the fact that the local terms $\Int(g,u)$ resp. $\del((\gamma,u'),\Phi'_v)$ in the two formulas can only be non-zero when $(g,u)$ resp. $(γ,u')$ satisfy an integrality condition. We refer to \cite[§13.4 and Cor.~14.8]{Z19} for the precise argument.) Let $K_\mfd$ and $Φ_\mfd$ be then as in the proposition. Let $Φ'\in \mcS((S_n\times V')(\mbA_0))$ be a Gaussian transfer of $Φ$ as in \S\ref{ss:prep comp}. It follows that, for $(γ, u')\in (S_n(α)\times V'_ξ)(F_0)$ such that locally at $v$ the orbit of $(γ, u')$ matches one in $\Omega_v$, the away-from-$v$ orbital integral 
$\Orb((γ, u'),Φ'^{,v})$
is non-vanishing if and only if $\GL_n(F_0)\cdot (γ, u')$ is the matching orbit of $(g,u)$.
\begin{corollary}\label{cor loc at v}
The AFL identity for $(g,u)$ and $(γ,u')$ holds if and only if
\begin{equation}\label{eq loc at v}
2\partial J_{v}(ξ,Φ') = -\Int_{v}(ξ,Φ).
\end{equation}
\end{corollary}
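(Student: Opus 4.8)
The plan is to show that, with the global data fixed in \S\ref{ss:glob}, \emph{both} sides of \eqref{eq loc at v} reduce to a single orbital contribution, and that, after cancelling a common non-zero ``away from $v$'' factor, the remaining identity is precisely the AFL identity of Conjecture~\ref{AFLconj rs} for $(g,u)$ and the matching $(\gamma,u')$ (with $q=q_v$ and local component $\Phi'_v=\mathbf 1_{(S_n\times V'_n)(O_{F_{0,v}})}$, so that $\del((\gamma,u'),\Phi'_v)$ is the quantity appearing there). The whole point of the choice of test functions $\phi^v,\varphi^v$ and of the compact set $\Omega_v$ in \S\ref{ss:glob} is exactly this: by Proposition~\ref{prop:aux_orbit_separation} and the discussion after it, among the finitely many orbits that can contribute to the geometric sum \eqref{sum inert} or to the analytic sum \eqref{eq:delJ v xi 1}, the only one with non-vanishing away-from-$v$ orbital integral is the $G^{(v)}(F_0)$-orbit of $(g,u)$, respectively the $\GL_n(F_0)$-orbit of $(\gamma,u')$ matching it.

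First, on the geometric side: all places $w\neq v$ of $F_0$ above $p$ are split in $F$, so Theorem~\ref{thm loc glob decomp}(a) kills those terms; and since $v$ is inert in $F$ and $\xi:=\pair{u,u}\neq 0$, Theorem~\ref{thm loc glob decomp}(b) expresses $\Int_v(\xi,\Phi)$ as the sum of $2\log q_v\,\Int(\wt g,\wt u)\,\Orb((\wt g,\wt u),\Phi^v)$ over orbits in $[(G^{(v)}(\alpha)\times V^{(v)}_\xi)(F_0)]$. By the choice of $\Omega_v$ every contributing orbit meets $\Omega_v$, and then Proposition~\ref{prop:aux_orbit_separation} forces it to equal $G^{(v)}(F_0)\cdot(g,u)$; this orbit does occur, with $\Orb((g,u),\Phi^v)\neq 0$ by the same proposition applied to $(g,u)$ itself. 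Hence $\Int_v(\xi,\Phi)=2\log q_v\,\Int(g,u)\,\Orb((g,u),\Phi^v)$. Symmetrically, \eqref{eq:delJ v xi 1} writes $\partial J_v(\xi,\Phi')$ as the sum of $\del((\gamma,\wt u'),\Phi'_v)\,\Orb((\gamma,\wt u'),\Phi'^{v,\infty})$ over $(\gamma,\wt u')\in[(S_n(\alpha)\times V'_\xi)(F_0)]$ with $\wt u'\neq 0$ (the archimedean Gaussians having been divided off by the Whittaker normalization \eqref{eq:delJ v xi}), and — using that matching preserves $p$-adic closeness together with the orbit-separation property established in \S\ref{ss:glob} — this collapses to $\del((\gamma,u'),\Phi'_v)\,\Orb((\gamma,u'),\Phi'^{v,\infty})$.

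It then remains to identify the two surviving away-from-$v$ factors. Because $\Phi'_w$ is a partial transfer of $\Phi_w$ at each finite $w\neq v$, one has $\Orb((g,u),\Phi_w)=\omega_w(\gamma,u')\,\Orb((\gamma,u'),\Phi'_w,0)$, and the product formula for transfer factors shows that the product of the $\omega_w$ over finite $w\neq v$ equals the local transfer factor $\omega_v(\gamma,u')$; thus $\Orb((g,u),\Phi^v)=\omega_v(\gamma,u')\,\Orb((\gamma,u'),\Phi'^{v,\infty})\neq 0$. Substituting into $2\,\partial J_v(\xi,\Phi')=-\Int_v(\xi,\Phi)$ and dividing by the non-zero number $2\,\Orb((\gamma,u'),\Phi'^{v,\infty})$, equation \eqref{eq loc at v} becomes equivalent to $\omega_v(\gamma,u')\,\del((\gamma,u'),\Phi'_v)=-\log q_v\,\Int(g,u)$; with $\Phi'_v=\mathbf 1_{(S_n\times V'_n)(O_{F_{0,v}})}$ the left-hand side is, by the convention \eqref{eq:orb s=0}, exactly the quantity $\del((\gamma,u'),\mathbf 1_{(S_n\times V'_n)(O_{F_0})})$ of Conjecture~\ref{AFLconj rs}, and $q_v=q$, so \eqref{eq loc at v} holds if and only if the AFL identity does. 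The step I expect to cost the most care is precisely this transfer-factor bookkeeping — the product formula converting the ``bare'' orbital-integral derivative built into the construction of $\delJ$ into the transfer-factor-normalized derivative of the AFL statement — together with tracking the archimedean Whittaker normalization; the collapse of both sums to a single orbit is then a formal consequence of \S\ref{ss:glob}. No lower bound on $q_v$ is used here: the hypothesis $q_v\geq n$ enters only later, when \eqref{eq loc at v} itself is proved.
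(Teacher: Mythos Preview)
Your approach is the same as the paper's: collapse each of \eqref{sum inert} and \eqref{eq:delJ v xi 1} to a single orbital term via Proposition~\ref{prop:aux_orbit_separation} and the choice of $\Omega_v$, then cancel the common away-from-$v$ factor. The paper's proof is literally one sentence to this effect.

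Your transfer-factor bookkeeping is, however, over-engineered and as written introduces a spurious $\omega_v$. In the paper's conventions \eqref{eq:orb s=0}, the local symbols $\Orb((\gamma,u'),\Phi'_w)$ and $\del((\gamma,u'),\Phi'_v)$ \emph{already} include the local transfer factors. The transfer relation (Definition~\ref{def st loc}) then gives $\Orb((g,u),\Phi_w)=\Orb((\gamma,u'),\Phi'_w)$ directly for each finite $w\neq v$, hence $\Orb((g,u),\Phi^v)=\Orb((\gamma,u'),\Phi'^{v,\infty})$ on the nose, with no product-formula detour. Likewise $\del((\gamma,u'),\Phi'_v)$ in \eqref{eq:delJ v xi 1} is already the quantity appearing in Conjecture~\ref{AFLconj rs}, so the equivalence $2\delJ_v(\xi,\Phi')=-\Int_v(\xi,\Phi)\Longleftrightarrow \del((\gamma,u'),\Phi'_v)=-\Int(g,u)\log q_v$ falls out immediately once the sums collapse. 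Your route through the raw orbital integrals $\Orb(\cdot,\cdot,0)$ and a global product formula for $\omega_w$ is correct in principle, but with the paper's notation the final line $\omega_v\del(\cdot)=-\Int(g,u)\log q_v$ double-counts the transfer factor. Drop that step and the argument is clean.
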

\begin{proof}
This follows by comparing the two local-global decomposition formulas \eqref{eq:delJ v xi 1} and \eqref{sum inert}, each of them containing (at most)  one non-zero term.
\end{proof}
This idea may be developed further. Namely, for $w$ non-archimedean,
$$2\partial J_w(ξ,Φ'),\ \Int_w(ξ,Φ) \in \mbQ\log q_w,$$
so both terms in Cor.~ \ref{cor loc at v} lie in $\mbQ \log (p)$. Note that $\Int_w(ξ,ϕ) = 0$ for all $w\neq v$ above $p$ since these were assumed to be split. 
Moreover, there is a known identity of archimedean contributions: by \cite[Lem.~ 14.4]{Z19}, for an archimedean place $w$,
$$2\partial J_{w}(ξ,Φ') = -\Int_{w}^\bK(ξ,Φ).$$
It follows that
$$2\partial J_w(ξ,Φ') + \Int_w^{\bK-\bB}(ξ,Φ) = -\Int_w(ξ,Φ),\ \ \ \ w\mid \infty.$$
Using the linear independence (over $\mbQ$) of the $\log \ell$ for $\ell$ prime, we obtain that \eqref{eq loc at v}, and hence the AFL identity in question, follow from the identity
\begin{equation}
\label{eq to show} 
2\delJ(\xi,\Phi') + \Int^{\bK-\bB}(ξ,Φ) = -\Int(ξ,Φ)\ \ \ \text{in } \mbR_\mfd
\end{equation}
where $\delJ(\xi,\Phi')$ is defined in \eqref{eq:delJ xi 1}.

\subsection{Modular forms}\label{ss:Int mod}
The way to proceed in \cite{Z19} was to use that all three terms in \eqref{eq to show} are known to be the Fourier coefficients of modular forms if $F_0=\mbQ$. For the $\partial J$-term this was worked out in \cite[\S9]{Z19}, for the $\Int^{\bK-\bB}$-term this is due to Ehlen--Sankaran \cite{ES}, for the $\Int$-term it is the main result of \cite{BHKRY}. Only the statement for $\partial J$ is currently known to carry over without change. For the other two terms we will instead apply the almost modularity results from \S\ref{s:alm mod}. This requires us however to modify $\LCM(α,\varphi)$, since Thms. \ref{thm ES} and \ref{thm mod} only apply to pairings with 1-cycles of degree $0$.

By Prop.~ \ref{prop modif cycle exists}, there exist a family of $*$-embeddings $σ_i\colon E\to \End(V)$ with $σ_i(O_E)$ stabilizing $Λ_p$, Hecke operators $g_i\in G(F_{0,\mfd})$ and scalars $λ_i\in \mbQ$ such that
$$C(α,\varphi)^\circ := C(α,\varphi) - \sum_{i} λ_i C(σ_i,g_i)$$
has degree $0$ on every connected component of $M$. Let $\bfE$ be an extension of $\bfF$ over which the $C(σ_i,g_i)$ are defined and set 
$$\mcC := \sum_i λ_i \mcC(σ_i,g_i)$$
 where the integral models $\mcC(σ_i,g_i)$ of the $C(σ_i,g_i)$ are the normal ones (cf. the paragraph after Def. \ref{def element CM cycle}). Define the modified CM cycle
$$
\LCM(\alpha,\varphi)^\circ= \LCM(\alpha,\varphi)-\mcC\in \mcZ_1(O_{\bfE}\otimes_{O_{\bfF}} \mcM).
$$
At this point, we again enlarge $\mfd$ (but still keep $K$ unchanged and maintain the condition $p\nmid \mfd$) if necessary so that we can assume $Λ[\mfd^{-1}]$ to be $σ_i(O_E)$-stable for all $i$.

Apply the pairing \eqref{eq:adm int} and define 
\begin{equation}\label{int g0}
\Int(h,\Phi)^\circ := \frac{1}{τ(Z^\mbQ) [\bE:F]} \left( Z(h, \phi),\quad  \LCM(\alpha,\varphi)^\circ\right)^\adm,
\end{equation}
For the rest of the section we let   $K_{\bH,\fkd}$ be a compact open subgroup of $\bH(F_{0,\fkd})$ that fixes $\phi_\fkd', \phi_\fkd$ and $\Phi'_\fkd$  under the Weil representation. Recall $ K_{\bH}^{\fkd,\circ}=\prod_{v\nmid\fkd}K_{\bH,v}^{\circ}$ from \eqref{eq:max open}.  Then Lem.~\ref{lem:Weil KH} (2) shows that $\phi_v$ is invariant under $ K_{\bH,v}^{\circ}$ for every $v\nmid\fkd$.
\begin{corollary}[to Thm. \ref{thm mod}]\label{cor hmf}The function
$\Int(-,Φ)^\circ$ lies in $\CA_{\rm hol}(\bH(\BA_0),K_{\bH}, n)_{\ov\BQ}\otimes_{\ov\BQ}\BR_{\mfd,\ov\BQ}$  with $K_\bH=K_{\bH,\fkd}\times K_{\bH}^{\fkd,\circ}$ and $\BR_{\mfd,\ov\BQ}:=\BR_{S,\ov\BQ}$ for $S$ the set of places $v\mid \mfd$.
\end{corollary}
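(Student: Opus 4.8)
The plan is to deduce Corollary~\ref{cor hmf} from Theorem~\ref{thm mod} by expressing the generating function $\Int(-,\Phi)^\circ$ as a pairing of the modular KR generating series $Z(h,\phi)$ against a single fixed $1$-cycle of degree zero, and then checking that the arithmetic data entering the definition satisfy all the hypotheses of Theorem~\ref{thm mod}.

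First I would recall that, by construction in \S\ref{ss:glob}, the modified CM cycle
$$
\LCM(\alpha,\varphi)^\circ = \LCM(\alpha,\varphi) - \mcC \in \mcZ_1(O_\bfE\otimes_{O_\bfF}\mcM)
$$
has degree $0$ on every connected component of $M_{\wt K}\otimes_\bfF \bfE$; this is exactly the content of Prop.~\ref{prop modif cycle exists} together with the choice of the scalars $\lambda_i$. Hence $\LCM(\alpha,\varphi)^\circ$ lies in $\mcZ_1(O_\bfE\otimes_{O_\bfF}\mcM)_{\deg=0}$, which is the source of cycles to which the admissible pairing \eqref{eq:adm int} applies after base change of $\mcM$ from $O_\bfF[\mfd^{-1}]$ to $O_\bfE[\mfd^{-1}]$. (The base change $O_\bfE\otimes_{O_\bfF}\mcM$ is again regular, flat and projective over $\Spec O_\bfE[\mfd^{-1}]$ with smooth generic fiber, and smooth since $\mcM$ is, by Thm.~\ref{thm:representabilityZ19}; so the intersection formalism of \S\ref{ss:AIP}, and in particular Lem.~\ref{lem:Ch iso} and the pairing $(\cdot,\cdot)^{\adm}$ of \eqref{eq:adm int}, all apply with $\bfF$ replaced by $\bfE$ and $S$ the places above $\mfd$.) Then by definition \eqref{int g0},
$$
\Int(h,\Phi)^\circ = \frac{1}{\tau(Z^\mbQ)[\bfE:F]}\,\bigl(Z(h,\phi),\ \LCM(\alpha,\varphi)^\circ\bigr)^{\adm},
$$
so $\Int(-,\Phi)^\circ$ is obtained by applying the $\ov\BQ$-linear (in the Chow-class variable) functional
$$
z\mapsto \tfrac{1}{\tau(Z^\mbQ)[\bfE:F]}\,(\,\cdot\,,z)^{\adm},\qquad z:=\LCM(\alpha,\varphi)^\circ,
$$
coefficient-wise to the generating series $Z(h,\phi)$.

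Next I would invoke Theorem~\ref{thm mod}: it states precisely that for $\phi\in\mcS(V(\BA_{0,f}))^K$ that is $K_\bH$-invariant under the Weil representation and for any $z\in\mcZ_1(\mcM)_{\deg=0}$ (here over $O_\bfE$), the function $h\mapsto (Z(h,\phi),z)^{\adm}$ lies in $\CA_{\rm hol}(\bH(\BA_0),K_\bH,n)_{\ov\BQ}\otimes_{\ov\BQ}\BR_{S,\ov\BQ}$. The two things that need to be verified to apply it are: (i) the $\BR_{S}$-coefficients are the right ones, namely $S$ is the set of places of $\bfE$ above $\mfd$ so that $\BR_{S,\ov\BQ}$ coincides with $\BR_{\mfd,\ov\BQ}$ in the notation of the excerpt — this is just a matching of notation, since $\BR_\mfd = \BR/\mathrm{Span}_\BQ\{\log\ell:\ell\mid\mfd\}$ and the residue characteristics of places of $\bfE$ above $\mfd$ are exactly the primes dividing $\mfd$; and (ii) the Weil-representation invariance of $\phi = \phi_\mfd\otimes 1_{\wh\Lambda^\mfd}$ under the specified $K_\bH = K_{\bH,\mfd}\times K_\bH^{\mfd,\circ}$. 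For (ii): by hypothesis $K_{\bH,\mfd}$ fixes $\phi_\mfd$ under the Weil representation, and for each finite place $v\nmid\mfd$ one applies Lem.~\ref{lem:Weil KH}(2), which asserts exactly that $\phi_v = 1_{\wh\Lambda_v}$ (the standard function on the self-dual lattice) is invariant under $K_{\bH,v}^\circ$; taking the product over $v\nmid\mfd$ gives invariance under $K_\bH^{\mfd,\circ}$, hence under all of $K_\bH$. This is the place where the choice of $\mfd$ (so that $\Lambda[\mfd^{-1}]$ is self-dual and unramified away from $\mfd$) is used, and it is precisely what makes the corollary "to Theorem~\ref{thm mod}" valid.

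Finally, I would conclude: since $\Int(-,\Phi)^\circ$ is the image of $Z(h,\phi)$ under a fixed $\ov\BQ$-linear functional on $\Ch^1(M_{\wt K})_{\ov\BQ}$ valued in $\BR_{\mfd,\ov\BQ}$, and since $Z(h,\phi)\in\CA_{\rm hol}(\bH(\BA_0),K_\bH,n)_{\ov\BQ}\otimes_{\ov\BQ}\Ch^1(M_{\wt K})_{\ov\BQ}$ by Theorem~\ref{thm mod} (equivalently by Theorem~\ref{thm:mod E}), applying the functional coefficient-wise keeps us inside $\CA_{\rm hol}(\bH(\BA_0),K_\bH,n)_{\ov\BQ}\otimes_{\ov\BQ}\BR_{\mfd,\ov\BQ}$, which is the assertion of the corollary. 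I do not expect any real obstacle here — the corollary is a formal consequence of Theorem~\ref{thm mod}; the only genuine content that has to be checked carefully is bookkeeping: that $\LCM(\alpha,\varphi)^\circ$ genuinely has degree zero on each component (Prop.~\ref{prop modif cycle exists}), that $\phi$ is $K_\bH$-invariant with the $K_\bH$ appearing in the statement (Lem.~\ref{lem:Weil KH}(2)), and that the base change from $\bfF$ to $\bfE$ does not disturb any of the regularity/smoothness/properness hypotheses needed for the $S$-punctured intersection theory of \S\ref{ss:AIP}. If one wanted to be maximally careful one would also note that the Petersson-metrized automorphic bundle $\wh{\bm\omega}$ and the Kähler metric it induces base change compatibly, so that the notion of admissibility used to define $(\cdot,\cdot)^{\adm}$ over $O_\bfE$ restricts to the one over $O_\bfF$.
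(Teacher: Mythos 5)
Your proposal is correct and matches the argument the paper intends: the corollary is obtained by feeding the degree-zero cycle $\LCM(\alpha,\varphi)^\circ$ (degree zero by Prop.~\ref{prop modif cycle exists} and the choice of $\lambda_i$) into the admissible pairing of Thm.~\ref{thm mod}, with the required $K_\bH$-invariance of $\phi=\phi_\mfd\otimes 1_{\wh\Lambda^\mfd}$ supplied by the standing hypothesis on $K_{\bH,\mfd}$ together with Lem.~\ref{lem:Weil KH}(2) at the places $v\nmid\mfd$ — exactly the preparation the paper carries out in the sentences immediately preceding the corollary. Your extra bookkeeping (base change of the $S$-punctured intersection theory from $O_\bfF[\mfd^{-1}]$ to $O_\bfE[\mfd^{-1}]$, and the identification $\BR_{S}=\BR_\mfd$) is exactly the unstated verification needed and is handled correctly.
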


We can spell out the Fourier coefficients $\Int(\xi, h_\infty,\Phi)^\circ$ using the arithmetic KR divisors $\wh\CZ^\bB(\xi,\phi)$ (cf. \eqref{eq:KR  Ar}) on the integral model $\CM$, and using the arithmetic intersection pairing \eqref{eq:adm int}
\begin{equation}\label{int g0 xi}
\Int(\xi,\Phi)^\circ := \frac{1}{τ(Z^\mbQ) [\bE:F]} \left(\wh \CZ^\bB(\xi, \phi) ,\quad\LCM(\alpha,\varphi)^\circ \right)
\end{equation}
for $ξ \in F_{0,+}$. 

Now we define the corresponding automorphic form on the analytic side. Let 
$$\delJ(h,\phi') = \frac{1}{ [E:F]}\sum_i λ_i \delJ (h, \phi'_i)$$
 be the linear combination of (the restriction to $\bH(\BA_0)$ of)  $\delJ$ from \eqref{eq def an series}. In particular by Thm.~\ref{thm comparison CM cycle} (and Prop.~\ref{prop CM intersection decomp}), the $\xi$-th Fourier coefficient with $\xi \neq 0$ is given by
\begin{equation}\label{eq:delJ phi'}
2\delJ(ξ,h_\infty,\phi') = -\frac{1}{τ(Z^\mbQ) [\bE:F]}\left( \widehat{\mcZ}^\bK (ξ,h_\infty,ϕ),\ \mcC\right)W^{(n)}_\xi(h_\infty)
\end{equation}
up to $\sum_{\ell\mid \mfd}\mbQ\log \ell$. Define the modified analytic generating function:
\begin{equation}\label{eq def del J circ}
 \delJ (h,\Phi')^\circ=       \delJ (h,\Phi')-\delJ(h,\phi'),\quad h\in \bH({\BA_0}).
\end{equation}
Then we consider
\begin{equation}\label{eq def del J}
\delJ_{\rm hol}(h):=      2 \delJ (h,\Phi')^\circ+\Int^{\bK-\bB}(h,\Phi)^\circ,
\end{equation}
where the last summand denotes the evaluation of the difference of the two Green functions at the modified CM cycle, cf. \eqref{geo error 0},
$$\Int^{\bK-\bB} (h,Φ)^\circ :=\frac{1}{τ(Z^\mbQ) [\bE:F]} \sum_{ν\in \Sigma_\bfE,\, ν\mid \infty} \CG_ν^{\bK-\bB}(h,ϕ)(C(α,\varphi)^\circ).$$

\begin{proposition}\label{prop hmf}
The function $\delJ_\mr{hol}(h)$ lies in $\CA_{\rm hol}(\bH({\BA_0}),K_\bH,n)_{\ov\BQ}\otimes_{\ov\BQ}\BR_{\mfd,\ov\BQ}$ with $K_\bH=K_{\bH,\fkd}\times K_{\bH}^{\fkd,\circ}$.
\end{proposition}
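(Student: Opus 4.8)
The plan is to express $\delJ_{\mr{hol}}(h)$ as the sum of two terms, each of which is separately known to be holomorphic modular, and then to invoke the fact that $\CA_{\rm hol}(\bH(\BA_0),K_\bH,n)_{\ov\BQ}\otimes_{\ov\BQ}\BR_{\mfd,\ov\BQ}$ is closed under sums. Concretely, unwinding Definitions \eqref{eq def del J} and \eqref{eq def del J circ} gives
\begin{equation*}
\delJ_{\mr{hol}}(h)=\bigl[2\delJ(h,\Phi')+\Int^{\bK-\bB}(h,\Phi)^\circ+2\delJ(h,\Phi')^{\circ}-2\delJ(h,\Phi')\bigr],
\end{equation*}
so the natural decomposition to use is
\begin{equation*}
\delJ_{\mr{hol}}(h)=\underbrace{\Bigl(2\delJ(h,\Phi')^\circ - \bigl[\text{the CM-error piece}\bigr]\Bigr)}_{\text{analytic, via }\delJ}\ +\ \underbrace{\Bigl(\Int^{\bK-\bB}(h,\Phi)^\circ + \bigl[\text{the CM-error piece}\bigr]\Bigr)}_{\text{geometric, via Thm.~\ref{thm mod} and Thm.~\ref{thm:dif inf}}},
\end{equation*}
but a cleaner route is to handle $2\delJ(h,\Phi')^\circ$, $\Int^{\bK-\bB}(h,\Phi)^\circ$ directly. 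First I would observe that $\delJ(h,\Phi')$ (the generating series of derived orbital integrals for the $n$-dimensional problem, built from the Gaussian test function at $\infty$) is holomorphic of weight $n$ with $\ov\BQ$-rational Fourier coefficients: this is exactly the content of \cite[\S9]{Z19}, which carries over verbatim since only the test function at $v_0$ and the entirety/smoothness statement of Thm.~\ref{thm n=1 gl}-type results (here \cite[Thm.~12.14]{Z19}) are used. Likewise $\delJ(h,\phi')=\frac{1}{[E:F]}\sum_i\lambda_i\delJ(h,\phi'_i)$ is holomorphic of weight $n$: each $\delJ(h,\phi_i')$ is, by Prop.~\ref{prop:Weil del J em}, in $\CA_\infty(\bH(\BA_0),K_\bH,n)$, and combining with Prop.~\ref{prop CM intersection decomp} and Thm.~\ref{thm comparison CM cycle} one sees its Fourier coefficients are (up to $\sum_{\ell\mid\mfd}\BQ\log\ell$) proportional to Whittaker values times intersection numbers, hence it is the restriction of a \emph{holomorphic} form — indeed \cite[Thm.~12.9, Rem.~12.10]{Z19} identify it with a derivative of a degenerate Eisenstein series, which is holomorphic of the stated weight. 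Therefore $2\delJ(h,\Phi')^\circ = 2\delJ(h,\Phi')-2\delJ(h,\phi')$ lies in $\CA_{\rm hol}(\bH(\BA_0),K_\bH,n)_{\ov\BQ}\otimes_{\ov\BQ}\BR_{\mfd,\ov\BQ}$.

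Next I would treat $\Int^{\bK-\bB}(h,\Phi)^\circ$, the evaluation of the generating series of the Green-function difference $\CG^{\bK-\bB}$ on the degree-zero zero-cycle $C(\alpha,\varphi)^\circ$. By construction $C(\alpha,\varphi)^\circ$ has degree $0$ on every connected component of $M_{\wt K}$, so Thm.~\ref{thm:dif inf} (the hermitian-space form of the Ehlen–Sankaran comparison, valid over a general totally real $F_0$) applies and shows that $h\mapsto \CG^{\bK-\bB}(h,\phi)(C(\alpha,\varphi)^\circ)$ lies in $\CA_\infty(\bH(\BA_0),K_\bH,n)$. Summing over the archimedean places $\nu$ of $\bfE$ and dividing by $\tau(Z^\BQ)[\bfE:F]$ preserves membership in that space. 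So $\Int^{\bK-\bB}(h,\Phi)^\circ\in\CA_\infty(\bH(\BA_0),K_\bH,n)$; it is \emph{not} a priori holomorphic, and this is the term whose non-holomorphy must cancel.

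The crux of the argument — and the step I expect to be the main obstacle — is therefore the cancellation: one must show that the \emph{non-holomorphic} part of $\Int^{\bK-\bB}(h,\Phi)^\circ$ is exactly cancelled by the non-holomorphic part of $2\delJ(h,\Phi')^\circ$, which must itself be traced back to $\delJ(h,\phi')$ versus $\delJ(h,\Phi')$. The mechanism is: $\delJ(h,\Phi')$ is honestly holomorphic, and $\delJ(h,\phi')$ matches — by Thm.~\ref{thm comparison CM cycle} together with the decomposition Prop.~\ref{prop CM intersection decomp} — the generating series of the Kudla-Green-function intersection numbers $\langle\widehat\mcZ^\bK(\xi,h_\infty,\phi),\mcC\rangle$ against the (linear combination of elementary) CM cycle(s) $\mcC$. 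Hence $2\delJ(h,\Phi')^\circ$ differs from a holomorphic form by (minus) the generating series of $\bK$-intersections against $\mcC$; on the other hand the $\bK$-intersection against the \emph{full} $\LCM(\alpha,\varphi)^\circ = \LCM(\alpha,\varphi)-\mcC$ equals the $\bB$-intersection plus the $\bK-\bB$-correction, and the $\bB$-intersection against $\LCM(\alpha,\varphi)^\circ$ is holomorphic by Cor.~\ref{cor hmf} (Thm.~\ref{thm mod}). Assembling: $2\delJ(h,\Phi')^\circ + \Int^{\bK-\bB}(h,\Phi)^\circ$ equals a holomorphic form plus terms supported at places dividing $\mfd$ (i.e. living in $\BR_{\mfd}$, hence zero there), with all archimedean non-holomorphic contributions cancelling by the matching of $\bK$-Green-function Fourier coefficients with derived orbital integrals at $\infty$ (Props.~\ref{prop orb inf}, \ref{prop orb V}). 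I would organize the bookkeeping so that the identity is proved coefficient-by-coefficient in $F_0$: for $\xi\neq 0$ one equates $2\delJ(\xi,h_\infty,\Phi')^\circ + \Int^{\bK-\bB}(\xi,h_\infty,\Phi)^\circ$ with the $\xi$-Fourier coefficient of $2\delJ(h,\Phi')+\Int^{\bB}(\xi,h_\infty,\Phi)^\circ$ modulo $\sum_{\ell\mid\mfd}\BQ\log\ell$, using \eqref{eq:delJ phi'}, \eqref{int g0 xi} and the $\bK = \bB + (\bK-\bB)$ splitting of Green functions; the $\xi=0$ coefficient is handled by the explicit formulas in Thm.~\ref{thm CM inter}(4) and Thm.~\ref{thm comparison CM cycle}(3). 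The $\ov\BQ$-rationality of Fourier coefficients is inherited from Thm.~\ref{thm:mod E}/Cor.~\ref{cor hmf} for the geometric input and from \cite[\S9]{Z19} for $\delJ(h,\Phi')$. The delicate point is making sure the ``up to $\sum_{\ell\mid\mfd}\BQ\log\ell$'' ambiguities in the $0$-th coefficient and in the elementary-CM-cycle comparison are genuinely killed in $\BR_{\mfd}$ and do not obstruct holomorphy; this is why the statement is phrased with $\BR_{\mfd,\ov\BQ}$-coefficients, and one checks that the correcting scalars $\varepsilon_\xi$ of Thm.~\ref{thm elementary CM modularity} and the constants $d_\xi, d_0^{\mr{corr}}$ of Thm.~\ref{thm comparison CM cycle} all lie in $\sum_{\ell\mid\mfd}\BQ\log\ell$, hence vanish modulo $\mathrm{Span}_\BQ\{\log q_v : v\mid\mfd\}$.
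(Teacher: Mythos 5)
Your proposal contains a genuine error at a key step. In the second paragraph you assert that $\delJ(h,\Phi')$ and $\delJ(h,\phi')$ are each, individually, holomorphic of weight $n$, invoking \cite[\S9]{Z19} for the former and an ``Eisenstein series'' interpretation for the latter. Neither function is in fact holomorphic. Lemma~\ref{lem:delJ sm} and Prop.~\ref{prop:Weil del J em} only place these generating series in $\CA_\infty(\bH(\BA_0),K_\bH,n)$, and this is optimal: the archimedean local terms $\delJ_v(\xi,h,\Phi')$ for $v\mid\infty$ involve the first derivative (in $s$) of archimedean orbital integrals of the Gaussian, which produce $\Ei$-factors, cf.~Prop.~\ref{prop orb inf} and \eqref{eq FC of derivative key identity inf}, rather than multiples of $W^{(n)}_\xi(h_\infty)$. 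The same holds for $\delJ(h,\phi')$: by Thm.~\ref{thm comparison CM cycle}(3) its Fourier coefficients at $\zeta$ with $\Diff(\zeta,\BV)$ archimedean are proportional to $\Ei\bigl(4\pi a_v v(\zeta)\bigr)\,\Orb(\zeta,\phi)$, again non-holomorphic. (A central derivative of an Eisenstein series is not holomorphic in general, and Rem.~\ref{rem SE} does not claim it is.) Your paragraph on ``the crux'' partly acknowledges this and then re-asserts that $\delJ(h,\Phi')$ is ``honestly holomorphic''; once that premise is removed, the bookkeeping you outline does not close, since it leaves the $\bK$-intersection against the unmodified $\LCM(\alpha,\varphi)$ unbalanced.

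The paper's proof organizes the cancellation by a different bracketing, which is where the actual content lies. After establishing that each summand lies in $\CA_\infty$ (your first paragraph has this right, via Prop.~\ref{prop:Weil del J em}, Lem.~\ref{lem:delJ sm}, Lem.~\ref{lem:Weil KH}, and Thm.~\ref{thm:dif inf}), one writes
$$
\delJ_{\rm hol}(h)=\Bigl[\,2\delJ(h,\Phi')+\tfrac{1}{\tau(Z^\BQ)[\bE:F]}\textstyle\sum_\nu\CG^{\bK-\bB}_\nu(h,\phi)\bigl(C(\alpha,\varphi)\bigr)\,\Bigr]-\Bigl[\,2\delJ(h,\phi')+\tfrac{1}{\tau(Z^\BQ)[\bE:F]}\textstyle\sum_\nu\CG^{\bK-\bB}_\nu(h,\phi)(C)\,\Bigr]
$$
and shows each bracket is of the holomorphic form \eqref{eq:hol qexp}. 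The first bracket is holomorphic by the argument of \cite[Prop.~14.5]{Z19}, which carries over to general totally real $F_0$; it is precisely this sum, and not $\delJ(h,\Phi')$ alone, that is holomorphic, because the $\Ei$-contributions of the two summands cancel archimedean place by archimedean place (\cite[Lem.~14.4]{Z19}). For the second bracket one uses that $\CG^\bB(\xi,\phi)$ is independent of $h_\infty$ (cf.~\eqref{Gr B h}), so that its contribution is automatically of the form \eqref{eq:hol qexp}; the remaining combination $2\delJ(h,\phi')+\CG^\bK(C)$ is then treated by diagonal restriction from $E_0$, applying Thm.~\ref{thm comparison CM cycle} together with Prop.~\ref{prop CM intersection decomp}: the archimedean $\Ei$-coefficients cancel exactly, leaving constants times $W^{(n)}_\xi$. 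Until you reorganize the argument around these two brackets — pairing each analytic piece with its Green-function counterpart \emph{on the same cycle} — the claimed holomorphy does not follow.
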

\begin{proof}
First we note that each summand in \eqref{eq def del J} belongs to $\CA_\infty(\bH(\mbA_0),K_\bH,n)$. For $\delJ(h,\phi')$ the statement is Prop.~\ref{prop:Weil del J em}. For $\delJ(h,Φ')$ this is Lem.~\ref{lem:delJ sm}, together with the fact that the same proof of Lem.~\ref{lem:Weil KH} shows that $\Phi_v'$ is invariant under $ K_{\bH,v}^{\circ}$ for every $v\nmid\fkd$. For $\Int^{\bK-\bB}(h,\Phi)$ it is a consequence of Thm.~\ref{thm:dif inf} and the fact that $\Phi_v$ is invariant under $ K_{\bH,v}^{\circ}$ for every $v\nmid\fkd$.

To complete the proof we need to show that the Fourier expansion of $ \delJ_{{\rm hol}}$ takes the form 
\begin{align}\label{eq:hol qexp}
 \delJ_{{\rm hol}}(h)=\sum_{\xi\in F_{0},\xi\geq 0} A_\xi(h_f) \,W_\xi^{(n)}(h_\infty),\quad A_\xi\in \BR_\mfd,
\end{align}
where $A_\xi(h_f) =0$ unless $\xi$ lies in a fractional ideal of $F_0$ (depending on $h_f\in \bH(\BA_{0,f})$). Similar to the proof of \cite[Prop.~14.5]{Z19}, using the fact that the Weil representation commutes with smooth transfer,  it suffices to consider the case $h_f=1$.

The proof of \cite[Prop.~14.5]{Z19} applies for a general totally real field $F_0$ and shows that the sum
$$2 \delJ (h_\infty,\Phi') + \frac{1}{τ(Z^\mbQ) [\bE:F]} \sum_{ν\in \Sigma_\bfE, ν\mid \infty} \CG_ν^{\bK-\bB}(h_\infty,ϕ)( C(α,\varphi)) $$
has the desired form of \eqref{eq:hol qexp}. It is only left to show that 
$$2\delJ(h_\infty,\phi') + \frac{1}{τ(Z^\mbQ) [\bE:F]} \sum_{ν\in \Sigma_\bfE, ν\mid \infty}  \CG_ν^{\bK-\bB}(h_\infty,ϕ)(C)$$
takes the form of \eqref{eq:hol qexp}. By definition, the automorphic Green functions $\CG_ν^{\bB}(\xi,h_\infty,ϕ)$ are actually independent of $h_\infty$ (cf.  \eqref{Gr B h}), and hence $\CG_ν^{\bB}(h_\infty,ϕ)$ takes the form of \eqref{eq:hol qexp}. So by \eqref{eq:G K-B} and \eqref{geo error 0}, it remains to show that
$$2\delJ(h_\infty,\phi') + \frac{1}{τ(Z^\mbQ) [\bE:F]}\sum_{ν\in \Sigma_\bfE, ν\mid \infty}  \CG_ν^{\bK}(h_\infty,ϕ)(C)$$
takes the form of \eqref{eq:hol qexp}. But here both terms arise by diagonal restriction from generating functions over $E_0$ to which we may apply Thm.~\ref{thm comparison CM cycle} (together with Prop.~\ref{prop CM intersection decomp}). The proof is now complete. 
\end{proof} 

\subsection{Comparison}
For $\xi\in F_{0,+}$, recall that our aim is to show the identity $2\delJ(ξ,Φ') + \Int^{\bK-\bB}(ξ,Φ) = - \Int(ξ,Φ)$ in $\mbR_\mfd$, cf. \eqref{eq to show}. Subtracting the known identity \eqref{eq:delJ phi'} evaluated at $h_\infty=1$,
\begin{equation}\label{eq CM add}
2\delJ(ξ,\phi') = -\frac{1}{τ(Z^\mbQ) [\bE:F]} \left( \wh \mcZ^\bK(ξ,ϕ), \mcC\right)\quad \text{in } \mbR_\mfd,
\end{equation}
cf. \eqref{Gr Ku h=1} and \eqref{eq:Z Ku h=1}, we see that the AFL identity in question would follow from
$$\delJ_\mr{hol}(ξ) = -\Int(ξ,Φ)^\circ,
$$
where the left hand side is the $\xi$-th Fourier coefficient of \eqref{eq def del J}.
Slightly enlarging $\mfd$, we now assume it is divided by all places with residue cardinality $<n$.

\begin{theorem}\label{thm AFL0}
There is an equality in $\CA_{\rm hol}(\bH(\BA_0),K_{\bH}, n)_{\ov\BQ}\otimes_{\ov\BQ}\BR_{\mfd,\ov\BQ}$:
\begin{equation}\label{eq ident hmfs}
\delJ_{\rm hol} (h) = -\Int(h,Φ)^\circ.
\end{equation}
In particular the AFL holds whenever $q_v \geq n$.
\end{theorem}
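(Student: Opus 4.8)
The plan is to establish \eqref{eq ident hmfs} by a Fourier-coefficient comparison, exploiting that by the work of the preceding sections both sides are \emph{bona fide} holomorphic modular forms, and then to feed the resulting identity back into the reduction carried out in \S\ref{ss:glob}. First I would record that, by Cor.~\ref{cor hmf} and Prop.~\ref{prop hmf}, the difference
$$\CF(h):=\delJ_{\rm hol}(h)+\Int(h,\Phi)^\circ$$
lies in $\CA_{\rm hol}(\bH(\BA_0),K_\bH,n)_{\ov\BQ}\otimes_{\ov\BQ}\BR_{\mfd,\ov\BQ}$, hence is a holomorphic Hilbert modular form of parallel weight $n$ over the totally real field $F_0$, with $[F_0:\BQ]\geq 2$ since $F_0\neq\BQ$. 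The theorem is equivalent to $\CF=0$.

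Next I would compute the $\xi$-th Fourier coefficient $A_\xi(\CF)$ for $\xi\in F_{0,+}$. Subtracting the CM-cycle identity \eqref{eq CM add} (a consequence of Thm.~\ref{thm comparison CM cycle} together with Prop.~\ref{prop CM intersection decomp}) and using the archimedean comparison \cite[Lem.~14.4]{Z19} to cancel the archimedean Green-function terms, one is reduced to a sum over the non-archimedean places $w$ of $F_0$ that are inert in $F$ and prime to $\mfd$: the split places contribute nothing by Thm.~\ref{thm loc glob decomp}(a) and \S\ref{ss:prep comp}, while the places $w\mid\mfd$ contribute inside $\BQ\log q_w$ and hence vanish in $\BR_\mfd$. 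By the local--global decompositions \eqref{sum inert} and \eqref{eq:delJ v xi 1}, each remaining summand equals $2\log q_w$ times a sum over matching orbits of terms $\bigl(\Int(g,u)+(\log q_w)^{-1}\del((\gamma,u'),\mathbf 1_{(S_n\times V'_n)(O_{F_{0,w}})})\bigr)\cdot\Orb((g,u),\Phi^{w})$, so it vanishes once Conj.~\ref{AFLconj rs} is available at $w$.

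The main work — and the step I expect to be the real obstacle — is to show $A_\xi(\CF)=0$ for all $\xi\neq\xi_0:=\pair{u_0,u_0}$, where $(g_0,u_0)$ is the global pair from \S\ref{ss:glob}. I would do this place by place, transcribing \cite[\S\S13.4, 14.3]{Z19}. If $\#\Diff(\xi,\mbV)\geq 3$ then all relevant cycle intersections are empty and $A_\xi(\CF)=0$ trivially; if $\Diff(\xi,\mbV)=\{w'\}$ with $w'$ archimedean or $w'\mid\mfd$, the contribution has already been absorbed. This leaves $\Diff(\xi,\mbV)=\{w'\}$ with $w'$ finite and $w'\nmid\mfd$, where necessarily $q_{w'}\geq n$ because $\mfd$ was chosen to contain every place of residue cardinality $<n$; here one uses the AFL in dimension $n$ at $w'$ — reducing to the group version in dimension $n-1$ and the inductive hypothesis on $n$ when $\xi$ has unit norm at $w'$ (cf.\ the end of \S\ref{ss:AFL}), and, for the finitely many $\xi$ that are not excluded this way, invoking the test-function choices of Prop.~\ref{prop:aux_orbit_separation} and the compact set $\Omega_v$ to rule out any orbit with non-vanishing away-from-$v$ orbital integral unless $w'=v$ and the orbit is that of $(g_0,u_0)$, which forces $\xi=\xi_0$. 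The delicate part is precisely this last exclusion: bookkeeping exactly which orbits at which auxiliary places can survive, so that no new $\xi$ slips through. This argument goes through over a general totally real $F_0$ exactly because the geometric modularity inputs it needs (almost-modularity of the arithmetic KR generating series, Thm.~\ref{thm mod}, and the Green-function comparison, Thm.~\ref{thm:dif inf}) have been supplied in Parts~1 and~2.

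Finally, having shown that $\CF$ has only finitely many nonzero Fourier coefficients (those in the $O_{F_0}^{\times}$-orbit of $\xi_0\in F_{0,+}$), I would conclude $\CF=0$, since a holomorphic modular form in $\CA_{\rm hol}(\bH(\BA_0),K_\bH,n)$ over a totally real field of degree $\geq 2$ with finitely supported Fourier expansion vanishes identically. In particular $A_{\xi_0}(\CF)=0$, which combined with \eqref{eq CM add} and Cor.~\ref{cor loc at v} yields the AFL identity \eqref{eq to show} for $(g_0,u_0)$ and its matching $(\gamma,u')$. Since $(g_0,u_0)$ was taken $p$-adically arbitrarily close to the original local pair, and $\Int(g,u)$ and $\del((\gamma,u'),\cdot)$ are locally constant (by \cite{M-LC} and \S\ref{ss: transfer}, the latter also ensuring that the matching relation is preserved under small perturbation), Conj.~\ref{AFLconj rs} follows for the given unramified quadratic extension whenever $q_v\geq n$; taking this as the inductive step on $n$ completes the proof of Thm.~\ref{thm main}.
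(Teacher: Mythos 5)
Your proposal diverges from the paper's argument at the crucial reduction step, and the divergence introduces a genuine gap. You propose to show that the difference $\CF:=\delJ_{\rm hol}+\Int(\cdot,\Phi)^\circ$ has $A_\xi(\CF)=0$ for all but finitely many $\xi$ (possibly excepting $\xi_0$) and then invoke the elementary fact that a holomorphic Hilbert modular form over $F_0\neq\BQ$ with finitely supported Fourier expansion vanishes. The paper instead invokes \cite[Lem.~13.6]{Z19}: after fixing $B$ to be the product of primes $\ell\nmid\mfd$ at which $\BZ_\ell\otimes_\BZ O_F[t]/(\alpha(t))$ is not maximal, it suffices to check $A_\xi(\CF)=0$ only for $\xi$ with $w(\xi)=0$ at every $w\mid B$. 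That restriction is exactly what makes the verification possible, for the following reason.

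Fix $w\nmid\mfd$ non-archimedean and inert. The local summand at $w$ of $A_\xi(\CF)$ (after cancelling archimedean terms and the $w\mid\mfd$ terms in $\BR_\mfd$) vanishes once the local AFL at $w$ is available for the contributing orbits. The only two available sources are: (i) the dimension-reduction \cite[Prop.~4.12]{Z19}, which requires $(u,u)$ to be a $w$-adic unit, i.e.\ $w\nmid\xi$, followed by the inductive hypothesis; and (ii) \cite[Prop.~3.10]{Z19}, which requires the order $O_F[t]/(\alpha(t))$ to be maximal at $w$, i.e.\ $w\nmid B$. When $\xi$ is coprime to $B$, one of (i), (ii) applies at every such $w$; this is precisely the dichotomy the paper runs. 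But if $\xi$ is divisible by some $w_0\mid B$ with $w_0\nmid\mfd$, $w_0\neq v$, then at $w_0$ neither (i) nor (ii) applies, and there are infinitely many totally positive $\xi$ of this kind. Your appeal to Prop.~\ref{prop:aux_orbit_separation} does not help here: that proposition and the compact set $\Omega_v$ control orbits only through their component at the fixed place $v$, and they constrain the away-from-$v$ orbital integral; they say nothing about the local terms $2\delJ_{w_0}(\xi,\Phi')+\Int_{w_0}(\xi,\Phi)$ for $w_0\neq v$. So the finite-support premise cannot be established with the tools in hand, and your concluding lemma (correct in itself) never becomes applicable. Replacing it by \cite[Lem.~13.6]{Z19} -- which trades the finite-support hypothesis for coprimality-to-$B$ -- is the missing ingredient.

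One further imprecision: the language ``$\Diff(\xi,\mbV)$'' and the trichotomy based on $\#\Diff$ belong to the rank-one setting of \S5--6 (elementary CM cycles over $E$), not to the $n$-dimensional generating series $\delJ_{\rm hol}$ and $\Int(\cdot,\Phi)^\circ$. For the latter, the $\xi$-th coefficient is a sum over non-split places $w$ of local terms indexed by $\GL_n(F_0)$-orbits in $(S_n(\alpha)\times V'_\xi)(F_0)$ (equivalently, $\U$-orbits in the nearby space $V^{(w)}_\xi$), and the correct bookkeeping is orbit-by-orbit and place-by-place as in \eqref{eq:delJ v xi 1} and \eqref{sum inert}, not via $\Diff$.
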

\emph{Proof.} 
Let $B$ be the product of all primes $\ell\nmid \mfd$ where $\mbZ_\ell\otimes_{\mbZ} O_F[t]/(α(t))$ is not the maximal order in the étale $\mbQ_\ell$-algebra $E_\ell=\BQ_\ell\otimes_\BQ E$. (Note that the case of interest is typically $p\mid B$.) By \cite[Lem. 13.6]{Z19} applied to $\delJ_{\rm hol}(h) +\Int(h,Φ)^\circ$ and $B$, it suffices to show the equality of Fourier cofficients (cf. \eqref{eq:def F coeff} for notation)
$$
\delJ_{\rm hol} (\xi, h_\infty) = -\Int(\xi, h_\infty,Φ)^\circ
$$
for all $\xi\in F_{0}^\times$ such that $w(\xi)=0$ for all $w| B$ (here we write $w(\xi)$ for the valuation  of $\xi$ associated to the place $w$).
Since both are holomorphic of the same weight (cf. Prop.~\ref{prop hmf} and Cor.~\ref{cor hmf}), this is equivalent to
$$
\delJ_{\rm hol} (\xi) = -\Int(\xi,Φ)^\circ
$$
for all $\xi\in F_{0}^\times$ such that $w(\xi)=0$ for all $w|B$.

By an inductive argument, we from now on assume the AFL Conjecture to hold in all cases of residue cardinality (of $F_0$) $\geq n$ and dimension (of the hermitian space $V$) smaller than $n$. It is now sufficient to verify the following claim.

\emph{Claim. For $ξ$ with $w(ξ) = 0$ for all $w\mid B$, we have $\delJ_\mr{hol}(\xi) = - \Int(ξ,Φ)^\circ$ in $\mbR_\mfd$.}
The CM identity \eqref{eq CM add} may be added again which leaves us with
$$2\delJ(ξ,Φ') + \Int^{\bK-\bB}(ξ,Φ) \overset{?}{=} \Int(ξ,Φ)$$
which may be rewritten as
$$2\delJ(ξ, Φ') \overset{?}{=}- \Int^{\bK}(ξ,Φ).$$
From here on, the argument is again as in \cite[Thm.~14.6]{Z19}. Write
$$\begin{aligned}
2\delJ(ξ,Φ') + \Int^\bK(ξ,Φ) = & \sum_{w\nmid\mfd,\, w\nmid \infty} \big(2\delJ_w(ξ,Φ') + \Int_w(ξ,Φ)\big)\\
									& + \sum_{w\nmid\mfd,\, w\mid \infty} \big(2\delJ_w(ξ,Φ') +\Int_w^\bK(ξ,Φ)\big).
\end{aligned}$$
The archimedean terms are known to vanish by \cite[Lem.~14.4]{Z19}. The non-archimedean terms may be rewritten using the local-global decompositions \eqref{eq:delJ v xi 1} and \eqref{sum inert}:
\begin{multline*}
2\delJ_w(ξ,Φ') + \Int_w(ξ,Φ) = \\
\sum_{(g,u)\in [(G^{(w)}(α)\times V^{(w)}_ξ)(F_0)],\atop (γ,u')\leftrightarrow (g,u)}
2\big(\del ((γ,u'), Φ'_w) + \Int_w(g,u)\big) \Orb((g,u),Φ^w) \log q_w.
\end{multline*}
The assumption on $ξ$ and $B$ implies that at least one of the following two cases holds.
\begin{enumerate}[wide, labelindent=0pt, labelwidth=!, label=(\arabic*), topsep=2pt, itemsep=2pt]
\item If $w\nmid ξ$, then $(u,u)$ is a $w$-adic unit. The AFL identity $\del ((γ,u'),Φ'_w) =- \Int_w(g,u)$ is known to reduce to a similar AFL-identity in dimension $n-1$ which holds by our induction hypothesis. The reduction statement is \cite[Prop. 4.12]{Z19}.
\item If $w\mid ξ$, then $w\nmid B$ so $O_F[t]/(α(t))$ is a maximal order at $w$. In this case the above AFL identity is also known, cf. \cite[Prop. 3.10]{Z19}.
\end{enumerate}
This finishes the proof of the Claim, of the theorem and of the AFL for $q_v \geq n$. \qed

\end{document}